\numberwithin{equation}{section}
\newtheorem{thm}{Theorem}[section]
\newtheorem{prop}[thm]{Proposition}
\newtheorem{lemma}[thm]{Lemma}
\newtheorem{thmintro}{Theorem}
\theoremstyle{definition}
\newcommand{\ZZ}{\mathbb{Z}}
\newcommand{\CC}{\mathbb{C}}   
\newcommand{\QQ}{\mathbb{Q}}
\newcommand{\PH}{\mathbb{P}}
\newcommand{\mb}{\mathbf}
\newcommand{\mh}{\mathbb}
\newcommand{\mr}{\mathrm}
\newcommand{\mc}{\mathcal}
\newcommand{\mf}{\mathfrak}
\newcommand{\N}{\mathbb N}
\newcommand{\Z}{\mathbb Z}
\newcommand{\Q}{\mathbb Q}
\newcommand{\sfa}{\mathsf{a}}
\newcommand{\sfb}{\mathsf{b}}
\def\Frob{\ensuremath{\mathrm{Frob}}}
\DeclareMathOperator{\fdeg}{\mathrm{fdeg}}
\newcommand{\der}{\mathrm{der}}
\newcommand{\ad}{\mathrm{ad}}
\newcommand{\SC}{\mathrm{sc}}
\newcommand{\Irr}{\mathrm{Irr}}
\newcommand{\nr}{\mathrm{nr}}
\newcommand{\cusp}{\mathrm{cusp}}
\newcommand{\KK}{K_\nr}
\newcommand{\unip}{\mathrm{unip}}
\newcommand{\Wr}{\mathrm{wr}}
\newcommand{\matje}[4]{\left(\begin{smallmatrix} #1 & #2 \\ 
#3 & #4 \end{smallmatrix}\right)}
\begin{document}

\title[Supercuspidal unipotent representations]{
Supercuspidal unipotent representations:\\ 
L-packets and formal degrees}

\author{Yongqi Feng}
\address[Y.~Feng]
{Department of Mathematics\\Shantou University\\Daxue Road 243, 515063 Shantou, China}
\email{yqfeng@stu.edu.cn}

\author{Eric Opdam}
\address[E.~Opdam]
{Korteweg-de Vries Institute for Mathematics\\Universiteit van Amsterdam\\Science Park 105-107\\ 
1098 XG Amsterdam, The Netherlands}
\email{e.m.opdam@uva.nl} 

\author{Maarten Solleveld}
\address[M.~Solleveld]
{Institute for Mathematics, Astrophysics and Particle Physics\\
Radboud Universiteit Nijmegen\\Heyendaalseweg 135, 
6525 AJ Nijmegen, The Netherlands}
\email{m.solleveld@science.ru.nl}

\date{\today}
\subjclass[2000]{Primary 22E50; Secondary 11S37, 20G25, 43A99}
\thanks{The first and third author were supported by NWO Vidi grant nr.~639.032.528.
The second author was supported by ERC--advanced grant no.~268105. The first author thanks the support from KdVI at the University of Amsterdam and from IMAPP at the Radboud University Nijmegen.}

\begin{abstract}
Let $K$ be a non-archimedean local field and let $\mb G$ be a connected reductive
$K$-group which splits over an unramified extension of $K$. We investigate supercuspidal
unipotent representations of the group $\mb G (K)$. We establish a bijection between
the set of irreducible $\mb G (K)$-representations of this kind and the set of cuspidal 
enhanced L-parameters for $\mb G(K)$, which are trivial on the inertia subgroup of 
the Weil group of $K$. The bijection is characterized by a few simple equivariance
properties and a comparison of formal degrees of representations with adjoint 
$\gamma$-factors of L-parameters.

This can be regarded as a local Langlands correspondence for all supercuspidal unipotent
representations. We count the ensuing L-packets, in terms of data from the affine
Dynkin diagram of $\mb G$. Finally, we prove that our bijection satisfies the conjecture 
of Hiraga, Ichino and Ikeda about the formal degrees of the representations. 
\end{abstract}

\maketitle
\tableofcontents

\section*{Introduction}

Let $K$ be a non-archimedean local field and let $\mb G$ be a connected reductive $K$-group.
Roughly speaking, a representation of the reductive $p$-adic group $\mb G (K)$ is unipotent if it 
arises from a unipotent representation of a finite reductive group associated to a parahoric 
subgroup of $\mb G (K)$. Among all (irreducible) smooth $\mb G (K)$-representations, this is a
very convenient class, which can be studied well with classification, parabolic induction and
Hecke algebra techniques. The work of Lusztig \cite{Lusztig-unirep,Lusztig-unirep2} and Morris
\cite{Mor} goes a long way towards a local Langlands correspondence for such representations, 
when $\mb G$ is simple and adjoint.

In this paper we focus on supercuspidal unipotent $\mb G (K)$-representations. For this to work
well, we assume throughout that $\mb G$ splits over an unramified extension of $K$. Our main
goal is a local Langlands correspondence for such representations, with as many nice properties
as possible. We will derive that from the following result, which says that one can determine 
the L-parameters of supercuspidal unipotent representations of a simple algebraic group by 
comparing formal degrees and adjoint $\gamma$-factors.

Denote the Weil group of $K$ by $\mb W_K$ and let
$\Frob \in \mb W_K$ be a geometric Frobenius element. A Langlands parameter is called 
unramified if it is trivial on the inertia subgroup of $\mb W_K$ (so that it is determined
by the image of $\Frob$ and by one unipotent element).

\begin{thmintro}\label{thm:A}
Consider a simple $K$-group $\mb{G}$ which 
splits over an unramified extension. For each irreducible supercuspidal unipotent 
$\mb G (K)$-representation $\pi$, there exists a discrete unramified 
local Langlands parameter $\lambda\in \Phi (\mb G (K))$ such that 
\begin{equation}\label{eq:fdegisgammafactor}
\fdeg(\pi) = C_\pi \gamma (\lambda) \text{ for some } C_\pi \in \QQ^\times
\end{equation}
as rational functions in $q_K$ with $\QQ$-coefficients. (Here $q_K$ denotes the cardinality of
the residue field of $K$, and one makes the terms of \eqref{eq:fdegisgammafactor} into
functions of $q_K$ by simultaneously considering unramified extensions of the field $K$.) Furthermore:
\begin{itemize}
\item $\lambda$ is essentially unique, in the sense that its image in the collection 
$\Phi (\mb G_\SC (K))$ of L-parameters for the simply connected cover of $\mb G (K)$ is unique.
\item When $\mb G$ is adjoint, the map $\pi \mapsto \lambda$ agrees with 
a parametrization of supercuspidal unipotent representations obtained in 
\cite{Mor,Lusztig-unirep,Lusztig-unirep2}.
\end{itemize}
\end{thmintro}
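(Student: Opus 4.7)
The plan is to combine Lusztig's classification of supercuspidal unipotent representations with explicit computations of formal degrees and adjoint $\gamma$-factors, carried out case by case on the simple factors. The starting point is to reduce to the case where $\mb G$ is adjoint: since supercuspidal unipotent representations, formal degrees (up to rational scalars), and adjoint $\gamma$-factors are all well-behaved under the isogeny $\mb G_\SC \to \mb G_{\mathrm{ad}}$, any parametrization on the adjoint side can be lifted to $\mb G$ and remains essentially unique in $\Phi(\mb G_\SC(K))$. This reduction also explains why the uniqueness has to be stated only up to the simply connected cover: the center of $\widehat{\mb G}$ corresponds to a finite ambiguity in choosing the Langlands parameter.

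For adjoint simple $\mb G$, each irreducible supercuspidal unipotent representation $\pi$ is, by Lusztig's construction \cite{Lusztig-unirep,Lusztig-unirep2}, compactly induced from a cuspidal unipotent representation $\sigma$ of the finite reductive quotient $\mb G_x(k_K)$ of some maximal parahoric $P_x \subset \mb G(K)$, possibly twisted by a character. The formal degree of $\pi$ then equals the generic degree of $\sigma$ divided by the appropriate parahoric volume, which is an explicit rational function in $q$ accessible from Lusztig's tables of generic degrees for finite reductive groups. On the dual side, Lusztig already attaches to $\pi$ an unramified discrete L-parameter $\lambda$, whose Frobenius semisimple part determines a pseudo-Levi subgroup $\widehat{H} \subset \widehat{\mb G}$, from which $\gamma(\lambda,q)$ can be read off via the adjoint $L$- and $\varepsilon$-factors at Frobenius eigenvalues. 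The core of the theorem is then the identity between these two rational functions in $q$, up to a nonzero rational scalar $C_\pi$.

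Carrying this matching out is the substance of \cite{Ree1,FeOp,Fen}: \cite{Ree1} establishes the identity for cases accessible through Iwahori-type Hecke algebras, \cite{FeOp} extends it via the affine Hecke algebra methods and unitarity results needed to handle non-Iwahori parahoric types, and \cite{Fen} completes the remaining exceptional cases. The main obstacle is precisely this last step: the generic degree of a cuspidal unipotent representation of an exceptional finite reductive group and the adjoint $\gamma$-factor of the candidate parameter $\lambda$ are computed from quite different combinatorial inputs (Lusztig's symbols and families on one side, Kac diagrams and Frobenius eigenvalues on the other), so proving that they agree up to $\QQ^\times$ requires a careful exhaustive verification, with particular attention to the normalization constants hidden in the parahoric volumes and in the Langlands--Deligne local constants. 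Once this verification is in place, uniqueness of $\lambda$ up to $\Phi(\mb G_\SC(K))$ follows from the fact that distinct discrete unramified parameters yield distinct $\gamma$-factors on the nose, because their pseudo-Levis already differ.
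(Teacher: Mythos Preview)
The paper does not give its own proof of this theorem: it is stated with the citation \cite{Ree1,FeOp,Fen} and used as input for the rest of the paper. Your proposal is a reasonable outline of the strategy carried out in those references, and you correctly identify the division of labor among them.

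One point deserves sharpening. Your final sentence claims uniqueness follows because ``distinct discrete unramified parameters yield distinct $\gamma$-factors on the nose, because their pseudo-Levis already differ.'' This is not quite right: parameters in the same $(\Omega^\theta)^*$-orbit have identical adjoint $\gamma$-factors (the center acts trivially on $\mathfrak g^\vee$), yet are genuinely inequivalent as parameters for $\mb G$. The essential uniqueness in the statement is precisely uniqueness modulo this center action, which is what passage to $\Phi(\mb G_\SC(K))$ encodes. Moreover, the pseudo-Levi $Z_{\mb G^\vee}(\lambda(\Frob))$ alone does not determine the $\gamma$-factor; the unipotent class $u_\lambda$ enters as well. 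What \cite{FeOp} actually establishes is that, among discrete unramified parameters admitting a cuspidal enhancement, the adjoint $\gamma$-factor (as a rational function of $q$ up to $\QQ^\times$) determines the $(\Omega^\theta)^*$-orbit of $\lambda$. This is a nontrivial separation result, not an automatic consequence of pseudo-Levis differing, and it too relies on the case-by-case analysis.
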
 

The credits for Theorem \ref{thm:A} belong to several authors. The larger part of it, namely  
all cases with classical groups, was proven in \cite[Theorem 4.6.1]{FeOp}.  Quite generally, whenever
$\mb G$ is adjoint, \cite[Theorem 4.11]{Opd2} shows that the Langlands parameters from 
\cite{Lusztig-unirep,Lusztig-unirep2} satisfy \eqref{eq:fdegisgammafactor}. Hence the L-parameters 
from \cite{FeOp} coincide with those found by Lusztig \cite{Lusztig-unirep,Lusztig-unirep2}.
A little before that, Morris \cite[\S 5--6]{Mor} already associated L-parameters to supercuspidal unipotent 
representations of inner forms of split simple groups. We note that the parametrizations from \cite{Mor} 
and \cite{Lusztig-unirep} are presented in combinatorial fashion and do not involve formal degrees.
Instead, they are motivated (and nearly determined) by considerations with character sheaves and 
cuspidal local systems on unipotent orbits \cite{Lus-Intersect}. For that reason, the
L-parameters from \cite{Mor} and \cite{Lusztig-unirep} agree. Then \cite{FeOp,Opd2} show that these
parametrizations can be characterized uniquely by the equality \eqref{eq:fdegisgammafactor}.

For split exceptional groups the formal degrees in Theorem \ref{thm:A} were computed in 
\cite[\S 7]{Ree0} and \cite[\S 10--13]{Ree1}, and it was shown that they determine essentially unique 
Langlands parameters. Next \eqref{eq:fdegisgammafactor} was proven in \cite[\S 3.4]{HII}.
The essential uniqueness in the cases of the non-split inner forms of $E_6$ and $E_7$ is easy by the 
extremely small number of instances \cite[\S 4.4]{Opd2}. Hence the Langlands parameters determined by 
formal degrees agree with those from \cite{Mor,Lusztig-unirep} for inner forms of exceptional split groups. 
For outer forms of exceptional groups all this follows from the explicit computations in
\cite[\S 4.4]{Fen} and a comparison with \cite{Lusztig-unirep2}.

We will make the above parametrization of supercuspidal unipotent representations 
more precise and generalize it to connected reductive $K$-groups.
Let $\Irr (\mb{G}(K))_{\cusp, \unip}$ be the collection of irreducible supercuspidal 
unipotent representations of $\mb{G}(K)$, modulo isomorphism. 
Let $\Phi (\mb G (K))_\cusp$ be the set of cuspidal enhanced L-parameters for $\mb G(K)$,
considered modulo conjugation by the dual group $\mb{G}^\vee$. 
We denote its subset of unramified parameters by $\Phi_\nr (\mb G (K))_\cusp$.
(See Section \ref{sec:prelim} for the definitions of these and related objects.)
Our main result can be summarized as follows:

\begin{thmintro}\label{thm:C}
Let $\mb G$ be a connected reductive $K$-group which splits over an unramified extension. 
There exists a bijection
\[
\begin{array}{ccc}
\Irr (\mb G (K))_{\cusp,\unip} & \longrightarrow & \Phi_\nr (\mb G (K))_\cusp \\
\pi & \mapsto & (\lambda_\pi, \rho_\pi)
\end{array}
\]
with the properties:
\begin{enumerate}
\item When $\mb G$ is semisimple, the formal degree of $\pi$ equals the adjoint
$\gamma$-factor of $\lambda_\pi$, up to a rational factor which depends only on $\rho_\pi$.
\item Equivariance with respect to tensoring by weakly unramified characters.
\item Equivariance with respect to $\mb W_K$-automorphisms of the root datum.
\item Compatibility with almost direct products of reductive groups.
\item Let $Z(\mb G)_s$ be the maximal $K$-split central torus of $\mb G$ 
and let $\mb H$ be the derived group of $\mb G / Z(\mb G)_s$.
When $Z(\mb G)_s (K)$ acts trivially on $\pi \in \Irr (\mb G (K))_{\cusp,\unip}$, 
we can regard $\pi$ as a representation of $(\mb G / Z(\mb G)_s)(K)$ and
restrict to a representation $\pi_{\mb H}$ of $\mb H (K)$. Then $\lambda_\pi$ has image 
in the Langlands $L$-group of $\mb G / Z(\mb G)_s$ and the canonical map
\[
\big( \mb G / Z(\mb G)_s \big)^\vee \rtimes \mb W_K \longrightarrow 
\mb H^\vee \rtimes \mb W_K 
\]
sends $\lambda_\pi$ to $\lambda_{\pi_{\mb H}}$.
\item The map in (5) provides a bijection between the intersection of 
$\Irr (\mb G (K))_{\cusp,\unip}$ with the L-packet of $\lambda_\pi$ and the intersection
of $\Irr (\mb H (K))_{\cusp,\unip}$ with the L-packet of $\lambda_{\pi_{\mb H}}$.
\end{enumerate}
For a given $\pi$ the properties (1), (2), (4) and (5) determine $\lambda_\pi$ uniquely, 
modulo tensoring by weakly unramified characters of $(\mb G / Z(\mb G)_s)(K)$.
\end{thmintro}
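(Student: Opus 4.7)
The plan is to reduce to the case of an absolutely simple semisimple group, where Theorem~\ref{thm:A} together with Lusztig's parametrization supplies the parameter $\lambda_\pi$, and then to bootstrap back up along the functorialities demanded by properties (4) and (5).

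First I would exploit properties (2) and (5) to strip off the maximal $K$-split central torus: after twisting $\pi$ by a suitable weakly unramified character of $\mb G(K)$, the torus $Z(\mb G)_s (K)$ acts trivially, so $\pi$ descends to a representation of $(\mb G / Z(\mb G)_s)(K)$ and restricts to a supercuspidal unipotent representation $\pi_{\mb H}$ of the derived group $\mb H(K)$. Next, property (4) decomposes $\mb H$ as an almost direct product of $K$-simple semisimple factors, each obtained by restriction of scalars from an absolutely simple group over a finite unramified extension of $K$. Both sides of the bijection are compatible with such decompositions (the left through tensor factorizations of supercuspidals, the right through the product structure on L-groups), so the problem reduces to the case where $\mb G$ is absolutely simple and semisimple.

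For such $\mb G$, Theorem~\ref{thm:A} produces a discrete unramified L-parameter $\lambda_\pi$ satisfying \eqref{eq:fdegisgammafactor}, uniquely determined after projection to $\mb G_\SC (K)$. To upgrade this to a cuspidal enhanced L-parameter $(\lambda_\pi, \rho_\pi) \in \Phi_\nr (\mb G(K))_\cusp$, I would combine Lusztig's parametrization of unipotent representations of the adjoint quotient with the classical description of supercuspidal unipotent representations of $\mb G(K)$ as compactly induced from cuspidal unipotent representations of the finite reductive quotient of a maximal parahoric. The component group of the centralizer of the image of $\lambda_\pi$ in $\mb G^\vee$ should be identified naturally with a group acting on this cuspidal datum, and $\rho_\pi$ would be the irreducible representation of that group detecting $\pi$ inside the induced module. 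Properties (2), (3) and (6) are then verified directly: weakly unramified twists and $\mb W_K$-automorphisms act functorially on both sides, and (6) follows by unwinding the construction through $\mb H$.

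The main obstacle is bijectivity, since Theorem~\ref{thm:A} asserts only existence. To get both injectivity and surjectivity I would count both sides: the source via the classification of supercuspidal unipotent representations in terms of cuspidal data from parahoric subgroups, the target via the combinatorial description of cuspidal enhanced unramified L-parameters, which by Theorem~\ref{thm:A} can be read off from the affine Dynkin diagram of $\mb G$. Matching cardinalities L-packet by L-packet, together with the equivariances, then forces the correspondence to be bijective. For the final uniqueness clause, suppose $\lambda, \lambda'$ both satisfy (1), (2), (4), (5) for a fixed $\pi$. The reductions via (4) and (5) bring us to the absolutely simple semisimple setting, where (1) forces $\gamma(\lambda, q)$ and $\gamma(\lambda', q)$ to agree up to a rational scalar; the essential-uniqueness clause of Theorem~\ref{thm:A} then forces $\lambda$ and $\lambda'$ to have the same image in $\Phi(\mb G_\SC (K))$, and the residual ambiguity is exactly by weakly unramified characters of $(\mb G / Z(\mb G)_s)(K)$ — absorbed by (2) — as claimed.
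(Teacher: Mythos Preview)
Your high-level architecture matches the paper's, but there is a genuine gap in the semisimple-to-simple reduction. You invoke property (4) to decompose $\mb H$ as an almost direct product of simple factors and thereby reduce to the absolutely simple case. But property (4) is a \emph{constraint}, not a \emph{construction}: given the LLC for the simple factors $\mb H_i$, the compatibility \eqref{eq:2.20} only says that $q^\vee(\lambda_\pi) = \prod_i \lambda_{\pi_i}$ and that $\rho_\pi$ occurs in $\big(\bigotimes_i \rho_{\pi_i}\big)\big|_{\mc A_{\lambda_\pi}}$. Since the restriction of $\pi$ to $\prod_i \mb H_i(K)$ is typically reducible and $\mc A_{\lambda_\pi}$ is a proper subgroup of $\prod_i \mc A_{\lambda_{\pi_i}}$, this does not determine $(\lambda_\pi,\rho_\pi)$ from the data on the factors.

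The paper instead passes to the \emph{adjoint} quotient $\mb H_\ad$, which is a genuine direct product of simple adjoint groups; there Theorem~\ref{thm:A} together with Lusztig's tables gives the bijection directly (Propositions~\ref{prop:11.1} and~\ref{prop:11.2}). The lift from $\mb H_\ad$ back to $\mb H$ is then a substantial Clifford-theoretic analysis (Lemmas~\ref{lem:11.3}--\ref{lem:11.7}): one computes $\mc A_\lambda / \mc A_{\lambda_\ad}$ explicitly, proves that every cuspidal $\rho_\ad \in \Irr(\mc A_{\lambda_\ad})$ extends to its stabilizer in $\mc A_\lambda$ (Lemma~\ref{lem:11.6}, which needs case-by-case checking and is not automatic), and matches the resulting extensions with $G^{F_\omega}$-conjugacy classes of parahorics that become conjugate only in $G_\ad^{F_\omega}$ (Lemma~\ref{lem:11.7}). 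Your phrase ``the component group \ldots\ should be identified naturally with a group acting on this cuspidal datum'' is exactly where this work lives, and it is not natural in any formal sense. A secondary gap: property (3) is not a consequence of abstract functoriality; the paper verifies diagram-automorphism equivariance type by type in Sections~\ref{sec:innAn}--\ref{sec:E7}, and several cases (notably $D_n$ with $\mathsf J = D_n$ and the $E_6$ cases) require ad hoc arguments.
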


Here a character of a group like $\mb G (K)$ is called weakly unramified if its kernel 
contains all parahoric subgroups of $\mb G (K)$.
Property (3) is important for the generalization of such a correspondence to all unipotent 
representations of reductive $p$-adic groups, which is carried out in \cite{Sol}.

The bijection exhibited in Theorem \ref{thm:C} is of course a good candidate for a local
Langlands correspondence (LLC) for supercuspidal unipotent representations, and we will 
treat it as such. The second bullet of Theorem \ref{thm:A} says that comparing formal 
degrees and adjoint $\gamma$-factors completely characterizes the L-parameters of 
supercuspidal unipotent representations of simple adjoint $K$-groups exhibited by Lusztig and 
Morris. In fact the method with formal degrees from \cite{Ree1,FeOp,Fen} provides a little 
more information, which we use to fix a few arbitrary choices in 
\cite{Mor,Lusztig-unirep,Lusztig-unirep2}.
In particular our LLC is determined already by formal degrees of supercuspidal unipotent 
representations in combination with the functoriality properties (2) and (4).

We point out that our correspondence is constructive. Indeed, for inner twists of simple adjoint 
unramified groups the enhanced L-parameters $(\lambda_\pi,\rho_\pi)$ can already be found in 
\cite{Mor}. For simple adjoint groups that split over an unramified extension the elements 
$\lambda_\pi (\Frob)$ are known explicitly from \cite{Lusztig-unirep,Lusztig-unirep2}, while
the unipotent class from $\lambda_\pi$ is given in \cite{Ree1,FeOp,Fen}. The enhancements 
$\rho_\pi$ are not uniquely determined, but there are only very few possibilities and those are 
given by the 
classification of cuspidal local systems on simple complex groups in \cite{Lus-Intersect}.
Further, our methods to generalize from simple adjoint to reductive groups are constructive, 
so that for any given supercuspidal unipotent representation one can in principle write down 
the enhanced L-parameter.\\

When $\mb G$ is semisimple we obtain finer results than Theorem \ref{thm:C}, summarized in 
Theorem \ref{thm:B}. In that setting we explicitly describe the number of cuspidal 
enhancements of $\lambda_\pi$ and the number of supercuspidal representations in the 
L-packet of $\lambda_\pi$, with combinatorial data coming from the affine Dynkin diagrams 
of $\mb G$ and $\mb G^\vee$.

Strengthening and complementing Theorem \ref{thm:C}, we will prove a conjecture by Hiraga, 
Ichino and Ikeda (cf.~\cite[Conjecture 1.4]{HII}) for unitary supercuspidal unipotent 
representations $\mb{G}(K)$. It relates formal degrees and adjoint $\gamma$-factors more
precisely than Theorem \ref{thm:A}.

Fix an additive character $\psi : K \to \CC^\times$ of order 0 and endow $K$ with the 
Haar measure that gives the ring of integers volume 1. Using these data, we normalize 
the Haar measure on $\mb G (K)$ as in \cite{HII}. The adjoint $\gamma$-factor 
$\gamma (s,\mr{Ad} \circ \lambda, \psi)$ involves the adjoint representation Ad of 
${}^L \mb G$ on $\mr{Lie} \big( (\mb G / Z(\mb G)_s )^\vee \big)$. Then $\gamma (\lambda)$
from Theorem \ref{thm:A} equals $\gamma (0,\mr{Ad} \circ \lambda, \psi)$. We will prove:

\begin{thmintro}\label{thm:HII}
Let $\mb G$ be a connected reductive $K$-group which splits over an unramified extension.
Let $\pi \in \Irr (\mb G (K))_{\cusp,\unip}$ be unitary and let $(\lambda_\pi ,
\rho_\pi)$ be the enhanced L-parameter assigned to it by Theorem \ref{thm:C}. Then
\[
\fdeg(\pi) = \frac{\dim (\rho_\pi) \; |\gamma (0,\mr{Ad} \circ \lambda_\pi, \psi)|}{
|Z_{(\mb G / Z(\mb G)_s)^\vee} (\lambda_\pi)|} .
\]
\end{thmintro}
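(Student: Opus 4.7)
The plan is to combine the qualitative comparison in Theorem~\ref{thm:A}---which states that $\fdeg(\pi,q)$ and $\gamma(\lambda_\pi,q)$ agree as rational functions of $q$ up to a rational scalar $C_\pi \in \QQ^\times$---with the functoriality properties of the correspondence from Theorem~\ref{thm:C}, in order to pin down this scalar as $\dim(\rho_\pi) / |Z_{(\mb G/Z(\mb G)_s)^\vee}(\lambda_\pi)|$. Theorem~\ref{thm:C}(1) already asserts that $C_\pi$ depends only on the enhancement $\rho_\pi$, so the task is to identify this one rational number explicitly.

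First I would reduce to semisimple groups. By Theorem~\ref{thm:C}(2), tensoring by a unitary weakly unramified character is harmless on both sides of the desired equality, so one may assume $Z(\mb G)_s(K)$ acts trivially on $\pi$; Theorem~\ref{thm:C}(5) then transfers the identity to the derived group $\mb H$ of $\mb G/Z(\mb G)_s$. Under this passage the adjoint $\gamma$-factor is preserved, the centralizer $Z_{(\mb G/Z(\mb G)_s)^\vee}(\lambda_\pi)$ equals $Z_{\mb H^\vee}(\lambda_{\pi_{\mb H}})$, and the formal degrees are related in a controlled way by the Haar-measure normalization of \cite{HII}. Next, Theorem~\ref{thm:C}(4) permits a further reduction to quasi-simple $\mb G$: an almost direct product decomposition splits $\pi$ as an outer tensor product, the Lie algebra $\mr{Lie}((\mb G/Z(\mb G)_s)^\vee)$ splits as a direct sum over the simple factors, and all three quantities $\fdeg(\pi)$, $|\gamma(0,\mr{Ad}\circ\lambda_\pi,\psi)|$ and $\dim(\rho_\pi)/|Z(\lambda_\pi)|$ are multiplicative across factors.

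For a quasi-simple semisimple $\mb G$ the statement becomes an explicit verification, using on one side the formulas for formal degrees of supercuspidal unipotent representations obtained from Lusztig's realization of cuspidal representations as compactly induced from parahoric subgroups \cite{Lusztig-unirep,Lusztig-unirep2}, and on the other side the combinatorial computations of adjoint $\gamma$-factors carried out in \cite{FeOp,Fen}. The main obstacle will be bookkeeping: keeping the $\psi$-dependent Haar-measure conventions of \cite{HII} consistent with the normalizations implicit in the Hecke-algebraic formal-degree formulas, while simultaneously matching enhancements $\rho_\pi$ with the specific irreducible constituents inside each $L$-packet---since it is $\dim(\rho_\pi)$, not merely the packet size, that appears in the numerator. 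The simply connected versus adjoint dichotomy enters through the order $|Z_{(\mb G/Z(\mb G)_s)^\vee}(\lambda_\pi)|$ and must be tracked carefully, as this is where the isogeny class of $\mb G$ interacts nontrivially with both sides of the formula.
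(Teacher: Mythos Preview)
Your reduction scheme (twist away the split center, pass to the derived group, factor over almost direct products, land on almost-simple groups) matches the paper's architecture. The paper organizes it as Lemma~\ref{lem:16.3} (adjoint) $\to$ Lemma~\ref{lem:16.4} (semisimple) $\to$ Lemma~\ref{lem:16.5} (anisotropic connected center) $\to$ general reductive. One small omission: reducing from $K$-simple to absolutely simple requires restriction of scalars, and the paper invokes Proposition~\ref{prop:17.2} for that.

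There is, however, a genuine gap at your base case. For quasi-simple $\mb G$ you propose an ``explicit verification'' citing \cite{FeOp,Fen}, but those sources establish Theorem~\ref{thm:A}, i.e.\ $\fdeg(\pi,q) = C_\pi\,\gamma(\lambda_\pi,q)$ for \emph{some} $C_\pi \in \QQ^\times$; they do not pin down $C_\pi = \dim(\rho_\pi)/|S_{\lambda_\pi}^\sharp|$. The paper's base case is instead \cite[Theorem~4.11]{Opd2}, which proves the full HII identity for simple \emph{adjoint} groups. Without that input (or an equivalent computation), your functoriality reductions only propagate an unknown constant.

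Relatedly, the passage from adjoint to non-adjoint semisimple is not ``bookkeeping'' but the computational heart of the argument (Lemma~\ref{lem:16.4}). One must show that under $\mb G \to \mb G_\ad$ both sides of \eqref{eq:HII} scale by the same factor $|\Omega_\ad^{\theta,\mh P_\ad}|/|\Omega^{\theta,\mh P}|$. On the formal-degree side this is \eqref{eq:16.19}; on the Galois side one needs the Clifford-theoretic identity $\dim(\rho_\pi)/\dim(\rho_{\pi_\ad}) = [\mc A_{\lambda_\pi} : (\mc A_{\lambda_\pi})_{\rho_{\pi_\ad}}]$ from Lemma~\ref{lem:11.6}, together with the comparison of $|S_{\lambda_\pi}^\sharp|$ and $|S_{\lambda_{\pi_\ad}}^\sharp|$ via \eqref{eq:11.5}. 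You correctly flag this as the place where isogeny matters, but the proposal does not supply the argument.
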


Theorem \ref{thm:HII} shows in particular that all  supercuspidal members of one unipotent
L-packet have the same formal degree (up to some rational factor), as expected in the
local Langlands program. \\

Let us discuss the contents of the paper and the proofs of the main results in more detail. 
In Section \ref{sec:prelim} we fix the notations and we recall some facts about reductive
groups, enhanced Langlands parameters and cuspidal unipotent representations.
Let $\Omega$ be the fundamental group of $\mb{G}$, interpreted as a group of 
automorphisms of the affine Dynkin diagram of $\mb{G}$. We denote the action of $\Frob \in 
\mb W_K$ on $\mb G^\vee$ by $\theta$, so that the group of weakly unramified
characters of $\mb G (K)$ can be expressed as $Z(\mb G^\vee)_{\mb W_K}$ and as the dual group
$(\Omega^\theta)^*$ of $\Omega^\theta$. In Section \ref{sec:mainss} we make Theorem \ref{thm:C} 
more precise for semisimple $K$-groups, counting the involved objects in terms of subquotients 
of the finite abelian group $(\Omega^\theta )^*$. A large part of the paper is dedicated 
to proving Theorem \ref{thm:B}, in bottom-up fashion.

In Sections \ref{sec:innAn}--\ref{sec:E7} we consider simple adjoint groups case-by-case. 
The majority of our claims can be derived quickly from \cite[\S 5--6]{Mor} and the 
tables \cite[\S 7]{Lusztig-unirep} and \cite[\S 11]{Lusztig-unirep2}, which contain a lot 
of information about the parametrization of $\Irr (\mb G (K))_{\cusp,\unip}$ from 
Theorem \ref{thm:A}. A simple group of type $E_8, F_4$ or $G_2$ is both simply 
connected and adjoint, so $\Omega$ is trivial. Then Theorem \ref{thm:B} is contained
entirely in \cite{Lusztig-unirep}, and we need not spend any space on it. For other simple
adjoint groups we compute several data that cannot be found in the works of Morris and Lusztig.

The main novelty in Sections \ref{sec:innAn}--\ref{sec:E7} is the equivariance of the LLC with 
respect to $\mb W_K$-automorphisms of the root datum (part (3) of Theorem \ref{thm:C}), that 
was not discussed in the sources on which we rely here. In some remarks we already take a 
look at certain non-adjoint simple groups. This concerns cases where we can only check 
Theorem \ref{thm:C} by direct calculations. 
In Section \ref{sec:adjoint} we explain in detail which parts of Sections \ref{sec:innAn}--\ref{sec:E7} 
are needed where, and we complete the proof of the main theorem for adjoint groups. 

In Sections \ref{sec:semisimple} and \ref{sec:proofss} we generalize Theorem \ref{thm:B} 
from adjoint semisimple to all semisimple groups. In particular, we investigate 
what happens when an adjoint $K$-group $\mb G_\ad$ is replaced by a covering group $\mb G$. 
It is quite easy to see how $\Irr(\mb{G}(K))_{\cusp, \unip}$ behaves. Namely, several 
unipotent cuspidal representations of $\mb{G}_\ad(K)$ coalesce upon pullback to $\mb G (K)$, 
and then decompose as a direct sum of a few irreducible unipotent cuspidal representations of 
$\mb{G}(K)$. With some technical work, we prove that the same behaviour (both qualitatively 
and quantitatively) occurs for enhanced L-parameters.

The proof of the main theorem for reductive $K$-groups (Section \ref{sec:mainred}) can roughly 
be divided into two parts. First we deal with the case where the connected centre of $\mb{G}$ 
is anisotropic. We reduce to the derived group of $\mb{G}$, which is semisimple, and use the 
already established results for semisimple groups. To deal with general connected reductive groups, 
we note that the connected centre is an almost direct product of its maximal split and maximal
anisotropic subtori. Applying Hilbert's theorem 90 to the maximal split torus, we obtain a 
corresponding decomposition of the group of $K$-rational points. This enables us to reduce
to the cases of tori (well-known) and of reductive $K$-groups with anisotropic connected 
centre.

We attack the HII conjecture in Section \ref{sec:HII}. For simple adjoint groups, the second 
author already proved Theorem \ref{thm:HII} in \cite{Opd2}. Starting from that and using
the proof of Theorem \ref{thm:A}, we extend Theorem \ref{thm:HII} to all reductive $K$-groups
that split over an unramified extension. 

Finally, in the appendix we explore the behaviour of L-parameters and adjoint 
$\gamma$-factors under Weil restriction. Whereas L-functions are always preserved, it turns
out that adjoint $\gamma$-factors sometimes change under Weil restriction. Nevertheless,
we can use these computations to prove that the HII conjectures are always stable under 
restriction of scalars. That is, if $L/K$ is a finite separable extension of non-archimedean
local fields and the HII conjectures hold for a reductive $L$-group, then they also hold
for the reductive $K$-group obtained by restriction of scalars (and conversely).\\

\textbf{Acknowledgements.}

We thank the referees for their helpful comments and careful reading.

\section{Preliminaries}
\label{sec:prelim}

Throughout this paper we let $K$ be a non-archimedean local field with finite residue field 
$\mf F$ of cardinality $q_K = |\mf F|$. We fix a separable closure $K_s$ of $K$ and we
let $\KK\subset K_s$ be the maximal unramified extension of $K$. 
The residue field $\overline{\mf F}$ of $\KK$ is an algebraic closure of $\mf F$. 
There are isomorphisms of Galois groups 
$\mathrm{Gal}(\KK/K) \simeq \mathrm{Gal}(\overline{\mf F} / \mf F) \simeq \hat{\ZZ}$. 
The geometric Frobenius element $\Frob$, whose inverse induces the automorphism 
$x \mapsto x^{q_K}$ for any $x \in \overline{\mf F}$, is a topological generator 
of $\mathrm{Gal}(\overline{\mf F}/\mf F)$. Let $\mb I_K =\mathrm{Gal}(K_s/\KK)$ be the 
inertia subgroup of $\mathrm{Gal}(K_s/K)$ and let $\mb W_K$ be the Weil group of $K$.
We fix a lift of Frob in $\mathrm{Gal}(K_s/K)$, so that $\mb W_K = \mb I_K \rtimes \langle
\Frob \rangle$.\\
 
Unless otherwise stated, $\mb{G}$ denotes an unramified connected reductive linear 
algebraic group over $K$. By unramified we mean that $\mb{G}$ is a quasi-split group defined 
over $K$ and that $\mb{G}$ splits over $\KK$. The group $\mb{G}(\KK)$ of $\KK$-points of 
$\mb{G}$ is often denoted by $G=\mb{G}(\KK)$. Let $Z(\mb{G})$ be the centre of $\mb{G}$, 
and write $\mb{G}_\ad := \mb{G}/Z(\mb{G})$ for the adjoint group of $\mb{G}$. 

We fix a Borel $K$-subgroup $\mb{B}$ and maximally split maximal $K$-torus $\mb{S}\subset\mb{B}$ 
which splits over $\KK$. We denote by $\theta$ the finite order automorphism of $X_*(\mb S)$ 
corresponding to the action of $\Frob$ on $S=\mb{S}(\KK)$. Let $R^\vee$ be the coroot system
of $(\mb G, \mb S)$ and define the abelian group 
\[
\Omega = X_* (\mb S) / \ZZ R^\vee .
\] 
Let $\mb G^\vee$ be the complex dual group of $\mb G$. Then $Z(\mb G^\vee)$ can be 
identified with $\Irr (\Omega) = \Omega^*$, and $\Omega$ is naturally isomorphic to the 
group $X^* (Z(\mb G^\vee))$ of algebraic characters of $Z(\mb G^\vee)$. In particular
\begin{equation}\label{eq:2.1}
\begin{aligned}
& \Omega_\theta \cong X^* \big( Z(\mb G^\vee) \big)_\theta = X^* \big( Z(\mb G^\vee)^\theta \big) ,\\
& \Omega^\theta \cong X^* \big( Z(\mb G^\vee) \big)^\theta = X^* \big( Z(\mb G^\vee)_\theta \big) .
\end{aligned}
\end{equation}
The isomorphism classes of inner twists of $\mb{G}$ over $K$ are naturally parametrized 
by the elements of the continuous Galois cohomology group 
\[
H_c^1(K,\mb{G}_\ad) \cong H_c^1(F,G_\ad) ,
\]
where $F$ denotes the automorphism of $G_\ad := \mb{G}_\ad (\KK)$ by which $\Frob$ acts on $G_\ad$.
A cocycle in $Z_c^1(F, G_\ad)$ is determined by the image $u \in G_\ad$ of $F$. 
The $K$-rational structure of $\mb{G}$ corresponding to such a $u \in G_\ad$ 
is given by the action of the inner twist $F_u:=\mathrm{Ad}(u) \circ F \in \mathrm{Aut}(G)$ 
of the $K$-automorphism $F$ on $G$. We will denote this $K$-rational form of $\mb{G}$ by 
$\mb{G}^u$, and the corresponding group of $K$-points by $G^{F_u}$. 

The cohomology class $\omega \in H_c^1(F, G_\ad)$ of the cocycle is represented by the 
$F$-twisted conjugacy class of $u$. Let $\Omega_\ad$ be the fundamental group of $\mb G_\ad$.
By a theorem of Kottwitz \cite{Kot1,Tha} and by \eqref{eq:2.1} there is a natural isomorphism
\begin{equation}\label{eq:2.22}
H^1_c (F, G_\ad) \cong H^1_c (F,\Omega_\ad) \cong (\Omega_\ad)_\theta 
\cong X^* \big( Z({G_\ad}^\vee)^\theta \big) .
\end{equation}
This works out to mapping $\omega$ to $u \in (\Omega_\ad)_\theta$. For each class 
$\omega \in H^1_c (F, G_\ad)$ we fix an inner twist $F_u$ of $F$ representing $\omega$, 
and we denote this representative by $F_\omega$. Then
\[
\mb G^\omega (K) = G^{F_\omega} . 
\]
Let $G_1$ be the kernel of the Kottwitz homomorphism $G \to X^* (Z(\mb G^\vee))$ \cite{Ko,HR}. 
This map is $\mb W_K$-equivariant and yields a short exact sequence
\[
1 \to G_1^{F_\omega} \to G^{F_\omega} \to X^*(Z(\mb G^\vee))^\theta \cong \Omega^\theta \to 1.
\]
We say that a character $\chi$ of $G^{F_\omega}$ is weakly unramified if $\chi$ is trivial 
on $G^{F_\omega}_1$, and we denote by $X_\Wr  (G^{F_\omega})$ the abelian group of 
weakly unramified characters. Since $\mb G$ is unramified there are natural isomorphisms
\cite[\S 3.3.1]{Hai}
\begin{equation}\label{eq:2.2}
\Irr (G^{F_\omega} / G_1^{F_\omega}) = X_\Wr  (G^{F_\omega}) \cong (\Omega^\theta)^*
\cong Z(\mb G^\vee)_\theta.
\end{equation}
This can be regarded as a special case of the local Langlands correspondence.
The identity components of the groups in \eqref{eq:2.2} are isomorphic to the group of 
unramified characters of $G^{F_\omega}$ (which is trivial whenever $\mb G$ is semisimple).\\

Let ${}^L \mb G = \mb G^\vee \rtimes \mb W_K$ be the L-group of $\mb G$. Recall that a 
L-parameter for $\mb G^\omega (K) = G^{F_\omega}$ is a group homomorphism
\[
\lambda : \mb W_K \times SL_2 (\CC) \to \mb G^\vee \rtimes \mb W_K
\]
satisfying certain requirements \cite{Bor}. We say that $\lambda$ is unramified if
$\lambda (w) = (1,w)$ for every $w\in \mb I_K$ and we say that $\lambda$ is discrete if
the image of $\lambda$ is not contained in the L-group of any proper Levi subgroup of 
$G^{F_\omega}$. We denote the set of $\mb G^\vee$-conjugacy classes of L-parameters (resp. 
unramified L-parameters and discrete L-parameters) for $G^{F_\omega}$ by $\Phi (G^{F_\omega})$ 
(resp. $\Phi_\nr (G^{F_\omega})$ and $\Phi^2 (G^{F_\omega})$). The group
$Z(\mb G^\vee)$ acts naturally on the set of L-parameters, by
\begin{equation}\label{eq:2.3}
(z \lambda) (\Frob^n w, x) = (z \lambda (\Frob))^n \lambda (w,x)
\quad z \in Z(\mb G^\vee), n \in \ZZ, w \in \mb I_K, x \in SL_2 (\CC) .
\end{equation}
This descends to an action of
\[
Z(\mb G^\vee)_\theta \cong (\Omega^*)_\theta = (\Omega^\theta)^* 
\]
on $\Phi (G^{F_\omega})$.

For any $\lambda \in \Phi (G^{F_\omega})$ the centralizer
$A_\lambda := Z_{\mb G^\vee}(\mr{im} \, \lambda)$ satisfies
\[
A_\lambda \cap Z(\mb G^\vee) = Z({}^L \mb G) = Z(\mb G^\vee)^\theta ,
\] 
and $A_\lambda / Z(\mb G^\vee)^\theta$ is finite if and only if $\lambda$ is discrete.
Let $\mathcal{A}_\lambda$ be the component group of the full pre-image of 
\begin{equation}\label{eq:2.11}
A_\lambda / Z( \mb G^\vee)^\theta \cong 
A_\lambda Z(\mb G^\vee) / Z(\mb G^\vee) \subset {\mb G^\vee}_\ad
\end{equation}
in the simply connected covering $(\mb G^\vee)_\SC$ of the derived group of $\mb G^\vee$. 
Equivalently, $\mc A_\lambda$ can also be described as the component group of
\begin{equation}\label{eq:2.12}
Z_{{\mb G^\vee}_\SC}^1 (\lambda) = \big\{ g \in {\mb G^\vee}_\SC : g \lambda g^{-1} = 
\lambda b \text{ for some } b \in B^1 (\mb W_K, Z(\mb G^\vee)) \big\} .
\end{equation}
Here $B^1 (\mb W_K, Z(\mb G^\vee))$ denotes the group of 1-coboundaries for group cohomology,
that is, the set of maps  $\mb W_K \to Z(\mb G^\vee)$ of the form $w \mapsto z w(z^{-1})$
for some $z \in Z(\mb G^\vee)$.

An enhancement of $\lambda$ is an irreducible representation $\rho$ of $\mc A_\lambda$. 
The group $G^\vee$ acts on the set of enhanced L-parameters by
\[
g \cdot (\lambda,\rho) = (g \lambda g^{-1}, \rho \circ \mr{Ad}(g^{-1})) .
\]
We write 
\[
\Phi_e ({}^L \mb G) = \{ (\lambda,\rho) : \lambda \text{ is an L-parameter for } \mb G (K), 
\rho \in \Irr (\mc A_\lambda) \} / G^\vee .
\]
Fix a complex character $\zeta$ of the centre $Z({\mb G^\vee}_\SC)$ of ${\mb G^\vee}_\SC$ whose 
restriction to $Z({}^L \mb G_\ad ) = Z({\mb G^\vee}_\SC)^\theta$ corresponds to $\omega$ via 
the Kottwitz isomorphism. If $\omega$ is given as an element of $\Omega_\ad$ (not just in 
$(\Omega_\ad)_\theta$), then there is a preferred way to define a character of 
$Z({\mb G^\vee}_\SC)$, namely via the Kottwitz isomorphism of the $K$-split form of $\mb G$. 
In particular $\omega = 1$ corresponds to the trivial character. 

Let $\Irr (\mathcal{A}_\lambda, \zeta)$ be the set of irreducible representations of 
$\mathcal{A}_\lambda$ whose restriction to $Z({\mb G^\vee}_\SC)$ is a multiple of $\zeta$. 
The set of enhanced L-parameters for $G^{F_\omega}$ is
\begin{equation}\label{eq:2.9}
\Phi_e (G^{F_\omega}) := \big\{ (\lambda, \rho) \in \Phi_e ({}^L \mb G) \mid 
\rho \in \mathrm{Irr}(\mathcal{A}_\lambda, \zeta) \big\} .
\end{equation}
We note that the existence of a $\rho \in \Irr (\mathcal{A}_\lambda, \zeta)$ is equivalent
to $\lambda$ being relevant \cite[\S 8.2.ii]{Bor} for the inner twist $\mb G^\omega$ 
of the quasi-split $K$-group $\mb G$ \cite[Proposition 1.6]{ABPS1}.

Let $Z^1_{{\mb G^\vee}_\SC} (\lambda (\mb W_K))$ be the inverse image of 
$Z_{\mb G^\vee}(\lambda (\mb W_K)) / Z(\mb G^\vee)^{\mb W_K}$ in ${\mb G^\vee}_\SC$. 
The unipotent element $u_\lambda := \lambda \big( 1, \big( \begin{smallmatrix} 1 & 1 \\ 0 & 1
\end{smallmatrix}\big) \big) \in \mb G^\vee$ can also be regarded as an element of the unipotent
variety of ${\mb G^\vee}_\SC$, and then 
\begin{equation}\label{eq:2.13}
\mc A_\lambda = \pi_0 \big( Z_{Z^1_{{\mb G^\vee}_\SC} (\lambda (\mb W_K))} (u_\lambda) \big) .
\end{equation}
We say that $\rho$ is a cuspidal representation of $\mc A_\lambda$, or that 
$(\lambda,\rho)$ is a cuspidal (enhanced) L-parameter for $G^{F_\omega}$, if $(u_\lambda, \rho)$ 
is a cuspidal pair for $Z^1_{{\mb G^\vee}_\SC} (\lambda (\mb W_K))$ \cite[Definition 6.9]{AMS1}.
Equivalently, $\rho$ determines a $Z^1_{{\mb G^\vee}_\SC} (\lambda (\mb W_K))$-equivariant
cuspidal local system on the conjugacy class of $u_\lambda$. This is only possible if
$\lambda$ is discrete (but not every discrete L-parameter admits cuspidal enhancements). 
We refer to \cite{Lus-Intersect} for more information about cuspidal local systems, and in 
particular their classification for every simple complex group.
We denote the set of $\mb G^\vee$-conjugacy classes of cuspidal enhanced L-parameters for 
$G^{F_\omega}$ by $\Phi (G^{F_\omega})_\cusp$.

The $(\Omega^\theta)^*$-action \eqref{eq:2.3} extends to enhanced L-parameters by
\begin{equation}\label{eq:2.4}
z \cdot (\lambda,\rho) = (z \lambda,\rho) \qquad z \in (\Omega^\theta)^*, (\lambda,\rho)
\in \Phi_e (G^{F_\omega}).
\end{equation}
The extended action preserves both discreteness and cuspidality.\\

Let $\Irr (G^{F_\omega})$ be the set of irreducible smooth $G^{F_\omega}$-representations
on complex vector spaces. The group $(\Omega^\theta)^*$ acts on $\Irr (G^{F_\omega})$
via \eqref{eq:2.2} and tensoring with weakly unramified characters. It is expected that
under the local Langlands correspondence (LLC) this corresponds precisely to the action
\eqref{eq:2.4} of $(\Omega^\theta)^*$ on $\Phi_e (G^{F_\omega})$. In other words, the
conjectural LLC is $(\Omega^\theta)^*$-equivariant.

Furthermore, the LLC should behave well with respect to direct products. Suppose that
$\mb G^\omega$ is the almost direct product of $K$-subgroups $\mb G_1$ and $\mb G_2$. 
Along the quotient map
\[
q : \mb G_1 \times \mb G_2 \to \mb G^\omega 
\]
one can pull back any representation $\pi$ of $\mb G^\omega (K)$ to a representation 
$\pi \circ q$ of $\mb G_1 (K) \times \mb G_2 (K)$. Since $q$ need not be surjective on
$K$-rational points, this operation may destroy irreducibility.
Assume that $\pi$ is irreducible and 
that $\pi_1 \otimes \pi_2$ is any irreducible constituent of $\pi \circ q$. Then the 
image of the L-parameter $\lambda_\pi$ of $\pi$ under the map 
\[
q^\vee : (\mb G^\omega)^\vee \to \mb G_1^\vee \times \mb G_2^\vee 
\]
should be the L-parameter $\lambda_{\pi_1} \times \lambda_{\pi_2}$ of $\pi_1 \otimes \pi_2$. 
In this case $\mc A_{\lambda_\pi}$ is naturally a subgroup of 
$\mc A_{\lambda_{\pi_1}} \times \mc A_{\lambda_{\pi_2}}$. We say that a LLC 
(for some class of representations) is compatible with almost direct products if, when 
$(\lambda_\pi,\rho_\pi)$ denotes the enhanced L-parameter of $\pi$ and $\mb G^\omega =
\mb G_1 \mb G_2$ is an almost direct product of reductive $K$-groups,
\begin{equation}\label{eq:2.20}
\lambda_{\pi_1} \times \lambda_{\pi_2} = q^\vee (\lambda_\pi) \text{ and } \big( \rho_{\pi_1} 
\otimes \rho_{\pi_2} \big) |_{\mc A_{\lambda_\pi}} \text{ contains } \rho_\pi .
\end{equation}
We also want the LLC to be equivariant with respect to automorphisms of the root datum, 
in a sense which we explain now. Let 
\[
\mc R (\mb G, \mb S) = (X^* (\mb S), R, X_* (\mb S),R^\vee ,\Delta)
\]
be the based root datum of $\mb G$, where $\Delta$ is the basis determined by the Borel
subgroup $\mb B \subset \mb G$. Since $\mb S$ and $\mb B$ are defined over $K$, the
Weil group $\mb W_K$ acts on this based root datum. 

When $\mb G$ is semisimple, any automorphism of $\mc R (\mb G,\mb S)$ is completely
determined by its action on the basis $\Delta$. Then we call it an automorphism of the
Dynkin diagram of $(\mb G,\mb S)$, or just a diagram automorphism of $\mb G$.
When $\mb G$ is simple and not of type $D_4$, the collection of such diagram automorphisms 
is very small: it forms a group of order 1 (type $A_1,B_n,C_n,E_7,E_8,F_4,G_2$ 
or a half-spin group) or 2 (type $A_n,D_n,E_6$ with $n>1$, except half-spin groups).

Suppose that $\tau$ is an automorphism of $\mc R (\mb G, \mb S)$ which commutes with the 
action of $\mb W_K$. Via the choice of a pinning of $\mb G^\vee$ (that is, the choice of a
nontrivial element in every root subgroup for a simple root), $\tau$ acts on $\mb G^\vee$
and ${}^L \mb G$. Then it also acts on enhanced L-parameters, by 
\[
\tau \cdot (\lambda,\rho) = (\tau \circ \lambda, \rho \circ \tau^{-1}).
\]
Then $\tau$ also acts on $\Phi_e ({}^L \mb G)$.
The action of $\tau$ on $\mb G^\vee$ is uniquely determined up to 
inner automorphisms, so the action on $\Phi_e ({}^L \mb G)$ is canonical. Considering 
$\omega \in \Omega_\ad$ as an element of $\Irr (Z ({\mb G^\vee}_\SC))$, we can define 
$\tau (\omega) = \omega \circ \tau$. Then $\tau$ maps enhanced L-parameters relevant 
for $G^{F_{\tau (\omega)}}$ to enhanced L-parameters relevant for $G^{F_\omega}$. 

From \cite[Lemma 16.3.8]{Spr} we see that the automorphism $\tau$ of $\mc R (\mb G,\mb S)$ 
can be lifted to a $K_\nr$-automorphism of $\mb G_\ad$. That uses only the diagram
automorphism induced by $\tau$. As $\tau$ also gives an automorphism of $\mb S$, it 
determines an automorphism of $\mb S$ stabilizing $Z(\mb G)$. The proof of 
\cite[Lemma 16.3.8]{Spr} also works for $\mb G$, when we omit the condition that the connected 
centre must be fixed and instead use the automorphism of $Z(\mb G)^\circ$ determined 
by $\tau$. Then $\tau$ lifts to a $K_\nr$-automorphism $\tau_{K_\nr}$ of $\mb G$ which
\begin{itemize}
\item stabilizes $\mb S$ and $\mb B$,
\item is unique up to conjugation by elements of $\mb S_\ad (K_\nr)$, where
$\mb S_\ad = \mb S / Z(\mb G)$.
\end{itemize}
Further, $\tau$ determines a permutation of the affine Dynkin diagram of
$(G,S)$. This in turn gives rise to a permutation of the set of vertices of a 
standard alcove in the Bruhat--Tits building of $(\mb G, K_\nr)$. For every such 
vertex $v$, we can require in addition that $\tau_{K_\nr}$ maps the $G$-stabilizer 
$G_v$ to $G_{\tau (v)}$. Since $\Omega_\ad$ acts faithfully on this standard alcove
and the image of $S \to G_\ad$ contains the kernel of $S_\ad \to \Omega_\ad$ 
\cite[\S 2.1]{Opd2}, this determines $\tau_{K_\nr} \in \mr{Aut}_{K_\nr}(\mb G)$ 
up conjugation by elements of $G_1 \cap S$. 

Let $u \in G_\ad$ represent $\omega$, so that 
\begin{equation}\label{eq:defGFu}
G^{F_\omega} \cong G^{F_u} = \{ g \in G : \mr{Ad}(u) \circ F (g) = g \} .
\end{equation}
By the functoriality of the Kottwitz isomorphism $\tau_{K_\nr} (u)$ represents $\tau (\omega)$. 
For $g \in G^{F_u}$:
\begin{equation}\label{eq:actionTau}
\mr{Ad}(\tau_{K_\nr} (u)) \circ F \circ \tau_{K_\nr} (g) = 
\tau_{K_\nr} \circ \mr{Ad}(u) \circ F (g) = \tau_{K_\nr} (g) ,
\end{equation}
so $\tau_{K_\nr} (g) \in G^{F_{\tau_{K_\nr} (u)}}$. Thus we obtain an isomorphism of $K$-groups
\[
\tau_K : \mb G^\omega \to \mb G^{\tau (\omega)} .
\]
Since $\tau_{K_\nr}$ was unique up to $G_1 \cap S$, $\tau_K$ is unique up to
conjugation by elements of $\mb S (K) \cap G_1$. (Not merely up to $\mb S_\ad (K)$
because $\tau_K (G_v) = G_{\tau (v)}$.) In particular, for every representation 
$\pi$ of $G^{F_{\tau (\omega)}}$ we obtain a representation $\pi \circ \tau_K$ 
of $G^{F_\omega}$, well-defined up to isomorphism.

Equivariance with respect to $\mb W_K$-automorphisms of the root datum means:
if $(\lambda_\pi,\rho_\pi)$ is the enhanced L-parameter of $\pi$ then
\begin{equation}\label{eq:}
(\tau \cdot \lambda_\pi, \rho_\pi \circ \tau^{-1}) \text{ is the enhanced L-parameter of } 
\pi \circ \tau_K,
\end{equation} 
for all $\tau \in \mr{Aut}(\mc R (\mb G,\mb S))$ which commute with $\mb W_K$. When $\mb G$ 
is semisimple, we also call this equivariance with respect to diagram automorphisms.

We note that it suffices to check this for automorphisms of $\mc R(\mb G, \mb S)$ which fix 
$\omega \in \Omega_\ad$. Indeed, if we know all those cases, then we can get equivariance
with respect to diagram automorphisms by defining the LLC for other groups $G^{F_{\omega'}}$
via the LLC for $G^{F_\omega}$ and a $\tau$ with $\tau (\omega) = \omega'$.\\

We define a parahoric subgroup of $G$ to be the stabilizer in $G_1$ of a facet (say $\mf f$)
of the Bruhat--Tits building of $(\mb G,K_\nr)$, and we typically denote it by $\mh P$.
Then $\mh P$ fixes $\mf f$ pointwise. If $\mf f$ is $F_\omega$-stable, it determines a
facet of the Bruhat--Tits building of $(\mb G^\omega,K)$, and $\mh P^{F_\omega}$ is the
associated parahoric subgroup of $G^{F_\omega}$. All parahoric subgroups of $G^{F_\omega}$
arise in this way.

Let $\mh P_u$ be the pro-unipotent radical of $\mh P$, that is, the kernel of the 
reduction map from $\mh P$ to the associated reductive group $\overline{\mh P}$ over 
$\overline{\mf F}$. Then $\mh P_u^{F_\omega}$ is the pro-unipotent radical of 
$\mh P^{F_\omega}$, and the quotient 
\begin{equation}\label{eq:2.5}
\mh P^{F_\omega} / \mh P_u^{F_\omega} = \overline{\mh P^{F_\omega}} \cong 
\overline{\mh P}^{F_\omega}
\end{equation}
is a connected reductive group over $\mf F$. Unipotent representations of
finite reductive groups like \eqref{eq:2.5} were classified in \cite[\S 3]{Lus-Chevalley}.
We call an irreducible representation of $\mh P^{F_\omega}$ unipotent (resp. cuspidal) 
if it arises by inflation from an irreducible unipotent (resp. cuspidal) representation of  
$\overline{\mh P}^{F_\omega}$.

An irreducible representation $\pi$ of $G^{F_\omega}$ is called unipotent if there
exists a parahoric subgroup $\mh P^{F_\omega}$ such that the restriction of $\pi$
to $\mh P^{F_\omega}$ contains a unipotent representation of $\mh P^{F_\omega}$. 
We denote the set of irreducible unipotent $G^{F_\omega}$-representations by
$\Irr (G^{F_\omega})_\unip$.

In this paper we are mostly interested in supercuspidal 
$G^{F_\omega}$-representations, which form a collection denoted $\Irr (G^{F_\omega})_\cusp$.
Among these, the supercuspidal unipotent representations form a subset  
$\Irr (G^{F_\omega})_{\cusp,\unip}$ which was described quite explicitly in 
\cite{Mor,Lusztig-unirep}. Every such $G^{F_\omega}$-representation arises from a cuspidal
unipotent representation $\sigma$ of a maximal parahoric subgroup $\mh P^{F_\omega}$.
For a given finite reductive group there are only few cuspidal unipotent representations, 
and the number of them does not change when \eqref{eq:2.5} is replaced by an isogenous 
$\mf F$-group. From the classification one sees that, when $\overline{\mh P}$ is simple, any 
cuspidal unipotent representation $(\sigma, V_\sigma)$ of $\overline{\mh P}^{F_\omega}$ is 
stabilized by every algebraic automorphism of $\overline{\mh P}^{F_\omega}$. 

By \cite{Opd2} there is a natural isomorphism
\begin{equation}\label{eq:2.6}
N_{G^{F_\omega}}(\mh P^{F_\omega}) / \mh P^{F_\omega} \cong \Omega^{\theta,\mh P},
\end{equation}
where the right hand side denotes the stabilizer of $\mh P$ in the abelian group $\Omega^\theta$. 
Morris shows in \cite[Proposition 4.6]{Mor} that, when $\mb G$ is adjoint, any unipotent 
$\sigma \in \Irr (\mh P^{F_\omega})$ can be extended to a representation of the normalizer of 
$\mh P^{F_\omega}$ in $G^{F_\omega}$. When $\mb G$ is semisimple, the group $\Omega^\theta$ 
embeds naturally in $\Omega_\ad^\theta = (\Omega_\ad)^\theta$.
Then $N_{G^{F_\omega}}(\mh P^{F_\omega}) / \mh P^{F_\omega}_u Z(G^{F_\omega})$ can be identified with 
a subgroup of $N_{G_\ad^{F_\omega}}(\mh P_\ad^{F_\omega}) / \mh P^{F_\omega}_{\ad,u}$, and in that 
way $\sigma$ can be extended to a representation of $N_{G^{F_\omega}}(\mh P^{F_\omega})$, on the same 
vector space. (The same conclusion holds when $\mb G$ is reductive, we will show that in
Section \ref{sec:mainred}.)

We fix one such extension, say $\sigma^N$. By \eqref{eq:2.6}, at least when $\mb G$ is semisimple:
\begin{equation}\label{eq:2.7}
\mr{ind}_{\mh P^{F_\omega}}^{N_{G^{F_\omega}}(\mh P^{F_\omega})}(\sigma) =
\bigoplus\nolimits_{\chi \in (\Omega^{\theta,\mh P})^*} \chi \otimes \sigma^N .
\end{equation}
When $Z(G^{F_\omega})$ is not compact, \eqref{eq:2.7} remains true if the right hand 
side is replaced by a direct integral over $(\Omega^{\theta,\mh P})^*$.
Furthermore it is known from \cite{Mor,Lusztig-unirep} that every representation
$\mr{ind}_{N_{G^{F_\omega}}(\mh P^{F_\omega})}^{G^{F_\omega}} (\chi \otimes \sigma^N)$
is irreducible and supercuspidal. Hence (when $\Omega^\theta$ is finite)
\begin{equation}\label{eq:2.8}
\mr{ind}_{\mh P^{F_\omega}}^{G^{F_\omega}} (\sigma) =
\bigoplus\nolimits_{\chi \in (\Omega^{\theta,\mh P})^*} 
\mr{ind}_{N_{G^{F_\omega}}(\mh P^{F_\omega})}^{G^{F_\omega}} (\chi \otimes \sigma^N) .
\end{equation}
Every element of $\Irr (G^{F_\omega})_{\unip,\cusp}$ arises in this way, from a pair 
$(\mh P,\sigma)$ which is unique up to $G^{F_\omega}$-conjugation.  
We denote the packet of irreducible supercuspidal unipotent $G^{F_\omega}$-representations 
associated to the conjugacy class of $(\mh P,\sigma)$ via \eqref{eq:2.7} and \eqref{eq:2.8} 
by $\Irr (G^{F_\omega})_{[\mh P,\sigma]}$. In other words, these are precisely 
the irreducible quotients of $\mr{ind}_{\mh P^{F_\omega}}^{G^{F_\omega}} (\sigma)$.
The group $(\Omega^{\theta,\mh P})^*$ acts simply transitively on 
$\Irr (G^{F_\omega})_{[\mh P,\sigma]}$, by tensoring with weakly unramified characters. 
The choice of $\sigma^N$ determines an equivariant bijection
\begin{equation}\label{eq:2.10}
(\Omega^{\theta,\mh P})^* \to \Irr (G^{F_\omega})_{[\mh P,\sigma]} : \chi \mapsto 
\mr{ind}_{N_{G^{F_\omega}}(\mh P^{F_\omega})}^{G^{F_\omega}} (\chi \otimes \sigma^N) .
\end{equation}
We normalize the Haar measure on $G^{F_\omega}$ as in \cite{GrGa,HII}. Recall that the 
formal degree of $\mr{ind}_{\mh P^{F_\omega}}^{G^{F_\omega}} (\sigma)$ equals
$\dim (\sigma) / \mr{vol}(\mh P^{F_\omega})$. When $(\Omega^\theta)^*$ is finite, 
\eqref{eq:2.8} implies that
\begin{equation}\label{eq:2.21}
\mr{fdeg}(\pi) = \frac{\dim (\sigma)}{|\Omega^{\theta,\mh P}| \, \mr{vol}(\mh P^{F_\omega})} 
\quad \text{for any } \pi \in \Irr (G^{F_\omega})_{[\mh P,\sigma]} .
\end{equation}
We will make ample use of Lusztig's arithmetic diagrams $\mathsf I / \mathsf J$ 
\cite[\S 7]{Lusztig-unirep}. This means that $\mathsf I$ is the affine Dynkin diagram of 
$\mb G$ (including the action of $\mb W_K$), and that $\mathsf J$ is a $\mb W_K$-stable 
subset of $\mathsf I$. This provides a convenient way to parametrize parahoric subgroups of 
$G$ up to conjugacy. The $\mb W_K$-action on $\mathsf I$ boils down to that of the 
Frobenius element, and the maximal $\Frob$-stable subsets $\mathsf J \subsetneq \mathsf I$ 
correspond to maximal parahoric subgroups of $G^{F_\omega}$. Recall that only those parahorics
can give rise to supercuspidal unipotent $G^{F_\omega}$-representations.

The above entails that $\Irr (G^{F_\omega})_{\cusp,\unip}$ depends only on some combinatorial
data attached to $\mb G$ and $F_\omega$: the affine Dynkin diagram $\mathsf I$, the Lie
types of the parahoric subgroups of $G$ associated to the subsets of $\mathsf I$, the
group $\Omega^\theta$ and its action on $\mathsf I$.

\section{Statement of main theorem for semisimple groups}
\label{sec:mainss}

Consider a semisimple unramified $K$-group $\mb G$ with data $\mh P, \sigma$ as in \eqref{eq:2.8}. 
Theorem \ref{thm:A} and compatibility with direct products of simple groups determine a map
\begin{equation}\label{eq:1.4}
\Irr (G^{F_\omega})_{\cusp,\unip} \to (\Omega^\theta)^* \backslash \Phi_\nr^2(G^{F_\omega}),
\end{equation} 
such that the image of $\Irr (G^{F_\omega})_{[\PH,\sigma]}$ is an orbit 
$(\Omega^\theta)^* \lambda$ where $\lambda$ satisfies the requirement 
\eqref{eq:fdegisgammafactor} about formal degrees and adjoint $\gamma$-factors.

In this section we count the number of enhancements of L-parameters in \eqref{eq:1.4},
and we find explicit formulas for the numbers of supercuspidal representations in
the associated L-packets. To this end we define four numbers:
\begin{itemize}
\item $\sfa$ is the number of $\lambda' \in \Phi_\nr^2(G^{F_\omega})$ which admit 
a $G^{F_\omega}$-relevant cuspidal enhancement and for each $K$-simple factor 
$\mb G_i$ of $\mb G$ satisfy
\[
\gamma (0,\mr{Ad}_{\mb G_i^\vee} \circ \lambda', \psi) = 
c_i \, \gamma (0,\mr{Ad}_{\mb G_i^\vee} \circ \lambda,\psi)
\] 
for some $c_i \in \Q^\times$ (as rational functions of $q_K$);
\item $\sfb$ is the number of  $G^{F_\omega}$-relevant cuspidal enhancements of $\lambda$;
\item $\sfa'$ is defined as $|\Omega^{\PH,\theta}|$ times the number of $G^{F_\omega}$-conjugacy 
classes of $F_\omega$-stable maximal parahoric subgroups $\PH' \subset G$ for which there exists a 
$\sigma' \in \Irr_{\cusp,\unip} (\PH^{F_\omega})$ such that the components 
$\sigma_i, \sigma'_i$ corresponding to any $K$-simple factor $\mb G_i$ of $\mb G$ satisfy
\[
\mr{fdeg} \big( \mr{ind}_{\PH_i^{' F_\omega}}^{G^{F_\omega}} \sigma'_i \big) = 
c'_i \, \mr{fdeg} \big( \mr{ind}_{\PH_i^{F_\omega}}^{G^{F_\omega}} \sigma_i \big)
\] 
for some $c'_i \in \Q^\times$ (as rational functions of $q_K$);
\item $\sfb'$ is the number of cuspidal unipotent representations $\sigma'$ of 
$\overline{\PH}^{F_\omega}$ such that $\textup{deg}(\sigma')=\textup{deg}(\sigma)$.
\end{itemize}

\begin{lemma}\label{lem:3.1}
When $\mb{G}$ is adjoint, simple and $K$-split, the above numbers $\sfa,\sfb,\sfa',\sfb'$ 
agree with those introduced (under the same names) in \cite[6.8]{Lusztig-unirep}.
\end{lemma}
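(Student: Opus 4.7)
The plan is to unpack each of the four numbers under the hypothesis that $\mb G$ is adjoint, simple and $K$-split, and to check that the resulting definitions match those in \cite[6.8]{Lusztig-unirep} term by term. Several simplifications are immediate: split means $\theta = 1$, so $\Omega^\theta = \Omega$ and $F_\omega = F$; adjointness gives $\mb G^\vee = \mb G^\vee_{\SC}$, so \eqref{eq:2.11}--\eqref{eq:2.13} reduce to the classical component group attached to a pair $(s,u)$ in $\mb G^\vee$; with $\omega = 1$ the character $\zeta$ is trivial, so every cuspidal L-parameter is $\mb G(K)$-relevant and $\Irr(\mc A_\lambda,\zeta) = \Irr(\mc A_\lambda)$; and by the second bullet of Theorem \ref{thm:A} our parametrization of $\Irr(G^F)_{\cusp,\unip}$ coincides with Lusztig's.

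I would first dispatch $\sfb'$ and $\sfa'$, which are defined purely in terms of the building-theoretic data of $\mb G$. For $\sfb'$ both definitions literally count cuspidal unipotent representations of $\overline{\mh P}^F$ of the same degree as $\sigma$, so the equality is tautological given that $\overline{\mh P}^F$ is the finite reductive group associated to $\mh P$ that Lusztig uses in \cite[\S 1]{Lusztig-unirep}. For $\sfa'$ the weight $|\Omega^{\theta,\mh P}|$ degenerates to $|\Omega^{\mh P}|$, the order of the stabilizer of $\mh P$ in $\Omega$ acting on the affine Dynkin diagram $\mathsf I$; the rest is a reindexing of Lusztig's enumeration of maximal parahorics carrying a cuspidal unipotent representation whose associated compactly induced representation has a constituent satisfying \eqref{eq:fdegisgammafactor}, which is precisely his $\sfa'$.

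For $\sfa$ and $\sfb$ the translation goes through the dual side. I would use Theorem \ref{thm:A} to identify the $\lambda' \in \Phi_\nr^2(G^F)$ satisfying \eqref{eq:fdegisgammafactor} with the parameters (essentially $\mb G^\vee$-conjugacy classes of pairs $(s,u)$ with $s$ semisimple and $u$ unipotent, extracted from $\lambda'(\Frob)$ and $u_{\lambda'}$) that Lusztig uses to label supercuspidal unipotent representations. Counting those which admit a cuspidal enhancement then yields our $\sfa$ on one side and Lusztig's on the other. For $\sfb$, \eqref{eq:2.13} identifies $\mc A_\lambda$ in the adjoint split case with the component group of $Z_{\mb G^\vee}(u_\lambda)$ inside the reductive centralizer of $\lambda(\Frob)$ modulo $Z(\mb G^\vee)$, and cuspidality in the sense of \cite[Definition 6.9]{AMS1} specializes to the existence of a cuspidal local system on the class of $u_\lambda$; this matches Lusztig's notion. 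The main obstacle is precisely this bookkeeping of component groups and cuspidal enhancements: one must verify that the $\mb G^\vee_{\SC}$-formalism and the cocycle twists $B^1(\mb W_K, Z(\mb G^\vee))$ in \eqref{eq:2.12} specialize cleanly in the adjoint case, so that $\mc A_\lambda$ and its subset of cuspidal representations coincide on the nose with Lusztig's objects.
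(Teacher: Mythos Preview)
Your proposal has two genuine gaps.

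First, you conflate ``$\mb G$ is $K$-split'' with ``$\omega = 1$''. The hypothesis says the quasi-split inner form is split, so indeed $\theta = 1$ and $\Omega^\theta = \Omega$; but the inner twist parameter $\omega \in \Omega_\ad$ is still arbitrary, and $F_\omega = \mr{Ad}(u) \circ F$ is not $F$ unless $u$ is central. The paper's proof works with general $\omega$ throughout: the $Z(\mb G^\vee)$-character $\omega$ is what singles out which cuspidal local systems are $G^{F_\omega}$-relevant, and this relevance condition is used essentially in the comparison of $\sfa$ and $\sfb$. Your claim that ``every cuspidal L-parameter is $\mb G(K)$-relevant'' is false in general.

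Second, and more substantially, your treatment of $\sfa$ and $\sfb$ misses the actual content. Lusztig's $\sfb$ (see \cite[6.8]{Lusztig-unirep}) counts pairs $(\mc C,\mc F)$ where $\mc C$ is a unipotent class in $Z_{\mb G^\vee}(s)$ and $\mc F$ a cuspidal local system on $\mc C$ with the prescribed central character; the class $\mc C$ is allowed to vary. Our $\sfb$, by contrast, fixes $\lambda$ and hence fixes the unipotent element $u_\lambda$. To equate the two one must show that, given $s$ and $\omega$, \emph{at most one} unipotent class in $Z_{\mb G^\vee}(s)$ carries a $G^{F_\omega}$-relevant cuspidal local system. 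This is not a bookkeeping matter: it fails for (half-)spin groups without the relevance condition, and the paper establishes it by a case check using the classification of cuspidal local systems in \cite{Lus-Intersect} together with the constraint imposed by $\omega$ (or alternatively via Theorem \ref{thm:A}). This same uniqueness is then what allows $\sfa$ to be identified with the number of possibilities for $s$ alone, which in turn is the size of the $\Omega$-orbit of the node $v(s)$ in the affine Dynkin diagram of $\mb G^\vee$ --- Lusztig's definition of $\sfa$. Your proposal does not isolate this step, and without it the comparison of $\sfa$ and $\sfb$ does not go through.
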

\begin{proof}
Our $\sfb'$ is defined just as that of Lusztig.

Under these conditions on $\mb{G}$, all $\mh P'$ as above are conjugate to $\mh P$, 
so $\sfa' = |\Omega^{\mh P,\theta}|$. From \cite[1.20]{Lusztig-unirep} we see that 
$\Omega^{\mh P,\theta}$ equals $\bar \Omega^u$ over there, so the two versions of $\sfa'$ agree.

With $\sfb$ Lusztig counts pairs $(\mc C, \mc F)$ consisting of a unipotent conjugacy
class in $\mc C$ in $Z_{\mb G^\vee}(\lambda (\Frob))$ and a cuspidal local system $\mc F$ on
$\mc C$, such that $Z(\mb G^\vee)$ acts on $\mc F$ according to the character defined by
$\mb G^\omega$ via the Kottwitz isomorphism \eqref{eq:2.22}. The set of such $(\mc C,\mc F)$ 
is naturally in bijection with the set of extensions of $\lambda |_{\mb W_F}$ to a 
$G^{F_\omega}$-relevant cuspidal L-parameter \cite{AMS1}. To equate Lusztig's $\sfb$
to ours, we need to show the following. Given $\mb G^\omega$ and $s = \lambda (\Frob)$, there 
exists at most one unipotent class in $Z_{\mb G^\vee}(s)$ supporting a
$G^{F_\omega}$-relevant cuspidal local system.

Recall from \cite[\S 8.2]{Ste} that $Z_{\mb G^\vee}(s)$ is a connected reductive 
complex group (because $\mb G^\vee$ is simply connected). For the existence of cuspidal local 
system on unipotent classes $Z_{\mb G^\vee}(s)$  has to be semisimple, so the semisimple element 
$s = \lambda (\Frob)$ must have finite order and must correspond to a single node in the affine 
Dynkin diagram of $\mb G^\vee$ \cite[\S 2.4]{Retorsion}. As $\mb G^\vee$ is simple, 
this implies that $Z_{\mb G^\vee}(s)$ has at most two simple factors. 

For every complex simple group which is not a (half-)spin group, there exists at most one 
unipotent class supporting a cuspidal local system, whereas for (half-)spin groups there are 
at most two such unipotent classes \cite{Lus-Intersect}. (There are two precisely when the 
vector space to which the spin group 
is associated has as dimension a square triangular number bigger than 1.) It follows that the 
required uniqueness holds whenever $\mb G^\vee$ does not have Lie type $B_n$ or $D_n$. The 
$G^{F_\omega}$-relevance of the cuspidal local system (i.e. the $Z(\mb G^\vee)$-character 
$\omega$) imposes another condition, limiting the number of possibilities even further. 
Going through all the cases \cite[\S 5.4--5.5, \S 6.7--6.11]{Mor}, or equivalently 
\cite[\S 7.38--7.53]{Lusztig-unirep}, one can see that in fact the uniqueness of unipotent classes 
holds for all simple adjoint $\mb G$. Alternatively, this can derived from Theorem \ref{thm:A}.

This uniqueness of unipotent classes also means that our $\sfa$ just counts the number
of possibilities for $\lambda \big|_{\mb W_F}$, or equivalently for $s = \lambda (\Frob)$. 
The geometric diagram in \cite[\S 7]{Lusztig-unirep} determines a unique node $v(s)$ of the 
affine Dynkin diagram $I$ of $\mb G^\vee$, and hence completely determines the image of $s$ in 
${\mb G^\vee}_\ad$. Then the possibilities for $s \in \mb G^\vee$ modulo conjugacy are parametrized by
the orbit of $v(s)$ in $I$ under the group $\Omega$ for $\mb G_\ad$, see \cite[\S 2.2]{Retorsion}
and \cite[\S 2]{Lusztig-unirep}. Since $\mb G^\vee$ is simple, this coincides with the orbit of 
$v(s)$ under the group of all automorphisms of $I$. The cardinality of the latter orbit is 
used as the definition of $\sfa$ in \cite{Lusztig-unirep}, so it agrees with our $\sfa$.
\end{proof}

Assume for the moment that $\mb{G}$ is simple (but not necessarily split or adjoint). Then 
$s \theta = \lambda (\Frob) \in \mb G^\vee \theta$ has finite order, and $s$ determines a vertex 
$v(s)$ in the fundamental domain for the Weyl group $W(\mb G^\vee, \mb S^\vee)^\theta$ acting on 
$\mb S^\vee$. The order $n_s$ of $v(s)$ is indicated by the label in the corresponding Kac diagram 
\cite{Kacbook,Retorsion}. We can also realize $v(s)$ as a node in Lusztig's geometric diagrams
\cite[\S 7]{Lusztig-unirep}. They are denoted as ``$\tilde{I}/J$'', where $\tilde I$ is a 
basis of the affine root system of the complex group $(\mb G^\vee)^\theta$. The complement of $J$ 
in $\tilde{I}$ is one node, the one corresponding to $v(s)$. We point out that $v(s)$ determines a 
unique $\mb G^\vee$-conjugacy class in ${\mb G^\vee}_\ad \theta$. Thus the geometric diagram
$J$ determines the conjugacy class of $s \theta$ up to $Z(\mb G^\vee)$. 

In first approximation, the semisimple group $\mb{G}$ is a product of simple groups, and 
thus the above yields a description of the possibilities for $\lambda (\Frob) = s \theta$,
$v(s) \in {\mb G^\vee}_\ad$ and $n_s = \mr{ord}(v(s))$.

In the setting of \eqref{eq:1.4}, let $(\Omega^\theta)^*_{\lambda}$ be the isotropy group of 
$\lambda$ in $(\Omega^\theta)^*$. We define 
\[
g = \big| (\Omega^\theta)^*_{\lambda} \big| \, [\Omega^\theta : \Omega^{\PH,\theta}]^{-1}
\quad \text{and} \quad 
g' = [ \Omega_\ad^\theta/\Omega_\ad^{\PH,\theta}:\Omega^\theta/\Omega^{\PH,\theta} ]. 
\]
We say that $\pi \in \Irr (G^{F_\omega})_{[\mh P,\sigma]}$ and $\lambda$ satisfy 
\eqref{eq:fdegisgammafactor} with respect to a $K$-simple factor $\mb G_i$ of $\mb G$ if, 
in the notations from page \pageref{eq:1.4}, there exists a $c_i \in \Q^\times$ such that
\begin{equation}\label{eq:2.14}
\mr{fdeg} (\mr{ind}_{\PH_i^{F_\omega}}^{G^{F_\omega}} \sigma_i) = 
c_i \, \gamma (0,\mr{Ad}_{\mb G_i^\vee} \circ \lambda,\psi) 
\end{equation}
as rational functions of $q_K$. Now we are ready to state our main result.

\begin{thm} \label{thm:B}  
Let $\mb G$ be an unramified semisimple $K$-group.
\begin{enumerate}
\item There exists an $(\Omega^\theta)^*$-equivariant bijection between 
$\Irr (G^{F_\omega})_{\cusp,\unip}$ and \\
$\Phi_\nr (G^{F_\omega})_\cusp$, which is equivariant with respect to diagram automorphisms,
compatible with almost direct products and matches formal degrees with adjoint 
$\gamma$-factors as in \eqref{eq:fdegisgammafactor}.
\item The set of L-parameters associated in part (1) to 
$\Irr (G^{F_\omega})_{[\mh P,\sigma]}$ is canonically determined. 
\end{enumerate}
Now we fix a $F_\omega$-stable parahoric subgroup $\PH \subset G$ and a cuspidal 
unipotent representation $\sigma$ of $\PH^{F_{\omega}}$. Let 
$\lambda \in \Phi_\nr^2(G^{F_\omega})$ be an L-parameter associated to 
$[\PH^{F_\omega}, \sigma]$ via part (1).
\begin{itemize}
\item[(3)] The $(\Omega^\theta)^*$-stabilizer of any $\pi \in \Irr (G^{F_\omega})_{\unip,\cusp}$ 
and of any $(\lambda,\rho) \in \Phi_\nr (G^{F_\omega})_\cusp$ which satisfies 
\eqref{eq:fdegisgammafactor} with respect to any $K$-simple factor of $\mb G$ is 
$( \Omega^\theta/ \Omega^{\PH, \theta})^*$. In particular 
$g = [ (\Omega^\theta)^*_{\lambda} : ( \Omega^\theta/ \Omega^{\PH, \theta})^*] \in \N$.
\item[(4)] $\sfb' = \phi (n_s)$, where $\phi$ denotes Euler's totient function. 
In particular, $\phi (n_s)$ is identically equal to 1 for groups isogenous to 
classical groups. 
\item[(5)] We have $\mathsf{ab = a'b'}$, which is equal to the total number of 
supercuspidal unipotent representations $\pi$ satisfying \eqref{eq:fdegisgammafactor}
with respect to any $K$-simple factor of $\mb G$, for this $\lambda$. Furthermore $\sfa = 
[(\Omega^\theta)^*:(\Omega^\theta)^*_{\lambda}]$, $\sfa' = g'\, |\Omega^{\PH,\theta}|$, 
and thus $\sfb = gg' \phi (n_s)$. 
\item[(6)] The number of $(\Omega^\theta)^*$-orbits on the set of $\pi \in 
\Irr (G^{F_\omega})_{\unip,\cusp}$ satisfying \eqref{eq:fdegisgammafactor} is
$g' \phi (n_s)$. These orbits can be parametrized by $G^{F_\omega}$-conjugacy classes
of pairs $(\mh P,\sigma)$, or (on the Galois side) by cuspidal enhancements of $\lambda$ 
modulo $(\Omega^\theta)^*_\lambda$.
\end{itemize}
\end{thm}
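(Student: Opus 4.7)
The plan is to prove Theorem~\ref{thm:B} in the bottom-up order outlined in the introduction: first for simple adjoint $K$-groups by direct inspection of Lusztig's classification, then for all adjoint unramified groups by compatibility with almost direct products and Weil restriction, and finally for arbitrary semisimple groups by passing along an isogeny from the adjoint form.

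For simple adjoint $\mb G$, Theorem~\ref{thm:A} already produces, for each pair $[\mh P,\sigma]$, a distinguished $(\Omega^\theta)^*$-orbit of discrete unramified L-parameters $\lambda$ satisfying \eqref{eq:fdegisgammafactor}. The bijection in (1) is then built by pairing the members of $\Irr(G^{F_\omega})_{[\mh P,\sigma]}$ labelled by weakly unramified characters as in \eqref{eq:2.10} with the cuspidal enhancements of $\lambda$ described through the component group $\mc A_\lambda$; the ambiguity in (2) reflects precisely the choice of extension $\sigma^N$. Equivariance under $(\Omega^\theta)^*$ follows from the definitions \eqref{eq:2.3}--\eqref{eq:2.4}; equivariance under diagram automorphisms is the main new point and will be verified type by type in Sections~\ref{sec:innAn}--\ref{sec:E7}. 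For (4)--(6), Lemma~\ref{lem:3.1} identifies $\sfa,\sfb,\sfa',\sfb'$ with Lusztig's invariants in the split case, so $\sfb'=\phi(n_s)$ reduces to the known parametrisation of cuspidal unipotent representations of finite reductive groups and $\sfa\sfb=\sfa'\sfb'$ expresses that both sides count the same L-packet; the closed formulas for $\sfa$ and $\sfa'$ are then immediate from the definitions, and the formal-degree identity follows from \eqref{eq:2.21}. Inner forms and non-split simple groups are handled by the same arguments using the $\omega$-twisted combinatorics and the Kottwitz isomorphism \eqref{eq:2.22}. For general adjoint unramified $K$-groups, any such $\mb G$ decomposes as an almost direct product of Weil restrictions of simple adjoint groups over finite unramified extensions of $K$. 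All four invariants, together with $g$, $g'$, $n_s$ and $\phi(n_s)$, behave multiplicatively with respect to such decompositions, and the compatibility of $\fdeg$ and adjoint $\gamma$-factors with Weil restriction (proved in the appendix) allows the simple-adjoint bijection and all its equivariance properties to be transported to the general adjoint case.

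The substantial new work is the isogeny step from $\mb G_\ad$ to a general semisimple cover $\mb G$. On the representation side, a cuspidal unipotent $\pi_\ad$ of $\mb G_\ad(K)^{F_\omega}$ pulls back along $\mb G\to\mb G_\ad$ to a sum of cuspidal unipotent representations of $G^{F_\omega}$, whose multiplicities are governed by Clifford theory applied to \eqref{eq:2.7}--\eqref{eq:2.8}, with the index $g'$ measuring the failure of parahoric subgroups of $G$ to be as numerous (up to $G^{F_\omega}$-conjugacy) as those of $G_\ad^{F_\omega}$. On the Galois side, the dual map $\mb G_\ad^\vee \twoheadleftarrow \mb G^\vee$ turns a cuspidal enhanced L-parameter of $\mb G_\ad$ into a $Z(\mb G^\vee)_\theta$-orbit of L-parameters of $\mb G$, whose fibres split into cuspidal enhancements according to the behaviour of $\mc A_\lambda$ (cf.~\eqref{eq:2.11}--\eqref{eq:2.13}) when $(\mb G_\ad^\vee)_\SC$ is replaced by $(\mb G^\vee)_\SC$. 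The plan is to show that the two splittings are isomorphic as $(\Omega^\theta)^*$-sets; once this is done, (3) gives the common stabiliser $(\Omega^\theta/\Omega^{\mh P,\theta})^*$, (5) follows from $\sfa=[(\Omega^\theta)^*:(\Omega^\theta)^*_\lambda]$, $\sfa'=g'|\Omega^{\mh P,\theta}|$ and $\sfb=gg'\phi(n_s)$, and (6) is obtained by counting $(\Omega^\theta)^*$-orbits.

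The hard part will be exactly this fibre-matching: one must verify that cuspidality in the sense of \cite[Def.~6.9]{AMS1} is preserved under the change of ambient simply connected group, and that the finite abelian subquotient of $(\Omega^\theta)^*$ governing the Galois-side splitting coincides with the subquotient dictated by the representation-side Clifford theory. Once this structural compatibility is in place, the remaining properties in (1)--(2) (equivariance, compatibility with almost direct products, and the formal-degree/$\gamma$-factor identity) are inherited from the adjoint case, since both $\fdeg$ and the adjoint $\gamma$-factor transform in a controlled multiplicative way along the isogeny, and the counting formulas in (3)--(6) reduce to bookkeeping with the subquotients of $(\Omega^\theta)^*$ introduced in Section~\ref{sec:prelim}.
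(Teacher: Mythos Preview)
Your proposal is correct and follows essentially the same three-stage architecture as the paper: case-by-case verification for simple adjoint groups (Sections~\ref{sec:innAn}--\ref{sec:E7}, wrapped up in Proposition~\ref{prop:11.1}), extension to all adjoint groups via direct products and Weil restriction (Proposition~\ref{prop:11.2}), and the isogeny step to general semisimple groups via matched Clifford theory on both sides (Lemmas~\ref{lem:11.3}--\ref{lem:11.7} and Section~\ref{sec:proofss}).

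Two small technical corrections to your description of the isogeny step: the dual map runs $\mb G_\ad^\vee = (\mb G^\vee)_\SC \twoheadrightarrow \mb G^\vee$, so a single $\lambda_\ad$ projects to a single $\lambda$, and it is rather the $(\Omega_\ad^\theta/\Omega^\theta)^*$-orbit of $\lambda_\ad$'s that coalesces to one $\lambda$ (Lemma~\ref{lem:11.3}); and $(\mb G_\ad^\vee)_\SC$ and $(\mb G^\vee)_\SC$ are literally the same group, so the enlargement $\mc A_{\lambda_\ad} \trianglelefteq \mc A_\lambda$ arises not from changing the ambient group but from the fact that more elements of $(\mb G^\vee)_\SC$ centralize the coarser parameter $\lambda$ (Lemma~\ref{lem:11.4}). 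With these adjustments your fibre-matching plan is exactly what the paper carries out.
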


In \eqref{eq:2.10} we saw that $\Irr (G^{F_\omega})_{[\mh P,\sigma]}$ can be parametrized
with the group $(\Omega^{\theta,\mh P})^*$. By \eqref{eq:2.2} that is a quotient of 
$Z(\mb G^\vee)_\theta$, and via \eqref{eq:2.3} it acts naturally on the set of involved
L-parameters. Thus part (2) can also be formulated as: the L-parameter of any  $\pi \in 
\Irr (G^{F_\omega})_{[\mh P,\sigma]}$ is canonically determined up to the action of
$(\Omega^\theta )^* \cong Z(\mb G^\vee)_\theta$.

In the upcoming nine sections we will collect the data that are needed to establish 
Theorem \ref{thm:B} for simple adjoint groups and cannot readily be found in the literature 
yet. In particular this concerns the behaviour under diagram automorphisms of reductive
$p$-adic groups. The actual proof for adjoint groups is written down in Section \ref{sec:adjoint}.

\section{Inner forms of projective linear groups}
\label{sec:innAn}

We consider $\mb G = PGL_n$, of adjoint type $A_{n-1}$. Then $\mb G^\vee = SL_n (\CC)$,
$\Omega^* = Z (\mb G^\vee) \cong \ZZ / n \ZZ$ and $\Omega = \mr{Irr}(Z(\mb G^\vee))$.

Cuspidal unipotent representations of $G^{F_\omega}$ can exist only if $\mathsf J \subset
\widetilde{A_{n-1}}$ is empty and $\omega \in \Omega$ has order $n$. Then $G^{F_\omega}$ is
an anisotropic form of $PGL_n (K)$, so isomorphic to $D^\times / K^\times$ where $D$
is a division algebra of dimension $n^2$ over $Z(D) = K$.

The parahoric $\PH^{F_\omega}$ is the unique maximal compact subgroup of $G^{F_\omega}$, 
so $\Omega^\PH = \Omega$ and $\sfa = |\Omega^\PH| = n$.
The cuspidal unipotent representations of $G^{F_\omega}$ are precisely its weakly
unramified characters. There are $n$ of them, naturally parametrized by $Z(\mb G^\vee)$ via 
the LLC. Hence $\sfa' \sfb' = n$ and $\sfb' = 1$. 

The associated Langlands parameter $\lambda$ sends Frob to an
element of $Z(\mb G^\vee)$, while $u_\lambda$ is a regular unipotent element of $\mb G^\vee$.
Hence $\mc A_\lambda = Z(\mb G^\vee)$, which supports exactly one cuspidal local system
relevant for $G^{F_\omega}$, namely $\omega \in \mc A_\lambda^*$. In particular $\sfb = 1$.
The group $(\Omega^\theta)^* = Z(\mb G^\vee)$ acts simply transitively on 
$\Phi_\nr (G^{F_\omega})_\cusp$, so $\sfa = n$ and 
\[
(\Omega^\theta)^*_\lambda = 1 = (\Omega^\theta / \Omega^{\theta,\mh P})^*.
\]
Let $\tau$ be the unique nontrivial automorphism of $A_{n-1}$. It acts on $G$ and
$\mb G^\vee$ by the inverse transpose map, composed with conjugation by a suitable matrix $M$.
Consequently $\tau (\lambda,\omega)$ is equivalent with $(\lambda^{-T},\omega^{-1})$.
On the $p$-adic side $\tau$ sends $g \in G^{F_\omega}$ to $M g^{-T} M^{-1} \in 
G^{F_{\omega^{-1}}}$. Thus $\tau$ sends a weakly unramified character $\chi$ of $G^{F_\omega}$ 
to $\chi^{-1} \in \mr{Irr}(G^{F_{\omega^{-1}}})$. If $(\chi,G^{F_\omega})$ corresponds to
$(\lambda,\omega)$, then $(\chi^{-1},G^{F_{\omega^{-1}}})$ corresponds to
$(\lambda^{-T},\omega^{-1})$. This says that the LLC is $\tau$-equivariant in this case.

\section{Projective unitary groups}
\label{sec:outAn}

Take $\mb G = PU_n$, of adjoint type ${}^2 \! A_{n-1}$, with $\mb G^\vee = SL_n (\CC)$. 
Now $\theta = \tau$ is the unique nontrivial diagram automorphism of $A_{n-1}$. When $n$ is 
odd, the groups $\Omega^\theta, \Omega_\theta, (\Omega^*)^\theta$ and $(\Omega^\theta)^*$ 
are all trivial. When $n$ is even, 
\[
\Omega^\theta = \{1, z \mapsto z^{n/2} \}, \Omega_\theta = \Omega / \Omega^2,
(\Omega^*)^\theta = Z(\mb G^\vee)^\theta = \{1,-1\}, (\Omega^\theta)^* = 
Z(\mb G^\vee) / Z(\mb G^\vee)^2
\]
and all these groups have order 2. When $n$ is even, the nontrivial element of 
$\Omega^\theta$ acts on $\widetilde{{}^2 \! A_{n-1}}$ by a rotation of order 2.

When $n$ is not divisible by four, there is a canonical way to choose the $\omega \in \Omega$
defining the inner twist, namely $\omega \in \Omega^\theta$. When $n$ is divisible by four,
the non-quasi-split inner twist $G^{F_\omega}$ cannot be written with a $\theta$-fixed $\omega$.
For that group we just pick one $\omega \in \Omega \setminus \Omega^2$. Then the diagram
automorphism $\tau$ sends $G^{F_\omega}$ to $G^{F_{\omega^{-1}}}$, a different group which
counts as the same inner twist. So equivariance with respect to diagram automorphisms 
is automatic, unless $n$ is congruent to 2 modulo 4.\\

The subset $\mathsf J \subset \widetilde{{}^2 \! A_{n-1}}$ has to consist of two (possibly 
empty) $F_\omega$-stable subdiagrams ${}^2 \! A_s$ and ${}^2 \!A_t$, with $s+t+2 = n$
(or $s + 1 = n$ if $t=0$ and $n$ is even). The analysis
depends on whether or not $s$ equals $t$, so we distinguish those two possibilities.\\

\textbf{The case $\mathsf J = {}^2 \! A_s {}^2 \!A_t$ with $s \neq t$}

When $n$ is odd, no parahoric subgroup associated to another subset of $\widetilde{{}^2 \! 
A_{n-1}}$ gives rise to a cuspidal unipotent representation with the same formal degree as 
that coming from $\mathsf J$. When $n$ is even, the parahoric subgroup associated to
$\mathsf J' = {}^2 \! A_t \, {}^2 \!A_s$ does have such a cuspidal unipotent representation, 
and the subsets $\mathsf J, \mathsf J'$ of $\widetilde{{}^2 \! A_{n-1}}$ form one orbit for 
$\Omega^\theta$. This leads to $\sfa' = |\Omega^{\theta,\mh P}|  = 1$.

The group $G^{F_\omega}$ has only one cuspidal unipotent 
representation with the given formal degree, so that one is certainly fixed by $\tau$. 

The cuspidal enhancements of $\lambda$ are naturally in bijection with the 
cuspidal local systems supported on unipotent classes in $Z_{SL_n (\CC)}(\lambda (\Frob))$.
The centralizer of the semisimple element $\lambda (\Frob) = y \theta \in {}^L G$ 
in $SL_n (\CC)$ is the classical group associated to the bilinear form given by $y$
times the antidiagonal matrix with entries 1 on the antidiagonal. This implies an isomorphism
\begin{equation}\label{eq:5.1}
Z_{SL_n (\CC)}(\lambda (\Frob)) \cong Sp_{2q}(\CC) \times SO_p (\CC) ,
\end{equation}
where the Lie type depends on the index of the bilinear form and can be read off from 
\cite[\S 11.2--11.3]{Lusztig-unirep2}. The unipotent element $u_\lambda$ is given in 
\cite[4.7.(i)]{FeOp}: it has Jordan blocks $2,4,6,\ldots$ and $1,3,5,\ldots$.

To get $\mc A_\lambda$, we have to add $Z(SL_n (\CC))$
to \eqref{eq:5.1}, and then to take the centralizer of $\lambda (SL_2 (\CC))$. The 
inclusion of $Z(SL_n (\CC))$ does not make a difference, because in \eqref{eq:2.9} we
already fixed the restriction of representations of $\mc A_\lambda$ to that group. Since 
both $Sp_{2q}(\CC)$ and $SO_p (\CC)$ admit at most one cuspidal pair $(u,\rho)$ 
\cite[\S 10]{Lus-Intersect}, $\lambda$ has at most one cuspidal enhancement. 
In other words, $\sfb = 1$. 

When $n$ is odd, Theorem \ref{thm:A} produces a unique L-parameter.

When $n$ is even, Theorem \ref{thm:A} gives one or two L-parameters. The action of 
$(\Omega^\theta )^* \cong Z(\mb G^\vee) / Z(\mb G^\vee)^2$ on L-parameters is by multiplying 
$\lambda (\Frob)$ with an element of $Z(\mb G^\vee)$. 
An element $z I_n \in Z(\mb G^\vee) \setminus Z(\mb G^\vee)^2$ can be written as
$(1 - \theta)(z^{1/2} U)$, where $U \in GL_n (\CC)^\theta$ has determinant $z^{-n/2} = -1$. 
When \eqref{eq:5.1} contains a nontrivial special orthogonal group, we can choose $U$ in 
$Z_{GL_n (\CC)}(\lambda (\Frob))$, which shows that $z \lambda (\Frob)$ is conjugate to 
$\lambda (\Frob)$ within \eqref{eq:5.1}. By \cite[\S 11.3]{Lusztig-unirep2}, this condition 
on \eqref{eq:5.1} is equivalent to $s \neq t$ (and $n$ even), which we already assumed here. 
With Theorem \ref{thm:A} it follows that in that case there is only one L-parameter with the 
required adjoint $\gamma$-factor. Notice that here
\[
(\Omega^\theta / \Omega^{\theta,\mh P})^* = (\Omega^\theta)^* = (\Omega^\theta)^*_\lambda 
\quad \text{and} \quad \sfa = \sfb = 1 = \sfa' = \sfb'.
\]
\textbf{Remark.}
When $n$ is even, some groups isogenous to $\mb G = PU_n$ have trivial $\Omega^\theta$, for instance 
$\mb H = SU_n$. In other words, the image of $H^{F_\omega} \to G^{F_\omega}$ does not contain
representatives for the nontrivial element of $\Omega^\theta$. For $s \neq t$,
the pullback of the $G^{F_\omega}$-representation $\pi$ associated to 
$\mathsf J = {}^2 \! A_s \, {}^2 \!A_t$ to $H^{F_\omega}$ decomposes as a direct sum of two 
irreducible representations, associated to $\mathsf J$ and to 
$\mathsf J' = {}^2 \! A_t \, {}^2 \!A_s$. Since $\mathsf J$ and $\mathsf J'$ are stable under
$\tau$, $\tau$ stabilizes both these $H$-representations. 

The isotropy group of $\lambda$ as a L-parameter $\lambda_H$ for $H^{F_\omega}$ is bigger than 
for $G^{F_\omega}$, for $z \lambda_{\mb H} = \lambda_{\mb H}$ and elements of $SL_n (\CC)$ 
which send $\lambda$ to $z \lambda$ also stabilize $\lambda_{\mb H}$. From the above we see 
that one such new element in the isotropy group is $z^{1/2} U$, where $U \in O_p (\CC) 
\setminus SO_p (\CC)$ and $p = n -2q \in 2 \ZZ_{>0}$. Thus \eqref{eq:5.1} becomes
\begin{equation}\label{eq:5.2}
Z_{SL_n (\CC)}(\lambda_{\mb H} (\Frob)) \cong Sp_{2q}(\CC) \times O_p (\CC) .
\end{equation}
The group $\mc A_{\lambda_{\mb H}}$ can be obtained from \eqref{eq:5.2} in the same way as 
described after \eqref{eq:5.1}. The group \eqref{eq:5.2} has precisely two cuspidal pairs, 
which should be matched with the two direct summands of the pullback of $\pi$. Note that the 
action of $\tau$ on \eqref{eq:5.1} is (up to some inner automorphism) the unique nontrivial 
diagram automorphism of that group. In $Sp_{2q}(\CC) \times O_p (\CC)$ that diagram automorphism 
becomes inner, which implies that $\tau$ fixes both cuspidal pairs for this group. In particular, 
the aforementioned matching of these with $H^{F_\omega}$-representation is automatically 
$\tau$-equivariant.\\

\textbf{The case $\mathsf J = {}^2 \! A_s {}^2 \!A_s$ (with $2s + 2 = n$)}

Now $\Omega^{\theta,\mh P} = \Omega^\theta$ is nontrivial and $\sfa' = 2$. There
are two cuspidal unipotent representations containing $\sigma$, parametrized by the two
extensions $\sigma_1, \sigma_2$ of $\sigma$ to $N_{G^{F_\omega}}(\PH^{F_\omega})$. Then
$\sigma_1 (g) = -\sigma_2 (g)$ for all $g \in N_{G^{F_\omega}}(\PH^{F_\omega}) \setminus 
\PH^{F_\omega}$. 

Consider the action of $\tau = \theta$ on $G$. We may take it to be the action of $F$,
only without the Frobenius automorphism of $K_\nr/K$. It stabilizes $G^{F_\omega}$, unless
$n$ is divisible by four and $G^{F_\omega}$ is not quasi-split (a case we need not consider,
for there equivariance with respect to diagram automorphisms is automatic). 
Then \eqref{eq:actionTau} shows that the action of $\tau$ on $G^{F_\omega}$ reduces to the 
action of this Frobenius automorphism on the matrix coefficients. 

Since $N_{G^{F_\omega}}(\PH^{F_\omega}) \setminus \PH^{F_\omega}$ contains 
$\tau$-fixed elements (they are easy to find knowing the explicit form of $\tau$), $\tau$ 
fixes $\sigma_1$ and $\sigma_2$. Thus $\tau$ fixes both cuspidal unipotent representations 
under consideration.

The Jordan blocks of $u_\lambda$ are again given in \cite[4.7.(i)]{FeOp}.
The same reasoning as in the case $\mathsf J = {}^2 \! A_s {}^2 \!A_t$ with $s \neq t$
shows that the L-parameters $\lambda$ and $z \lambda$ are not equivalent and that
$Z_{SL_n (\CC)}(\lambda (\Frob)) \cong Sp_n (\CC)$ admits just one cuspidal pair.
Hence $\sfb = 1$,  $(\Omega^\theta)^*_\lambda = 1$ and $(\Omega^\theta)^*_\lambda = 
(\Omega^\theta / \Omega^{\theta,\mh P})^*$. As $\sfb' = 1$, we conclude that
\[
\sfa = \sfb \sfa = |(\Omega^\theta)^* \lambda| = 2 = |\Omega^{\theta,\mh P}| = 
\sfa' = \sfa' \sfb'.
\]
We can take for $y = \lambda (\Frob) \theta^{-1}$ the diagonal matrix with alternating 1 and
$-1$ on the diagonal. Considering the eigenvalues of $y$ and $\theta (y)$, it is clear that 
$\theta (\lambda (\Frob)) = \theta (y) \theta$ and $z \lambda (\Frob) = z y \theta$ are not
conjugate. So $\tau$ fixes both these L-parameters.\\

We checked that the diagram automorphism $\tau = \theta$ fixes all L-parameters
under consideration in this section. Every such L-parameter has only one cuspidal 
enhancement. Hence $\tau$ fixes everything on the Galois side, which means that 
our LLC is $\tau$-equivariant for the representations in this section.

\section{Odd orthogonal groups}
\label{sec:innBn}

Here $\mb G = SO_{2m+1} = PSO_{2m+1}$, of type $B_m$. Now $\mb G^\vee = Sp_{2m}(\CC)$ and 
$|\Omega^\theta| = |\Omega| = 2$. From \cite[\S 5.3]{Mor} or \cite[\S 7]{Lusztig-unirep} 
we see that $\mathsf J = D_s B_t$ and hence
\[
\Omega^{\theta,\mh P} = \left\{ \begin{array}{ll}
\Omega^\theta & \text{if } s > 0,\\
1 & \text{if } s = 0.
\end{array}
\right.
\]
Further Lusztig's geometric diagram $J$ has two (possibly empty) components of type 
$C_{n_\pm}$. Then 
\[
Z_{\mb G^\vee}(\lambda (\Frob)) \cong Sp_{2 n_+}(\CC) \times Sp_{2 n_-}(\CC),
\] 
and this determines $\lambda (\Frob))$ up to $Z(\mb G^\vee)$. The L-parameter $\lambda$ is described 
in \cite[\S 5.3, \S 6.6]{Mor} and \cite[4.7 (ii)]{FeOp}: the unipotent class in $Sp_{2 n_\pm}(\CC)$ 
has Jordan blocks of sizes $2, 4, 6 \ldots$, which forces $n_\pm$ to be a square. One observes that
\[
(\Omega^\theta )^*_\lambda = \left\{ \begin{array}{ll}
(\Omega^\theta )^* & \text{if } n_+ = n_-,\\
1 & \text{if } n_+ \neq n_- .
\end{array}
\right.
\]
By \cite[7.54--7.56]{Lusztig-unirep} $\sfb = \sfb' = 1$ and $s=0$ is equivalent to $n_+ = n_-$.
We conclude that
\[
\sfa \sfb = \sfa = |(\Omega^\theta )^*_\lambda| = |(\Omega^\theta / \Omega^{\theta,\mh P})^*| 
= \sfa' = \sfa' \sfb' .
\]

\section{Symplectic groups}
\label{sec:innCn}

We consider $\mb G = PSp_{2n}$, the adjoint group of type $C_n$. The group $\Omega=\Omega^\theta$ 
has two elements and 
\[
(\Omega^\theta )^* = Z(\text{Spin}_{2n+1}(\CC)) = \{1,-1\} .
\]
The subset $\mathsf J \subset \widetilde{C_n}$ can be of three kinds.\\

\textbf{The case $\mathsf J = C_s C_t$ with $s \neq t$}

Here $\Omega^{\theta,\mh P} = 1$, so $\omega = 1$ and $\sfa' = 1$. 
By \cite[7.48--7.50]{Lusztig-unirep} $\sfb = \sfb' = 1$ and the geometric diagram is of type 
$D_p B_q$ with $p > 0$. More precisely, \cite[\S 5.4]{Mor} says that
\[
Z_{\mb G^\vee}(\lambda (\Frob)) \cong \big( \text{Spin}_{2p}(\CC) \times 
\text{Spin}_{2q+1}(\CC) \big) \big/ \langle (-1,-1) \rangle .
\]
The unipotent class from $\lambda$ is given in \cite[4.7 (iii)]{FeOp}: it has Jordan blocks of
sizes $1,3,\ldots,2N_p - 1$ and $1,3,\ldots,2N_q - 1$, where $N_p^2 = 2p$ and $N_q^2 = 2q+1$.
This shows that $(\Omega^\theta )^*_\lambda = (\Omega^\theta)^* = 
(\Omega^\theta / \Omega^{\theta,\mh P})^*$ and $\sfa = 1$.\\

\textbf{The case $\mathsf J = C_s C_s$ (with $2s = n$)} 

Now $\Omega^{\theta,\mh P} = \Omega^\theta$, $\sfa' = 2$ and $\omega$ can be both elements
of $\Omega$. The geometric diagram has type $B_q$ and one checks that
$(\Omega^\theta )^*_\lambda = 1$. (This corrects \cite[\S 7.50]{Lusztig-unirep}.) 

The group $Z_{\mb G^\vee}(\lambda (\Frob))$ is just $\mb G^\vee = \text{Spin}_{2n+1}(\CC)$.
By \cite[\S 14]{Lus-Intersect} it has (at most) one cuspidal pair on which
$Z(\mb G^\vee)$ acts as $\omega$, so $\sfb = 1$. Thus
\[
\sfa \sfb = \sfa = 2 = \sfa' = \sfa' \sfb' .
\]

\textbf{The case $\mathsf J = C_s {}^2 A_t C_s$ with $t>0$}

Here $\omega$ must be nontrivial. Now $\Omega^{\theta,\PH} = \Omega^\theta$ and $\sfb' = 1$, 
so $\sfa' = \sfa' \sfb' = 2$. Also, $(\Omega^\theta / \Omega^{\theta,\mh P})^*$ 
is automatically contained in $(\Omega^\theta )^*_\lambda$.

By \cite[\S 7.51--7.53]{Lusztig-unirep} the geometric diagram is of type $D_p B_q$ 
with $p,q > 0$. The L-parameters are given explicitly in \cite[\S 6.7]{Mor} and 
\cite[4.7 (v)]{FeOp}. The group $(\Omega^\theta )^*$ stabilizes $\lambda$ and $\sfa = 1$. Here
\[
Z_{\mb G^\vee}(\lambda (\Frob)) \cong \big( \text{Spin}_{2p}(\CC) \times \text{Spin}_{2q+1}(\CC) \big)
\big/ \langle (-1,-1) \rangle .
\]
The unipotent element $u_\lambda$ has two factors, both with Jordan blocks of the types
$1,5,9,\ldots$ or $3,7,11,\ldots$. Its conjugacy class only admits cuspidal local systems on 
which both $-1 \in \text{Spin}_{2p}(\CC)$ and $-1 \in \text{Spin}_{2q+1}(\CC)$ act nontrivially. 
From \cite[\S 14]{Lus-Intersect} we know that there are precisely two such cuspidal local
systems, differing only by the action of $Z(\text{Spin}_{2p}(\CC))$. Hence $\sfb = 2$.

Let us also look at the action of $(\Omega^\theta )^*_\lambda$ on this pair of
enhancements of $\lambda$. For this we need to exhibit a $g \in \mb G^\vee$ such that
$g \lambda (\Frob) g^{-1} = -1 \cdot \lambda (\Frob)$. For that we can look at the
$\mb G^\vee$-centralizer of the image $v(s)$ of $s = \lambda (\Frob)$ in $\mb G^\vee / \{1,-1\} = 
SO_{2n+1}(\CC)$. As
\[
Z_{SO_{2n+1}(\CC)}(v(s)) \cong S (O_{2p}(\CC) \times O_{2q+1}(\CC)) ,
\]
we find
\[
Z_{\mb G^\vee}(v(s)) \cong S \big( \text{Pin}_{2p}(\CC) \times \text{Pin}_{2q+1}(\CC) \big) 
\big/ \langle (-1,-1) \rangle .
\]
The required $g$ must lie in $Z_{\mb G^\vee}(v(s)) \setminus Z_{\mb G^\vee}(s)$, so its image in
$\mr{Pin}_{2p}(\CC)$ does not lie in $\mr{Spin}_{2p}(\CC)$. Therefore conjugation by $g$
is an outer automorphism of $Z_{\mb G^\vee}(\lambda (\Frob))$. Every outer automorphism of
$\mr{Spin}_{2q}(\CC)$ acts nontrivially on the centre of that group (but fixes $-1$), 
and hence exchanges the two cuspidal local systems supported by the unipotent class from
$\lambda$. Thus $(\Omega^\theta )^*_\lambda$ acts transitively on the set of relevant
cuspidal enhancements of $\lambda$.

\section{Inner forms of even orthogonal groups}
\label{sec:innDn}
 
We consider $\mb G = PSO_{2n}$, of adjoint type $D_n$. 
Then $\mb G^\vee = \mr{Spin}_{2n}(\CC)$ and
\[
\Omega^* = Z(\mr{Spin}_{2n}(\CC)) = 
\left\{ \begin{array}{cc}
(\ZZ / 2 \ZZ )^2 & n \text{ even,} \\
\ZZ / 4 \ZZ & n \text{ odd.} \\
\end{array} \right.
\]
Let $\tau$ be the standard diagram automorphism of $D_n$ of order 2.
Then $(\Omega^* )^\tau = \{1,-1\}$ is the kernel of the projection
$\mr{Spin}_{2n}(\CC) \to SO_{2n}(\CC)$. Apart from that $\Omega^*$ contains elements $\epsilon$ 
and $-\epsilon$. In the associated Clifford algebra, $\epsilon$ is the product of the
elements of the standard basis of $\CC^{2n}$.

We write $\Omega = \mr{Irr} (\Omega^*) = \{1,\eta,\rho,\eta \rho\}$, where $\eta$ is fixed 
by $\tau$ and $\eta (-1) = 1$. Furthermore we decree that $\rho (\epsilon) \neq 1$. So 
$\rho$ has order 2 if $n$ is even and order 4 
if $n$ is odd, while $\tau$ interchanges $\rho$ and $\eta \rho$. The action of $\Omega \rtimes
\{1,\tau\}$ on the affine Dynkin diagram $\widetilde{D_n}$ can be pictured as

\includegraphics[width=10cm]{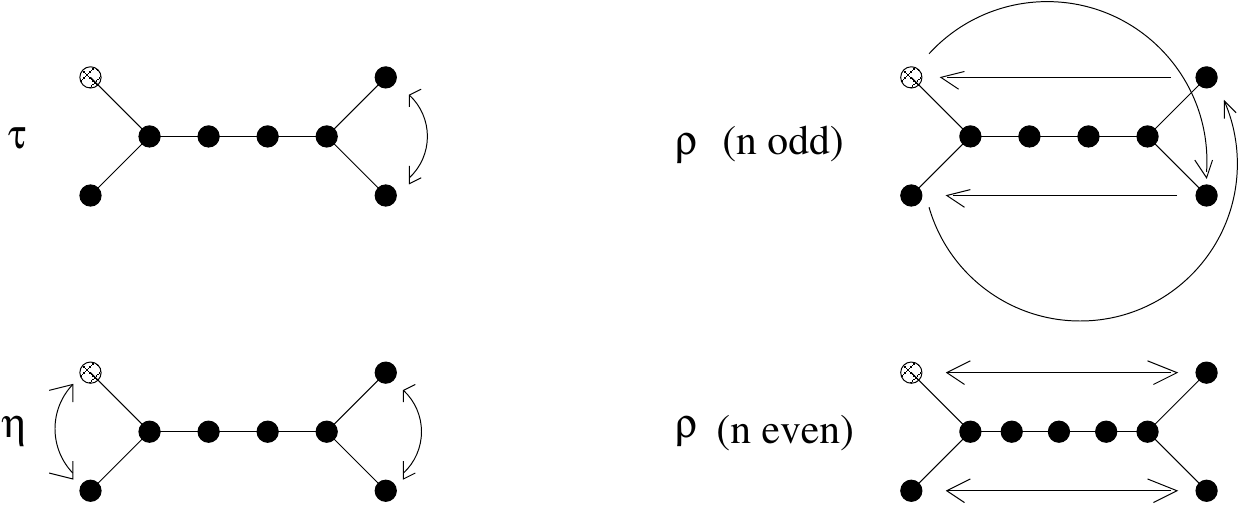}

To check $\tau$-equivariance, the following elementary lemma is useful.

\begin{lemma}\label{lem:tauOmega*}
Let $n$ be even and let $X$ be a set with a simply transitive $\Omega^*$-action. 
Suppose that $\{1,\tau\}$ acts on $X$, $\Omega^*$-equivariantly in the sense that
$\tau (\lambda x) = \tau (\lambda) \tau (x)$ for all $x \in X, \lambda \in \Omega^*$. 
Then $X \cong \Omega^*$ as $\Omega^* \rtimes \{1,\tau\}$-spaces.
\end{lemma}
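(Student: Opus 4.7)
The plan is to produce a $\tau$-fixed point $x_1 \in X$; given such an $x_1$, the orbit map $\Phi \colon \Omega^* \to X$, $\lambda \mapsto \lambda \cdot x_1$, is automatically an isomorphism of $\Omega^* \rtimes \{1,\tau\}$-sets. Indeed $\Phi$ is bijective by simple transitivity, is $\Omega^*$-equivariant by construction, and
\[
\Phi(\tau(\lambda)) = \tau(\lambda) \cdot x_1 = \tau(\lambda) \cdot \tau(x_1) = \tau(\lambda \cdot x_1) = \tau(\Phi(\lambda))
\]
by the equivariance of $\tau$ together with the fixity of $x_1$.

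To construct $x_1$, I would start from an arbitrary $x_0 \in X$ and write $\tau(x_0) = \mu \cdot x_0$ for the unique $\mu \in \Omega^*$ provided by simple transitivity. Applying $\tau$ once more, and using $\tau^2 = \mathrm{id}$ together with the equivariance hypothesis, gives
\[
x_0 = \tau(\mu) \cdot \tau(x_0) = \tau(\mu)\mu \cdot x_0,
\]
so $\mu\,\tau(\mu) = 1$ and hence $\mu \in (\Omega^*)^\tau = \{1,-1\}$ (here one uses that $\Omega^*$ has exponent two, so $\tau(\mu) = \mu^{-1} = \mu$).

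Next I would try to absorb $\mu$ by replacing $x_0$ by $\nu \cdot x_0$ for a suitable $\nu \in \Omega^*$. A direct calculation in the abelian, exponent-two group $\Omega^*$ yields
\[
\tau(\nu \cdot x_0) = \tau(\nu) \cdot \tau(x_0) = \tau(\nu)\mu \cdot x_0 = \mu\,\nu\,\tau(\nu) \cdot (\nu \cdot x_0),
\]
so $\nu \cdot x_0$ is $\tau$-fixed precisely when $N(\nu) := \nu\,\tau(\nu)$ equals $\mu$. Since $\tau$ fixes $\pm 1$ and interchanges $\epsilon$ with $-\epsilon$, the four values of $N$ are $N(\pm 1) = 1$ and $N(\pm\epsilon) = -\epsilon^2 = -1$, so the image of $N$ is exactly $(\Omega^*)^\tau$. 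As $\mu$ already lies there, a suitable $\nu$ exists and $x_1 := \nu \cdot x_0$ is the desired fixed point.

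The only non-formal step is the surjectivity of $N$ onto $(\Omega^*)^\tau$, which conceptually amounts to the vanishing of $\hat H^0(\langle\tau\rangle,\Omega^*)$ and concretely is the four-element check above; I anticipate no other difficulty.
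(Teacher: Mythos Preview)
Your proof is correct and follows essentially the same approach as the paper's: both arguments produce a $\tau$-fixed basepoint $x_1$ and then use the orbit map $\lambda \mapsto \lambda x_1$. The only cosmetic difference is that the paper checks the four possible values of $\tau(x_0)$ by hand (ruling out $\epsilon x_0$ and $-\epsilon x_0$ via the contradiction $\epsilon^2 = -1$, and correcting $-x_0$ by passing to $\epsilon x_0$), whereas you phrase the same computation as a cocycle condition forcing $\mu \in (\Omega^*)^\tau$ followed by surjectivity of the norm map $\nu \mapsto \nu\,\tau(\nu)$ onto $(\Omega^*)^\tau$; the underlying four-element check is identical.
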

\begin{proof}
First we show that $\tau$ fixes a point of $X$. Take any $x \in X$ and consider
$\tau (x) \in X$. If $\tau (x) = x$, we are done. When $\tau (x) = -x$, the element
$\epsilon x$ is fixed by $\tau$, for
\[
\tau (\epsilon x) = \tau (\epsilon) \tau (x) = -\epsilon \cdot -x = \epsilon x .
\]
Suppose that $\tau (x) = \epsilon x$. We compute
\[
x = \tau (\epsilon x) = \tau (\epsilon) \tau (x) = -\epsilon \cdot \epsilon x ,
\]
so $\epsilon^2 = -1$. But $\Omega^* \cong (\ZZ / 2 \ZZ)^2$ since $n$ is even, so we
have a contradiction. For similar reasons $\tau (x) = -\epsilon x$ is impossible. 

Thus $X$ always contains a $\tau$-fixed point, say $x_0$. Then the map
\[
\Omega^* \to X : \lambda \mapsto \lambda x_0
\]
is an isomorphism of $\Omega^* \rtimes \{1,\tau\}$-spaces. 
\end{proof}

For the group $PSO_{2n}$ there are five different kinds of subsets $\mathsf J$ of 
$\widetilde{D_n}$ which can support cuspidal unipotent representations.\\

\textbf{The case $\mathsf J = D_n$}

Here $\Omega^{\mh P} = 1$ and $\sfa' = 1$. We must have $\omega = 1$, for otherwise
$\mh P$ cannot be $F_\omega$-stable. There are four ways to embed $\mathsf J$ in
$\widetilde D_n$, and they are all associate under $\Omega$. 

By \cite[\S 7.40]{Lusztig-unirep} the geometric diagram has type $D_p D_p$, so
$n$ is even. The element $s = \lambda (\Frob)$ is a lift of the diagonal matrix 
$I_n \oplus -I_n \in SO_{2n}(\CC)$ in $\mathrm{Spin}_{2n}(\CC)$. It follows that 
$(\Omega)^*_\lambda = 1$ and
\begin{equation}\label{eq:8.1}
Z_{\mb G^\vee}(s) = \big( \mr{Spin}_n (\CC) \times \mr{Spin}_n (\CC) \big) \big/ \langle (-1,-1) \rangle .
\end{equation}
By \cite[4.7.iv]{FeOp}, $u_\lambda$ has Jordan blocks of sizes $1,3,\ldots, 2 \sqrt{n} - 1$,
in both almost direct factors $\mr{Spin}_n (\CC)$. The group \eqref{eq:8.1} has (at most) 
one cuspidal pair on which $Z (\mb G^\vee)$ acts as 1, so $\sfa = 1$ and $\sfb = 1$. \\

\textbf{Remark.} 
Let us rename $PSO_{2n}$ as $\mb G_\ad$, and investigate what happens when it is replaced
by an isogenous group $\mb G$, which in particular can be the simply connected cover
$\mb G_\SC = \mr{Spin}_{2n}$. In this remark we will endow objects associated to $\mb G_\ad$
with a subscript ad.

As $\Omega_\SC = 1$, the four elements of $\Omega_\ad \cdot \mathsf J$ define four 
non-conjugate $F_\omega$-stable parahoric subgroups of $G_\SC^{F_\omega}$. Hence the 
pullback of the unique $\pi \in \Irr (G_\ad^{F_\omega})_{[\mh P,\sigma]}$ from above 
to $G_\SC^{F_\omega}$ decomposes as a direct sum of four irreducible representations, 
parametrized by the four elements of $\Omega_\ad \cdot (\mh P,\sigma)$ or, equivalently, 
by the four $\Omega_\ad$-associates of $\mh P$. We note that the diagram automorphism
$\tau$ fixes two of these $(\mh P,\sigma)$ and exchanges the other two.

For $\mb G = SO_{2n}$ we find two direct summands of $\pi$, parametrized by 
$\{\mh P, \rho \mh P\}$ and both $\tau$-stable. For $\mb G$ a half-spin group of rank $n$,
$\pi$ also becomes a direct sum of two irreducible representations upon pulling back to 
$G^{F_\omega}$. Then they are parametrized by $\{\mh P, \eta \mh P\}$. The diagram 
automorphism $\tau$ exchanges these two half-spin groups, so it does not extend to
an automorphism of the (absolute) root datum of such a group.

On the Galois side, the above $(\lambda_\ad,\rho_\ad)$ determines a single L-parameter 
$\lambda_\SC$ for $G_\SC^{F_\omega}$. The centralizer of $\lambda_\SC (\Frob)$ is larger 
than that of $\lambda_\ad (\Frob)$:
\begin{equation}\label{eq:8.2}
Z_{{\mb{G}_\ad}^\vee} (\lambda_\SC (\Frob)) = \langle w \rangle S \big( \mr{Pin}_n (\CC) 
\times \mr{Pin}_n (\CC) \big) \big/ \langle (-1,-1) \rangle ,
\end{equation}
where $w \in \mr{Spin}_{2n}(\CC)$ is a lift of $\left( \begin{smallmatrix}
0 & I_n \\ -I_n & 0  \end{smallmatrix} \right) \in SO_{2n}(\CC)$. Since $G^{F_\omega}$ 
is $K$-split, it suffices to consider enhancements of $\lambda_\SC$ that are trivial on 
$Z(\mb G^\vee)$. The component group of $\lambda_\SC$ for $G^{F_\omega}$ is identified as
\begin{multline*}
Z_{{\mb{G}_\ad}^\vee} (\lambda_\SC) / Z ({\mb{G}_\ad}^\vee) \cong \langle w \rangle \ltimes 
S \big( (\ZZ / 2 \ZZ)^n \times (\ZZ / 2 \ZZ)^n \big) \big/ Z(SO_{2n}(\CC)) \supset  \\ 
Z_{{\mb{G}_\ad}^\vee} (\lambda_\ad) / Z ({\mb G_\ad}^\vee) \cong S \big( (\ZZ / 2 \ZZ)^n \big) 
\times S \big( (\ZZ / 2 \ZZ)^n \big) \big/ Z(SO_{2n}(\CC)) ,
\end{multline*}
where $w$ now has order two and a capital S indicates the subgroup of elements that can be 
realized by an element of a Spin group (not just in a Pin group). The component group
for $\lambda_\ad$ as a L-parameter $\lambda$ for $\mb G = SO_{2n}$ lies inbetween the above two:
\begin{equation}\label{eq:8.5}
Z_{{\mb G_\ad}^\vee} (\lambda) / Z ({\mb G_\ad}^\vee) \cong S \big( (\ZZ / 2 \ZZ)^n\times 
(\ZZ / 2 \ZZ)^n \big) \big/ Z(SO_{2n}(\CC)) .
\end{equation}
It is known that $\rho_\ad$ is the unique alternating character of $\mc A_{\lambda_\ad}$ and of 
$Z_{\mb{G}_\ad^{\, \vee}} (\lambda_\ad) / Z(\mb{G}_\ad^{\,\vee})$. It can be extended in two ways 
to an enhancement $\rho$ of $\lambda$, a representation of \eqref{eq:8.5}. Since $\tau$ fixes
\eqref{eq:8.5} pointwise, it also fixes $\rho$. In particular we can match these two 
$\rho$'s with the set $\{\mh P, \eta \mh P\}$ (from the $p$-adic side for $\mb G = SO_{2n}$) 
in a $\tau$-equivariant way.

Both these extensions $\rho$ are symmetric with respect to the two almost direct factors, 
so they are stabilized by $w$. With Clifford theory it follows that $\rho_\ad$ can be extended in 
precisely four ways to a representation of $Z_{{\mb G_\ad}^\vee} (\lambda_\SC) / Z ({\mb G_\ad}^\vee)$, 
and hence to a $\rho_\SC \in \Irr (\mc A_{\lambda_\SC})$. These four extensions differ only by 
characters of
\begin{equation}\label{eq:8.4}
Z_{{\mb G_\ad}^\vee} (\lambda_\SC) / Z_{{\mb G_\ad}^\vee} (\lambda_\ad) \cong (\ZZ / 2 \ZZ)^2 .
\end{equation}
The group \eqref{eq:8.4} is isomorphic to $\Omega_\ad^*$, by mapping 
$z \in \Omega_\ad^*$ to a $g \in Z_{{\mb G_\ad}^\vee} (\lambda_\SC)$ with 
\[
g \lambda_\ad (\Frob) g^{-1} = z \lambda_\ad (\Frob).
\]
By Lemma \ref{lem:tauOmega*} the set of enhancements $\rho_\SC$ of $\lambda_\SC$ is 
$\Omega^* \rtimes \{ 1,\tau \}$-equivariantly in bijection with the $\Omega$-orbit of $\mh P$.

For $\mb G$ a half-spin group of rank $n$ and $\lambda_\ad$ considered as a L-parameter 
$\lambda$ for $G^{F_\omega}$, $Z_{{\mb G_\ad}^\vee}(\lambda)$ is an index two subgroup of 
$Z_{{\mb G_\ad}^\vee}(\lambda_\ad)$, which contains $Z_{{\mb G_\ad}^\vee}(\lambda_\SC)$ and
differs from \eqref{eq:8.3}. So $\rho_\ad$ can be extended in two ways to an enhancement
$\rho$ of this $\lambda$. We note that $\tau$ maps $(\lambda,\rho)$ to an enhanced
L-parameter for the other half-spin group of rank $n$.\\

\textbf{The case $\mathsf J = D_s \, D_t$ with $s,t > 2$ and $s \neq t$}

Here $\Omega^{\mh P} = \{1,\eta\}$ and the $F_\omega$-stability of $\mh P$ forces
$\omega \in \{1,\eta\}$. In particular $\sfa' = 2$ and $\sfb' = 1$.

Now $[N_{G^{F_\omega}}(\PH^{F_\omega}):\PH^{F_\omega}] = 2$ and there are precisely two 
extensions of $\sigma$ from $\PH^{F_\omega}$ to $N_{G^{F_\omega}}(\PH^{F_\omega})$. They differ 
by a sign on $N_{G^{F_\omega}}(\PH^{F_\omega}) \setminus \PH^{F_\omega}$. Since $\eta$ stabilizes 
$\mathsf J$ and $\PH^{F_\omega}$, $N_{G^{F_\omega}}(\PH^{F_\omega}) \setminus \PH^{F_\omega}$ 
contains elements of the form $\chi (\varpi_K)$, where $\chi \in X_* (\mathbf S)$ represents $\eta$. 
Taking $\chi = e_1$, we see that $N_{G^{F_\omega}}(\PH^{F_\omega}) \setminus \PH^{F_\omega}$ 
has $\tau$-fixed elements. Hence $\tau$ stabilizes both extensions of $\sigma$ to 
$N_{G^{F_\omega}}(\PH^{F_\omega})$.

The two Langlands parameters built from $\mathsf J$ and the unipotent class associated to 
$\sigma$ \cite[\S 5.5, \S 6.9]{Mor} differ by an element of $\Omega^*$. 
From \cite[\S 7.38--7.39]{Lusztig-unirep} we see that the geometric diagram has type $D_p D_q$ 
with $p \neq q$. The element $\lambda (\Frob)$ is a lift of $-I_{2p} \oplus I_{2q}$ to 
$\mr{Spin}_{2n}(\CC)$. It is conjugate to $-\lambda (\Frob) \in \mr{Spin}_{2n}(\CC)$ by a lift of
$-1 \oplus I_{2n-2} \oplus -1$ to $g \in \mb G^\vee$. As $\lambda (\Frob)$ is not conjugate to
$\epsilon \lambda (\Frob)$, we obtain
\[
\Omega^*_\lambda = \{1,-1\} = (\Omega^*)^\tau = (\Omega / \Omega^{\mh P})^*. 
\]
The unipotent class from $\lambda$ \cite[4.7.(iv)]{FeOp}, in the group
\begin{equation}\label{eq:8.3}
Z_{\mb G^\vee}(\lambda (\Frob)) = \big( \mr{Spin}_{2p}(\CC) \times \mr{Spin}_{2q}(\CC) \big) 
\big/ \langle (-1,-1) \rangle ,
\end{equation}
has Jordan blocks $1,3,\ldots,2 N_p - 1$ and $1,3,\ldots,2 N_q - 1$, where $N_p^2 = 2p$ and $N_q^2 = 2q$.
It only supports a (unique) cuspidal local system if $n = p+q$ is even. Then $Z(\mb G^\vee)$ acts 
as 1 if $n$ is divisible by 4 and as $\eta$ if $n \equiv 2$ modulo 4. So $\sfa = 2$ and
$\sfb = 1$. As $\tau$ fixes the above $\lambda (\Frob)$, it stabilizes both the L-parameters,
and then also their enhancements. In particular the LLC is $\tau$-equivariant in this case.\\

\textbf{Remark.} Again we work out what changes if we replace $\mb G$ by $\mb G_\SC =
\mr{Spin}_{2n}$. Any $\pi \in \Irr (G^{F_\omega})_{\cusp,\unip}$ as above decomposes a
direct sum of irreducibles upon pulling back to $G_\SC^{F_\omega}$. These are 
parametrized by $\{ \mh P, \rho \mh P \}$, the set of $G_\SC^{F_\omega}$-conjugacy classes 
of parahoric subgroups of $G$ which are $G^{F_\omega}$-conjugate to $\mh P$. Since $\tau$
stabilizes $\mh P$, it fixes all four elements of $\Irr (G_\SC^{F_\omega})$ under 
consideration.

Regarding $\lambda$ as a L-parameter $\lambda_\SC$ for $G_\SC^{F_\omega}$, we get
\[
Z_{\mb G^\vee}(\lambda_\SC (\Frob)) = S \big( \mr{Pin}_{2p}(\CC) \times \mr{Pin}_{2q}(\CC) \big) 
\big/ \langle (-1,-1) \rangle .
\]
This group admits two cuspidal pairs $(u_\lambda,\rho_\SC)$ on which $Z(\mb G^\vee)$ acts as
1 or $\eta$. Notice that $\tau$ fixes some elements of $Z_{\mb G^\vee}(\lambda_\SC) 
\setminus Z_{\mb G^\vee}(\lambda)$, for example a lift of $I_{2p-1} \oplus -I_2 \oplus I_{2q-1}$
to $\mr{Spin}_{2n}(\CC)$. Hence $\tau$ fixes all enhanced L-parameters for $G_\SC^{F_\omega}$
involved here.\\

\textbf{The case $\mathsf J = D_s \, D_s$ (with $2s = n$)}

Since the finite reductive groups of type $D_1, D_2$ and $D_3$ do not admit cuspidal
unipotent representations, we have $s \geq 4$. Then $\Omega^{\mh P} = \Omega$, so $\sfa' = 4$ 
and $\sfb' = 1$. By \cite[\S 7.41--7.42]{Lusztig-unirep} or \cite[\S 5.5, \S 6.9]{Mor} 
the geometric diagram has type $D_p$ and $\omega \in \{1,\eta\}$. (The tables 
\cite[\S 7.44--7.45]{Lusztig-unirep} cannot appear here, because of parity problems.) 
For $\lambda (\Frob)$ we can take the
unit element so $\Omega^*_\lambda = 1 = (\Omega / \Omega^{\mh P})^*$. Then $Z_{\mb G^\vee}(
\lambda (\Frob)) = \mr{Spin}_{2n}(\CC)$, which has precisely one cuspidal pair on
which $Z(\mb G^\vee)$ acts as 1 or $\eta$.  We find $\sfa = 4$ and $\sfb = 1$. As usual the 
Jordan blocks of $u_\lambda$ \cite[4.7.(iv)]{FeOp} must follow the pattern $1,3,5 \ldots$

The $\Omega^*$-orbit of $\lambda$ forms a set $X$ as in Lemma \ref{lem:tauOmega*}. The four 
extensions of $\sigma$ from $\PH^{F_\omega}$ to $N_{G^{F_\omega}}(\PH^{F_\omega})$ 
also form a set $X'$ as in Lemma \ref{lem:tauOmega*}, and we may identify it with
$\Irr (G^{F_\omega})_{[\mh P,\sigma]}$. Now Lemma  \ref{lem:tauOmega*} yields a 
$\Omega^* \rtimes \{1,\tau\}$-equivariant bijection $X \longleftrightarrow X'$, which 
fulfills all the conditions we impose on the LLC.\\

\textbf{The case $\mathsf J = {}^2 \! A_s$}

The involvement of the diagram automorphism of $A_s$ implies that $\omega = \rho$
or $\omega = \eta \rho$. These two are interchanged by $\tau$. This points to an
easy recipe to make the LLC $\tau$-equivariant in this case: construct it in some
$\Omega^*$-equivariant way for $\omega = \rho$, and then define if for $\omega = \eta 
\rho$ by imposing $\tau$-equivariance.

There are four ways to embed $\mathsf J$ in $\widetilde{D_n}$, two of them are
$F_\omega$-stable and the other two are $F_{\omega \eta}$-stable. We have
$\Omega^{\mh P} = \{1, \omega\}$, so $\sfa' = 2$ and $\sfb' = 1$.

According to \cite[\S 7.46]{Lusztig-unirep} and \cite[\S 6.10]{Mor}, just as in the 
case $\mathsf J = D_n$, $n$ is even and the geometric diagram has type $D_{n/2} D_{n/2}$.
As over there, $\Omega^*_\lambda = \Omega^*$ and $\sfa = 1$.
The group $Z_{\mb G^\vee}(\lambda (\Frob))$ is as in \eqref{eq:8.1}.
It admits two cuspidal pairs on which $Z(\mb G^\vee)$ acts as $\omega$ (so $\sfb = 2$).
Let the unipotent element $u$ be as in \cite[4.7.(vi)]{FeOp}, so with Jordan blocks following
the patterns $1,5,9,\ldots$ or $3,7,11,\ldots$. 

Assume that $\omega = \rho$. 
In terms of $\mr{Spin}_{2n}(\CC)^2$, the cuspidal pairs are of the form 
$(u \times u, \rho_1 \otimes \rho_2)$, where $\rho_1$ and $\rho_2$ differ only by 
the nontrivial diagram automorphism of $\mr{Spin}_{2n}(\CC)$. 

A lift $g \in \mr{Spin}_{2n}(\CC)$ of $I_{n-1} \oplus -I_2 \oplus I_{n-1} 
\in SO_{2n}(\CC)$ satisfies $g \lambda (\Frob) g^{-1} = -\lambda (\Frob)$. Such a $g$ 
acts by outer automorphisms on both almost direct factors of \eqref{eq:8.1}, so it 
exchanges $(u \times u, \rho_1 \otimes \rho_2)$ and $(u \times u, \rho_2 \otimes \rho_1)$. 
Thus $\Omega^*$ acts transitively on the enhancements L-parameters for this case.

The element $w$ from \eqref{eq:8.2} satisfies $w \lambda (\Frob) w^{-1} = \epsilon 
\lambda (\Frob)$. Since $n$ is even, conjugation by $w$ 
exchanges the two almost direct factors of \eqref{eq:8.1}, but nothing more. That
operation exchanges $(u \times u, \rho_1 \otimes \rho_2)$ and 
$(u \times u, \rho_2 \otimes \rho_1)$. Thus $\epsilon$ and $-1 \in \Omega^*$ act
in the same (nontrivial) way on the set of enhancements of $\lambda$. Then $-\epsilon$
fixes stabilizes both these enhancements. As $n$ is even and $\omega = \rho$, 
$(\Omega / \Omega^{\mh P})^* = \{1 , -\epsilon\}$ is precisely the isotropy group
all of enhanced L-parameters under consideration here. 

For $\omega = \eta \rho$ we would get the cuspidal pairs $(u \times u, \rho_i \otimes 
\rho_i)$, and we would find that $\Omega^*$ acts transitively on them, with isotropy 
group $\{1,\epsilon\} = (\Omega / \Omega^{\mh P})^*$.\\

\textbf{The case $\mathsf J = D_t \, {}^2 \! A_s \, D_t$ with $s,t>1$}

Here $2t + s = n -1$. As for $\mathsf J = {}^2 \! A_s$, $\omega \in \{\rho, \eta \rho\}$ 
and $\tau$-equivariance of the LLC is automatic in this case.
We have $\Omega^{\mh P} = \Omega$, so $\sfa' = 4$ and $\sfb' = 1$.

By \cite[\S 6.11]{Mor} and \cite[\S 7.44--7.45]{Lusztig-unirep} the geometric diagram 
has type $D_p D_q$ with $p > q \geq 0$ and $p + q = n$. The unipotent class from $\lambda$ 
is given in \cite[4.7.(vi)]{FeOp} and has the same shape as in the previous case. The image of 
$\lambda (\Frob)$ in $SO_{2n}(\CC)$ is $I_{2p} \oplus -I_{2q}$ or $-I_{2p} \oplus I_{2q}$. 

When $q=0$, the four possibilities for $\lambda (\Frob)$ are non-conjugate and central in 
$\mb G^\vee$, so $\sfa = 4$. The given unipotent class in $\mb G^\vee = \mr{Spin}_{2n}(\CC)$ 
supports just one cuspidal local system on which $Z(\mb G^\vee)$ acts as $\omega$, so $\sfb = 1$.
We also note that $\Omega^*_\lambda = 1 = (\Omega / \Omega^{\mh P})^*$.

When $q>0$, $\lambda (\Frob)$ and $\epsilon \lambda (\Frob) \in \mr{Spin}_{2n}(\CC)$
are not conjugate, but $g \lambda (\Frob) g^{-1} = -\lambda (\Frob)$ is achieved by
taking for $g$ a lift of $-1 \oplus I_{2n-2} \oplus -1$. Hence 
\[
\Omega^*_\lambda = \{1,-1\} \supsetneq (\Omega / \Omega^{\mh P})^* = 1.
\]
The group $Z_{\mb G^\vee}(\lambda (\Frob))$ is given by \eqref{eq:8.3}. The unipotent class
and $\omega$ impose that we only look at cuspidal pairs on which $-1 \in Z(\mb G^\vee)$ acts
nontrivially. Like in the case $\mathsf J = {}^2 \! A_s$ there are four of them, two
relevant for $G^{F_\rho}$ and two relevant for $G^{F_{\eta \rho}}$. Let $\rho_1, \rho_2$
denote cuspidal enhancements for $\mr{Spin}_{2m}(\CC)$ with different central characters,
nontrivial on $-1$, and $m \in \{p,q\}$. Then the enhancements for $\omega = \rho$ are
$\rho_1 \otimes \rho_2$ and $\rho_2 \otimes \rho_1$, and the enhancements for $\omega =
\eta \rho$ are $\rho_i \otimes \rho_i$. The same analysis as in the case $\mathsf J =
{}^2 \! A_s$ shows that $\Omega^*_\lambda$ acts simply transitively on the 
$G^{F_\omega}$-relevant enhancements of $\lambda$.\\

\textbf{The exceptional automorphisms of $D_4$}

All the diagram automorphisms of order 2 are conjugate to $\tau$, so equivariance of the
LLC with respect to those follows in the same way as equivariance with respect to $\tau$.

Let $\tau_1$ and $\tau_2 = \tau_1^2$ be the order 3 diagram automorphisms of $D_4$.
The subset $\mathsf J = D_t {}^2 \! A_s D_t$ with $s>0$ cannot appear here, as $s+1$ needs to
be of the form $b(b+1)/2$ to support a cuspidal unipotent representation. 
Therefore we must have $\mathsf J = D_s D_t$ with $s + t = 4$. The finite reductive groups of 
type $D_1, D_2$ and $D_3$ (these are actually of type $A$) do not admit cuspidal unipotent 
representations, so only the case $\mathsf J = D_4$ remains. 
There $\sfa = \sfb = \sfa' = \sfb' = 1$, so it involves only one representation of 
$G^{F_\omega}$ and only one enhanced L-parameter, 
and these must be fixed by $\tau_1$ and $\tau_2$.

\section{Outer forms of even orthogonal groups}
\label{sec:outDn}

Let us look at $\mb G = PSO^*_{2n}$, the quasi-split adjoint group of type ${}^2 D_n$.
Then $\mb G^\vee = \mr{Spin}_{2n}(\CC)$ and in ${}^L \mb G$ the Frobenius elements act 
nontrivially, by the standard automorphism $\theta = \tau$ of $D_n$ of order 2.
For this $G$ we have $\Omega^\theta=\{1,\eta\}$,
\[
(\Omega^\theta)^* = Z(\mr{Spin}_{2n} (\CC)) / \{1,-1\}= \{1,\overline{\epsilon}\}
\]
and the inner twists are parametrized by $\Omega_\theta = \Omega / \{1,\eta\} = \{1,\bar \rho\}$.\\

\textbf{The case $\mathsf J = D_s \: {}^2 D_t$ with $s > 0, t > 1$}

By \cite[\S 11.4]{Lusztig-unirep2} $\omega$ has to be 1 (which is equivalent to 
$u \in \{ \theta, \theta \eta \}$ in Lusztig's notation). 
Here $\Omega^{\theta,\mh P} = \Omega^\theta$, so $\sfa' = 2$ and $\sfb' = 1$. 

Let $E/K$ be the quadratic unramified field extension over which the quasi-split group
$G^{F_\omega}$ splits, and let Frob be the associated field automorphism. 
From \eqref{eq:defGFu} we see that
\[
G^{F_\omega} = \{ g \in \mb G (E) : \theta \circ \mr{Frob}(g) = g \},
\]
where Frob acts on the coefficients of $g$ as a matrix. In particular the action of
$\tau = \theta$ on $G^{F_\omega}$ reduces to the action of the field automorphism Frob. 

There are precisely two extensions of $\sigma$ from $\PH^{F_\omega}$ to 
$N_{G^{F_\omega}}(\PH^{F_\omega})$. Since $\eta$ stabilizes $\PH$ and commutes with $\tau$, 
one can find $\tau$-fixed elements in $N_{G^{F_\omega}}(\PH^{F_\omega}) \setminus \PH^{F_\omega}$ 
(see the case $\mb G = PSO_{2n}$, $\mathsf J = D_s \, D_t$ and $\sfa = 2$). 
This entails that $\tau$ stabilizes both extensions of $\sigma$ to $N_{G^{F_\omega}}(\PH)$. 

From \cite[\S 11.4]{Lusztig-unirep2} we see that the geometric diagram has type
$B_p \, B_q$ with $p \neq q$ and $p + q + 1 = n$. We can represent the image of
$\lambda (\Frob)$ in $O_{2n}(\CC)$ by the diagonal matrix $-I_{2p+1} \oplus I_{2q+1}$. One finds 
\begin{equation}\label{eq:9.3}
Z_{\mb G^\vee}(\lambda (\Frob)) \cong \big( \text{Spin}_{2p+1}(\CC) \times \text{Spin}_{2q+1}(\CC)
\big) \big/ \langle (-1,-1) \rangle .
\end{equation}
One checks that $\epsilon \lambda (\Frob)$ is not $\mb G^\vee$-conjugate to $\lambda (\Frob)$, 
so $(\Omega^\theta)^*_\lambda = (\Omega^\theta / \Omega^{\theta,\mh P})^* = 1$ and $\sfa = 2$. 
One can obtain $\mc A_\lambda$ from \eqref{eq:9.3} by intersecting with the centralizer
of $\lambda (SL_2 (\CC))$ and adding $Z(\mb G^\vee)$. But since $G^{F_\omega}$ is quasi-split,
we may ignore the addition of the centre and just look at cuspidal pairs for \eqref{eq:9.3}
on which $Z(\mb G^\vee)^\theta$ acts trivially. According to \cite[4.7.(iv)]{FeOp}, $u_\lambda$
has Jordan blocks in the pattern $1,3,5,\ldots$. One sees quickly from the classification in 
\cite{Lus-Intersect} that the class of $u_\lambda$ admits a unique cuspidal local system which 
is equivariant for \eqref{eq:9.3}, so $\sfb = 1$. 

Since $\lambda (\Frob) \theta^{-1}$ can be chosen in $(\mb G^\vee)^\theta$ \cite{Retorsion}, 
$\tau$ fixes both enhanced L-parameters under consideration.
We conclude that the LLC is $\tau$-equivariant in this case.\\

\textbf{The case $\mathsf J = {}^2 D_t$}

Here $\Omega^{\theta,\PH} = \{1\}$, so $\omega = 1$, $\sfa' = 1$ and $\sfb' = 1$. 
The description of $\lambda$ from $s > 0$ remains valid, only now $p = q$.
Let $w \in \mb G^\vee$ be a lift of $\left( \begin{smallmatrix} 0 & I_n \\ -I_n & 0 
\end{smallmatrix} \right) \in SO_{2n}(\CC)$. Picking suitable representatives, 
we can achieve that
\[
w \lambda (\Frob) w^{-1} = \epsilon \lambda (\Frob).
\]
Thus $(\Omega^\theta)^*$ fixes the equivalence class of $\lambda$, $\sfa = 1$ and
$(\Omega^\theta)^*_\lambda = (\Omega^\theta / \Omega^{\theta,\mh P})^*$. In the same
way as above one sees that $\sfb = 1$. This case involves a unique object on both sides of 
the LLC, and the LLC matches them in an obviously $\tau$-equivariant way. \\

\textbf{Remark.} Let $\mb G_\SC = \mr{Spin}_{2n}^*$ be the simply connected cover of 
$\mb G$. When we pull back a $G^{F_\omega}$-representation coming from $(\mh P,\sigma)$ 
as above to $G_\SC^{F_\omega}$, it decomposes as a direct sum of two irreducible 
representations, one associated to $(\mh P,\sigma)$ and one to 
$(\eta \mh P, \eta^* \sigma)$. The diagram automorphism $\tau$ stabilizes $\mh P$ and 
$\eta \mh P$, so it fixes both these representations of $G_\SC^{F_\omega}$.

On the Galois side, we can consider $\lambda$ as a L-parameter $\lambda_\SC$ for
$G_\SC^{F_\omega}$. Its stabilizer is larger than \eqref{eq:9.3}:
\begin{equation}\label{eq:9.2}
Z_{\mb G^\vee}(\lambda_\SC (\Frob)) \cong \langle w \rangle \big( \text{Spin}_{2q+1}(\CC) 
\times \text{Spin}_{2q+1}(\CC) \big) \big/ \langle (-1,-1) \rangle .
\end{equation}
The unipotent class from $\lambda$ supports
two cuspidal local systems which are equivariant under \eqref{eq:9.2}. The diagram
automorphism $\tau$ induces an inner automorphism of \eqref{eq:9.3} and \eqref{eq:9.2}
(namely, conjugation by $\lambda (\Frob) \theta$), so it stabilizes both these cuspidal
enhancements of $\lambda_\SC$.\\

\textbf{The case $\mathsf J = {}^2 ( D_t  A_s D_t )$ with $t > 0$}

By \cite[11.5]{Lusztig-unirep2} $\omega = \bar \rho \in \Omega_\theta$ 
(or $u = \rho \theta$ in Lusztig's notation). Notice that $\tau (\rho) = \eta \rho$,
so $\tau$ does not preserve the group $G^{F_\omega}$. Also, $\tau$ maps enhanced 
L-parameters on which $Z(\mb G^\vee)$ acts according to $\rho$
to enhanced L-parameters on which $Z(\mb G^\vee)$ acts as $\eta \rho$. Consequently 
equivariance with respect to diagram automorphisms is automatic in this case.

We have $\Omega^{\theta,\PH} = \Omega^\theta = \{1,\eta\}$ and
$[N_{G^{F_\omega}}(\PH) : \PH] = \sfa' = 2$. 

The element $\lambda (\Frob)$ and its $\mb G^\vee$-centralizer are as in \eqref{eq:9.3},
only with different conditions on $p$ and $q$. In particular $\sfa = 2$ as above. 
The unipotent class from $\lambda$ is given in \cite[4.7.(vi)]{FeOp}: its Jordan blocks
come in the patterns $1,5,9,\ldots$ or $3,7,11,\ldots$
For this class, only cuspidal $\mc A_\lambda$-representations of dimension $>1$ have
to be considered. The classification of cuspidal local systems for spin groups in 
\cite[\S 14]{Lus-Intersect} shows that \eqref{eq:9.3} admits precisely one on which
$Z(\mb G^\vee)$ acts as $\rho$, so $\sfb = 1$.\\

\textbf{The case $\mathsf J = {}^2 \! A_s$}

As in the previous case we take $\omega = \rho$.
There are four ways to embed this $\mathsf J$ in $\widetilde{{}^2 \! D_n}$,
all conjugate under $\Omega^\theta \rtimes \{1,\theta\}$. When $n$ is even, none of these
is $F_\omega$-stable, so $n$ has to be odd. Then two of these $\mh P$'s are $F_\omega$-stable
and $\Omega^{\theta,\PH} = \{1\}$. Hence $N_{G^{F_\omega}}(\PH^{F_\omega}) = \PH^{F_\omega}$ 
and $\sfa' = 1$. 

The element $\lambda (\Frob)$ and its $\mb G^\vee$-centralizer are still as in \eqref{eq:9.3},
but with $p=q$. Just as above for $\mathsf J = {}^2 \! D_t$, one finds 
$(\Omega^\theta)^*_\lambda = (\Omega^\theta / \Omega^{\theta,\mh P})^*$ and $\sfa = 1$.
The analysis of enhancements of $\lambda$ from the case 
$\mathsf J = {}^2 \! D_t {}^2 \! A_s {}^2 D_t$ remains valid, so $\sfb = 1$.\\

\textbf{Remark.}
Let us consider the pullback of one of the above $G^{F_\omega}$-representations to 
$G_\SC^{F_\omega}$. It decomposes as a sum of two irreducibles, parametrized by $(\mh P,\sigma)$ 
and $(\eta \mh P, \eta^* \sigma)$. Notice that $\tau$ does not stabilize these two parahoric 
subgroups of $G$, rather, it sends them to $F_{\rho \eta}$-stable parahoric subgroups.
Just as in the remark to the case $\mathsf J = D_s {}^2 \! D_t$, one can show that for $G_\SC^{F_\omega}$ 
the L-parameter $\lambda$ admits two relevant enhancements. Both are fixed by $\tau$, except for the
action of $Z(\mb G^\vee)$ on the enhancements, which $\tau$ changes from $\rho$ to $\eta \rho$.\\

\textbf{The exceptional group of type ${}^3 \! D_4$}

Here $\theta$ is a diagram automorphism of $D_4$ of order 3. We have $\mb G^\vee = 
\mr{Spin}_8 (\CC)$, $Z(\mb G^\vee)^\theta = \{1\}$ and $\Omega^\theta = \{1\}$. 
In particular there is a unique inner twist, the quasi-split adjoint group of type 
${}^3 \! D_4$.

According \cite[11.10--11.11]{Lusztig-unirep2} only the subset $\mathsf J = {}^3 \! D_4$ 
of $\widetilde{{}^3 \! D_4}$ supports cuspidal unipotent representations. More precisely,
the associated parahoric subgroup $\PH^{F_\omega}$ has two cuspidal unipotent representations
with different formal degree \cite[\S 4.4.1]{Fen}, so $\sfb' = 1$ for both. 
As $\Omega^\theta = \{1\}$, $N_{G^{F_\omega}}(\PH^{F_\omega}) = \PH^{F_\omega}$ and $\sfa' = 1$. 
As $(\Omega^\theta )^* = \{1\}$, also $\sfa = 1$.

From the geometric diagrams in \cite[11.10--11.11]{Lusztig-unirep2} and 
\cite[\S 4.4]{Retorsion} we see that $Z_{\mb G^\vee}(\lambda (\Frob))$ is either 
$\text{Spin}_4 (\CC)$ or $G_2 (\CC)$, while \cite[\S 4.4.1]{Fen} tells us how these must be
matched with the two relevant supercuspidal representations. Both these complex groups admit 
a unique cuspidal pair, so $\sfb = 1$. Thus, given the formal degree we find exactly one 
cuspidal unipotent representation of $G^{F_\omega}$ and exactly one cuspidal enhanced 
L-parameter. In particular these are fixed by any diagram automorphism of $D_4$, making 
the LLC for these representations equivariant with respect to diagram automorphisms.

\section{Inner forms of $E_6$}
\label{sec:innE6}

Let $\mb G$ be the split adjoint group of type $E_6$. Then $\mb G^\vee$ also has type $E_6$ and 
\[
\Omega^* = Z(\mb G^\vee) \cong \ZZ / 3 \ZZ .
\]
We write $\Omega = \mr{Irr}(\Omega^*) = \{1,\zeta,\zeta^2\}$ and we let $\tau$ be the
nontrivial diagram automorphism of $E_6$. There are two possibilities for 
$\mathsf J \subset \widetilde{E_6}$.\\

\textbf{The case $\mathsf J = E_6$}

Here $\Omega^\PH = \{1\}$ and hence $\omega = 1$. From \cite[7.22]{Lusztig-unirep} we 
deduce that $\sfa = \sfa' = 1$ and $\sfb = \sfb' = 2$ and hence $\Omega^*_\lambda = \Omega^*$. 
Let $\sigma_1$ and $\sigma_2$ be the two cuspidal unipotent representations of $\PH^{F_\omega}$. 
Since $\Omega^*$ has order 3 and $\sfa \sfb = \sfa' \sfb' = 2$, $\Omega^*$ fixes the 
$G^{F_\omega}$-representations induced from $\sigma_1$ and $\sigma_2$, and fixes both 
enhanced L-parameters with the appropriate adjoint $\gamma$-factor.

According to \cite[Theorem 3.23]{Lus-Chevalley} the representation $\sigma_k$ can be 
realized as the eigenspace, for the eigenvalue $e^{2 k \pi i 3} q^3$, of a Frobenius 
element $F$ acting on the top $l$-adic cohomology of a variety $X_w$. Here $w$ is an
element of the Weyl group of $E_6$ which stabilizes the subsystem of type 
$A_2 A_2 A_2$. The action of 
$\theta$ on the $\sigma_k$ comes from its action on $X_w$, the variety of Borel subgroups 
$B$ of $E_6 (\overline{\mathbb F_q})$ such that $B$ and $F(B)$ are in relative 
position $w$. For the particular $w$ used here, 
$X_w$ is $\theta$-stable. Since $E_6$ is split, $F$ acts on it by a field automorphism
applied to the coefficients. The induced action on $X_w$ commutes with the $\theta$-action,
because $F$ and $\theta$ commute as automorphisms of the Dynkin diagram of $E_6$.
In particular $\theta$ stabilizes every eigenspace for $F$, and $\theta$ stabilizes both
$\sigma_1$ and $\sigma_2$. 

We work out the observations from \cite[\S 5.9]{Mor} about the semisimple element
$s = \lambda (\Frob) \in {}^L \mb G$. It corresponds to the central node $v(s)$ of $\widetilde{E_6}$. 
By \cite{Ste} its centralizer in the simply connected group $E_6 (\CC)$ is a complex connected 
group of type $A_2 A_2 A_2$. The root lattice of $A_2 A_2 A_2$ has index 3 in the root 
lattice of $E_6$, so $Z_{\mb G^\vee}(s)$ has centre of order $3 \, |\Omega^*| = 9$.
Hence $Z_{\mb G^\vee}(s)$ is the quotient of the simply connected group $SL_3 (\CC)^3$ by a central
subgroup $C$ of order 3, such that the projection of $C$ on any of the 3 factors 
$SL_3 (\CC)$ is nontrivial. Consequently
\begin{equation}\label{eq:AphiE6}
\mc A_\lambda \cong (\ZZ / 3 \ZZ)^3 / C .
\end{equation}
Since $\omega = 1$, we only have to look at enhancements of 
$\lambda$ which are trivial on $Z(\mb G^\vee)$ and we may replace $Z_{\mb G^\vee}(s)$ by
\[
Z_{\mb G^\vee}(s) / Z(\mb G^\vee) \cong SL_3 (\CC)^3 / C Z(\mb G^\vee).
\]
The centre of the latter group has order 3, and it is generated by the image $v(s)$ of $s$.
The group $SL_3 (\CC)^3$ has $2^3 = 8$ cuspidal pairs, corresponding to the characters of
$Z(SL_3 (\CC)^3) \cong (\ZZ / 3 \ZZ)^3$ which are nontrivial on each of the 3 factors.
Dividing out $C Z(\mb G^\vee)$ leaves only 2 of these characters. 
Since $\tau$ fixes $v(s)$, it stabilizes $s Z(\mb G^\vee)$ and fixes both cuspidal enhancements
of $\lambda$. Thus our LLC for these objects is $\theta$-equivariant.\\

\textbf{Remark}. Let us investigate what happens when $\mb G$ is replaced by its simply
connected cover $\mb G_\SC$ and $\lambda$ is regarded as a L-parameter $\lambda_\SC$ for 
$G_\SC^{F_\omega}$. The centralizer of $\lambda_\SC (\Frob)$ in $\mb G^\vee$ is bigger than
that of $s$. From \cite[Proposition 2.1]{Retorsion} we get a precise description, namely
$Z_{\mb G^\vee}(s) \rtimes \{1,w,w^2\}$, where the Weyl group element $w$ cyclically permutes
the factors of $A_2 A_2 A_2$. In $\mb G^\vee$ we have $w (s) = s z$ with 
$z \in Z(\mb G^\vee) \setminus \{1\}$, so
\[
\mc A_{\lambda_\SC} / Z(\mb G^\vee) = \langle s \rangle \times \langle w \rangle \cong 
(\ZZ / 3 \ZZ)^2 . 
\]
In particular both the cuspidal representations $\rho$ of $\mc A_\lambda$ can be extended 
in 3 ways to characters of $\mc A_{\lambda_\SC}$. As $\rho$ is $\tau$-stable the diagram
automorphism group $\langle \tau \rangle$ acts on the set of extensions of $\rho$
to $\mc A_{\lambda_\SC}$. There are 3 such extensions and $\tau$ has order 2, so it fixes
(at least) one extension, say $\rho_\SC$. From the actions on the root systems we see that
$\tau (w) = w^2$. If $\chi$ is a nontrivial character of $\langle w \rangle$, then
$\rho_\SC \otimes \chi$ is another extension of $\rho$ and 
\[
\tau (\rho_\SC \otimes \chi) = \rho_\SC \otimes \chi^2.
\]
Thus $\tau$ permutes the other two extensions of $\rho$. Notice that this 3-element set
of extensions is, as a $\langle \tau \rangle$-space, isomorphic to the set of standard
parahoric subgroups of $G$ which are $G^{F_\omega}$-conjugate to $\mh P$.\\

\textbf{The case $\mathsf J = {}^3 \! D_4$}

Here $\Omega^\PH = \Omega$ and $\omega \in \Omega$ has order 3. The parahoric subgroup
$\mh P^{F_\omega}$ has two cuspidal unipotent representations, say $\sigma_1$ and 
$\sigma_2$, with different formal degrees. From Lusztig's tables we get $\sfb = \sfb' = 1$ 
and $\sfa = \sfa' = 3$, so $\Omega^*_\lambda = \{1\}$. 

The diagram automorphism $\tau$ stabilizes $\Irr (G^{F_\omega})_{[\mh P,\sigma_i]}$,
because it preserves formal degrees. Using \cite[\S 4.4.1]{Fen} we match the geometric
diagrams \cite[7.20 and 7.21]{Lusztig-unirep} with $[\mh P,\sigma_1]$ and $[\mh P,\sigma_2]$.
As these two diagrams differ, $\tau$ stabilizes the triple of (enhanced) L-parameters
associated to $[\mh P,\sigma_i]$ (for $i = 1,2$). As $\tau$ has order 2, it fixes at least one
element of $\Irr (G^{F_\omega})_{[\mh P,\sigma_i]}$, say $\pi_i$. The group $\Omega^*$ acts 
simply transitively on $\Irr (G^{F_\omega})_{[\mh P,\sigma_i]}$ and $\tau$ acts
nontrivially on $\Omega^*$, so $\pi_i$ is the unique $\tau$-fixed element of 
$\Irr (G^{F_\omega})_{[\mh P,\sigma_i]}$.

By the same argument, $\tau$ fixes exactly one the three enhanced L-parameters associated 
to $[\mh P,\sigma_i]$, say $(\lambda_i,\rho_i)$. Decreeing that $\pi_i$ corresponds to 
$(\lambda_i,\rho_i)$, we obtain a $\Omega^* \rtimes \langle \tau \rangle$-equivariant 
bijection between $\Irr (G^{F_\omega} )_{[\mh P,\sigma_i]}$ and the associated triple in 
$\Phi (G^{F_\omega})_\cusp$.

\section{The outer forms of $E_6$}
\label{sec:outE6}

Now $\tau = \theta$ is the nontrivial diagram automorphism of $E_6$. The groups
$\Omega^\theta, \Omega_\theta, (\Omega^\theta)^*$ and $(\Omega^*)^\theta$ are all
trivial. In particular $G^{F_\omega}$ is necessarily quasi-split.

From \cite{Lusztig-unirep2} we see that only $\mathsf J = {}^2 \! E_6$ supports cuspidal 
unipotent representations. The group $\PH^{F_\omega}$ has one self-dual cuspidal unipotent 
representation $\sigma_0$, for which $\sfa = \sfa' = \sfb = \sfb' = 1$. We see from
\cite[\S 4.4.2]{Fen} that $Z_{\mb G^\vee}(\lambda (\Frob)) \cong F_4 (\CC)$, which 
has just one unipotent class supporting a cuspidal local system. The associated 
$G^{F_\omega}$-representation and its enhanced L-parameter are determined uniquely 
by the geometric diagram \cite[11.7]{Lusztig-unirep2}, so the objects are fixed by $\tau$. 

Also, $\PH^{F_\omega}$ has two other cuspidal unipotent representations $\sigma_1$ and 
$\sigma_2$. For $\sigma_1$ and $\sigma_2$ we have $\sfa = \sfa' = 1$ and $\sfb = \sfb' = 2$ 
\cite[11.6]{Lusztig-unirep2}. The same reasoning as for the inner forms of $E_6$ with 
$J = E_6$, relying on \cite{Lus-Chevalley}, shows that $\tau$ stabilizes both $\sigma_i$.

By \cite[\S 4.4.2]{Fen} $\lambda (\Frob) = s \theta$, where $s \in (\mb G^\vee)^\theta$ 
is associated to the central node of the affine Dynkin diagram of $\mb G^\vee$. 
The orders of $\theta$ and of the image of $s$ in ${\mb G^\vee}_\ad$
(2 and 3, respectively) are coprime, so 
\[
Z_{\mb G^\vee}(\lambda (\Frob)) = (\mb G^\vee)^\theta \cap Z_{\mb G^\vee}(s)
\cong \big( SL_3 (\CC)^3 / C \big)^\theta \cong SL_3 (\CC)^2 / C' 
\]
for suitable central subgroups $C, C'$ of order 3.
Thus the component group of the L-parameter $\lambda$ associated to $\sigma_1,\sigma_2$ 
is obtained from \eqref{eq:AphiE6} by taking $\theta$-invariants. That removes $Z(\mb G^\vee)$
from \eqref{eq:AphiE6}, but then the very definition of $\mc A_\lambda$ says that we
have to include the centre again. It follows that 
\[
\mc A_\lambda = Z(\mb G^\vee) \times \langle s \rangle \cong (\ZZ / 3 \ZZ)^2 .
\]
In \eqref{eq:2.9} we already fixed the $Z(\mb G^\vee)$-character of every relevant 
representation of $\mc A_\lambda$ (namely, the trivial central character), so it suffices to 
consider the representations of the subgroup generated by $s$. Its irreducible cuspidal 
representations are precisely the two nontrivial characters. Since $\langle s \rangle$ 
is fixed entirely by $\theta$, so are these two enhancements of $\lambda$. We conclude 
that also in this case the LLC is $\theta$-equivariant.

\section{Groups of Lie type $E_7$}
\label{sec:E7}

Let $\mb G$ be the split adjoint group of type $E_7$. Then $\mb G^\vee$ also has type 
$E_7$ and $|\Omega| = 2$. From \cite[\S 5.9, \S 6.13]{Mor} and \cite[7.12--7.14]{Lusztig-unirep} 
it is known that two subsets of the affine Dynkin diagram $\widetilde{E_7}$ are relevant 
for our purposes.\\

\textbf{The case $\mathsf J = E_6$}

This $\mathsf J$ only gives rise to supercuspidal unipotent representations of $G^{F_\omega}$ 
if $\omega$ is nontrivial. The associated parahoric subgroup satisfies $\Omega^{\mh P} = \Omega$. 
In \cite[7.12 and 7.13]{Lusztig-unirep} $\sfa = \sfa' = 2$ and $\sfb = \sfb' \in \{1,2\}$.  
In view of Theorem \ref{thm:A}, $\Omega^*$ in each case permutes the two involved L-parameters
$\lambda$. Hence $(\Omega / \Omega^{\mh P})^* = \{1\}$ is precisely the stabilizer of $\lambda$ 
and any of its $G^{F_\omega}$-relevant enhancements.\\

\textbf{The case $\mathsf J = E_7$}

By \cite[7.14]{Lusztig-unirep} $\sfa = \sfa' = 1$, $\sfb = \sfb' = 2$, $\Omega^{\mh P} = 1$ and 
$\omega = 1$, so the group $G^{F_\omega}$ is split. In particular every relevant representation 
of $\mc A_\lambda$ is trivial on $Z(\mb G^\vee)$. The group $\mc A_\lambda / Z(\mb G^\vee)$ is
isomorphic to $\ZZ / 4 \ZZ$ \cite[p. 34]{Ree1} and is generated by the element 
$\lambda (\Frob)$, which has order four in the derived group of $\mb G^\vee$ \cite{Retorsion}.
The nontrivial element of $\Omega^* = Z(\mb G^\vee)$ sends $\lambda (\Frob)$ to a different but 
conjugate element of $\mb G^\vee$. Suppose that $g \in \mb G^\vee$ achieves this conjugation. 
Then conjugation by $g$ stabilizes $\lambda (\Frob) Z(\mb G^\vee)$, so it fixes 
$\mc A_\lambda / Z(\mb G^\vee)$ pointwise. Hence the action of $\Omega^*$ on the enhancements 
of $\lambda$ is trivial, and $(\Omega / \Omega^{\mh P})^*$ stabilizes them all.

\section{Adjoint unramified groups}
\label{sec:adjoint}

First we wrap up our findings for unramified simple adjoint groups, then we prove
Theorem \ref{thm:B} for all unramified adjoint groups.

\begin{prop}\label{prop:11.1}
Theorem \ref{thm:B} holds for all unramified simple adjoint $K$-groups $\mb G$.
\end{prop}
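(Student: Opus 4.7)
The plan is to assemble Proposition \ref{prop:11.1} directly from the case-by-case verifications completed in Sections \ref{sec:innAn}--\ref{sec:E7}, after first reducing to those cases via the classification of simple algebraic groups. Any unramified simple adjoint $K$-group is, up to $K_\nr$-isomorphism, determined by its absolute Lie type together with the $\theta$-action; thus it falls into one of the nine families $A_{n-1}$ (inner/outer), $B_n, C_n, D_n$ (inner/outer), $E_6$ (inner/outer), $E_7, E_8, F_4, G_2$. The isomorphism classes of inner forms are parametrized by $(\Omega_\ad)_\theta$ via \eqref{eq:2.22}, and for each such class the possible supports $\mh P$ of supercuspidal unipotent representations are enumerated by the $F_\omega$-stable maximal subdiagrams $\mathsf J\subsetneq \mathsf I$ of the affine Dynkin diagram. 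This reduction is just the bookkeeping carried out at the end of Section \ref{sec:prelim}.

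For $\mb G$ of type $E_8, F_4$ or $G_2$ the group is both simply connected and adjoint, so $\Omega=1$; then $(\Omega^\theta)^*$, $\Omega^{\PH,\theta}$ and all groups built from them are trivial, and the diagram automorphism group is trivial as well. In this situation every claim of Theorem \ref{thm:B} collapses to a statement already contained in the tables \cite[\S 7]{Lusztig-unirep}: the bijection of (1) follows from Theorem \ref{thm:A} together with the fact that both sides are singletons per support, (2) is vacuous, the stabilizer equality (3) is $\{1\}=\{1\}$, (4) reads $\sfb'=\phi(n_s)$ as Lusztig computes, (5) is $\sfa\sfb=\sfa'\sfb'=\sfb'$, and (6) coincides with (5). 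For the remaining seven families the relevant $\sfa,\sfb,\sfa',\sfb'$, the isotropy $(\Omega^\theta)^*_\lambda$, the identification with $(\Omega^\theta/\Omega^{\PH,\theta})^*$ and the $\tau$-equivariance of the bijection were verified in Sections \ref{sec:innAn}--\ref{sec:E7}, arguing subset $\mathsf J$ by subset $\mathsf J$. Putting these together, Theorem \ref{thm:B}(3)--(5) is immediate, and (4) follows since $\phi(n_s)=1$ whenever $\mb G$ is isogenous to a classical group, while for exceptional types the single value of $n_s$ was read from the geometric diagram in each case.

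The bijection in (1) is then assembled as follows. For a fixed $F_\omega$-stable maximal parahoric $\mh P$ and cuspidal unipotent $\sigma$, the set $\Irr(G^{F_\omega})_{[\mh P,\sigma]}$ carries a simply transitive action of $(\Omega^{\PH,\theta})^*$ via \eqref{eq:2.10} (after choosing an extension $\sigma^N$), and by the case analysis the set of cuspidal enhancements of the $(\Omega^\theta)^*$-orbit of $\lambda$ carries the same simply transitive action. Matching the two distinguished base points produces the bijection; equivariance with respect to weakly unramified twists and with respect to diagram automorphisms was verified case by case, and compatibility with almost direct products is vacuous here because $\mb G$ is simple. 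The matching of formal degrees with adjoint $\gamma$-factors is Theorem \ref{thm:A}, and the residual ambiguity of the L-parameter is exactly the choice of $\sigma^N$, giving (2). Finally (6) counts $(\Omega^\theta)^*$-orbits as $\sfa'\sfb'/\sfa = \sfb = gg'\phi(n_s)$ divided by the $(\Omega^\theta)^*$-action on cuspidal enhancements, whose stabilizer is $(\Omega^\theta)^*_\lambda$; the quotient is $g'\phi(n_s)$ as asserted.

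The main obstacle throughout is the $\tau$-equivariance in (1) when neither side is a singleton. The delicate points are those of Sections \ref{sec:outAn}, \ref{sec:innDn}, \ref{sec:outDn} and \ref{sec:innE6}: one must produce a $\tau$-fixed extension $\sigma^N$ on the $p$-adic side by exhibiting $\tau$-fixed elements of $N_{G^{F_\omega}}(\mh P^{F_\omega})\setminus \mh P^{F_\omega}$ (possible as soon as $\eta\in\Omega^{\PH,\theta}$ has a $\tau$-invariant cocharacter lift), while on the Galois side one verifies using \cite[\S 2]{Retorsion} that $\lambda(\Frob)$ lies in $(\mb G^\vee)^\theta$ or is moved by $\tau$ to a conjugate within its $(\Omega^\theta)^*$-orbit. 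In the exceptional $E_6$-with-$J={}^3\! D_4$ and inner $D_n$-with-$J=D_s D_s$ cases we instead invoke Lemma \ref{lem:tauOmega*} (or its order-3 analogue) to produce the equivariant bijection abstractly from the fact that both sides are $\Omega^*\rtimes\langle\tau\rangle$-torsors. No further case splits are needed, so this completes the proof.
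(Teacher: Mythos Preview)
Your approach is essentially the paper's: reduce to the case checks of Sections \ref{sec:innAn}--\ref{sec:E7} and assemble the bijection from the established numerics and stabilizer computations. There is, however, one genuine gap in your reduction step. You assert that any unramified simple adjoint $K$-group is determined by its absolute Lie type together with the $\theta$-action and therefore falls into one of the nine families, but this overlooks Weil restrictions: if $L/K$ is unramified of degree $d>1$ and $\mb H$ is absolutely simple adjoint over $L$, then $\mb G = \mathrm{Res}_{L/K}\mb H$ is simple and adjoint over $K$, yet over $K_\nr$ it decomposes as $\mb H_{K_\nr}^{\,d}$ with $\theta$ cyclically permuting the factors. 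Such $\mb G$ are not covered by any of Sections \ref{sec:innAn}--\ref{sec:E7}. The paper closes this gap at the outset by invoking Lemma \ref{lem:17.1} and Proposition \ref{prop:17.2}, which show that all the data in Theorem \ref{thm:B} are inert under restriction of scalars; only after that reduction may one assume $\mb G$ is absolutely simple and proceed with the case analysis you describe.

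A second, smaller point: your construction of the bijection in (1) via ``matching distinguished base points'' presupposes that the $(\Omega^\theta)^*$-stabilizer of every enhanced L-parameter in the relevant orbit is exactly $(\Omega^\theta/\Omega^{\PH,\theta})^*$, which is part (3). The paper isolates this as the non-formal step, explicitly singling out the cases $\sfb > \sfb'$ (namely \cite[7.44, 7.46, 7.51, 7.52]{Lusztig-unirep}) where $(\Omega^\theta)^*_\lambda$ strictly contains $(\Omega^\theta/\Omega^{\PH,\theta})^*$ and one must verify that the larger group acts transitively on the $G^{F_\omega}$-relevant enhancements with the correct isotropy; the remaining cases with $\sfb=\sfb'>1$ are then exceptional types handled in Sections \ref{sec:innE6}--\ref{sec:E7}. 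You subsume all of this under ``verified in Sections \ref{sec:innAn}--\ref{sec:E7}'', which is literally true, but it is worth making explicit that this stabilizer check on enhancements, and not merely the count $\sfa\sfb=\sfa'\sfb'$, is what makes the $(\Omega^\theta)^*$-equivariant bijection well-defined.
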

\begin{proof}
In view of Lemma \ref{lem:17.1} and Proposition \ref{prop:17.2}, the objects in
Theorem \ref{thm:B} are unaffected by restriction of scalars for reductive groups. Hence 
we may assume that $\mb G$ is absolutely simple. We start with the Langlands parameters for 
supercuspidal unipotent representations from  \cite{Mor,Lusztig-unirep,Lusztig-unirep2}, 
where all free choices are made compatibly with Theorem \ref{thm:A}. 
In some of the cases a completely canonical $\lambda$ can be found by closer inspection, 
for instance see Section \ref{sec:innAn}.

On the $p$-adic side the subgroup $(\Omega^\theta / \Omega^{\theta,\mh P})^*$ of 
$(\Omega^\theta)^*$ acts trivially, and the quotient group $(\Omega^{\theta,\mh P})^*$ 
acts simply transitively on $\Irr (G^{F_\omega})_{[\mh P,\sigma]}$, see \eqref{eq:2.10}
(based on \cite{Lusztig-unirep}). A bijection from 
$(\Omega^{\theta,\mh P})^*$ to $\Irr (G^{F_\omega})_{[\mh P,\sigma]}$ can be determined
by fixing an extension of $\sigma$ to $N_{G^{F_\omega}}(\mh P^{F_\omega})$.

All possibilities for $(\mh P,\sigma)$ up to conjugacy can be found in \cite[\S 5--6]{Mor} 
and \cite[\S 7]{Lusztig-unirep} (for inner forms of split groups) and 
\cite[\S 11]{Lusztig-unirep2} (for outer forms of split groups). These lists show that the 
$G^{F_\omega}$-conjugacy class of $\mh P$ is uniquely determined by $\lambda$. Hence the 
$(\Omega^{\theta})^*$-orbits on the set of solutions $\pi$ of \eqref{eq:fdegisgammafactor} 
are parametrized by the cuspidal unipotent $\mh P^{F_\omega}$-representations 
with the same formal degree as $\sigma$. In particular there are $\sfb'$ such orbits.

For inner forms of split groups the numbers $\sfa, \sfb, \sfa', \sfb'$ and the equality 
$\sfa \sfb = \sfa' \sfb'$ are known from \cite[\S 7]{Lusztig-unirep}. For outer forms we 
have exhibited these numbers in Sections \ref{sec:outAn}, \ref{sec:outDn} and \ref{sec:outE6}.
That $\sfb' = 1$ for classical groups is known from \cite{Lus-Chevalley}. The equality
$\sfb' = \phi (n_s)$ can be seen from \cite[\S 7]{Lusztig-unirep} and
\cite[\S 11]{Lusztig-unirep2}.

In the adjoint case all parahorics admitting cuspidal unipotent representations with
the same formal degree are conjugate, so $\sfa' = |\Omega^{\theta,\mh P}|$. By \eqref{eq:2.1},
$(\Omega^\theta )^*$ is naturally isomorphic with $Z(\mb G^\vee)_\theta$. By Theorem
\ref{thm:A} $Z(\mb G^\vee)$ acts transitively on the set of $\lambda$'s with the
same adjoint $\gamma$-factor, and that descends to a transitive action of $(\Omega^\theta)^*$.
Therefore $(\Omega^\theta)^* / (\Omega^\theta)^*_\lambda$ acts simply transitively on the set 
of such $\lambda$, and $\sfa$ is as claimed in Theorem \ref{thm:B}.(5).
In the previous sections we checked that $(\Omega^\theta)^*_\lambda$ always contains
$(\Omega^\theta / \Omega^{\theta,\mh P})^*$.

This entails that we can find a bijection as in part (1), which is 
$(\Omega^\theta)^*$-equivariant as far as $\pi$ and $\lambda$ are concerned, but maybe
not on the relevant enhancements of $\lambda$. Notice that by Theorem \ref{thm:A} our 
method determines $\lambda$ uniquely up to $(\Omega^\theta)^*$ (given $\pi$). 

A priori it is possible that $(\Omega^\theta / \Omega^{\theta,\mh P})^*$ acts nontrivially 
on some enhancements. To rule that out we need another case-by-case check.
There are only few cases with $\sfb > \sfb'$, or equivalently 
$(\Omega^\theta)^*_\lambda \supsetneq (\Omega^\theta / \Omega^{\theta,\mh P})^*$, namely 
\cite[7.44, 7.46, 7.51 and 7.52]{Lusztig-unirep}. In those cases $\sfb' = 1$, and we 
checked in Sections \ref{sec:innCn} and \ref{sec:innDn} that $(\Omega^\theta)^*_\lambda$ 
acts transitively on the set of $G^{F_\omega}$-relevant cuspidal enhancements of 
$\lambda$, with isotropy group $(\Omega^\theta / \Omega^{\theta,\mh P})^*$.

In the other cases $\sfb = \sfb'$ and $(\Omega^\theta)^*_\lambda = 
(\Omega^\theta / \Omega^{\theta,\mh P})^*$. Usually $\sfb = \sfb' = 1$, then 
$\mc A_\lambda$ has only one relevant cuspidal representation $\rho$ and 
$(\Omega^\theta / \Omega^{\theta,\mh P})^*$ is the stabilizer of $(\lambda,\rho)$.
When $\sfb = \sfb' > 1$, $\mb G$ must be an exceptional group. For Lie types $G_2, F_4, E_8$ 
and ${}^3 D_4$ the group $(\Omega^\theta)^*$ is trivial, so there is nothing left to prove.
For Lie types $E_6$ and $E_7$ see Sections \ref{sec:innE6}, \ref{sec:outE6} and \ref{sec:E7}.

This shows part (3) and part (1) except the equivariance with respect to diagram automorphisms. 
But the latter was already verified in the previous sections (notice that in Sections 
\ref{sec:innBn} and \ref{sec:innCn} the Dynkin diagrams only admit the trivial automorphism).

Now only part (6) on the Galois side remains. By the earlier parts, there are precisely
$\sfb' = \phi (n_s)$ orbits under $(\Omega^\theta)^*$. Since all solutions $\lambda$
for \eqref{eq:fdegisgammafactor} are in the same $(\Omega^\theta)^*$-orbit, the orbits
on $\Phi_\nr (G^{F_\omega})_\cusp$ can be parametrized by enhancements of one $\lambda$.
More precisely, such orbits are parametrized by any set of representatives for the
action of $(\Omega^\theta)^*_\lambda$ on the $G^{F_\omega}$-relevant enhancements of
$\lambda$.
\end{proof}

\begin{prop}\label{prop:11.2}
Theorem \ref{thm:B} holds for all unramified adjoint $K$-groups $\mb G$.
\end{prop}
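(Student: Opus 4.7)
The plan is to reduce to the absolutely simple adjoint case treated in Proposition \ref{prop:11.1} by using the structure theory of unramified adjoint $K$-groups together with the compatibility properties (almost direct products and restriction of scalars) that are already built into the framework.

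First, I would decompose $\mb G$ as a direct product of its $K$-simple factors $\mb G = \prod_{i} \mb G_i$. Each $K$-simple adjoint unramified factor $\mb G_i$ is of the form $\mr{Res}_{L_i / K} \mb H_i$ for an absolutely simple adjoint group $\mb H_i$ over an unramified finite extension $L_i/K$. Invoking Lemma \ref{lem:17.1} and Proposition \ref{prop:17.2} from the appendix, the sets $\Irr(\mb G_i(K))_{\cusp,\unip}$, $\Phi_\nr(\mb G_i(K))_\cusp$, the groups $\Omega^\theta$, the numerical invariants $\sfa,\sfb,\sfa',\sfb',n_s,g,g'$, the formal degrees and the adjoint $\gamma$-factors all match with the corresponding objects for $\mb H_i$ over $L_i$. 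Hence Proposition \ref{prop:11.1} applied to each $\mb H_i$ gives the desired bijection on each $K$-simple factor together with all the properties of Theorem \ref{thm:B}.

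Next, I would glue these bijections together using compatibility with almost direct products. For adjoint groups the product $\mb G = \prod_i \mb G_i$ is actually direct, so
\[
\Irr(\mb G(K))_{\cusp,\unip} \;\longleftrightarrow\; \prod_i \Irr(\mb G_i (K))_{\cusp,\unip}
\quad \text{and} \quad
\Phi_\nr(\mb G(K))_\cusp \;\longleftrightarrow\; \prod_i \Phi_\nr(\mb G_i(K))_\cusp,
\]
with the groups $\Omega^\theta$, $\Omega^{\theta,\mh P}$, $\mc A_\lambda$, and the stabilizers $(\Omega^\theta)^*_\lambda$ decomposing as products over $i$. The numerical identities $\sfa\sfb = \sfa'\sfb'$ and $\sfb = gg'\phi(n_s)$ follow by taking products of the corresponding factor-wise identities, and similarly for the count of $(\Omega^\theta)^*$-orbits in part (6). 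Formal degrees and adjoint $\gamma$-factors are multiplicative across direct product decompositions, so property (1) of Theorem \ref{thm:C}, and hence the formal-degree/$\gamma$-factor matching in Theorem \ref{thm:B}, passes from the factors to $\mb G$.

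The main obstacle will be equivariance under diagram automorphisms, since an automorphism $\tau$ of the root datum of an adjoint $\mb G$ can permute the $K$-simple factors $\mb G_i$ (and correspondingly permute the extensions $L_i$), not merely act within each factor. The plan is to split $\tau$ into a permutation $\pi_\tau$ of the set of simple factors (combined with isomorphisms $\mb G_i \isom \mb G_{\pi_\tau(i)}$ of $K$-groups) and, for $\tau$-orbits of length one, a genuine diagram automorphism of the corresponding $\mb H_i$ over $L_i$; the latter is handled by Proposition \ref{prop:11.1}, while the former acts by the obvious permutation on both sides of the correspondence. The two contributions commute and together produce the $\tau$-action on the packet data, so the correspondence is $\tau$-equivariant by construction. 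Part (2), uniqueness of the L-parameter up to the choice of extension of $\sigma$, is then immediate from the factor-wise statement of Proposition \ref{prop:11.1}, and part (3) follows because $(\Omega^\theta)^*$, $(\Omega^{\theta,\mh P})^*$ and $(\Omega^\theta)^*_\lambda$ all decompose compatibly with the product decomposition.
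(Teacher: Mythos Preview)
Your approach is essentially the same as the paper's: reduce to simple adjoint factors via the direct product structure, invoke Proposition \ref{prop:11.1} (which already absorbs restriction of scalars via Lemma \ref{lem:17.1} and Proposition \ref{prop:17.2}), and then assemble. The numerical parts (3)--(6) and the formal-degree/$\gamma$-factor matching go through exactly as you say, by multiplicativity.

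The one place where you are looser than the paper is the $\tau$-equivariance when $\tau$ permutes the $K$-simple factors. You first construct the bijection independently on each $\mb G_i$ and then assert that the permutation part of $\tau$ ``acts by the obvious permutation on both sides of the correspondence''. That is only true if the choices made in Proposition \ref{prop:11.1} on the different (isomorphic) factors were made compatibly; nothing in your construction forces this. The paper handles this by not constructing the bijections independently: it fixes one factor in each orbit of the full diagram-automorphism group, constructs the bijection there equivariantly for the stabilizer $\Gamma_1$ of that factor, and then \emph{defines} the bijection on the other factors by transporting along $\tau$. With that convention your phrase ``$\tau$-equivariant by construction'' becomes literally correct. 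This is a small repair rather than a different idea, but it is the step that actually secures equivariance and you should state it explicitly.
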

\begin{proof}
Every adjoint linear algebraic group is a direct product of simple adjoint groups. 
It is clear that everything in Theorem \ref{thm:B} (apart from diagram automorphisms)
factors naturally over direct products of groups. Here the required compatibility with
(almost) direct products, as in \eqref{eq:2.20}, says that the enhanced L-parameter of
a $\pi \in \Irr (G^{F_\omega})_{\cusp,\unip}$ is completely determined by what happens
for the simple factors of $\mb G$. In particular, Proposition \ref{prop:11.1}
establishes Theorem \ref{thm:B}, except equivariance with respect to diagram automorphisms, 
for all unramified adjoint groups which are inner forms of a $K$-split group.

Consider an unramified adjoint $K$-group with simple factors $\mb G_i$:
\begin{equation}\label{eq:11.2}
\mb G = \mb G_1 \times \cdots \times \mb G_d .
\end{equation}
Suppose that a diagram automorphism $\tau$ maps $\mb G_1$ to $\mb G_2$. Then $\mb G_1$ and 
$\mb G_2$ are isomorphic, and $\tau$ also maps $\mb G_1^\vee$ to $\mb G_2^\vee$ inside $\mb G^\vee$. 
Let $\Gamma_1$ be the stabilizer of $\mb G_1$ in the group of diagram automorphisms of $\mb G$.

Assume that the $\theta$-action on the set $\{ \mb G_1,\ldots,\mb G_d\}$ is trivial. Then
Theorem \ref{thm:B} for $\mb G$ follows directly from Proposition \ref{prop:11.1} for the 
$\mb G_i$, except possibly for part (2). But we can make the LLC from Theorem \ref{thm:B} 
$\tau$-equivariant by first constructing it for $\mb G_1$ in a $\Gamma_1$-equivariant way, 
and then defining it for $\mb G_2$ by imposing $\tau$-equivariance.

When $\theta$ acts nontrivially on the set of direct factors of $\mb G$, the above enables
us to reduce to the case where the $\mb G_i$ form one orbit under the group 
$\langle \theta \rangle$. Then clearly $G^{F_\omega} \cong G_1^{F^d_\omega}$. For a more
precise formulation, let $K_{(d)}$ be the unramified extension of $K$ of degree $d$. 
Then the $K$-group $\mb G^\omega$ is the restriction of scalars, from $K_{(d)}$ to $K$, of the 
$K_{(d)}$-group $\mb G_1^\omega$. Lemma \ref{lem:17.1} says that there is a natural bijection
\begin{equation}\label{eq:11.1}
\Phi (G^{F_\omega})_\cusp \longrightarrow \Phi (G_1^{F_\omega^d})_\cusp .
\end{equation}
Proposition \ref{prop:17.2} says that Theorem \ref{thm:B} for the $K$-group $\mb G$ is equivalent 
to Theorem \ref{thm:B} for the simple, adjoint $K_{(d)}$-group $\mb G_1$, via \eqref{eq:11.1}.
Now $\theta$ has been replaced by $\theta^d$, which stabilizes $\mb G_1$, so we can apply the 
method from the case with trivial $\theta$-action on the set of simple factors of $\mb G$.
\end{proof}

\section{Semisimple unramified groups}
\label{sec:semisimple}

Let $\mb G$ be a semisimple unramified $K$-group, and let $\mb G_\ad$ be its adjoint 
quotient. We will compare the numbers $\sfa,\sfb,\sfa'$ and $\sfb'$ for $\mb G$ with
those for $\mb G_\ad$, which we denote by a subscript ad.

Let $\pi_\ad, \lambda_\ad, \mh P_\ad, \sigma_\ad$ be as in \eqref{eq:1.4}, for $\mb G_\ad$. 
From \cite{Lusztig-unirep,Lusztig-unirep2}, Theorem \ref{thm:B} and Lemma \ref{lem:3.1}
we know that $(\Omega_\ad^{\theta,\mh P} )^*$ acts simply transitively on the set of
$\pi_\ad \in \mr{Irr} (G_\ad^{F_\omega})$ containing $(\mh P_\ad, \sigma_\ad)$. In other words,
$(\Omega_\ad^{\theta} )^*$ acts transitively, and $(\Omega_\ad^\theta / 
\Omega_\ad^{\theta,\mh P} )^*$ is the stabilizer of $\pi_\ad$.

Let $\pi \in \Irr (G^{F_\omega})_{\cusp,\unip}$ be contained in the pullback of $\pi_\ad$
to $G^{F_\omega}$. It is known \cite[\S 3]{Lus-Chevalley} that unipotent cuspidal 
representations of a finite reductive group depend on the Lie type of the group. 
So every $(\mh P_\ad, \sigma_\ad)$ lifts uniquely to $(\mh P, \sigma)$ and
$\pi \in \Irr (G^{F_\omega})_{[\mh P,\sigma]}$. The packets of cuspidal 
unipotent representations of these parahoric subgroups satisfy
\begin{equation}\label{eq:11.7}
\sfb' = \sfb'_\ad .
\end{equation}
When $\mb G_\ad$ is adjoint and simple, Theorem \ref{thm:A} and Lusztig's classification show 
that the formal degree (of $\pi_\ad$) determines a unique conjugacy class of $F_\omega$-stable 
parahoric subgroups of $G_\ad^{F_\omega}$ which gives rise to one or more supercuspidal 
unipotent representations with that formal degree. Via a factorization as in \eqref{eq:11.2} 
this extends to all unramified adjoint $\mb G_\ad$.

This need not be true when $\mb G$ is not adjoint, but then still all such parahoric 
subgroups are associate by elements of $\Omega^\theta_\ad$. It follows that the number of 
$G^{F_\omega}$-conjugacy classes of such parahoric subgroups is precisely
$[ \Omega_\ad^\theta / \Omega_\ad^{\PH,\theta} : \Omega^\theta/\Omega^{\PH,\theta} ] = g'$.
By \eqref{eq:2.10} the group $(\Omega^{\theta,\mh P})^*$ acts simply transitively on 
the set of irreducible $G^{F_\omega}$-representations containing $\sigma$. It follows that
\begin{equation}\label{eq:11.8}
\sfa' = |\Omega^{\theta,\mh P}| g' = 
[\Omega^{\theta,\mh P}_\ad : \Omega^{\theta,\mh P}]^{-1} g' \, \sfa'_\ad ,
\end{equation}
and that $\sfa' \sfb'$ equals the number of supercuspidal unipotent
$G^{F_\omega}$-representations which on each $K$-simple factor $\mb G_i$ give the same formal 
degree as $\pi$ (see page \pageref{lem:3.1}).

By Theorem \ref{thm:B}.(3) for $\mb G_\ad$:
\begin{equation}\label{eq:11.3}
(\Omega_\ad^\theta )_{\lambda_\ad}^* = (\Omega_\ad^\theta / N_{\lambda_\ad} )^*
\text{ for a subgroup } N_{\lambda_\ad} \subset \Omega^{\theta,\mh P}_\ad .
\end{equation}
By Theorem \ref{thm:B}.(5) $N_{\lambda_\ad}^*$ is naturally in bijection with the 
$(\Omega_\ad^\theta )^*$-orbit of $\lambda_\ad$. In particular 
\begin{equation}\label{eq:11.4}
\sfa_\ad = |(\Omega_\ad^\theta )^* \lambda_\ad| = |N_{\lambda_\ad}| .
\end{equation}
Also, $(\Omega_\ad^\theta )_{\lambda_\ad}^*$ has $\sfb_\ad$ elements and acts simply 
transitively on the set of $G^{F_\omega}_\ad$-relevant cuspidal enhancements of $\lambda_\ad$.

\begin{lemma}\label{lem:11.3}
Let $\lambda \in \Phi_\nr^2 (G^{F_\omega})$ be the projection of $\lambda_\ad$ via
$\mb G^\vee_\SC \to \mb G^\vee$.
\begin{enumerate}
\item $(\Omega^\theta)^*$ acts transitively on the collection of $\lambda' \in \Phi_\nr^2 (G^{F_\omega})$ 
which, for every $K$-simple factor $\mb G_i$ of $\mb G$, have the same $\gamma$-factor 
$\gamma (0, \mr{Ad}_{\mb G_i^\vee} \circ \lambda',\psi)$ as $\lambda$.
\item The stabilizer of (the equivalence class of) $\lambda$ equals 
$(\Omega^\theta / \Omega^\theta \cap N_{\lambda_\ad})^*$, and it contains
$(\Omega^\theta / \Omega^{\theta,\mh P})^*$.
\item $\sfa = |N_{\lambda_\ad} \cap \Omega^\theta|$.
\end{enumerate}
\end{lemma}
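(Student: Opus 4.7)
The plan is to transport everything from $\mb G_\ad$—where the corresponding claims are Proposition~\ref{prop:11.2}—to $\mb G$ via the central isogeny $\pi\colon\mb G_\SC^\vee\to\mb G^\vee$. The inclusion $\Omega^\theta\hookrightarrow\Omega_\ad^\theta$ dualizes to a surjection $r\colon(\Omega_\ad^\theta)^*\twoheadrightarrow(\Omega^\theta)^*$, and $\pi$ intertwines the $(\Omega_\ad^\theta)^*$- and $(\Omega^\theta)^*$-actions on L-parameters. A key preliminary observation is that the adjoint $\gamma$-factors of $\lambda_\ad$ and $\lambda$ coincide, because $\mathrm{Ad}$ factors through $\mb G^\vee/Z(\mb G^\vee)$ and is thus insensitive both to the central isogeny $\pi$ and to central twists in the sense of \eqref{eq:2.3}.

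For part~(1), I would take any $\lambda'\in\Phi_\nr^2(G^{F_\omega})$ with the same adjoint $\gamma$-factor as $\lambda$ and lift it to a parameter $\lambda'_\ad\in\Phi_\nr^2(G_\ad^{F_\omega})$; the existence of such a lift, and the fact that the $\pi_*$-fiber over the class of $\lambda$ coincides with the $\ker r$-orbit of $\lambda_\ad$, will be established in Sections~\ref{sec:semisimple}--\ref{sec:proofss} as part of the general analysis of central isogenies. Since the adjoint $\gamma$-factors of $\lambda'_\ad$ and $\lambda_\ad$ then agree, Proposition~\ref{prop:11.2} yields $\tilde b\in(\Omega_\ad^\theta)^*$ with $\lambda'_\ad=\tilde b\cdot\lambda_\ad$, and applying $\pi$ gives $\lambda'=r(\tilde b)\cdot\lambda$.

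For part~(2), an element $b\in(\Omega^\theta)^*$ stabilizes the class of $\lambda$ if and only if, for some (equivalently, any) lift $\tilde b\in(\Omega_\ad^\theta)^*$, the parameter $\tilde b\cdot\lambda_\ad$ lies in the $\pi_*$-fiber of $[\lambda]$; by the description of that fiber this is in turn equivalent to $\tilde b\cdot\lambda_\ad\sim k\cdot\lambda_\ad$ for some $k\in\ker r$, i.e.\ to $\tilde b\in\ker r\cdot(\Omega_\ad^\theta)^*_{\lambda_\ad}$. Using \eqref{eq:11.3} the second factor equals $(\Omega_\ad^\theta/N_{\lambda_\ad})^*$, and the image of this subgroup under $r$ is computed by elementary Pontryagin duality: a character of $\Omega_\ad^\theta$ trivial on $N_{\lambda_\ad}$ restricts to a character of $\Omega^\theta$ trivial on $\Omega^\theta\cap N_{\lambda_\ad}$, and conversely any such character of $\Omega^\theta$ extends, by divisibility of $\CC^\times$, to a character of $\Omega_\ad^\theta$ trivial on $N_{\lambda_\ad}$. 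Hence the stabilizer of $[\lambda]$ equals $(\Omega^\theta/(\Omega^\theta\cap N_{\lambda_\ad}))^*$. The asserted containment reduces to $\Omega^\theta\cap N_{\lambda_\ad}\subseteq\Omega^{\theta,\mh P}$, which follows from $N_{\lambda_\ad}\subseteq\Omega^{\theta,\mh P_\ad}_\ad$ together with the elementary identity $\Omega^\theta\cap\Omega^{\theta,\mh P_\ad}_\ad=\Omega^{\theta,\mh P}$. Part~(3) then follows at once from orbit-stabilizer applied to the transitive action of part~(1), giving $\sfa=|\Omega^\theta|\cdot|\Omega^\theta/(\Omega^\theta\cap N_{\lambda_\ad})|^{-1}=|\Omega^\theta\cap N_{\lambda_\ad}|$.

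The main obstacle is the pair of technical inputs imported from the later sections: existence of an unramified discrete lift $\lambda'_\ad$ of $\lambda'$, and the identification of the $\pi_*$-fiber of $[\lambda]$ with the $\ker r$-orbit of $\lambda_\ad$. Both are standard but delicate facts about L-parameters under central isogenies of reductive duals; once they are in place, the remainder of Lemma~\ref{lem:11.3} is a purely combinatorial computation in the finite abelian groups $\Omega^\theta\subseteq\Omega_\ad^\theta$.
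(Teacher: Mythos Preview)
Your argument is correct and follows essentially the same route as the paper: transport the adjoint result through the surjection $r\colon(\Omega_\ad^\theta)^*\twoheadrightarrow(\Omega^\theta)^*$, identify the $\pi_*$-fiber of $[\lambda]$ with the $\ker r$-orbit of $\lambda_\ad$, and then do the finite-abelian-group bookkeeping.

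Two remarks. First, the ``technical inputs'' you defer need not be imported from later sections: the paper argues the fiber description inline in its proof of part~(2) (the one-line identification of $\ker r$ with the image of $\ker(\mb G^\vee_\SC\to\mb G^\vee)=(\Omega_\ad/\Omega)^*$ in $(\Omega_\ad^\theta)^*$), and lift existence is treated as a standard background fact; in particular do not forward-reference Section~\ref{sec:proofss}, which itself invokes Lemma~\ref{lem:11.3}. Second, your part~(3) via orbit--stabilizer directly from (1) and (2) is slightly cleaner than the paper's, which instead re-derives the orbit size through the auxiliary transitive action of $N_{\lambda_\ad}^*$.
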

\begin{proof}
(1) By Theorem \ref{thm:B} for $\mb G_\ad$, $(\Omega^\theta_\ad )^* \lambda_\ad$ is 
precisely the collection of L-parameters for $G_\ad^{F_\omega}$ with a given adjoint 
$\gamma$-factor. Consequently every lift of a $\mb G^\vee$-conjugate of $\lambda$ is 
$\mb G^\vee_\ad$-conjugate to an element of $(\Omega^\theta_\ad )^* \lambda_\ad$, and 
$(\Omega^\theta )^* \lambda$ is the collection of L-parameters for $G^{F_\omega}$ 
with the same adjoint $\gamma$-factor as $\lambda$.\\
(2) Since $(\Omega_\ad^\theta / \Omega_\ad^{\theta,\mh P})^*$ stabilizes $\lambda_\ad$ (by 
Theorem \ref{thm:B}.(3) for $\mb G_\ad$), its image $(\Omega^\theta / \Omega^{\theta,\mh P})^*$ 
under $(\Omega_\ad^\theta)^* \to (\Omega^\theta)^*$ stabilizes $\lambda$.

In $\mb G^\vee$ some different elements of $\mb G^\vee_\SC$ become equal, namely 
$\ker (\mb G^\vee_\SC \to \mb G^\vee) = (\Omega_\ad / \Omega )^*$. The image of 
$\ker (\mb G^\vee_\SC \to \mb G^\vee)$ in $(\Omega_\ad^\theta)^*$ is 
\[
(\Omega_\ad^\theta / \Omega^\theta)^* = \ker \big( (\Omega_\ad^\theta)^* \to (\Omega^\theta)^* \big).
\]
Hence the stabilizer of $\lambda$ in $(\Omega^\theta)^*$ is precisely the image in 
$(\Omega^\theta)^*$ of the $(\Omega_\ad^\theta)^*$-stabilizer of the orbit 
$(\Omega_\ad^\theta / \Omega^\theta)^* \lambda_\ad$. That works out as
\begin{multline*}
(\Omega_\ad^\theta / \Omega^\theta)^* (\Omega^\theta_\ad)^*_{\lambda_\ad} \big/ 
(\Omega_\ad^\theta / \Omega^\theta)^* =
(\Omega^\theta_\ad)^*_{\lambda_\ad} \big/ 
\big( (\Omega_\ad^\theta / \Omega^\theta)^* \cap (\Omega^\theta_\ad)^*_{\lambda_\ad} \big) \\
= (\Omega^\theta_\ad / N_{\lambda_\ad})^* \big/ (\Omega^\theta_\ad / 
\Omega^\theta N_{\lambda_\ad})^* = (\Omega^\theta N_{\lambda_\ad} / N_{\lambda_\ad})^*
= (\Omega^\theta / \Omega^\theta \cap N_{\lambda_\ad})^* .
\end{multline*}
(3) We saw in \eqref{eq:11.3} that $(N_{\lambda_\ad})^*$ acts simply transitively on
$(\Omega^\theta_\ad )^* \lambda_\ad$, so it also acts transitively on 
$(\Omega^\theta )^* \lambda$. The stabilizer of $\lambda$ in this group is
$(N_{\lambda_\ad} / N_{\lambda_\ad} \cap \Omega )^*$, the image of $(\Omega_\ad / \Omega )^*$ 
in $(N_{\lambda_\ad})^*$. Then the quotient group 
\[
N_{\lambda_\ad}^* / (N_{\lambda_\ad} / N_{\lambda_\ad} \cap \Omega )^* =
(N_{\lambda_\ad} \cap \Omega)^* = (N_{\lambda_\ad} \cap \Omega^\theta)^*
\]
acts simply transitively on $(\Omega^\theta )^* \lambda$. We deduce that $\sfa = 
|N_{\lambda_\ad} \cap \Omega^\theta|$. 
\end{proof}

In the setting of Lemma \ref{lem:11.3}, $\mc A_\lambda$ contains $\mc A_{\lambda_\ad}$ 
as a normal subgroup. We want to compare these subgroups of $\mb G^\vee_\SC$, and the 
cuspidal local systems which they support.

\begin{lemma}\label{lem:11.4}
The group $\mc A_\lambda / \mc A_{\lambda_\ad}$ is isomorphic to 
$(\Omega^\theta_\ad / \Omega^\theta N_{\lambda_\ad})^*$. 
\end{lemma}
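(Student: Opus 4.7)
The plan is to realize $\mc A_\lambda/\mc A_{\lambda_\ad}$ as the image of an explicit obstruction homomorphism coming from the short exact sequence
\[
1 \to (\Omega_\ad/\Omega)^* \to Z({\mb G^\vee}_\SC) \to Z(\mb G^\vee) \to 1
\]
of $\mb W_K$-modules (with $\mb W_K$ acting through $\theta$). First I would observe that $Z^1_{{\mb G^\vee}_\SC}(\lambda_\ad)$ and $Z^1_{{\mb G^\vee}_\SC}(\lambda)$ from \eqref{eq:2.12} share the same identity component $Z_{{\mb G^\vee}_\SC}(\lambda)^\circ$, because near the identity the coboundary slack $b$ in \eqref{eq:2.12} is forced to be trivial. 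Hence the natural map $\mc A_{\lambda_\ad} \to \mc A_\lambda$ is injective and it suffices to identify the honest quotient $Z^1_{{\mb G^\vee}_\SC}(\lambda)\big/Z^1_{{\mb G^\vee}_\SC}(\lambda_\ad)$.

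Next, for $g \in Z^1_{{\mb G^\vee}_\SC}(\lambda)$ I set $c_g(w) := g\lambda_\ad(w)g^{-1}\lambda_\ad(w)^{-1}$. Connectedness of $SL_2(\CC)$ together with the finiteness of $Z({\mb G^\vee}_\SC)$ force $g$ to commute exactly with $\lambda_\ad|_{SL_2(\CC)}$, so $c_g$ depends only on $w \in \mb W_K$. Since the image of $g$ in $\mb G^\vee$ conjugates $\lambda$ to $\lambda\bar b$ for some $\bar b \in B^1(\mb W_K, Z(\mb G^\vee))$, the map $c_g$ takes values in the preimage $Z({\mb G^\vee}_\SC)$ of $Z(\mb G^\vee)$ and is a $1$-cocycle. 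Under the Kottwitz-type identifications (compare \eqref{eq:2.1}--\eqref{eq:2.2})
\[
H^1(\mb W_K, Z({\mb G^\vee}_\SC)) \cong Z({\mb G^\vee}_\SC)_\theta \cong (\Omega_\ad^\theta)^*, \qquad H^1(\mb W_K, Z(\mb G^\vee)) \cong (\Omega^\theta)^*,
\]
the class $[c_g]$ projects to $[\bar b]=0$ in $(\Omega^\theta)^*$ and therefore lies in the kernel of $(\Omega_\ad^\theta)^* \to (\Omega^\theta)^*$, which is $(\Omega_\ad^\theta/\Omega^\theta)^*$ by dualizing $0 \to \Omega^\theta \to \Omega_\ad^\theta \to \Omega_\ad^\theta/\Omega^\theta \to 0$. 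The assignment $g \mapsto [c_g]$ thus produces a homomorphism
\[
\Psi : Z^1_{{\mb G^\vee}_\SC}(\lambda) \longrightarrow (\Omega_\ad^\theta/\Omega^\theta)^*,
\]
whose kernel is tautologically $Z^1_{{\mb G^\vee}_\SC}(\lambda_\ad)$: being in the kernel says $c_g \in B^1(\mb W_K, Z({\mb G^\vee}_\SC))$, which is the defining condition for $Z^1_{{\mb G^\vee}_\SC}(\lambda_\ad)$.

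To identify the image I would argue as follows. By construction $[c_g]=z$ means precisely that $g\lambda_\ad g^{-1}$ and $z\cdot\lambda_\ad$ (with the $(\Omega_\ad^\theta)^*$-action of \eqref{eq:2.3}) represent the same ${\mb G^\vee}_\SC$-conjugacy class of L-parameters; so $z$ is realized by some $g$ iff $z$ stabilizes $[\lambda_\ad]$ and its image in $(\Omega^\theta)^*$ is trivial. By \eqref{eq:11.3} the stabilizer equals $(\Omega_\ad^\theta/N_{\lambda_\ad})^*$, and Pontryagin duality (intersections of annihilators correspond to sums of annihilated subgroups) then yields
\[
\mathrm{image}(\Psi) \;=\; (\Omega_\ad^\theta/N_{\lambda_\ad})^* \cap (\Omega_\ad^\theta/\Omega^\theta)^* \;=\; (\Omega_\ad^\theta/\Omega^\theta N_{\lambda_\ad})^*,
\]
which combined with the first two paragraphs proves the lemma. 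The main step that needs care is surjectivity onto this intersection: given $z$ in the intersection, one produces $g \in {\mb G^\vee}_\SC$ effecting the conjugation $g\lambda_\ad g^{-1}$ into the class of $z\cdot\lambda_\ad$ directly from the stabilizer condition, and then verifies $g \in Z^1_{{\mb G^\vee}_\SC}(\lambda)$ by noting that $c_g$ projects to a coboundary in $Z(\mb G^\vee)$ precisely because the image of $z$ in $(\Omega^\theta)^*$ vanishes.
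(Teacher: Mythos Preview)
Your argument is correct. The core mechanism matches the paper's: both identify $\mc A_\lambda/\mc A_{\lambda_\ad}$ with the intersection $(\Omega_\ad^\theta/N_{\lambda_\ad})^* \cap (\Omega_\ad^\theta/\Omega^\theta)^*$ inside $(\Omega_\ad^\theta)^*$, using that the first factor is the stabilizer of $[\lambda_\ad]$ and the second records that the obstruction dies in $(\Omega^\theta)^*$. The packaging differs, however. The paper introduces the \emph{strict} centralizers $S_\lambda = Z_{{\mb G^\vee}_\SC}(\mr{im}\,\lambda)$ and $S_{\lambda_\ad}$, argues geometrically that $S_\lambda/S_{\lambda_\ad}$ parametrizes those lifts of $\lambda$ to $\Phi(G_\ad^{F_\omega})$ that are ${\mb G^\vee}_\SC$-conjugate to $\lambda_\ad$ --- yielding the target group --- and then transfers the result to $\mc A_\lambda/\mc A_{\lambda_\ad}$ via the index identity $[\mc A_\lambda : S_\lambda] = |(1-\theta)Z({\mb G^\vee}_\SC)| = [\mc A_{\lambda_\ad} : S_{\lambda_\ad}]$. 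Your cocycle-class map $\Psi$ on $Z^1_{{\mb G^\vee}_\SC}(\lambda)$ absorbs the coboundary slack from the outset and thereby skips both the strict-centralizer intermediary and the index comparison; this is slightly slicker, while the paper's route keeps the geometric picture of conjugate lifts in the foreground.
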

\begin{proof}
First we determine which lifts of $\lambda$ to a L-parameter for $G_\ad^{F_\omega}$ are 
$\mb G^\vee_\SC$-conjugate. To be conjugate, they have to be related by elements of 
$(\Omega_\ad^\theta)^*_{\lambda_\ad} = (\Omega^\theta_\ad / N_{\lambda_\ad})^*$. 
To be lifts of the one and the same $\lambda$, they may differ only by elements of 
\[
\ker (\mb G^\vee_\SC \to \mb G^\vee) = (\Omega_\ad / \Omega )^* .
\] 
Therefore two lifts of $\lambda$ are conjugate if and only if they differ by an element of the intersection
\[
(\Omega^\theta_\ad / N_{\lambda_\ad})^* \cap (\Omega_\ad^\theta / \Omega^\theta )^* =
(\Omega^\theta_\ad / \Omega^\theta N_{\lambda_\ad})^* .
\]
We write $S_\lambda = Z_{\mb G^\vee_\SC}(\mr{im} \, \lambda) = 
\pi_0 \big( Z_{\mb G^\vee_\SC}(\mr{im} \, \lambda) \big)$, where $\mb G^\vee_\SC$ acts by
conjugation, via the natural map to the derived group of $\mb G^\vee$. The above implies 
\begin{equation}\label{eq:11.5}
S_\lambda / S_{\lambda_\ad} \cong (\Omega_\ad^\theta / \Omega^\theta N_{\lambda_\ad})^* .
\end{equation}
The more subtle component group $\mc A_{\lambda_\ad}$ contains $S_{\lambda_\ad}$ with index 
$[Z(\mb G^\vee_\SC) : Z(\mb G^\vee_\SC)^{\mb W_F}] = | (1 - \theta) Z(\mb G^\vee_\SC) |$. Similarly 
$[\mc A_{\lambda} : S_{\lambda}] = [Z(\mb G^\vee_\SC) : Z(\mb G^\vee_\SC)^{\mb W_F}]$ and hence
\begin{equation}\label{eq:11.6}
\mc A_\lambda / \mc A_{\lambda_\ad} \cong S_\lambda / S_{\lambda_\ad} \qedhere
\end{equation}
\end{proof}

Next we compare the cuspidal enhancements of $\lambda$ and $\lambda_\ad$. Since $\mc A_\lambda$
contains $\mc A_{\lambda_\ad}$ as normal subgroup, it acts on $\mc A_{\lambda_\ad}$ (by conjugation)
and it acts on $\Irr (\mc A_{\lambda_\ad})$. For $\rho_\ad \in \Irr (\mc A_{\lambda_\ad})$, we let  
$(\mc A_\lambda )_{\rho_\ad}$ be its stabilizer in $\mc A_\lambda$.

\begin{lemma}\label{lem:11.6}
Every irreducible cuspidal representation $\rho_\ad$ of $\mc A_{\lambda_\ad}$ extends to a
representation of $(\mc A_\lambda )_{\rho_\ad}$.
\end{lemma}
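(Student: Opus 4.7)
The plan is to apply Clifford theory to the normal inclusion $\mc A_{\lambda_\ad} \triangleleft (\mc A_\lambda)_{\rho_\ad}$. By Lemma \ref{lem:11.4} the ambient quotient $\mc A_\lambda/\mc A_{\lambda_\ad} \cong (\Omega_\ad^\theta/\Omega^\theta N_{\lambda_\ad})^*$ is abelian, so the subquotient $Q := (\mc A_\lambda)_{\rho_\ad}/\mc A_{\lambda_\ad}$ is abelian as well. Conjugation on $\rho_\ad$ produces a projective action of $Q$ on its representation space, and the obstruction to lifting this to an honest representation of $(\mc A_\lambda)_{\rho_\ad}$ is a class $[c_{\rho_\ad}] \in H^2(Q,\CC^\times)$ which I must show vanishes.

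The first step is the easy case where $Q$ is cyclic: then $H^2(Q,\CC^\times)$ vanishes automatically, since $\CC^\times$ is divisible. Inspecting the structure of $\Omega_\ad$ for simple adjoint factors of $\mb G_\ad$ (Sections \ref{sec:innAn}--\ref{sec:E7}), the group $\Omega_\ad^\theta$ itself is cyclic in every case except when $\mb G_\ad$ has a factor of type $D_n$ with $n$ even and with trivial Frobenius action on $\Omega_\ad$, in which case $\Omega_\ad^\theta \cong (\ZZ/2\ZZ)^2$. Combined with the compatibility with almost direct products already used throughout the paper, this settles the overwhelming majority of configurations.

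For the remaining non-cyclic configurations I would pass to the geometric picture, realising $\rho_\ad$ as the fiber at $u_\lambda$ of a cuspidal local system $\mc F$ on the conjugacy class of $u_\lambda$ in ${\mb G^\vee}_\SC$, equivariant under $Z^1_{{\mb G^\vee}_\SC}(\lambda_\ad(\mb W_K))$. Each element of $(\mc A_\lambda)_{\rho_\ad}$ lifts to some $g \in Z^1_{{\mb G^\vee}_\SC}(\lambda(\mb W_K))$ centralising $u_\lambda$ and preserving the isomorphism class of $\mc F$; hence conjugation by $g$ induces an intertwiner on $\mc F_{u_\lambda}$, canonical up to a single scalar. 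Rigidifying these scalars so as to agree with the given $\mc A_{\lambda_\ad}$-action furnishes a candidate extension of $\rho_\ad$ to $(\mc A_\lambda)_{\rho_\ad}$.

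The main obstacle will be the last point, namely verifying that the normalised intertwiners satisfy the composition law in the non-cyclic $D_n$ cases, which is precisely the vanishing of $[c_{\rho_\ad}]$. Here I would argue directly from the explicit classification of cuspidal local systems on the unipotent classes in spin groups in \cite{Lus-Intersect}, combined with the explicit descriptions of $\mc A_\lambda$ and $\mc A_{\lambda_\ad}$ already carried out in Section \ref{sec:innDn}. In each configuration that arises there, the relevant unipotent class supports only one or two cuspidal local systems (distinguished by their central characters), so $Q$ acts by permuting an explicit finite set and the projective factor is forced to be trivial by direct inspection.
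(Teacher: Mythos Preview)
Your overall strategy matches the paper's: reduce to almost simple factors, observe that the relevant stabilizer quotient is a subquotient of $\Omega_\ad^\theta$, dispose of the cyclic cases via $H^2(\text{cyclic},\CC^\times)=0$, and treat the residual $D_{2n}$ configurations by hand. Two points are worth sharpening.

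First, the paper does not work with an unspecified $Q$ but computes it exactly: using Theorem \ref{thm:B}.(3) for $\mb G_\ad$ together with Lemma \ref{lem:11.4}, one finds $(\mc A_\lambda)_{\rho_\ad}/\mc A_{\lambda_\ad} \cong (\Omega_\ad^\theta/\Omega^\theta\Omega_\ad^{\theta,\mh P})^*$ (this is \eqref{eq:11.12}). That identification is what lets the paper pin down the only non-cyclic possibility precisely: one needs $\Omega_\ad^\theta\cong(\ZZ/2\ZZ)^2$ (so inner split $D_{2n}$), $\Omega^\theta$ trivial (so $\mb G$ simply connected), and $\Omega_\ad^{\theta,\mh P}=1$ (forcing $\mathsf J = D_{2n}$). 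Your sketch jumps to ``the non-cyclic $D_n$ cases'' without this filtering, which leaves many configurations on the table that are in fact already cyclic.

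Second, your final sentence is where the argument wobbles. Saying that $Q$ ``acts by permuting an explicit finite set'' of cuspidal local systems is not an argument for the vanishing of $[c_{\rho_\ad}]$: by definition $Q$ already fixes the isomorphism class of $\rho_\ad$, and the cocycle lives in the choice of intertwiners on the representation space, not in any permutation of isomorphism classes. The paper does not try to argue abstractly here; it simply invokes the explicit computation in the Remark after $\mathsf J=D_n$ in Section \ref{sec:innDn}, where the component groups for $\mr{Spin}_{2n}$ are written down concretely and $\rho_\ad$ is extended in stages (first to the $SO_{2n}$ level \eqref{eq:8.5}, then, using that the extension is symmetric in the two factors and hence $w$-stable, to the full $\mc A_{\lambda_\SC}$). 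That hands-on verification is the actual content, and your proposal should point to it rather than to a permutation heuristic.
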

\begin{proof}
Recall from Theorem \ref{thm:B}.(3) for $\mb G_\ad$ that the
stabilizer of $(\lambda_\ad,\rho_\ad)$ in $(\Omega_\ad^\theta)^*$ is $(\Omega_\ad^\theta / 
\Omega_\ad^{\theta,\mh P})^*$. Under the isomorphism from Lemma \ref{lem:11.4} or \eqref{eq:11.5}, 
the stabilizer of $\rho_\ad$ in $\mc A_\lambda / \mc A_{\lambda_\ad}$ corresponds to 
\begin{equation}\label{eq:11.12}
(\Omega^\theta_\ad / \Omega^\theta N_{\lambda_\ad})^* \cap (\Omega_\ad^\theta / 
\Omega_\ad^{\theta,\mh P})^* = (\Omega_\ad^\theta / \Omega^\theta \Omega_\ad^{\theta,\mh P})^* .
\end{equation}
In the proof of Proposition \ref{prop:11.2} we checked that everything for $\mb G_\ad$ factors
as a direct product of objects associated to simple adjoint groups. In particular $\rho_\ad$
is a tensor product of cuspidal representations $\rho_i$ of groups $\mc A_i$ associated to 
L-parameters $\lambda_i$ for adjoint simple groups $G_i^{F_\omega}$. Thus it suffices to show 
that every such $\rho_i$ extends to a representation of $(\mc A_\lambda )_{\rho_\ad}$. 
The action of $\mc A_\lambda$ on $\rho_i$ factors through the almost direct factor of 
$G^{F_\omega}$ which corresponds to $G_i^{F_\omega}$. This enables us to reduce to the case 
where $\mb G$ is almost simple, which we assume for the remainder of this proof.

Now we can proceed by classification, using \cite[\S 7]{Lusztig-unirep}. By \eqref{eq:11.12}
and \cite[Theorem 1.2]{AMS1} we have to consider projective representations of 
$(\Omega_\ad^\theta / \Omega^\theta \Omega_\ad^{\theta,\mh P})^*$. In almost all cases this 
group is cyclic, because $\Omega_\ad^\theta$ is cyclic. Every 2-cocycle (with values in
$\CC^\times$) of a cyclic group is trivial, so then by \cite[Proposition 1.1.a]{AMS1} $\rho_\ad$ 
extends to a representation of $(\mc A_\lambda )_{\rho_\ad}$. 

The only exceptions are the inner twists of split groups of type $D_{2n}$, for those
$\Omega^\theta_\ad \cong (\ZZ / 2 \ZZ)^2$. The group $(\Omega^\theta_\ad / 
\Omega^\theta \Omega_\ad^{\theta,\mh P})^*$ can only be non-cyclic if $\mb G$ is simply 
connected and $\Omega_\ad^{\theta,\mh P} = 1$, which forces $\overline{\mh P}$ to be of 
type $D_{2n}$. For this case, see the remark to $\mathsf J = D_n$ in Section \ref{sec:innDn}.
\end{proof}

It requires more work to relate the numbers of $G^{F_\omega}$-relevant cuspidal
enhancements of $\lambda$ (i.e. $\sfb$) and of $\lambda_\ad$ (i.e. $\sfb_\ad$), 
in general their ratio is less than $[\mc A_{\lambda} : \mc A_{\lambda_\ad}]$.

\begin{lemma}\label{lem:11.5}
$\sfb = g' [\Omega^{\theta,\mh P}_\ad : \Omega^{\theta,\mh P} N_{\lambda_\ad}]^{-1} \sfb_\ad$.
\end{lemma}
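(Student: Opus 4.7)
The plan is to count $\sfb$ via Clifford theory for the normal inclusion $\mc A_{\lambda_\ad} \triangleleft \mc A_\lambda$, whose abelian quotient is identified in Lemma \ref{lem:11.4}. First I will argue that restriction from $\mc A_\lambda$ to $\mc A_{\lambda_\ad}$ gives a well-defined surjective map from $G^{F_\omega}$-relevant cuspidal enhancements of $\lambda$ to $G_\ad^{F_\omega}$-relevant cuspidal enhancements of $\lambda_\ad$. Cuspidality is preserved in both directions because $Z^1_{{\mb G^\vee}_\SC}(\lambda(\mb W_K))^\circ = Z^1_{{\mb G^\vee}_\SC}(\lambda_\ad(\mb W_K))^\circ$, so the underlying equivariant local system on the conjugacy class of $u_\lambda$ is the same and only the discrete equivariance structure changes. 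Relevance is preserved because $Z({\mb G^\vee}_\SC)^\theta \subset \mc A_{\lambda_\ad}$, so the central characters match. Lemma \ref{lem:11.6} then supplies, for each such $\rho_\ad$, an extension to its $\mc A_\lambda$-stabilizer $(\mc A_\lambda)_{\rho_\ad}$, and Clifford theory for normal subgroups with abelian quotient identifies the irreducible representations of $\mc A_\lambda$ above the $\mc A_\lambda/\mc A_{\lambda_\ad}$-orbit of $\rho_\ad$ with $\Irr\big((\mc A_\lambda)_{\rho_\ad}/\mc A_{\lambda_\ad}\big)$.

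Next I will determine the orbit structure. Under the isomorphism of Lemma \ref{lem:11.4}, the conjugation action of $\mc A_\lambda/\mc A_{\lambda_\ad}$ on $\Irr(\mc A_{\lambda_\ad})$ factors through the restriction of the $(\Omega_\ad^\theta)^*$-action on enhancements of $\lambda_\ad$. By Theorem \ref{thm:B}(3) applied to $\mb G_\ad$, every $G_\ad^{F_\omega}$-relevant cuspidal pair $(\lambda_\ad,\rho_\ad)$ has the same $(\Omega_\ad^\theta)^*$-stabilizer $(\Omega_\ad^\theta/\Omega_\ad^{\theta,\mh P})^*$. Intersecting this with $(\Omega_\ad^\theta/\Omega^\theta N_{\lambda_\ad})^*$ and using $N_{\lambda_\ad} \subset \Omega_\ad^{\theta,\mh P}$ yields the common stabilizer $H := (\Omega_\ad^\theta/\Omega^\theta \Omega_\ad^{\theta,\mh P})^*$ in $Q := \mc A_\lambda/\mc A_{\lambda_\ad}$, already computed in the proof of Lemma \ref{lem:11.6}. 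Since all orbits have the same size $[Q:H]$, and each orbit contributes $|H|$ cuspidal enhancements of $\lambda$, the count
\[
\sfb \;=\; \frac{\sfb_\ad}{[Q:H]} \cdot |H| \;=\; \sfb_\ad \cdot \frac{|H|^2}{|Q|}
\]
is immediate.

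It then remains to convert the ratio $|H|^2/|Q|$ into $g' \,[\Omega_\ad^{\theta,\mh P} : \Omega^{\theta,\mh P} N_{\lambda_\ad}]^{-1}$. Two elementary identities do the work. The equality $\Omega^{\theta,\mh P} = \Omega^\theta \cap \Omega_\ad^{\theta,\mh P}$ (valid because $\Omega^\theta \hookrightarrow \Omega_\ad^\theta$ acts on the Bruhat--Tits building by the same translations) together with $|AB|\,|A\cap B| = |A|\,|B|$ yields $[\Omega_\ad^\theta : \Omega^\theta \Omega_\ad^{\theta,\mh P}] = g'$. The inclusion $N_{\lambda_\ad} \subset \Omega_\ad^{\theta,\mh P}$ then forces $\Omega^\theta \cap N_{\lambda_\ad} = \Omega^{\theta,\mh P} \cap N_{\lambda_\ad}$, and a second application of the same identity converts $[\Omega^\theta \Omega_\ad^{\theta,\mh P} : \Omega^\theta N_{\lambda_\ad}]$ into $[\Omega_\ad^{\theta,\mh P} : \Omega^{\theta,\mh P} N_{\lambda_\ad}]$. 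Assembling these gives the formula as stated.

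The main obstacle is the uniform verification that cuspidality transports along Clifford restriction and extension between $\mc A_{\lambda_\ad}$ and $\mc A_\lambda$. If this cannot be read off directly from the $Z^1_{{\mb G^\vee}_\SC}(\lambda(\mb W_K))^\circ$-invariance of the underlying local system in the sense of \cite{Lus-Intersect}, the fallback is a case-by-case check against Sections \ref{sec:innAn}--\ref{sec:E7}, in which every instance with $\mc A_\lambda \supsetneq \mc A_{\lambda_\ad}$ was explicitly described and the cuspidal enhancements at both levels were matched.
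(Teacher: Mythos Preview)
Your proposal is correct and follows essentially the same approach as the paper: Clifford theory for $\mc A_{\lambda_\ad} \triangleleft \mc A_\lambda$, with Lemma \ref{lem:11.4} identifying the quotient, Theorem \ref{thm:B}(3) for $\mb G_\ad$ pinning down the common stabilizer, and Lemma \ref{lem:11.6} ensuring the extensions are genuine so that each orbit contributes $|H|$ enhancements. The paper organizes the index manipulation slightly differently---computing $[Q:H]$ directly as $|(\Omega_\ad^{\theta,\mh P}/\Omega^{\theta,\mh P}N_{\lambda_\ad})^*|$ via the chain \eqref{eq:11.13} rather than passing through $|H|^2/|Q|$---and dispatches your ``main obstacle'' in one line by invoking \cite[Definition 6.9]{AMS1}, which already characterizes cuspidality of an $\mc A_\lambda$-representation via its restriction to $\mc A_{\lambda_\ad}$; your $Z^1_{{\mb G^\vee}_\SC}(\lambda(\mb W_K))^\circ$ argument is the content behind that citation.
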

\begin{proof}
It follows directly from \cite[Definition 6.9]{AMS1} that an irreducible 
$\mc A_\lambda$-representation is cuspidal if and only if its restriction to
$\mc A_{\lambda_\ad}$ is a direct sum of cuspidal representations. Such a situation
can be analysed with a version of Clifford theory \cite[Theorem 1.2]{AMS1}. 
Briefly, this method entails that first we
exhibit the $\mc A_\lambda$-orbits of cuspidal representations in $\Irr (\mc A_{\lambda_\ad})$. 
In every such orbit we pick one representation $\rho_\ad$ and we determine its
stabilizer $(\mc A_\lambda)_{\rho_\ad}$. By a choice of intertwining operators, $\rho_\ad$ can 
be extended to a projective representation $\widetilde{\rho_\ad}$ of $(\mc A_\lambda)_{\rho_\ad}$.
Then the set of the irreducible $\mc A_\lambda$-representations that contain $\rho_\ad$ 
is in bijection with the set of irreducible representations (say $\tau$) of a twisted group 
algebra of $(\mc A_\lambda)_{\rho_\ad} / \mc A_{\lambda_\ad}$. The bijection sends $\tau$ to
\begin{equation}\label{eq:11.16}
\mr{ind}_{(\mc A_\lambda)_{\rho_\ad}}^{\mc A_\lambda} (\tau \otimes \widetilde{\rho_\ad}) \in
\Irr (\mc A_\lambda) .
\end{equation}
If $\rho_\ad$ can be extended to a (linear) representation of $(\mc A_\lambda)_{\rho_\ad}$, 
the aforementioned twisted group algebra becomes simply the group algebra of 
$(\mc A_\lambda)_{\rho_\ad} / \mc A_{\lambda_\ad}$. In that simpler case, the desired number of 
cuspidal irreducible $\mc A_\lambda$-representations is the sum, over the $\mc A_\lambda$-orbits 
of the appropriate $\mc A_{\lambda_\ad}$-representations, of the numbers 
\begin{equation}\label{eq:11.9}
|\Irr ((\mc A_\lambda)_{\rho_\ad} / \mc A_{\lambda_\ad})| = 
[(\mc A_\lambda)_{\rho_\ad} : \mc A_{\lambda_\ad}] .
\end{equation}
For this equality we use that $\mc A_\lambda / \mc A_{\lambda_\ad}$ is abelian, which
is immediate from Lemma \ref{lem:11.4}.

Let us make this explicit. By \eqref{eq:11.12} $(\mc A_\lambda)_{\rho_\ad}$ is the inverse image 
of $(\Omega_\ad^\theta / \Omega^\theta \Omega_\ad^{\theta,\mh P})^*$ in $\mc A_\lambda$ under 
\eqref{eq:11.5}. Notice that this does not depend on $\rho_\ad$, every other relevant enhancement 
gives the same stabilizer. It follows that the quotient group
\begin{equation}\label{eq:11.13}
\begin{aligned}
\mc A_\lambda / (\mc A_\lambda)_{\rho_\ad} & \cong (\Omega^\theta_\ad / 
\Omega^\theta N_{\lambda_\ad})^* / (\Omega_\ad^\theta / \Omega^\theta \Omega_\ad^{\theta,\mh P})^* \\
& \cong (\Omega^\theta \Omega_\ad^{\theta,\mh P} / \Omega^\theta N_{\lambda_\ad})^* 
\cong (\Omega_\ad^{\theta,\mh P} / \Omega^{\theta,\mh P} N_{\lambda_\ad})^*
\end{aligned}
\end{equation}
acts freely on the collection of $G^{F_\omega}$-relevant cuspidal enhancements of $\lambda_\ad$.
Now we can compute the number of $\mc A_\lambda$-orbits of such enhancements:
\begin{equation}\label{eq:11.10}
\sfb_\ad \, |(\Omega_\ad^{\theta,\mh P} / \Omega^{\theta,\mh P} N_{\lambda_\ad})^* |^{-1} =
\sfb_\ad \, |\Omega^{\theta,\mh P} N_{\lambda_\ad}| \, |\Omega_\ad^{\theta,\mh P}|^{-1} .
\end{equation}
By Lemma \ref{lem:11.6} $\rho_\ad$ can be extended to a representation $\widetilde{\rho_\ad}$ of 
$(\mc A_\lambda)_{\rho_\ad}$. It follows from \eqref{eq:11.9} that every $\mc A_\lambda$-orbit 
of $G^{F_\omega}$-relevant cuspidal enhancements of $\lambda_\ad$ accounts for the same number
of $G^{F_\omega}$-relevant cuspidal enhancements of $\lambda$, namely
\begin{equation}\label{eq:11.11}
|(\Omega_\ad^\theta / \Omega^\theta \Omega_\ad^{\theta,\mh P})^*| =
\frac{|\Omega_\ad^\theta |}{|\Omega^\theta \Omega_\ad^{\theta,\mh P}|} =
\frac{|\Omega_\ad^\theta |\, |\Omega^{\theta,\mh P}|}{|\Omega^\theta| \, |\Omega_\ad^{\theta,\mh P}|}
= g' .
\end{equation}
By \cite[Theorem 1.2]{AMS1} $\sfb$ is the product of \eqref{eq:11.10} and \eqref{eq:11.11}.
\end{proof}

\begin{lemma}\label{lem:11.7}
Fix a $G^{F_\omega}$-relevant cuspidal $\rho_\ad \in \Irr (\mc A_{\lambda_\ad})$. There exists
a bijection between:
\begin{itemize}
\item the set of $\rho \in \Irr (\mc A_\lambda)$ that contain $\rho_\ad$,
\item the set of $G^{F_\omega}$-conjugacy classes of parahoric subgroups of $G$ that are 
$G_\ad^{F_\omega}$-conjugate to $\mh P$,
\end{itemize}
which is equivariant for $\Omega^\theta_\ad / \Omega^{\theta,\mh P}_\ad \Omega^\theta$ and with
respect to diagram automorphisms.
\end{lemma}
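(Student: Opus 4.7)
The plan is to realize both sets as simply transitive torsors over the same finite abelian group $A := \Omega_\ad^\theta / \Omega^{\theta,\mh P}_\ad \Omega^\theta$, and then to specify the bijection by a compatible choice of base points. Note that $|A| = g'$, matching the cardinalities already computed for each side.

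On the parahoric side, the identification $\Omega_\ad^\theta \cong N_{G_\ad^{F_\omega}}(\mh P_\ad^{F_\omega}) / \mh P_\ad^{F_\omega}$ from \eqref{eq:2.6} shows that $\Omega_\ad^\theta$ acts transitively on the $G_\ad^{F_\omega}$-conjugacy class of $\mh P$ by conjugation, with stabilizer of $\mh P$ equal to $\Omega_\ad^{\theta,\mh P}$. Elements of $\Omega^\theta$ lift to $N_{G^{F_\omega}}(\mh P^{F_\omega})$ and therefore act trivially on $G^{F_\omega}$-conjugacy classes of parahorics, so $A$ acts simply transitively on these classes.

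On the Galois side, Lemma \ref{lem:11.6} provides a genuine extension $\widetilde{\rho_\ad}$ of $\rho_\ad$ from $\mc A_{\lambda_\ad}$ to $(\mc A_\lambda)_{\rho_\ad}$, so the Clifford-theoretic procedure recalled in the proof of Lemma \ref{lem:11.5} yields a bijection
\[
\Irr \bigl( (\mc A_\lambda)_{\rho_\ad}/\mc A_{\lambda_\ad} \bigr) \isom
\bigl\{ \rho \in \Irr (\mc A_\lambda) : \rho |_{\mc A_{\lambda_\ad}} \text{ contains } \rho_\ad \bigr\} , \quad
\tau \mapsto \mr{ind}_{(\mc A_\lambda)_{\rho_\ad}}^{\mc A_\lambda}(\tau \otimes \widetilde{\rho_\ad}) .
\]
Lemma \ref{lem:11.4} combined with \eqref{eq:11.12} identifies $(\mc A_\lambda)_{\rho_\ad}/\mc A_{\lambda_\ad}$ with $(\Omega_\ad^\theta/\Omega^\theta \Omega_\ad^{\theta,\mh P})^* = A^*$, so Pontryagin duality for the finite abelian group $A$ exhibits the target set as an $A$-torsor with base point $\mr{ind}_{(\mc A_\lambda)_{\rho_\ad}}^{\mc A_\lambda}(\widetilde{\rho_\ad})$. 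The desired bijection is then fixed by matching base points: the $G^{F_\omega}$-class of $\mh P$ is sent to $\mr{ind}_{(\mc A_\lambda)_{\rho_\ad}}^{\mc A_\lambda}(\widetilde{\rho_\ad})$. Equivariance for $A$ is automatic from the torsor structure.

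The main obstacle is equivariance with respect to diagram automorphisms $\tau$ commuting with $\mb W_K$. Such a $\tau$ acts on both $A$-torsors and on $A$ itself, and $\tau$-equivariance of the bijection amounts to requiring that $\tau$ transports the two chosen base points consistently (i.e. modulo the same $A$-twist on each side). This in turn reduces to a $\tau$-equivariant choice of the extension $\widetilde{\rho_\ad}$, which is a priori only well-defined up to a character in $A^*$. I would reduce via the compatibility with almost direct products of \eqref{eq:2.20} to the case of a simple adjoint $\mb G_\ad$, and then invoke the case-by-case verifications in Sections \ref{sec:innAn}--\ref{sec:E7}, in particular the remarks covering the non-adjoint isogeny classes ($\mr{Spin}_{2n}^{(\ast)}$, $SU_n$, and the covers of $E_6$ and $E_7$). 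In those remarks we already traced how $\tau$ simultaneously permutes the parahoric classes $\Omega_\ad \cdot \mh P$ and the extensions of $\rho_\ad$, and checked that the two actions are described by the same element of $A \rtimes \langle\tau\rangle$; this supplies the required $\tau$-equivariant normalization.
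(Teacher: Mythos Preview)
Your proposal is correct and follows essentially the same approach as the paper's proof: both sides are identified with the finite abelian group $\Omega_\ad^\theta / \Omega_\ad^{\theta,\mh P}\Omega^\theta$ (the paper via the explicit formula $\rho = \mr{ind}_{(\mc A_\lambda)_{\rho_\ad}}^{\mc A_\lambda}(\omega \otimes \widetilde{\rho_\ad}) \leftrightarrow \omega \mh P \omega^{-1}$, you via the torsor language), and equivariance for diagram automorphisms is in both cases reduced to the almost simple situation and then to the case-by-case remarks in Sections \ref{sec:outAn}, \ref{sec:innDn}, \ref{sec:outDn} and \ref{sec:innE6}. The paper is only slightly more explicit in listing exactly which arithmetic diagrams $\mathsf J$ require checking (those with $\Omega^{\theta,\mh P} \neq \Omega^\theta$ and nontrivial diagram automorphism group), but your reduction arrives at the same endpoint.
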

\begin{proof}
By \eqref{eq:11.16} and Lemma \ref{lem:11.6} every $\rho \in \Irr (\mc A_\lambda)$ 
which contains $\rho_\ad$ is of the form
\begin{equation}\label{eq:11.18}
\rho = \mr{ind}_{(\mc A_\lambda)_{\rho_\ad}}^{\mc A_\lambda} (\omega \otimes \widetilde{\rho_\ad})
\end{equation}
for a unique
\begin{equation}\label{eq:11.17}
\omega \Omega_\ad^{\theta,\mh P} \Omega^\theta \in \Omega^\theta_\ad / \Omega_\ad^{\theta,\mh P} 
\Omega^\theta = \Irr \big( (\Omega^\theta_\ad / \Omega_\ad^{\theta,\mh P} 
\Omega^\theta)^* \big).
\end{equation}
On the other hand, the group in \eqref{eq:11.17} parametrizes the $G^{F_\omega}$-conjugacy classes
of $G^{F_\omega}_\ad$-conjugates of $\mh P$. Decreeing that \eqref{eq:11.18} corresponds to
$\omega \mh P \omega^{-1}$, we obtain the required bijection and the 
$\Omega^\theta_\ad / \Omega^{\theta,\mh P}_\ad \Omega^\theta$-equivariance.

Notice that the set of (standard) parahoric subgroups of $G$ is the direct product of the analogous
sets for the almost direct simple factors of $\mb G$. Together with the explanation at the start of
the proof of Lemma \ref{lem:11.6}, this entails that for equivariance with respect to diagram
automorphisms it suffices to check the cases where $\mb G$ is almost simple. 

We only have to consider the Lie types $A_n, D_n$ and $E_6$, for the others do not admit nontrivial 
diagram automorphisms. Among these, we only have to look at the parahoric subgroups $\mh P$ with
$\Omega^{\theta,\mh P} \neq \Omega^\theta$, or equivalently at the $\mathsf J$ that are not
$\Omega^\theta$-stable. That takes care of the inner forms of type $A_n$ and of the outer forms of 
type $E_6$. For the outer forms of type $A_n$ ($\mathsf J = {}^2 \! A_s {}^2 \! A_t$), the inner
forms of $D_n$ ($\mathsf J = D_n$ and $\mathsf J = D_s D_t$), the outer forms of $D_n$ 
($\mathsf J = {}^2 D_t$ and $\mathsf J = {}^2 A_s$) and the inner forms of $E_6$ 
($\mathsf J = E_6$) see the remarks in Sections \ref{sec:outAn}, \ref{sec:innDn}, 
\ref{sec:outDn} and \ref{sec:innE6}.
\end{proof}

\section{Proof of main theorem for semisimple groups}
\label{sec:proofss}

Proposition \ref{prop:11.2} proves Theorem \ref{thm:B} for unramified adjoint groups. 
When we replace an adjoint group by a group in the same isogeny class, several unipotent 
cuspidal representations of $G_\ad^{F_\omega}$ coalesce and then decompose as a sum of 
$g'$ irreducible unipotent cuspidal representations of $G^{F_\omega}$. Similarly, several 
enhanced L-parameters for $G_\ad^{F_\omega}$ coincide, and they can be further enhanced 
in $g'$ ways to elements of $\Phi_\nr (G^{F_\omega})_\cusp$. 

From \eqref{eq:2.10} we see that the $G_\ad^{F_\omega}$-representations which contain 
$\pi \in \Irr (G^{F_\omega})_{[\mh P,\sigma]}$ form precisely one orbit for 
$(\Omega^\theta_\ad / \Omega^\theta)^*$. The action of $(\Omega^\theta_\ad)^*$ 
on $\Irr (G_\ad^{F_\omega})_{\unip,\cusp}$ reduces to an action of $(\Omega^\theta)^*$ on
$\Irr (G^{F_\omega})_{\unip,\cusp}$, and the stabilizers become 
\[
(\Omega^\theta / \Omega_\ad^{\theta,\mh P} \cap \Omega^\theta)^* = 
(\Omega^\theta / \Omega^{\theta,\mh P})^*.
\]
A bijection 
\begin{equation}\label{eq:11.19}
(\Omega^{\theta,\mh P})^* \cong (\Omega^\theta)^* / (\Omega^\theta / \Omega^{\theta,\mh P})^*
\longrightarrow \Irr (G^{F_\omega})_{[\mh P,\sigma]}
\end{equation}
can be specified by fixing an extension of $\sigma$ from $\mh P^{F_\omega}$ to 
$N_{G^{F_\omega}}(\mh P^{F_\omega})$ \cite[\S 2]{Opd2}.
In particular $\Irr (G^{F_\omega})_{[\mh P,\sigma]}$ forms exactly one 
$(\Omega^\theta)^*$-orbit. Consequently the $(\Omega^\theta)^*$-orbits on the set of 
solutions $\pi$ of \eqref{eq:fdegisgammafactor} are parametrized by the 
$G^{F_\omega}$-conjugacy classes of $(\mh P', \sigma')$ with fdeg$(\sigma') = \mr{fdeg}
(\sigma)$. There are $g' \sfb' = g' \phi (n_s)$ of those.

Recall that $\lambda$ is the image of $\lambda_\ad$ under ${\mb G^\vee}_\SC \to \mb G^\vee$.
When $\mb G$ is simple, Theorem \ref{thm:A} says that $(\Omega^\theta)^* \lambda$ is 
the unique $(\Omega^\theta)^*$-orbit of L-parameters for $G^{F_\omega}$ with for each 
$K$-simple factor of $\mb G$ the same 
adjoint $\gamma$-factor as $\lambda_\ad$ (up to a rational number). It follows quickly 
from the definitions that adjoint $\gamma$-factors are multiplicative for almost direct 
products of reductive groups, cf. \cite[\S 3]{GrRe}. From \eqref{eq:2.21} we see that 
the formal degrees of supercuspidal unipotent representations are also multiplicative for
almost direct products, up to some rational numbers $C_\pi$ (which can be made 
explicit, see \cite[\S 1]{HII} and \cite[\S 4.6]{Opd2}). Hence the uniqueness of 
$(\Omega^\theta)^* \lambda$ in the above sense also holds for semisimple $\mb G$, provided 
we impose the compatibility with almost direct products from \eqref{eq:2.20}.
Together with \eqref{eq:11.19} this proves Theorem \ref{thm:B}.(2).

By Lemma \ref{lem:11.3} the $\lambda_\ad$ which coalesce to $\lambda$ form precisely 
one orbit under\\ $(\Omega^\theta_\ad / \Omega^\theta \cap N_{\lambda_\ad})^*$. 
From the proof of Lemma \ref{lem:11.4} we see that the restriction of a relevant 
cuspidal representation $\rho$ of $\mc A_\lambda$ to $\mc A_{\lambda_\ad}$ contains 
precisely the enhancements of $\lambda_\ad$ in one $(\Omega^\theta_\ad / 
\Omega^\theta N_{\lambda_\ad})^*$-orbit. 

From Lemma \ref{lem:11.3}.(2) we know that the $(\Omega^\theta)^*$-stabilizer of $(\lambda,\rho)$
is contained in $(\Omega^\theta / \Omega^\theta \cap N_{\lambda_\ad})^*$, and from the proof
of Lemma \ref{lem:11.5} we see that it must stabilize the $\mc A_\lambda$-orbit of a 
$\rho_\ad \in \Irr (\mc A_{\lambda_\ad})$. By Theorem \ref{thm:B}.(3) for $\mb G_\ad$ and by
Lemma \ref{lem:11.4}, the $(\Omega^\theta_\ad)^*$-stabilizer of that orbit is
\begin{equation}\label{eq:11.14}
(\Omega^\theta_\ad / \Omega^{\theta,\mh P}_\ad)^* 
(\Omega^\theta_\ad / \Omega^\theta N_{\lambda_\ad})^* .
\end{equation}
Hence the $(\Omega^\theta)^*$-stabilizer of that orbit is the image of \eqref{eq:11.14} in
$(\Omega^\theta)^*$, that is, $(\Omega^\theta / \Omega^{\theta,\mh P})^*$.
From \eqref{eq:11.11} we know that the different representations $\rho$ of $\mc A_\lambda$ 
associated to the orbit of $\rho_\ad$ are parametrized by $\Omega_\ad^\theta / \Omega^\theta 
\Omega_\ad^{\theta,\mh P}$. Elements of $(\Omega^\theta)^*$ exert no influence on the last
group, so  $(\Omega^\theta / \Omega^{\theta,\mh P})^*$ is precisely the stabilizer of
$(\lambda,\rho)$ in $(\Omega^\theta)^*$. This proves Theorem \ref{thm:B}.(3).

Part (4) can be observed from the adjoint case and \eqref{eq:11.7}. For Part (5), we note
by \eqref{eq:11.7} and \eqref{eq:11.8}
\[
\frac{\sfa' \sfb'}{\sfa'_\ad \sfb'_\ad} =
\frac{g' \, |\Omega^{\theta,\mh P}|}{|\Omega^{\theta,\mh P}_\ad|} .
\]
On the other hand, by Lemmas \ref{lem:11.3} and \ref{lem:11.5}
\begin{multline*}
\frac{\sfa \sfb}{\sfa_\ad \sfb_\ad} = \frac{|N_{\lambda_\ad} \cap \Omega^\theta| \, g'}{|N_{\lambda_\ad}| 
\, [\Omega^{\theta,\mh P}_\ad : \Omega^{\theta,\mh P} N_{\lambda_\ad}]}
= \frac{|N_{\lambda_\ad} \cap \Omega^\theta| \, g'  |\Omega^{\theta,\mh P}| \, |N_{\lambda_\ad}|}{
|N_{\lambda_\ad}| \, |\Omega^{\theta,\mh P}_\ad| \, |\Omega^{\theta,\mh P} \cap N_{\lambda_\ad}|}
=  \frac{g' \, |\Omega^{\theta,\mh P}|}{|\Omega^{\theta,\mh P}_\ad|} .
\end{multline*}
Thus Theorem \ref{thm:B}.(5) for $\mb G_\ad$ implies that $\sfa' \sfb' = \sfa \sfb$.
 
Now we can construct a LLC for $\Irr (G^{F_\omega})_{\unip,\cusp}$. Every $\pi$ 
in there corresponds to a unique $(\Omega^\theta_\ad / \Omega^\theta)^*$-orbit in 
$\Irr (G_\ad^{F_\omega})_{\unip,\cusp}$. Then Proposition \ref{prop:11.2} gives an orbit
\begin{equation}\label{eq:11.15}
(\Omega^\theta_\ad / \Omega^\theta)^* (\lambda_\ad,\rho_\ad) \subset 
\Phi_\nr (G_\ad^{F_\omega})_\cusp .
\end{equation}
By Lemma \ref{lem:11.3}.(2) that determines a single $\lambda \in \Phi_\nr (G^{F_\omega})$
and from Lemma \ref{lem:11.4} we get one $\mc A_\lambda$-orbit  
\begin{equation}\label{eq:11.20}
(\Omega^\theta_\ad / \Omega^\theta N_{\lambda_\ad})^* \rho_\ad \subset \Irr (\mc A_{\lambda_\ad}) .
\end{equation}
But \eqref{eq:11.15} does not yet determine a unique representation of $\mc A_\lambda$, in general
several extensions of $\rho_\ad$ to $\rho \in \Irr (\mc A_\lambda)$ are possible. 
By Lemma \ref{lem:11.7} we can match these $\rho$'s with the $G^{F_\omega}$-conjugacy classes
of parahoric subgroups of $G$ that are $G_\ad^{F_\omega}$-conjugate to $\mh P$, in a way which
is equivariant for $\Omega^\theta_\ad$ and for diagram automorphisms. For $\pi \in 
\Irr (G^{F_\omega})_{[\mh P,\sigma]}$ we now choose the $\rho$
which corresponds to the $G^{F_\omega}$-conjugacy class of the $\mh P$.
Above we saw that $\pi$ and $(\lambda,\rho)$ have the same isotropy group in
$(\Omega^\theta)^*$, so we get a well-defined map from $(\Omega^\theta)^* \pi$ to
$(\Omega^\theta)^* (\lambda,\rho)$. This map is equivariant for $(\Omega^\theta)^*$ and for
all diagram automorphisms that stabilize the domain.

For all $G^{F_\omega}$-representations in the Out$(G^{F_\omega})$-orbit of 
$(\Omega^\theta)^* \pi$, we define the LLC by imposing equivariance with respect to diagram 
automorphisms. If $\tau$ is a diagram automorphism of $\mb G$ with $\tau (\omega) \neq \omega$, 
then for $z \in (\Omega^\theta)^* \cong X_\Wr (G^{F_\omega})$ we define the enhanced L-parameter 
of $\tau^* (z \otimes \pi) \in \Irr (G^{F_{\tau (\omega)}})_{\cusp,\unip}$ to be 
$(\tau (z \lambda), \tau^* \rho)$.

For another $\pi' \in \Irr (G^{F_\omega})_{\unip,\cusp}$ we construct $(\lambda',\rho') \in
\Phi_\nr (G^{F_\omega})_\cusp$ in the same way. We only must take care that, if $\lambda' = \lambda$,
we select a $\rho'$ that we did not use already. Since $\sfa' \sfb' = \sfa \sfb$, this procedure
yields a bijection $\Irr (G^{F_\omega})_{\unip,\cusp} \to \Phi_\nr (G^{F_\omega})_\cusp$.

As explained above, at the same time this determines bijections 
\[
\Irr (G^{F_{\tau (\omega)}})_{\unip,\cusp} \to \Phi_\nr (G^{F_{\tau (\omega)}})_\cusp 
\]
for all diagram automorphisms $\tau$ of $\mb G$. The union of all these bijections is the LLC for
all the involved representations, and then it is equivariant with respect to diagram automorphisms.
 
From parts (1) and (3) we get the number of $(\Omega^\theta)^*$-orbits on the Galois side of the LLC, 
namely $g' \phi (n_s)$, just as on the $p$-adic side. Since the L-parameters with the same adjoint 
$\gamma$-factor form just one $(\Omega^\theta)^*$-orbit, the orbits of enhanced L-parameters can 
be parametrized by enhancements of $\lambda$, just as in the adjoint case.

\section{Proof of main theorem for reductive groups}
\label{sec:mainred}

First we check that Theorem \ref{thm:C} is valid for any $K$-torus, ramified or unramified.
Of course, the local Langlands correspondence for $K$-tori is well-known, due to Langlands.

\begin{prop}\label{prop:15.1}
Let $\mb T$ be a $K$-torus and write $T = \mb T (K_\nr)$. 
\begin{enumerate}
\item The unipotent representations of $\mb T (K)$ are precisely its weakly unramified characters.
\item The LLC for $\Irr (\mb T (K))_\unip$ is injective, and has as image the collection of
L-parameters 
\[
\lambda : \mb W_K \times SL_2 (\CC) \to \mb T^\vee \rtimes \mb W_K
\]
such that $\lambda (w,x) = (1,w)$ for all $w \in \mb I_K, x \in SL_2 (\CC)$. 
\item The map from (2) is equivariant for $({\Omega_{\mb I_K}}^*)_\Frob$ and with respect to\\ 
$\mb W_K$-automorphisms of the root datum.
\end{enumerate}
\end{prop}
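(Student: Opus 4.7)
The plan is to reduce the proposition to Langlands' LLC for tori, using the standard description of parahoric subgroups of $\mb T(K)$.

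For part (1), observe that $T$ contains a unique parahoric subgroup $T_1$, namely the kernel of the Kottwitz homomorphism, and its finite reductive quotient is a torus over $\mf F$. A finite torus admits only one cuspidal unipotent representation, the trivial character. Since $\mb T(K)$ is abelian, every $\pi \in \Irr(\mb T(K))$ is a character, and by the definition of unipotent representations $\pi$ is unipotent iff $\pi|_{T_1^{F_\omega}}$ contains the trivial character, iff $\pi|_{T_1^{F_\omega}} = 1$, iff $\pi \in X_\Wr(\mb T(K))$.

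For part (2), I would invoke Langlands' LLC for tori, which provides a canonical isomorphism
\[
\mr{Hom}_{cont}(\mb T(K), \CC^\times) \isom H^1_{cont}(\mb W_K, T^\vee).
\]
Because $T^\vee$ is a connected complex torus and $SL_2(\CC)$ is perfect, every homomorphism $SL_2(\CC) \to T^\vee$ is trivial, so $\mb T$-L-parameters are in bijection with equivalence classes of continuous 1-cocycles $\mb W_K \to T^\vee$; the $SL_2$-triviality condition in the statement is therefore automatic. It then suffices to check that the LLC restricts to a bijection between $X_\Wr(\mb T(K))$ and the 1-cocycles that are trivial on $\mb I_K$. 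This follows from the compatibility of the LLC with the Kottwitz-type isomorphism $X_\Wr(\mb T(K)) \cong ((T^\vee)^{\mb I_K})_\Frob$, together with the fact that inertia-trivial 1-cocycles modulo coboundaries form exactly $((T^\vee)^{\mb I_K})_\Frob$.

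For part (3), both equivariances are formal consequences of the functoriality of the LLC for tori. A $\mb W_K$-equivariant automorphism $\tau$ of the root datum of $\mb T$ is nothing but an automorphism of $X^*(\mb T)$ commuting with the $\mb W_K$-action, so it lifts canonically to a $K$-automorphism of $\mb T$, which then acts compatibly on both sides of the LLC by naturality. Equivariance for the translation action of $((\Omega_{\mb I_K})^*)_\Frob \cong X_\Wr(\mb T(K))$ is built into the construction: tensoring a character by a weakly unramified character corresponds on the Galois side to adding the corresponding inertia-trivial 1-cocycle, which matches the action defined in \eqref{eq:2.3}--\eqref{eq:2.4}.

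The main obstacle is the bookkeeping in part (3): verifying that the intrinsic definition of the $((\Omega_{\mb I_K})^*)_\Frob$-action via twisting the Frobenius value in \eqref{eq:2.3} really agrees, under the LLC for tori, with tensoring by weakly unramified characters. This is standard but requires unpacking Langlands' explicit construction for tori and tracing through the identifications.
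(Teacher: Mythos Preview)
Your proposal is correct and follows essentially the same approach as the paper: identify the unique parahoric subgroup of $T$ with the kernel of the Kottwitz homomorphism, observe that the finite reductive quotient is a torus (whose only cuspidal unipotent representation is trivial), then invoke the known identification $X_\Wr(\mb T(K)) \cong ((T^\vee)^{\mb I_K})_\Frob$ via the LLC for tori, and finally appeal to naturality of the LLC for both equivariances. Your argument is in fact slightly more explicit than the paper's in places (e.g.\ the remark that $SL_2(\CC) \to T^\vee$ is trivial because $T^\vee$ is abelian and $SL_2$ is perfect), but there is no genuine difference in strategy.
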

The target in part (2) is the analogue of $\Phi_\nr (G^{F_\omega})$ for tori. 
As $\mc A_\lambda = 1$, we can ignore enhancements here.
\begin{proof}
(1) The kernel $T_1$ of the Kottwitz homomorphism \cite[\S 7]{Ko}
\[
T \to X^* ((\mb T^\vee)^{\mb I_K}) 
\]
has finite index in the maximal bounded subgroup of $T$. By \cite[Appendix, Lemma 5]{HR}
$T_1$ equals the unique parahoric subgroup of $T$. Then $\mb T (K)_1 = T_1^\Frob$ is the unique
parahoric subgroup of $\mb T (K) = T^\Frob$. The finite reductive quotient $\overline{T}^\Frob$
is again a torus, so its only cuspidal unipotent representation is the trivial representation.
Hence the unipotent $\mb T(K)$-representations are precisely the characters of $\mb T(K)$ that
are trivial on $\mb T(K)_1$, that is, the weakly unramified characters. \\
(2) It is known (e.g. from \cite[\S 3.3.1]{Hai}) that the LLC for tori puts $X_\Wr (\mb T(K))$
in bijection with $(Z(\mb T^\vee)^{\mb I_K})_\Frob$.\\
(3) From $Z(\mb T^\vee) \cong \Omega^*$ we see that 
\begin{equation}\label{eq:15.1}
X_\Wr (\mb T (K)) \cong (Z(\mb T^\vee)^{\mb I_K})_\Frob \cong (\Omega^*)^{\mb I_K}_\Frob \cong
({\Omega_{\mb I_K}}^*)_\Frob .
\end{equation}
Now it is clear that the LLC for $\Irr (\mb T (K))_\unip$ is equivariant under \eqref{eq:15.1}.

Since the LLC for tori is natural, it is also equivariant with respect to all automorphisms of
$X^* (\mb T)$ that define automorphisms of $\mb T (K)$. These are precisely the automorphisms
of the root datum $(X^* (\mb T), \emptyset, X_* (\mb T),\emptyset,\emptyset)$ that commute
with $\mb W_K$.
\end{proof}

Next we consider the case that $\mb G$ is an unramified reductive $K$-group such that
$Z (\mb G)^\circ (K)$ is anisotropic. By \cite[Theorem BTR]{Pra}, that happens if and only if
$Z(G^{F_\omega})$ is compact. Equivalently, every unramified character of $G^{F_\omega}$ is 
trivial. We will divide the proof of Theorem \ref{thm:C} for such groups over a sequence of lemmas.

\begin{lemma}\label{lem:15.2}
Suppose that $Z(\mb G)^\circ$ is $K$-anisotropic, and let $\Omega_\der$ be the 
$\Omega$-group for $\mb G_\der$. Then $\Omega^\theta_\der = \Omega^\theta$.
\end{lemma}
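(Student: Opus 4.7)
The plan is to work with the natural surjection $\mb G \to \mb G/\mb G_\der$ and translate the anisotropy hypothesis on $Z(\mb G)^\circ$ into vanishing of a $\theta$-fixed module on the cocharacter side.

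First I would record the short exact sequence
\[
0 \to \Omega_\der \to \Omega \to X_*(\mb G/\mb G_\der) \to 0 .
\]
This comes from $\Omega = X_*(\mb S)/\ZZ R^\vee$ and $\Omega_\der = X_*(\mb S_\der)/\ZZ R^\vee$ (where $\mb S_\der = \mb S \cap \mb G_\der$), using that $\mb S_\der$ is saturated in $\mb S$ so $X_*(\mb S)/X_*(\mb S_\der) = X_*(\mb S/\mb S_\der)$, and that $\mb S \to \mb G/\mb G_\der$ is surjective (because $\mb G = \mb G_\der \cdot Z(\mb G)^\circ$ and $Z(\mb G)^\circ \subset \mb S$) with kernel $\mb S_\der$, so $\mb S/\mb S_\der \cong \mb G/\mb G_\der$.

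Next, taking $\theta$-invariants yields an exact sequence
\[
0 \to \Omega_\der^\theta \to \Omega^\theta \to X_*(\mb G/\mb G_\der)^\theta ,
\]
so it suffices to establish that $X_*(\mb G/\mb G_\der)^\theta = 0$. For this I would observe that the multiplication map $Z(\mb G)^\circ \to \mb G/\mb G_\der$ is a $K$-isogeny of tori (as $\mb G = Z(\mb G)^\circ \cdot \mb G_\der$ with finite intersection), hence the two tori have identical $\theta$-module structure on their cocharacter lattices up to $\otimes \QQ$. Therefore $\mb G/\mb G_\der$ inherits from $Z(\mb G)^\circ$ the property of being $K$-anisotropic.

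Finally, for an unramified $K$-torus $\mb T$, one has the standard equivalence that $\mb T$ is $K$-anisotropic if and only if $X^*(\mb T)^\theta = 0$, which by the duality between invariants of $X^*(\mb T)$ and coinvariants of $X_*(\mb T)$ (and the equal-rank property $\mathrm{rk}\, M^\theta = \mathrm{rk}\, M_\theta$ for a finite cyclic action on a lattice $M$) is equivalent to $X_*(\mb T)^\theta = 0$. Applying this to $\mb T = \mb G/\mb G_\der$ gives $X_*(\mb G/\mb G_\der)^\theta = 0$ and hence $\Omega_\der^\theta = \Omega^\theta$. There is no serious obstacle; the only care needed is the translation between anisotropy (a condition on $X^*$) and vanishing of $X_*^\theta$, which is routine for unramified tori.
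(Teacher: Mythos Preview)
Your proof is correct. The short exact sequence
\[
0 \to \Omega_\der \to \Omega \to X_*(\mb G/\mb G_\der) \to 0
\]
is valid (the key point being that any cocharacter of $\mb S$ landing in $\mb G_\der$ lands in $(\mb S\cap\mb G_\der)^\circ$, so $X_*(\mb S_\der)$ really is the kernel), and the vanishing $X_*(\mb G/\mb G_\der)^\theta=0$ follows from $K$-anisotropy as you say. In fact you could shorten the last step: for an unramified $K$-torus $\mb T$, anisotropy is \emph{directly} equivalent to $X_*(\mb T)^\theta=0$ (a $\theta$-fixed cocharacter gives a $K$-split $\GG_m\hookrightarrow\mb T$), so the detour through $X^*(\mb T)^\theta$ and rank comparisons is unnecessary.

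The paper argues on the dual side instead: it uses the identification $\Omega^\theta \cong X^*(Z(\mb G^\vee)_\theta)$ and $\Omega_\der^\theta \cong X^*\big(Z(\mb G^\vee)_\theta/Z(\mb G^\vee)^\circ\big)$, and then shows that the image of $Z(\mb G^\vee)^\circ$ in the $\theta$-coinvariants is trivial, i.e.\ $(1-\theta)Z(\mb G^\vee)^\circ = Z(\mb G^\vee)^\circ$. The underlying input is the same (anisotropy of $\mb G/\mb G_\der$), so the two arguments are essentially dual to one another. Your version is slightly more elementary in that it avoids the dual group entirely. The paper's version, however, isolates the statement $(1-\theta)Z(\mb G^\vee)^\circ = Z(\mb G^\vee)^\circ$ as an intermediate result, and that equality is invoked again in the proofs of Lemmas~\ref{lem:15.4} and~\ref{lem:15.5}; so if you were writing this section you would want to record it separately.
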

\begin{proof}
By \eqref{eq:2.1} $\Omega_\der^\theta \cong X^* (Z(\mb G^\vee)_\theta / Z(\mb G^\vee)^\circ)$. 
Since $Z(\mb G)^\circ$ is $K$-anisotropic, so is $\mb G / \mb G_\der$, and
\[
0 = X^* (\mb G / \mb G_\der)^\Frob = X_* (Z(\mb G^\vee)^\circ )^\theta .
\]
This implies
\begin{equation}\label{eq:15.2}
(1 - \theta) Z(\mb G^\vee)^\circ = Z(\mb G^\vee)^\circ ,
\end{equation}
so $\Omega^\theta = X^* (Z(\mb G^\vee)_\theta) = X^* (Z(\mb G^\vee)_\theta / Z(\mb G^\vee)^\circ )$.
\end{proof}

\begin{lemma}\label{lem:15.3} 
Suppose that $Z(\mb G)^\circ$ is $K$-anisotropic.
The inclusion $G_\der^{F_\omega} \to G^{F_\omega}$ induces a bijection
\[
\Irr (G^{F_\omega})_{\cusp,\unip} \longrightarrow \Irr (G_\der^{F_\omega})_{\cusp,\unip} .
\]
\end{lemma}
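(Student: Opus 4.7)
The plan is to verify the claim through the explicit parametrization \eqref{eq:2.8}--\eqref{eq:2.10} of $\Irr(G^{F_\omega})_{\cusp,\unip}$ by triples $(\mh P,\sigma,\chi)$, by constructing a matching parametrization for $G_\der^{F_\omega}$ and checking that restriction realizes the corresponding bijection. First I would match parahoric data. Since $Z(\mb G)^\circ$ is $K$-anisotropic, $Z(\mb G)^\circ(\KK)$ is bounded and therefore contained in every parahoric subgroup of $G$. The Bruhat--Tits buildings of $\mb G$ and $\mb G_\der$ over $\KK$ differ only by a Euclidean factor coming from the cocharacter lattice of the maximal $\KK$-split subtorus of $Z(\mb G)^\circ$, which vanishes here. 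Consequently $\mh P \mapsto \mh P_\der := \mh P \cap G_\der$ is a bijection between parahoric subgroups of $G$ and of $G_\der$, preserving $F_\omega$-stability, and one has $\mh P^{F_\omega} = Z(\mb G)^\circ(K)\cdot \mh P_\der^{F_\omega}$.

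Next I would match the cuspidal unipotent data attached to these parahorics. The reductive quotients $\overline{\mh P}^{F_\omega}$ and $\overline{\mh P_\der}^{F_\omega}$ share the same derived group and differ only by a finite central subgroup (the image of $Z(\mb G)^\circ(K)$). By Lusztig's classification in \cite{Lus-Chevalley}, cuspidal unipotent representations of a connected reductive group over $\mf F$ are invariant under isogeny and trivial on the connected center, so $\sigma \mapsto \sigma|_{\overline{\mh P_\der}^{F_\omega}}$ is a bijection of cuspidal unipotent sets. Since the stabilizer of a facet in the building depends only on the building itself, the isomorphism \eqref{eq:2.6} together with Lemma~\ref{lem:15.2} yields $\Omega^{\theta,\mh P} = \Omega_\der^{\theta,\mh P_\der}$, so the packet cardinalities $|(\Omega^{\theta,\mh P})^*|$ on both sides coincide, and an extension $\sigma^N$ on the $G^{F_\omega}$-side restricts to an extension on the $G_\der^{F_\omega}$-side.

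Finally I would show that this combinatorial bijection is realized by restriction. Starting from $\pi = \mr{ind}_{N_{G^{F_\omega}}(\mh P^{F_\omega})}^{G^{F_\omega}}(\chi \otimes \sigma^N)$ of \eqref{eq:2.10}, Mackey's theorem decomposes $\pi|_{G_\der^{F_\omega}}$ along the double cosets $G_\der^{F_\omega}\backslash G^{F_\omega}/N_{G^{F_\omega}}(\mh P^{F_\omega})$. I would argue that this set is a singleton by decomposing any $g \in G^{F_\omega}$ as a product of an element of $G_\der^{F_\omega}$, an element of $Z(\mb G)^\circ(K) \subset \mh P^{F_\omega}$, and an element lifting a class in $\Omega^\theta = \Omega_\der^\theta$; the last factor already admits a representative in $G_\der^{F_\omega}$ by Lemma~\ref{lem:15.2}. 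Hence $\pi|_{G_\der^{F_\omega}}$ is irreducible and coincides with the representation of $G_\der^{F_\omega}$ built from $(\mh P_\der, \sigma|_{\overline{\mh P_\der}^{F_\omega}}, \chi)$, which gives both well-definedness and surjectivity of the restriction map; injectivity then follows because distinct triples produce distinct representations on each side.

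The main obstacle I expect is the double-coset calculation $G^{F_\omega} = G_\der^{F_\omega}\cdot N_{G^{F_\omega}}(\mh P^{F_\omega})$, which is what upgrades the bijection of parameters to an actual identity of restricted representations. This demands a simultaneous use of anisotropy (to force $Z(\mb G)^\circ(K)$ inside every parahoric), Lemma~\ref{lem:15.2} (to transfer the Kottwitz quotient across the isogeny), and control of the Galois cohomology obstruction in $H^1_c(F,Z(\mb G)^\circ \cap \mb G_\der)$. The compactness of $G^{F_\omega}/G_\der^{F_\omega}$ (inherited from the anisotropic torus $\mb G/\mb G_\der$) should be the key technical lever, and a comparison of formal degrees across the isogeny via \eqref{eq:2.21} can serve as an independent check that no multiplicities are lost.
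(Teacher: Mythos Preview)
Your overall strategy matches the paper's: parametrize both sides by triples $(\mh P,\sigma,\chi)$ via \eqref{eq:2.8}--\eqref{eq:2.10}, identify the parahoric data for $G$ and $G_\der$, invoke Lusztig's isogeny-invariance of cuspidal unipotent representations, and use Lemma~\ref{lem:15.2} to match the groups $\Omega^{\theta,\mh P}$. The paper stops once the parametrizations coincide; your additional Mackey step is a reasonable (if not strictly necessary) amplification.

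There is, however, a genuine error in your first paragraph. You claim that because $Z(\mb G)^\circ$ is $K$-anisotropic, $Z(\mb G)^\circ(\KK)$ is bounded and the Euclidean factor in the building over $\KK$ vanishes. This is false: $\mb G$ is unramified, so $Z(\mb G)^\circ$ \emph{splits} over $\KK$, whence $Z(\mb G)^\circ(\KK)\cong(\KK^\times)^r$ is unbounded and the Euclidean factor $X_*(Z(\mb G)^\circ)\otimes\RR$ is nonzero whenever $\mb G$ is not semisimple. Anisotropy over $K$ gives you compactness of $Z(\mb G)^\circ(K)$, not of $Z(\mb G)^\circ(\KK)$. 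Consequently your stated reason for the parahoric bijection, and for $Z(\mb G)^\circ(\KK)$ sitting inside every parahoric of $G$, collapses.

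The paper sidesteps this by observing directly that $G$ and $G_\der$ share the same affine Dynkin diagram $\mathsf I$ (the connected center contributes no affine roots), so the standard parahorics are indexed by the same subsets $\mathsf J\subset\mathsf I$ and their reductive quotients have the same Lie type. Together with Lemma~\ref{lem:15.2} this gives $N_{G^{F_\omega}}(\mh P^{F_\omega}) = N_{G_\der^{F_\omega}}(\mh P_\der^{F_\omega})\,\mh P^{F_\omega}$, after which the classification \eqref{eq:2.10} is literally the same on both sides. You can repair your argument either by adopting this combinatorial shortcut or by working with the \emph{reduced} building over $\KK$ and noting that a parahoric of $G$ is the product of the corresponding parahoric of $G_\der$ with the unique parahoric $Z(\mb G)^\circ(\KK)_1$ of the central torus.
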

\begin{proof}
These two groups have the same affine Dynkin diagram $\mathsf I$. For any proper subset of 
$\mathsf I$, the two associated parahoric subgroups, of $G$ and of $G_\der$, give rise to
connected reductive $\mf F$-groups of the same Lie type. The collection of (cuspidal) unipotent 
representations of a connected reductive group over a (fixed) finite field depends only
on the Lie type of the group \cite[Proposition 3.15]{Lus-Chevalley}. Hence any cuspidal 
unipotent $\sigma \in \Irr (\mh P_\der^{F_\omega})$ extends in a unique way to a 
representation $\sigma'$ of $\mh P^{F_\omega}$. More precisely, both $\sigma$ and $\sigma'$ 
factor via the canonical map to $\mh P_\ad^{F_\omega}$.

By Lemma \ref{lem:15.2} $G^{F_\omega}$ and $G_\der^{F_\omega}$ also have the same 
$\Omega^\theta$-group. From \eqref{eq:2.6} we see that 
\begin{equation}\label{eq:15.12}
N_{G^{F_\omega}}(\mh P^{F_\omega}) = 
N_{G_\der^{F_\omega}}(\mh P_\der^{F_\omega}) \mh P^{F_\omega} .
\end{equation}
After \eqref{eq:2.6} we checked that there exists an extension $\sigma^N$ of $\sigma$ to
a representation of $N_{G_\der^{F_\omega}}(\mh P^{F_\omega})$. Since $\sigma$ and $\sigma'$
factor via $\mh P_\ad^{F_\omega}$, $\sigma^N$ extends uniquely to a representation of 
\eqref{eq:15.12}.

Now the classification of supercuspidal unipotent representations, as in \eqref{eq:2.8} and 
further, is the same for $G^{F_\omega}$ and for $G_\der^{F_\omega}$. The explicit form 
\eqref{eq:2.10} shows that the ensuing bijection is induced by $G_\der^{F_\omega} \to G^{F_\omega}$.
\end{proof}

\begin{lemma}\label{lem:15.4}
Suppose that $Z(\mb G)^\circ$ is $K$-anisotropic. The canonical map\\
$\mb G^\vee \to \mb G^\vee / Z (\mb G^\vee)^\circ$ induces a bijection 
$\Phi_\nr (G^{F_\omega}) \to \Phi_\nr (G_\der^{F_\omega})$.
\end{lemma}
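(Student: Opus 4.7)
The plan is to translate the bijection into a concrete statement about pairs $(s,u)$, where $s\in\mb G^\vee$ satisfies $\lambda(\Frob)=(s,\Frob)$ and $u=\lambda|_{SL_2(\CC)}$, and then exploit Lemma \ref{lem:15.2} at the one point where lifting from $\mb G^\vee/Z(\mb G^\vee)^\circ$ could fail. The quotient $\mb G^\vee\to\mb G^\vee/Z(\mb G^\vee)^\circ$ is the dual of $\mb G_\der\hookrightarrow\mb G$, and since $\mb I_K$ acts trivially on $\mb G^\vee$ (as $\mb G$ is unramified), an unramified L-parameter is fully determined by the pair $(s,u)$ modulo $\mb G^\vee$-conjugacy, subject to the compatibility $\Ad(s\theta)\circ u=u$ and semisimplicity of $s\theta$.

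For surjectivity, pick $\bar\lambda_\der\in\Phi_\nr(G_\der^{F_\omega})$ represented by $(\bar s,\bar u)$. Lift $\bar s$ to some $s\in\mb G^\vee$ using that $\mb G^\vee\to\mb G^\vee/Z(\mb G^\vee)^\circ$ is surjective. To lift $\bar u$, I use that $SL_2(\CC)$ is simply connected and $Z(\mb G^\vee)^\circ$ is a central torus: the kernel of the surjection $\mb G^\vee_\der\to\mb G^\vee_\der/(\mb G^\vee_\der\cap Z(\mb G^\vee)^\circ)$ is finite central, so $\bar u$ lifts to $u:SL_2(\CC)\to\mb G^\vee$, and any two lifts differ by a homomorphism $SL_2(\CC)\to Z(\mb G^\vee)^\circ$, which must be trivial since $SL_2(\CC)$ is perfect. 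The compatibility is automatic: $\Ad(s\theta)(u)\cdot u^{-1}$ takes values in $Z(\mb G^\vee)^\circ$ because its image in $\mb G^\vee/Z(\mb G^\vee)^\circ$ is trivial by $\Ad(\bar s\theta)(\bar u)=\bar u$, and again perfection of $SL_2(\CC)$ forces it to be trivial. Semisimplicity of $s\theta$ is preserved under the isogeny. Thus $(s,u)$ defines an unramified L-parameter $\lambda$ lifting $\bar\lambda_\der$.

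For injectivity, suppose $\lambda_1,\lambda_2\in\Phi_\nr(G^{F_\omega})$ have the same image in $\Phi_\nr(G_\der^{F_\omega})$. Lift a conjugator $\bar g\in\mb G^\vee/Z(\mb G^\vee)^\circ$ to $g\in\mb G^\vee$; replacing $\lambda_1$ by $g\lambda_1g^{-1}$, I may assume $\lambda_1$ and $\lambda_2$ coincide modulo $Z(\mb G^\vee)^\circ$. By the same perfection argument as above, $\lambda_1|_{SL_2(\CC)}=\lambda_2|_{SL_2(\CC)}$, and $\lambda_1(\Frob)=z\,\lambda_2(\Frob)$ for some $z\in Z(\mb G^\vee)^\circ$. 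To complete the proof I seek $z'\in Z(\mb G^\vee)^\circ$ so that conjugation by $z'$ carries $\lambda_1$ to $\lambda_2$; in the semidirect product this amounts to the equation $z'\lambda_1(\Frob)\theta(z')^{-1}=\lambda_2(\Frob)$, which simplifies to
\[
z=\theta(z')(z')^{-1}\in(\theta-1)Z(\mb G^\vee)^\circ.
\]
This is precisely the place where the anisotropy hypothesis enters: by Lemma \ref{lem:15.2} (more accurately, equation \eqref{eq:15.2} in its proof), $(\theta-1)Z(\mb G^\vee)^\circ=Z(\mb G^\vee)^\circ$, so such a $z'$ always exists, and conjugation by $z'\in Z(\mb G^\vee)^\circ$ preserves the $SL_2(\CC)$-part automatically.

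The main obstacle is therefore packaged entirely in the surjectivity of $(1-\theta)$ on $Z(\mb G^\vee)^\circ$; everything else reduces to standard properties of central isogenies and to the fact that $SL_2(\CC)$ is simply connected and perfect. Once stated this way, the proof is essentially the computation above plus the two remarks about lifting $u$ and preserving semisimplicity.
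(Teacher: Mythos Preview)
Your proof is correct and follows essentially the same route as the paper: both arguments lift a conjugator to $\mb G^\vee$, then use \eqref{eq:15.2} to absorb the residual discrepancy $z\in Z(\mb G^\vee)^\circ$ in the Frobenius image by conjugating with a central element. The one minor difference is in handling the $SL_2(\CC)$-part: the paper argues via Lie algebras and the unipotent class, whereas you use perfection and simple connectivity of $SL_2(\CC)$ directly, which is slightly cleaner and avoids the implicit Jacobson--Morozov step.
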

\begin{proof}
Suppose that $\lambda, \lambda' \in \Phi_\nr (G^{F_\omega})$ become equal in
$\Phi_\nr (G_\der^{F_\omega})$. Then there exists a $g \in \mb G^\vee$ such that
$g \lambda' g^{-1} = \lambda$ as maps $\mb W_K \times SL_2 (\CC) \to \mb G^\vee / Z(\mb G^\vee)^\circ
\rtimes \mb W_K$. In particular $g \lambda' (\Frob) g^{-1} = z_1 \lambda (\Frob)$
for some $z_1 \in Z(\mb G^\vee)^\circ$. By \eqref{eq:15.2}
we can find $z_2 \in Z(\mb G^\vee)^\circ$ with $z_2^{-1} \theta (z_2) = z_1$. Then
\[
z_2 g \lambda' (\Frob) g^{-1} z_2^{-1} = z_2 z_1 \lambda (\Frob) z_2^{-1} = 
z_2 z_1 \theta (z_2^{-1}) \lambda (\Frob) = \lambda (\Frob) .
\]
Replace $\lambda'$ by the equivalent parameter $\lambda'' = z_2 g \lambda' g^{-1} z_2^{-1}$.
These parameters are unramified, so $\lambda'' |_{\mb W_K} = \lambda |_{\mb W_K}$. But
$\lambda |_{SL_2 (\CC)} = \lambda'' |_{SL_2 (\CC)}$ still only holds as maps
$SL_2 (\CC) \to \mb G^\vee / Z(\mb G^\vee)^\circ$. In any case, $\lambda \big(1, \left(
\begin{smallmatrix} 1 & 1 \\ 0 & 1 \end{smallmatrix} \right) \big)$ and
$\lambda'' \big(1, \left( \begin{smallmatrix} 1 & 1 \\ 0 & 1 \end{smallmatrix} \right) \big)$ 
determine the same unipotent class (in $\mb G^\vee$ and in $\mb G^\vee / Z(\mb G^\vee)^\circ$).
Consequenty $\lambda$ and $\lambda''$ are $\mb G^\vee$-conjugate.

Conversely, consider a $\tilde \lambda \in \Phi_\nr (G_\der^{F_\omega})$. We may assume that
$\tilde \lambda (\Frob) = s \theta Z(\mb G^\vee)^\circ$ for some $s \in S^\vee$. Then $s \theta$
and $\tilde \lambda (\Frob)$ centralize the same subalgebra of 
\[
\mr{Lie}(\mb G^\vee) = \mr{Lie}({\mb G^\vee}_\der) \oplus \mr{Lie}(Z(\mb G^\vee)^\circ) .
\] 
As d$\tilde \lambda (\mathfrak{sl}_2 (\CC))$ is contained in 
\[
\mr{Lie} \big( (\mb G^\vee)^{s \theta} \big) = \mr{Lie}\big( Z_{\mb G^\vee}(s \theta) \big) = 
\mr{Lie}\big( Z_{\mb G^\vee}(\tilde \lambda (\Frob)) \big) ,
\] 
we can lift d$\tilde \lambda (\mathfrak{sl}_2 (\CC))$ to a homomorphism 
$\lambda : SL_2 (\CC) \to Z_{\mb G^\vee}(s \theta)^\circ$. Together with $\lambda (\Frob) :=
s \theta$ this defines a preimage of $\tilde \lambda$ in $\Phi_\nr (G^{F_\omega})$. 
\end{proof}

\begin{lemma}\label{lem:15.5}
Suppose that $Z(\mb G)^\circ$ is $K$-anisotropic. Let $\lambda \in \Phi_\nr (G^{F_\omega})$ 
and let $\lambda_\der$ be its image in $\Phi_\nr (G_\der^{F_\omega})$. Then
$\mc A_\lambda = \mc A_{\lambda_\der}$.
\end{lemma}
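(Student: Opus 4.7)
The plan is to identify both $\mc A_\lambda$ and $\mc A_{\lambda_\der}$ as component groups of subgroups of a common complex Lie group. The natural projection $q^\vee : \mb G^\vee \twoheadrightarrow \mb G^\vee / Z(\mb G^\vee)^\circ \cong (\mb G_\der)^\vee$ is a quotient by a central torus, so its restriction to the derived groups is a central isogeny. This induces a canonical identification $(\mb G^\vee)_\SC = ((\mb G_\der)^\vee)_\SC$, inside which both component groups live via the description \eqref{eq:2.12}. It therefore suffices to prove the equality of subgroups
\[
Z^1_{(\mb G^\vee)_\SC}(\lambda) \;=\; Z^1_{((\mb G_\der)^\vee)_\SC}(\lambda_\der) .
\]

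The inclusion $\subseteq$ is immediate: given $g\lambda g^{-1} = \lambda b$ with $b \in B^1(\mb W_K, Z(\mb G^\vee))$, projecting by $q^\vee$ yields the analogous identity $g\lambda_\der g^{-1} = \lambda_\der \bar b$ with $\bar b \in B^1(\mb W_K, Z((\mb G_\der)^\vee))$. For the reverse inclusion, I would start from $g \in Z^1_{((\mb G_\der)^\vee)_\SC}(\lambda_\der)$ witnessed by a coboundary $\bar b(w) = \bar z\, w(\bar z^{-1})$. A short commutator argument, using the connectedness of $\mb G^\vee$, shows that the preimage of $Z((\mb G_\der)^\vee)$ in $\mb G^\vee$ is exactly $Z(\mb G^\vee)$; so $\bar z$ lifts to some $z \in Z(\mb G^\vee)$, and $b(w) = z\, w(z^{-1})$ is the desired lift to $B^1(\mb W_K, Z(\mb G^\vee))$. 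Then $g\lambda g^{-1}$ and $\lambda b$ differ in $\mb G^\vee$ only by a cocycle $c : \mb W_K \to Z(\mb G^\vee)^\circ$; the $SL_2(\CC)$-part of the discrepancy is automatically trivial because $SL_2(\CC)$ admits no nontrivial homomorphism to a torus.

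The heart of the matter is then to show that $c$ is itself a coboundary. Since $\lambda$ is unramified, $c$ vanishes on $\mb I_K$ and is entirely determined by $c(\Frob) \in Z(\mb G^\vee)^\circ$. This is precisely where the anisotropy hypothesis on $Z(\mb G)^\circ$ enters: by \eqref{eq:15.2} from Lemma \ref{lem:15.2}, the map $1-\theta$ is surjective on $Z(\mb G^\vee)^\circ$, so $c(\Frob) = z_0 \theta(z_0^{-1})$ for some $z_0 \in Z(\mb G^\vee)^\circ$, exhibiting $c$ as a coboundary. Consequently $bc \in B^1(\mb W_K, Z(\mb G^\vee))$ witnesses $g \in Z^1_{(\mb G^\vee)_\SC}(\lambda)$, and taking $\pi_0$ yields $\mc A_\lambda = \mc A_{\lambda_\der}$. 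The main obstacle is exactly this last cohomological step; without the anisotropy hypothesis, $1-\theta$ need not be surjective and the argument would break down.
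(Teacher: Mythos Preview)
Your argument is correct and follows essentially the same route as the paper's proof: both identify $(\mb G^\vee)_\SC$ with $((\mb G_\der)^\vee)_\SC$, reduce to proving $Z^1_{(\mb G^\vee)_\SC}(\lambda) = Z^1_{((\mb G_\der)^\vee)_\SC}(\lambda_\der)$, and use the surjectivity of $1-\theta$ on $Z(\mb G^\vee)^\circ$ from \eqref{eq:15.2} to absorb the discrepancy at $\Frob$. The paper's version is terser---it phrases the key step as ``to centralize $\lambda_\der$ means to centralize $\lambda$, up to adjusting $\lambda(\Frob)$ by an element of $Z(\mb G^\vee)^\circ$''---whereas you spell out the lifting of coboundaries and the cocycle $c$ explicitly, but the content is the same.
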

\begin{proof}
Recall the construction of $\mc A_\lambda$ from \eqref{eq:2.11} and \eqref{eq:2.12}. It says
that $\mc A_{\lambda_\der}$ is the component group of 
\[
Z^1_{(\mb G_\der)^\vee_\SC} (\lambda_\der) = \big\{ g \in (\mb G_\der)^\vee_\SC : g \lambda_\der 
g^{-1} = \lambda_\der b \text{ for some } b \in B^1 (\mb W_K, Z({\mb G_\der}^\vee)) \big\} .
\]
From ${\mb G_\der}^\vee = \mb G^\vee / Z(\mb G^\vee)^\circ$ we see that 
$(\mb G_\der)^\vee_\SC = {\mb G^\vee}_\SC$.
Since $\lambda$ is unramified, the difference with $\lambda_\der$ resides only in the image
of the Frobenius element (see the second half of the proof of Lemma \ref{lem:15.4}).
To centralize $\lambda_\der$ means to centralize $\lambda$, up to adjusting $\lambda (\Frob)$
by an element of $Z(\mb G^\vee)^\circ$. Together with \eqref{eq:15.2} this implies that
\[
Z^1_{(\mb G_\der)^\vee_\SC} (\lambda_\der) = \big\{ g \in {\mb G^\vee}_\SC : g \lambda 
g^{-1} = \lambda b \text{ for some } b \in B^1 (\mb W_K, Z(\mb G^\vee)) \big\} = 
Z^1_{{\mb G^\vee}_\SC}(\lambda) .
\]
In particular 
\[
\mc A_{\lambda_\der} = \pi_0 \big( Z^1_{(G_\der)^\vee_\SC} (\lambda_\der) \big)
= \pi_0 \big( Z^1_{{\mb G^\vee}_\SC}(\lambda) \big) = \mc A_\lambda . \qedhere
\]
\end{proof}

\begin{prop}\label{prop:15.8}
Theorem \ref{thm:C} holds whenever $Z(\mb G)^\circ$ is $K$-anisotropic. 
\end{prop}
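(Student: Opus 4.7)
The strategy is to reduce everything to the semisimple case already proved in Section \ref{sec:proofss}. The inputs are Lemmas \ref{lem:15.2}--\ref{lem:15.5}, which together say that going from $\mb G$ to $\mb G_\der$ does not change anything relevant: the representations, the L-parameters, their component groups and the group $\Omega^\theta$ all match. In particular the cuspidality condition for enhanced L-parameters, which by \eqref{eq:2.13} depends only on $u_\lambda$ and $\mc A_\lambda$, is preserved under $\mb G^\vee \to \mb G^\vee / Z(\mb G^\vee)^\circ$, so Lemmas \ref{lem:15.4} and \ref{lem:15.5} yield a natural bijection
\[
\Phi_\nr (G^{F_\omega})_\cusp \isom \Phi_\nr (G_\der^{F_\omega})_\cusp .
\]
Composing this with the inverse of the bijection in Lemma \ref{lem:15.3} and with the bijection provided for the semisimple group $\mb G_\der$ by Theorem \ref{thm:B}, we obtain a candidate map $\pi \mapsto (\lambda_\pi,\rho_\pi)$ for $\mb G$.

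Next I would verify the six properties of Theorem \ref{thm:C}. For (1), note that since $Z(\mb G)^\circ$ is $K$-anisotropic, $Z(\mb G)_s = 1$, so the adjoint representation used to define $\gamma (\lambda,q)$ lives on $\mr{Lie}(\mb G_\ad^\vee) = \mr{Lie}(\mb G_\der^\vee) / \mr{centre}$; the central part contributes trivially to adjoint $\gamma$-factors, so $\gamma (\lambda,q) = \gamma (\lambda_\der,q)$. Similarly the formal degree is unchanged: the parahoric subgroups $\mh P^{F_\omega}$ and $\mh P_\der^{F_\omega}$ have the same reductive quotient and the same normalizer indices by \eqref{eq:15.12} and Lemma \ref{lem:15.2}, so the formula \eqref{eq:2.21} gives equal values (after the Haar measure on the compact group $Z(\mb G)(K)$ is accounted for). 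For (2), Lemma \ref{lem:15.2} yields $X_\Wr (G^{F_\omega}) \cong (\Omega^\theta)^* = (\Omega_\der^\theta)^* \cong X_\Wr (G_\der^{F_\omega})$, and both actions factor through this common group; equivariance on the Galois side is immediate from \eqref{eq:2.4} together with Lemma \ref{lem:15.4}. Property (3) follows because every $\mb W_K$-automorphism of $\mc R(\mb G,\mb S)$ restricts to one of $\mc R(\mb G_\der,\mb S \cap \mb G_\der)$, and Theorem \ref{thm:B} already provides diagram equivariance for $\mb G_\der$. Properties (5) and (6), concerning the passage to the derived group of $\mb G/Z(\mb G)_s = \mb G$, hold tautologically here since $Z(\mb G)_s = 1$ makes $\mb H = \mb G_\der$, and the construction was built precisely through this reduction.

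For property (4), compatibility with almost direct products, I would argue as follows. If $\mb G^\omega = \mb G_1 \mb G_2$, then $\mb G_\der$ factors compatibly, and the compatibility statement for $\mb G_\der$ (given by Theorem \ref{thm:B} applied in the semisimple case) transfers to $\mb G$ via the commutative square of bijections built above. The uniqueness statement at the end of Theorem \ref{thm:C} is then inherited from the corresponding uniqueness in Theorem \ref{thm:B}.(2), combined with the fact that the weakly unramified characters appearing in the indeterminacy are the same on both sides by Lemma \ref{lem:15.2}.

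The main obstacle, and the only genuinely delicate step, is matching formal degrees and the Haar measure normalizations between $G^{F_\omega}$ and $G_\der^{F_\omega}$: because $Z(\mb G)^\circ$ is only assumed $K$-anisotropic rather than trivial, the compact abelian group $Z(\mb G)^\circ(K)$ contributes a (finite, harmless) factor that must be tracked through \eqref{eq:2.21} and through the definition of adjoint $\gamma$-factors. Once one checks that both sides of \eqref{eq:fdegisgammafactor} acquire this factor in the same way (which is clean because the relevant Lie-algebra decomposition $\mr{Lie}(\mb G^\vee) = \mr{Lie}({\mb G^\vee}_\der) \oplus \mr{Lie}(Z(\mb G^\vee)^\circ)$ is $\mr W_K$-stable and the second summand is killed by $\mr{Ad}$), the proposition follows.
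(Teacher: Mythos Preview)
Your overall strategy coincides with the paper's: construct the bijection by transporting Theorem~\ref{thm:B} for $\mb G_\der$ through the natural identifications of Lemmas~\ref{lem:15.2}--\ref{lem:15.5}, and then check the properties. Parts (2), (3), (5), (6) and the uniqueness follow essentially as you sketch, and the paper says little more than that the relevant isomorphisms are natural.

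There is, however, a genuine error in your treatment of property~(1). First, it is unnecessary: Theorem~\ref{thm:C}.(1) is asserted only when $\mb G$ is semisimple, and in that case $Z(\mb G)^\circ=1$ so the proposition is already Theorem~\ref{thm:B}. For non-semisimple $\mb G$ with anisotropic connected centre, (1) is vacuous and the paper simply does not address it. Second, and more seriously, your supporting claims are false. Since $Z(\mb G)_s=1$, the adjoint representation acts on all of $\mr{Lie}(\mb G^\vee)$, and the $\mb W_K$-part of ${}^L\mb G$ acts \emph{non-trivially} on $\mr{Lie}(Z(\mb G^\vee)^\circ)$ precisely because $Z(\mb G)^\circ$ is $K$-anisotropic; this summand is not ``killed by $\mr{Ad}$''. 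The paper later computes exactly this discrepancy in Lemma~\ref{lem:16.5}: passing from $\mb G_\der$ to $\mb G$ multiplies the adjoint $\gamma$-factor by $\gamma(s,\mr{Ad}\circ\mr{id}_{\mb W_K},\psi)$ (see \eqref{eq:16.29}), and multiplies the formal degree by the non-trivial factor $q_K^{\dim(\overline{\mh P_a})/2}/|\overline{\mh P_a}^{F_\omega}|$ (see \eqref{eq:16.27}). So neither side of \eqref{eq:fdegisgammafactor} is unchanged, and your assertion that the parahorics ``have the same reductive quotient'' is also off. Luckily none of this affects the proposition.

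For property~(4) the paper is more explicit than your sketch: it notes that an almost direct product $\mb G^\omega=\mb G_1\mb G_2$ induces almost direct products of the derived groups and of the connected centres \eqref{eq:15.10}, and then uses the chain \eqref{eq:15.11} together with the fact that each $Z(\mb G_i)^\circ(K)$ has only the trivial irreducible unipotent representation (Lemma~\ref{lem:15.3}) to reduce compatibility to Theorem~\ref{thm:B}.(1) for $\mb G_\der$. Your ``factors compatibly'' gestures at this but skips the step that makes it work.
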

\begin{proof}
Lemmas \ref{lem:15.2}, \ref{lem:15.3}, \ref{lem:15.4}, \ref{lem:15.5} and Theorem 
\ref{thm:B} prove parts (5) and (6) of Theorem \ref{thm:C}, as well as the unicity up to
weakly unramified characters.
The equivariance properties (2) and (3) in Theorem \ref{thm:C} follow from the semisimple 
case, because the isomorphisms in the aforementioned lemmas are natural. 

Assume that $\mb G$ is the almost direct product of $K$-groups $\mb G_1$ and $\mb G_2$. 
Then 
\begin{equation}\label{eq:15.10}
\mb G_\der = \mb G_{1,\der} \mb G_{2,\der} \text{ and }
Z(\mb G)^\circ = Z(\mb G_1)^\circ Z (\mb G_2)^\circ
\end{equation}
are also almost direct products, and there are epimorphisms of $K$-groups
\begin{equation}\label{eq:15.11}
\mb G_{1,\der} \times Z(\mb G_1)^\circ \times \mb G_{2,\der} \times Z (\mb G_2)^\circ 
\longrightarrow \mb G_\der \times Z(\mb G)^\circ \longrightarrow \mb G .
\end{equation}
Notice that the connected centres of $\mb G_1$ and $\mb G_2$ are $K$-anisotropic. 
By Lemma \ref{lem:15.3} $Z(\mb G)^\circ (K)$, and the $Z(\mb G_i)^\circ (K)$ have unique 
irreducible unipotent representations, namely the trivial representation of each of these
groups. That and Theorem \ref{thm:B}.(1) show 
that our instances of the LLC for $\mb G (K)$ and the $\mb G_i (K)$ are compatible 
with the almost direct products \eqref{eq:15.10}. For the same reason they are compatible 
with the second map in \eqref{eq:15.11}. The composition of the maps in \eqref{eq:15.11} 
factors via $\mb G_1 \times \mb G_2 \to \mb G$, so our LLC is also compatible with that 
almost direct product.

Conversely, Lemma \ref{lem:15.3} and Theorem \ref{thm:B}.(4) leave no choice for the LLC 
in this case, it just has to be the same as for $\Irr (\mb G_\der^\omega (K))_{\cusp,\unip}$. 
We already know from Theorem \ref{thm:B} that for the latter the L-parameters are uniquely 
determined modulo $(\Omega^\theta)^*$ by properties (1), (2) and (4) in Theorem \ref{thm:C}. 
Hence the same goes for the LCC for $\Irr (\mb G^\omega (K))_{\cusp,\unip}$. 
\end{proof}

Now $\mb G$ may be any unramified reductive $K$-group. Let $Z(\mb G^\omega)_s$ be the 
maximal $K$-split central torus of $\mb G^\omega$. Recall that the $K$-torus 
$Z(\mb G^\omega)^\circ$ is the almost direct product of $Z(\mb G^\omega)_s$ and a 
$K$-anisotropic torus $Z(\mb G^\omega)_a$ \cite[Proposition 13.2.4]{Spr}.
These central subgroups do not depend on $\omega$, so may denote them simply by 
$Z(\mb G)_s$ and $Z(\mb G)_a$.

\begin{lemma}\label{lem:15.7}
Any cuspidal unipotent $\sigma \in \Irr (\mh P^{F_\omega})$ can be extended to 
$N_{G^{F_\omega}}(\mh P^{F_\omega})$.
\end{lemma}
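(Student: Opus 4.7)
The plan is to mimic the semisimple argument from Section \ref{sec:prelim}, reducing the problem to the adjoint group $\mb G_\ad$ where Lusztig's extension \cite[\S 1.16]{Lusztig-unirep} is already available. The first step is to show that $\sigma$ factors through the natural surjection $\overline{\mh P^{F_\omega}} \twoheadrightarrow \overline{\mh P_\ad^{F_\omega}}$: its kernel is the image of $Z(\mb G)^\circ(\mathcal{O}_K) \cap \mh P^{F_\omega}$, which lies in the connected center of the finite reductive quotient $\overline{\mh P^{F_\omega}}$, and on that connected center any cuspidal unipotent representation acts trivially by the classification \cite[\S 3]{Lus-Chevalley}. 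This matches the phenomenon already exploited in the proof of Lemma \ref{lem:15.3}, and lets $\sigma$ descend to a cuspidal unipotent representation $\sigma_\ad$ of $\mh P_\ad^{F_\omega}$.

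The second step is to take Lusztig's extension $\sigma_\ad^N$ of $\sigma_\ad$ to $N_{G_\ad^{F_\omega}}(\mh P_\ad^{F_\omega})$ on the same vector space, and pull it back along the group homomorphism $N_{G^{F_\omega}}(\mh P^{F_\omega}) \to N_{G_\ad^{F_\omega}}(\mh P_\ad^{F_\omega})$ induced by $\mb G \to \mb G_\ad$. This map is well-defined and carries normalizers to normalizers because $\mh P_\ad$ is the parahoric attached to the image of the facet defining $\mh P$, and every element of $N_{G^{F_\omega}}(\mh P^{F_\omega})$ maps into $G_\ad^{F_\omega}$ by functoriality of the Kottwitz homomorphism. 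The pullback $\sigma^N$ agrees with $\sigma$ on $\mh P^{F_\omega}$ by construction, giving the desired extension.

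The main obstacle is verifying the triviality of $\sigma$ on $Z(\mb G)^\circ(\mathcal{O}_K) \cap \mh P^{F_\omega}$, which controls the descent to $\mh P_\ad^{F_\omega}$. This amounts to pinning down the central character of a cuspidal unipotent representation of a connected reductive finite group and is the essential point that distinguishes the reductive case from the semisimple one, since the connected center of $\mb G$ now contributes a potentially noncompact piece $Z(\mb G)_s(K)$. One also has to verify that the normalizer $N_{G^{F_\omega}}(\mh P^{F_\omega})$ really maps into $N_{G_\ad^{F_\omega}}(\mh P_\ad^{F_\omega})$ rather than merely into the stabilizer of the image of $\mh P^{F_\omega}$; this can be checked using the $\Omega^{\theta,\mh P}$-description of the normalizer from \eqref{eq:2.6} and \cite{Opd2}. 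Finally, the extension $\sigma^N$ is not unique: it can be twisted by any weakly unramified character of $Z(\mb G)_s(K)$, reflecting the freedom introduced by the split central torus and making this step fit cleanly into the direct integral version of \eqref{eq:2.7}.
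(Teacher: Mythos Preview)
Your approach is correct but takes a different route from the paper. You go directly from $\mb G$ to $\mb G_\ad$: show that $\sigma$ factors through the adjoint quotient of the finite reductive group, take Lusztig's extension $\sigma_\ad^N$ on $N_{G_\ad^{F_\omega}}(\mh P_\ad^{F_\omega})$, and pull back along $N_{G^{F_\omega}}(\mh P^{F_\omega}) \to N_{G_\ad^{F_\omega}}(\mh P_\ad^{F_\omega})$. The paper instead first quotients by the maximal $K$-split central torus $Z(\mb G)_s$, using Hilbert 90 to get the short exact sequence \eqref{eq:15.9}. It checks that $\sigma$ is trivial on $Z(\mb G)_s(\mf o_K)$ and hence descends to the parahoric of $\mb G^\omega / Z(\mb G^\omega)_s$, a group with $K$-anisotropic connected centre; then it invokes the already-established Lemma \ref{lem:15.3}, which reduces that case to the derived (semisimple) group.

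Both arguments rest on the same fact, namely that cuspidal unipotent representations of a finite reductive group factor through the adjoint quotient \cite[\S 3]{Lus-Chevalley}. Your route is more direct for this lemma in isolation; the paper's detour through $\mb G / Z(\mb G)_s$ is chosen because the exact sequence \eqref{eq:15.9} is reused throughout the remainder of Section \ref{sec:mainred}, so it is more economical in context. One minor point in your write-up: the kernel of $\overline{\mh P^{F_\omega}} \to \overline{\mh P_\ad^{F_\omega}}$ is the image of all of $Z(\mb G)$, not just of $Z(\mb G)^\circ$, but this does not affect the argument since $\sigma$ and $\sigma_\ad$ both factor through the common adjoint quotient of the finite reductive group (so ``descent'' here really means identifying the two as pullbacks from that common quotient, as in the proof of Lemma \ref{lem:15.3}).
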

\begin{proof}
Recall that by Hilbert 90 the continuous Galois cohomology group $H^1_c (K,Z (\mb G^\omega)_s)$ 
is trivial. The long exact sequence in Galois cohomology yields a short exact sequence
\begin{equation}\label{eq:15.9}
1 \to Z(\mb G^\omega)_s (K) \to \mb G^\omega (K) \to 
\big( \mb G^\omega / Z(\mb G^\omega)_s \big) (K) \to 1 .
\end{equation}
The restriction of $\sigma$ to $\mh P^{F_\omega} \cap Z(\mb G^\omega)_s (K) = Z(\mb G)_s (\mf o_K)$ 
is inflated from a unipotent representation of $Z(\mb G)_s (\mf F)$, so it is a multiple of 
the trivial representation. Thus we can extend $\sigma$ trivially across $Z(\mb G)_s (K)$,
making it a representation of 
\[
\mh P^{F_\omega} Z(\mb G)_s (K) / Z(\mb G)_s (K) \cong 
\mh P_{\mb G^\omega / Z(\mb G^\omega)_s}^{F_\omega} .
\]
By \cite[Proposition 13.2.2]{Spr} the connected centre of $\mb G^\omega / Z(\mb G^\omega)_s$ 
is $K$-anisotropic. From Lemma \ref{lem:15.3} we know that $\sigma$ extends canonically to
a representation of 
\[
N_{\big( \mb G^\omega / Z(\mb G^\omega)_s \big) (K)} \big( \mh P_{\mb G^\omega / Z(\mb G^\omega)_s
}^{F_\omega} \big) \cong N_{G^{F_\omega}}(\mh P^{F_\omega}) / Z(\mb G^\omega)_s (K) .
\]
This can be regarded as the required extension of $\sigma$. 
\end{proof}

With a similar argument we can prove a part of Theorem \ref{thm:C} for reductive groups.

Every (cuspidal) unipotent representation of $\mb G^\omega (K)$ restricts to a
unipotent character of $Z(\mb G^\omega)_s (K)$. From Proposition \ref{prop:15.1}.(1)
we know that those are precisely the weakly unramified characters of $Z(\mb G^\omega)_s (K)$. 
This torus is $K$-split, so all its weakly unramified characters are in fact unramified.
Since $\CC^\times$ is divisible, every $\chi \in X_\nr (Z(\mb G^\omega)_s (K))$ can be
extended to an unramified character of $\mb G^\omega (K)$. Thus every $\pi \in 
\Irr (\mb G^\omega (K) )_{\cusp,\unip}$ can be made trivial on $Z(\mb G^\omega)_s (K)$ by
an unramified twist:
\begin{equation}\label{eq:15.13}
\pi = \chi \otimes \pi' \text{ with } \chi \in X_\nr (\mb G^\omega (K)) \text{ and }
\pi' \in \Irr (\mb G^\omega / Z(\mb G^\omega)_s)(K) .
\end{equation}
By the functoriality of the Kottwitz homomorphism, \eqref{eq:15.9} induces a short exact sequence
\[
1 \to X_\Wr  \big( \big( \mb G^\omega / Z(\mb G^\omega)_s \big) (K) \big) \to 
X_\Wr  (\mb G^\omega (K)) \to X_\nr (Z (\mb G^\omega)_s (K)) \to 1 .
\]
Thus we can reformulate the above as a bijection
\begin{equation}\label{eq:15.3}
\begin{split}
\Irr \big( \big(\mb G^\omega / Z(\mb G^\omega)_s \big) (K) \big)_{\cusp,\unip} 
\underset{X_\Wr  ( ( \mb G^\omega / Z(\mb G^\omega)_s ) (K) )}{\times}
X_\Wr  \big( \mb G^\omega (K) \big) \\
\longrightarrow \Irr \big( \mb G^\omega (K) \big)_{\cusp,\unip} . 
\end{split}
\end{equation}
On the Galois side of the LLC there is a short exact sequence
\begin{equation}\label{eq:15.5}
1 \to {}^L (\mb G / Z(\mb G)_s) \to {}^L \mb G \to  
{}^L Z(\mb G)_s = {Z(\mb G)_s}^\vee \times \mb W_K \to 1.
\end{equation}
This induces maps between L-parameters for these groups. 
It also induces a short exact sequence
\begin{equation}\label{eq:15.6}
1 \to Z \big( (\mb G / Z(\mb G)_s)^\vee \big)_\theta \to Z(\mb G^\vee)_\theta 
\to \big( {Z(\mb G)_s}^\vee \big)_\theta = {Z(\mb G)_s}^\vee \to 1 ,
\end{equation}
whose terms can be interpreted as the sets of weakly unramified characters of the 
associated $K$-groups (or of their inner forms). As 
$\Phi_\nr^2 (Z(\mb G^\omega)_s) \cong {Z(\mb G^\omega)_s}^\vee$, 
\eqref{eq:15.6} and \eqref{eq:15.5} show that the map
\[
\Phi_\nr^2 (\mb G^\omega (K)) \to \Phi_\nr^2 (Z(\mb G^\omega)_s)
\]
is surjective with fibres $\Phi_\nr^2 ( \mb G^\omega / Z(\mb G^\omega)_s )$. 
With \eqref{eq:15.6} we obtain a bijection
\begin{equation}\label{eq:15.4}
\Phi_\nr^2 \big( \big(\mb G^\omega / Z(\mb G^\omega)_s \big) (K) \big) 
\underset{Z ( (\mb G / Z(\mb G)_s)^\vee )_\theta}{\times}
Z(\mb G^\vee)_\theta \longrightarrow \Phi_\nr^2 (\mb G^\omega (K)) .
\end{equation}

\begin{lemma}\label{lem:15.6}
$B^1 \big( \mb W_K, Z \big( (\mb G / Z(\mb G)_s)^\vee \big) \big) =
B^1 \big( \mb W_K, Z(\mb G^\vee) \big)$
\end{lemma}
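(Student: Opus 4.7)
The plan is to recast the statement as an equality of subgroups of $Z(\mb G^\vee)$, and then derive this equality directly from the short exact sequence \eqref{eq:15.6} that has already been put in place. I would set $A = Z(\mb G^\vee)$ and $B = Z((\mb G/Z(\mb G)_s)^\vee)$. Because $\mb G$ is unramified, the inertia subgroup $\mb I_K$ acts trivially on both $A$ and $B$, so the $\mb W_K$-action factors through $\langle \Frob \rangle$ with action via $\theta$. A coboundary $w \mapsto z w(z)^{-1}$ is then determined by its value $z \theta(z)^{-1}$ on $\Frob$, and this gives natural identifications $B^1(\mb W_K, A) \cong (1-\theta)A$ and $B^1(\mb W_K, B) \cong (1-\theta)B$, both regarded as subgroups of $A$. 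So I am reduced to showing $(1-\theta)A = (1-\theta)B$ in $A$.

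The inclusion $(1-\theta)B \subseteq (1-\theta)A$ is immediate from $B \subseteq A$. For the reverse, the first step is to note that the quotient $A/B \cong Z(\mb G)_s^\vee$ carries the trivial $\mb W_K$-action: indeed, since $Z(\mb G)_s$ is $K$-split, $\mb W_K$ acts trivially on $X^*(Z(\mb G)_s)$ and hence on the dual torus. Consequently, for every $a \in A$ the image of $(1-\theta)a$ in $A/B$ is $(1-\theta) \cdot 0 = 0$, so $(1-\theta)a \in B$. This shows $(1-\theta)A \subseteq B$, or equivalently $(1-\theta)A = (1-\theta)A \cap B$.

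The final step is to invoke the exactness of \eqref{eq:15.6}, specifically the injectivity of the map $B_\theta \to A_\theta$. Unwinding the definition of coinvariants, this injectivity is exactly the statement that $B \cap (1-\theta)A = (1-\theta)B$. Combining this with the inclusion established in the previous paragraph yields
\[
(1-\theta)A \;=\; B \cap (1-\theta)A \;=\; (1-\theta)B,
\]
which is the desired equality. The one place where real content enters is the injectivity in \eqref{eq:15.6}, which in turn rests on the vanishing of $H_1(\langle \theta \rangle, Z(\mb G)_s^\vee)$; this vanishing is forced by the divisibility of $\CC^\times$ and the triviality of the $\theta$-action on $Z(\mb G)_s^\vee$. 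Since \eqref{eq:15.6} is already in hand in the excerpt, no additional work is required, and this is the only potential obstacle in the argument.
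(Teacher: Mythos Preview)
Your main argument is correct and reaches the goal by a different route than the paper. Both proofs reduce to the identity $(1-\theta)Z(\mb G^\vee) = (1-\theta)Z((\mb G/Z(\mb G)_s)^\vee)$. The paper proves it constructively: it exhibits a $\theta$-stable subtorus $\mb T^\vee \subset Z(\mb G^\vee)$ (the dual of the split quotient $\mb G / \mb G_\der Z(\mb G)_a$) on which $\theta$ acts trivially and which satisfies $Z(\mb G^\vee) = \mb T^\vee \cdot Z((\mb G/Z(\mb G)_s)^\vee)$, so that $(1-\theta)$ annihilates the $\mb T^\vee$ factor. You instead observe that $(1-\theta)A \subset B$ because $A/B \cong Z(\mb G)_s^\vee$ carries the trivial $\theta$-action, and then invoke the injectivity of $B_\theta \to A_\theta$ from \eqref{eq:15.6} to finish. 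Your approach is slicker in that it uses only what is already on the page, without introducing the auxiliary torus $\mb T$; the paper's approach has the virtue of being self-contained and in effect re-proving the left-exactness of \eqref{eq:15.6} by producing a $\theta$-equivariant section.

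One correction: your aside explaining why \eqref{eq:15.6} is left-exact is wrong. You claim it ``rests on the vanishing of $H_1(\langle\theta\rangle, Z(\mb G)_s^\vee)$'', but for a trivial $\ZZ$-action one has $H_1(\ZZ, C) = C^\theta = C$, which is certainly nonzero here. What is actually needed is that the connecting map $H_1(\ZZ, Z(\mb G)_s^\vee) \to B_\theta$ vanishes, and that is \emph{equivalent} to the equality $(1-\theta)A = (1-\theta)B$ you are proving, so it cannot serve as an independent justification. The paper's reason for asserting \eqref{eq:15.6} is the identification of its terms with groups of weakly unramified characters via \eqref{eq:2.2}, together with the short exact sequence of such characters derived from \eqref{eq:15.9}. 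Since you explicitly cite \eqref{eq:15.6} as already established, your proof stands; just drop or replace the $H_1$ sentence.
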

\begin{proof}
Consider the short exact sequence of $K$-groups
\[
1 \to \mb G^\omega_\der Z(\mb G^\omega)_a \to \mb G^\omega \to
\mb T := \mb G^\omega \big/ \big(\mb G^\omega_\der Z(\mb G^\omega)_a \big) \to 1 .
\]
By \cite[Proposition 13.2.2]{Spr} $\mb T$ is a $K$-split torus. In the short exact
sequences of complex groups
\[
\begin{array}{ccccccccc}
1 & \to & \big( \mb G / Z(\mb G)_s \big)^\vee & \to & \mb G^\vee & \to & 
{Z(\mb G)_s}^\vee & \to & 1, \\
1 & \to & \mb T^\vee & \to & \mb G^\vee & \to & 
\big( \mb G_\der Z(\mb G)_a \big)^\vee & \to & 1 ,
\end{array}
\]
the Lie algebra of $\mb T^\vee$ maps isomorphically to the Lie algebra of 
${Z(\mb G)_s}^\vee$. Hence
\[
\mb G^\vee = \mb T^\vee \big( \mb G / Z(\mb G)_s \big)^\vee \quad \text{and} \quad
Z(\mb G^\vee) = \mb T^\vee Z \big( \big( \mb G / Z(\mb G)_s \big)^\vee \big) .
\]
The last equation entails that every element $b \in B^1 \big( \mb W_K, Z(\mb G^\vee) \big)$ 
is of the form 
\[
b(w) = t z w z^{-1} t^{-1} w^{-1} \quad \text{for some } t \in \mb T^\vee, z \in 
Z \big( (\mb G / Z(\mb G)_s )^\vee \big).
\]
As $\mb T^\vee$ is fixed by $\mb W_F$ and central in $\mb G^\vee$, $b(w) = z w z^{-1} w^{-1}$.
This says that $b \in B^1 \big( \mb W_K, Z \big( (\mb G / Z(\mb G)_s)^\vee \big) \big)$.
\end{proof}

\emph{Proof of Theorem \ref{thm:C}.}\\
Both for $\mb G^\omega (K)$ and for $\big(\mb G^\omega / Z(\mb G^\omega)_s \big) (K)$
the component groups of L-parameters are computed in the simply connected cover of
${\mb G^\vee}_\der = {\big(\mb G / Z(\mb G)_s \big)^\vee}_\der$,
see \eqref{eq:2.11}. By Lemma \ref{lem:15.6} and \eqref{eq:2.12} the group $\mc A_\lambda$
for $\lambda \in \Phi \big( \big(\mb G^\omega / Z(\mb G^\omega)_s \big) (K) \big)$ is the same
as the component group for $\lambda$ as L-parameter for $\mb G^\omega (K)$. Any
$z \in Z(\mb G^\vee)_\theta $ is made from central elements of $\mb G^\vee$, 
so $\mc A_{z \lambda}$ for $\mb G^\omega (K)$ equals $\mc A_\lambda$ for
$\mb G^\omega (K)$, and then also for $\big(\mb G^\omega / Z(\mb G^\omega)_s \big) (K)$.
This says that \eqref{eq:15.4} extends to a bijection between the spaces of enhanced 
L-parameters. Recall from \eqref{eq:2.13} that cuspidality of the enhancements is 
defined via the group $Z^1_{{(\mb G^\omega)^\vee}_\SC}(\lambda (\mb W_K))$, which is the
same for $\mb G^\omega (K)$ as for  $\big(\mb G^\omega / Z(\mb G^\omega)_s \big) (K)$.
Hence \eqref{eq:15.4} extends to a bijection
\begin{equation}\label{eq:15.8}
\Phi_\nr \big( \big(\mb G^\omega / Z(\mb G^\omega)_s \big) (K) \big)_\cusp 
\underset{Z ( (\mb G / Z(\mb G)_s)^\vee )_\theta}{\times}
Z(\mb G^\vee)_\theta \longrightarrow \Phi_\nr (\mb G^\omega (K))_\cusp .
\end{equation}
As $\mb G^\omega / Z(\mb G^\omega)_s$ has $K$-anisotropic centre, we already know 
Theorem \ref{thm:C} for that group from Proposition \ref{prop:15.8}. Using that and
comparing \eqref{eq:15.8} with \eqref{eq:15.3}, we obtain a bijection
\begin{equation}\label{eq:15.7}
\Irr \big( \mb G^\omega (K) \big)_{\cusp,\unip} \longleftrightarrow 
\Phi_\nr (\mb G^\omega (K))_\cusp .
\end{equation}
By construction \eqref{eq:15.7} satisfies parts (2), (5) and (6) of Theorem \ref{thm:C},
while part (1) does not apply. What happens for $Z(\mb G^\omega)_s (K)$ in \eqref{eq:15.3} 
and \eqref{eq:15.8} is completely determined by the LLC for tori, so any non-canonical 
choices left in \eqref{eq:15.7} come from $\big(\mb G^\omega / Z(\mb G^\omega)_s \big) (K)$. 
By Theorem \ref{thm:C} for the latter group, the only free choices are twists by weakly 
unramified characters of that group.

Concerning part (3), let $\tau$ be a $\mb W_K$-automorphism 
of the absolute root datum of $(\mb G,\mb S)$. From Theorem \ref{thm:C} for 
$\big( \big(\mb G^\omega / Z(\mb G^\omega)_s \big) (K)$ we know that \eqref{eq:15.7} is
$\tau$-equivariant on the subset $\Irr \big( \big(\mb G^\omega / Z(\mb G^\omega)_s \big) 
(K) \big)_{\cusp,\unip}$. We also know, from \eqref{eq:2.2}, that the LLC for 
$X_\Wr (\mb G^\omega (K))$ is $\tau$-equivariant. In view of \eqref{eq:15.3} and
\eqref{eq:15.8}, this implies that \eqref{eq:15.7} is also $\tau$-equivariant. 

We note that the LLC for unipotent characters of tori is compatible with almost 
direct products, that follows readily from Proposition \ref{prop:15.1}. Consider $\mb G$
as the almost direct product of $Z(\mb G^\omega)_s$ and $\mb G_\der^\omega Z(\mb G^\omega)_a$,
where $Z(\mb G^\omega)_a$ denotes the maximal $K$-anisotropic subtorus of 
$Z(\mb G^\omega)^\circ$. Let $\pi \in \Irr (\mb G^\omega (K))_{\cusp,\unip}$, with
enhanced L-parameter $(\lambda_\pi, \rho_\pi)$. In terms of \eqref{eq:15.3} we write
$\pi = \pi_\der \otimes \chi$ and in terms of \eqref{eq:15.4} we write 
$\lambda_\pi = \lambda_{\pi_\der} \lambda_\chi$ and $\rho_\pi = \rho_{\pi_\der}$. Then
\[
\pi |_{Z(\mb G^\omega)_s (K)} = \chi |_{Z(\mb G^\omega)_s (K)} \quad \text{and} \quad
\pi |_{(\mb G_\der^\omega Z(\mb G^\omega)_a) (K)} = 
\pi_\der |_{(\mb G_\der^\omega Z(\mb G^\omega)_a) (K)} \otimes \chi |_{\mb G_\der^\omega (K)} .
\]
The naturality of the LLC for weakly unramified characters entails that the L-parameter of
$\chi |_{Z(\mb G^\omega)_s (K)}$ (resp. of $\chi |_{\mb G_\der^\omega Z(\mb G^\omega)_a) (K)}$)
is the image of $\lambda_\chi$ in $Z(\mb G)_s^\vee$ (resp. in
$Z (\mb G^\vee ) / Z(\mb G^\vee)^{\circ,\mb W_F}$). Lemmas \ref{lem:15.3}, 
\ref{lem:15.4} and \ref{lem:15.5} show that, to analyse the enhanced L-parameter of
$\pi_\der |_{(\mb G_\der^\omega Z(\mb G^\omega)_a) (K)}$, it suffices to consider the
restriction to $\mb G_\der^\omega (K)$. Then we are back in the case of semisimple groups,
and the constructions in the proof of Theorem \ref{thm:B}, see especially \eqref{eq:11.15}, 
were designed such that the L-parameter of $\pi_\der |_{\mb G_\der^\omega (K)}$ is the
image of $\lambda_{\pi_\der}$ in $\Phi_\nr (\mb G_\der^\omega (K))$. Similarly, the
constructions in Section \ref{sec:semisimple} and their wrap-up after \eqref{eq:11.20}
show that the enhancement for $\pi_\der |_{\mb G_\der^\omega (K)}$ contains 
$\rho_{\lambda_{\pi_\der}}$. 

The above says that \eqref{eq:15.7} is compatible with the almost direct product 
$\mb G = \big( \mb G_\der^\omega Z(\mb G^\omega)_a \big) Z(\mb G^\omega)_s$.
Now the same argument as in and directly after \eqref{eq:15.10}
and \eqref{eq:15.11} shows that Theorem \ref{thm:C}.(4) holds. $\qquad \Box$

\section{The Hiraga--Ichino--Ikeda conjecture}
\label{sec:HII}

We fix an additive character $\psi : K \to \CC^\times$ which is trivial
on the ring of integers $\mf o_K$ but nontrivial on any larger fractional ideal.
We endow $K$ with the Haar measure that gives $\mf o_K$ volume 1 and we normalize the 
Haar measure on $(\mb G^\omega / Z(\mb G^\omega)_s)(K) = \mb G^\omega (K) / 
Z(\mb G^\omega)_s (K)$ as in \cite{HII}. As $\psi$ has 
order 0, the Haar measure agrees with that in \cite[\S 5]{GrGa} and \cite[\S 4]{Gro}.
The formal degree of a square-integrable modulo centre representation of $\mb G^\omega (K)$ 
(e.g. a unitary supercuspidal representation) can be defined as in \cite[p. 285]{HII}. 

For a L-parameter $\lambda \in \Phi (\mb G^\omega (K))$ we write
\begin{equation}\label{eq:S}
S_\lambda^\sharp = \pi_0 \big( Z_{(\mb G / Z(\mb G)_s)^\vee} (\lambda) \big) . 
\end{equation}
When $\lambda$ is discrete, as it will be most of the time in this paper, we do not have
to take the group of components in \eqref{eq:S}, for the centralizer group is already finite.

Let $\pi \in \Irr (\mb G^\omega (K))_{\cusp,\unip}$ and let $(\lambda_\pi, \rho_\pi)$
be its enhanced L-parameter from Theorem \ref{thm:C}. 
It was conjectured in \cite[Conjecture 1.4]{HII} that
\begin{equation}\label{eq:HII}
\mr{fdeg}(\pi) = \dim (\rho_\pi) \, |S_{\lambda_\pi}^\sharp |^{-1} \, 
|\gamma (0,\mr{Ad} \circ \lambda_\pi, \psi)| .
\end{equation}
We will prove \eqref{eq:HII} with a series of lemmas, of increasing generality.
Thanks to Proposition \ref{prop:17.2} we do not have to worry about restriction of scalars.

\begin{lemma}\label{lem:16.3}
The equality \eqref{eq:HII} holds for $\Irr (\mb G^\omega (K))_{\cusp,\unip}$ 
when $\mb G$ is semisimple and adjoint. 
\end{lemma}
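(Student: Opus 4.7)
The plan is to reduce to the absolutely simple adjoint case, where \eqref{eq:HII} is the main result of \cite{Opd2}.

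\emph{Step 1: Reduction to the simple adjoint case.} Any adjoint group is a direct product of simple adjoint groups, so write $\mb G = \mb G_1 \times \cdots \times \mb G_d$. Then each $\pi \in \Irr(G^{F_\omega})_{\cusp,\unip}$ factors as $\pi = \pi_1 \boxtimes \cdots \boxtimes \pi_d$, and by Theorem \ref{thm:C}.(4) the enhanced L-parameter factorizes as $\lambda_\pi = \lambda_{\pi_1} \times \cdots \times \lambda_{\pi_d}$, $\rho_\pi = \rho_{\pi_1} \otimes \cdots \otimes \rho_{\pi_d}$. The adjoint representation of ${}^L\mb G$ splits as a direct sum over the factors, so $\gamma(0,\mr{Ad}\circ\lambda_\pi,\psi) = \prod_i \gamma(0,\mr{Ad}\circ\lambda_{\pi_i},\psi)$; likewise $S^\sharp_{\lambda_\pi} = \prod_i S^\sharp_{\lambda_{\pi_i}}$ and $\dim(\rho_\pi) = \prod_i \dim(\rho_{\pi_i})$. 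Multiplicativity of formal degrees under the compatibly normalized Haar measures of \cite{HII} (cf.\ \eqref{eq:2.21}) shows that \eqref{eq:HII} for $\pi$ is equivalent to the conjunction of \eqref{eq:HII} for the $\pi_i$. This reduces us to $\mb G$ simple and adjoint.

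\emph{Step 2: Reduction to the absolutely simple case.} If $\mb G$ is simple adjoint unramified but not absolutely simple, then $\mb G \cong \mr{Res}_{L/K}\mb G'$ for an absolutely simple adjoint group $\mb G'$ defined over a finite unramified extension $L/K$. By Proposition \ref{prop:17.2} of the Appendix, the HII conjecture is stable under restriction of scalars, so \eqref{eq:HII} for $\pi$ on $\mb G^\omega(K)$ is equivalent to the analogous identity for the corresponding representation of $(\mb G')^\omega(L)$.

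\emph{Step 3: The absolutely simple adjoint case.} Here \eqref{eq:HII} is precisely the main result of \cite{Opd2}, where the HII identity is verified for supercuspidal unipotent representations of unramified absolutely simple adjoint $K$-groups, using Lusztig's parametrization of $\Irr(G^{F_\omega})_{\cusp,\unip}$. To conclude, one must match the enhanced L-parameter $(\lambda_\pi,\rho_\pi)$ produced by Theorem \ref{thm:C} with the one employed in \cite{Opd2}. This is guaranteed by the second bullet of Theorem \ref{thm:A}: in the adjoint case our bijection agrees with Lusztig's parametrization, and the enhancement $\rho_\pi$ is determined once an extension of $\sigma$ to $N_{G^{F_\omega}}(\mh P^{F_\omega})$ is fixed (Theorem \ref{thm:B}.(2)). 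Any residual freedom in $\lambda_\pi$ consists of a twist by an element of $(\Omega^\theta)^*$, which acts through $Z(\mb G^\vee)_\theta$ on $\lambda_\pi$; such twists leave $\mr{Ad}\circ\lambda_\pi$ invariant, and they also fix $\dim(\rho_\pi)$ and $|S^\sharp_{\lambda_\pi}|$, so the quantities on both sides of \eqref{eq:HII} are unaffected.

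The main obstacle is the bookkeeping in Steps 1 and 2: verifying that the $\psi$-dependent Haar measure normalizations of \cite{HII} interact correctly with direct products (Step 1) and with Weil restriction of scalars (Step 2, where both $\gamma$-factors and formal degrees may be rescaled by the same factor). Once these compatibilities are in place, Lemma \ref{lem:16.3} follows from \cite{Opd2}.
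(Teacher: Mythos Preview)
Your proof is correct and follows essentially the same approach as the paper: reduce to direct factors (using that Haar measures, formal degrees, $\gamma$-factors, $\dim(\rho)$ and $|S^\sharp_\lambda|$ are multiplicative), then use Proposition~\ref{prop:17.2} to pass via Weil restriction to the absolutely simple adjoint case, where \cite[Theorem 4.11]{Opd2} applies. Your Step~3 makes explicit the matching of parametrizations (via Theorem~\ref{thm:A}) that the paper leaves implicit; the paper in turn is slightly more precise about the source of Haar measure compatibility under direct products, citing \cite[Proposition 6.1.4]{GrGa} rather than \eqref{eq:2.21}.
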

\begin{proof}
By \cite[Proposition 6.1.4]{GrGa} the normalization of the Haar measure on 
$\mb G^\omega (K)$ is respected by direct products of reductive 
$K$-groups. It follows that all the terms in \eqref{eq:HII} behave multiplicatively
with respect to direct products. Using that and Proposition \ref{prop:17.2}, we can follow
the strategy from the proof of Proposition \ref{prop:11.2} to reduce to the case of simple 
adjoint groups. For such groups \eqref{eq:HII} was proven in \cite[Theorem 4.11]{Opd2}.
\end{proof}

\begin{lemma}\label{lem:16.4}
The equality \eqref{eq:HII} holds for $\Irr (\mb G^\omega (K))_{\cusp,\unip}$ 
when $\mb G$ is semisimple.  
\end{lemma}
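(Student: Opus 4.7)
The plan is to deduce the semisimple case from the adjoint semisimple case (Lemma \ref{lem:16.3}) via the central isogeny $q\colon \mb G\to \mb G_\ad$. Given $\pi\in\Irr(\mb G^\omega(K))_{\cusp,\unip}$ with enhanced L-parameter $(\lambda_\pi,\rho_\pi)$, the constructions of Sections \ref{sec:semisimple}--\ref{sec:proofss} furnish a $\pi_\ad\in\Irr(\mb G_\ad^\omega(K))_{\cusp,\unip}$ whose pullback along $q$ contains $\pi$, with enhanced L-parameter $(\lambda_{\pi_\ad},\rho_{\pi_\ad})$ such that $\lambda_\pi = q^\vee(\lambda_{\pi_\ad})$ and $\rho_{\pi_\ad}$ occurs in $\rho_\pi|_{\mc A_{\lambda_{\pi_\ad}}}$. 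My strategy is to show that each of the four quantities in \eqref{eq:HII} changes along $\pi\leftrightarrow \pi_\ad$ by an explicit factor, and that these factors combine to $1$.

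Since $Z(\mb G)_s = Z(\mb G_\ad)_s = 1$, the adjoint representations of ${}^L\mb G$ and of ${}^L\mb G_\ad$ act on the same Lie algebra, identified through $\mr{d}q^\vee$. Hence
\[
\gamma(0,\mr{Ad}\circ\lambda_\pi,\psi) = \gamma(0,\mr{Ad}\circ\lambda_{\pi_\ad},\psi).
\]
By \eqref{eq:2.21} and the fact (used to obtain \eqref{eq:11.7}) that cuspidal unipotent characters of finite reductive groups depend only on the Lie type, $\dim\sigma=\dim\sigma_\ad$, so
\[
\frac{\mr{fdeg}(\pi)}{\mr{fdeg}(\pi_\ad)} = \frac{|\Omega_\ad^{\theta,\mh P}|\,\mr{vol}(\mh P_\ad^{F_\omega})}{|\Omega^{\theta,\mh P}|\,\mr{vol}(\mh P^{F_\omega})}.
\]
The ratio of parahoric volumes under the HII-normalization of Haar measures can be evaluated by applying \cite[\S 5]{GrGa} along the isogeny $q$, which identifies the reductive quotients of $\mh P$ and $\mh P_\ad$ up to isogeny and accounts for the difference via the finite kernel and cokernel of $q$ on $K$-points.

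For the component-group side, the Clifford-theoretic analysis of Section \ref{sec:semisimple} (Lemma \ref{lem:11.6}, formula \eqref{eq:11.16}, and \eqref{eq:11.13}) gives
\[
\dim\rho_\pi = [\mc A_{\lambda_\pi}:(\mc A_{\lambda_\pi})_{\rho_{\pi_\ad}}]\dim\rho_{\pi_\ad}
= |\Omega_\ad^{\theta,\mh P}/\Omega^{\theta,\mh P}N_{\lambda_{\pi_\ad}}|\cdot\dim\rho_{\pi_\ad}.
\]
Meanwhile, Lemma \ref{lem:11.4} and the definition \eqref{eq:S} express $|S_{\lambda_\pi}^\sharp|/|S_{\lambda_{\pi_\ad}}^\sharp|$ in the same combinatorial data $(\Omega, \Omega_\ad, \Omega_\ad^{\theta,\mh P}, N_{\lambda_{\pi_\ad}})$, with a correction for the change of the relevant centre $Z(\mb G^\vee)^\theta$ as one passes from $\mb G_\ad$ to $\mb G$.

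Substituting these three ratios into \eqref{eq:HII} for $\pi_\ad$ (known by Lemma \ref{lem:16.3}) will give \eqref{eq:HII} for $\pi$. The main obstacle is the precise bookkeeping of the volume ratio: verifying that the discrepancy between the Haar measures on $\mb G^\omega(K)$ and $\mb G_\ad^\omega(K)$, as normalized in \cite[\S 2.3]{FeOp} and \cite{HII}, is exactly cancelled by the combined $\Omega$-, $\dim\rho$- and $|S^\sharp|$-ratios described above. Once this combinatorial identity is verified (which amounts to a Tamagawa-type computation along $q$, implicitly present in the formulas of Sections \ref{sec:semisimple}--\ref{sec:proofss}), the lemma follows.
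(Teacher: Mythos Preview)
Your overall approach is the same as the paper's: reduce to the adjoint case via the isogeny $q:\mb G\to\mb G_\ad$ and check that both sides of \eqref{eq:HII} scale by the same factor. However, your proposal stops short of the two computations that actually make this work.

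First, the volume ratio. You leave $\mr{vol}(\mh P_\ad^{F_\omega})/\mr{vol}(\mh P^{F_\omega})$ as an obstacle requiring a ``Tamagawa-type computation.'' In fact this ratio is simply $1$: with the normalization of \cite{GrGa,DeRe} one has $\mr{vol}(\mh P^{F_\omega})=q_K^{-\dim(\overline{\mh P})/2}|\overline{\mh P}^{F_\omega}|$, and by \cite[Proposition 1.4.12.c]{MaGe} the order of the $\mf F$-points of a connected reductive group is invariant under isogeny, so $|\overline{\mh P}^{F_\omega}|=|\overline{\mh P_\ad}^{F_\omega}|$. Thus the formal-degree ratio is exactly $|\Omega_\ad^{\theta,\mh P}|/|\Omega^{\theta,\mh P}|$, with no further correction.

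Second, the $|S^\sharp|$-ratio and the final cancellation. You say the ratio ``can be expressed in the same combinatorial data'' but do not write it down. The paper computes it explicitly: using $|S_{\lambda_\pi}|/|S_{\lambda_\pi}^\sharp|=[\Omega_\ad^\theta:\Omega^\theta]$ together with \eqref{eq:11.5}, one gets $|S_{\lambda_\pi}^\sharp|/|S_{\lambda_{\pi_\ad}}^\sharp|=|\Omega^\theta\cap N_{\lambda_{\pi_\ad}}|/|N_{\lambda_{\pi_\ad}}|$. Combined with your (correct) expression $\dim\rho_\pi/\dim\rho_{\pi_\ad}=|\Omega_\ad^{\theta,\mh P}|\,|\Omega^{\theta,\mh P}\cap N_{\lambda_{\pi_\ad}}|\,|\Omega^{\theta,\mh P}|^{-1}|N_{\lambda_{\pi_\ad}}|^{-1}$ and the inclusion $N_{\lambda_{\pi_\ad}}\subset\Omega_\ad^{\theta,\mh P}$ (so $\Omega^\theta\cap N_{\lambda_{\pi_\ad}}=\Omega^{\theta,\mh P}\cap N_{\lambda_{\pi_\ad}}$), the right-hand side of \eqref{eq:HII} scales by $|\Omega_\ad^{\theta,\mh P}|/|\Omega^{\theta,\mh P}|$ as well. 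That is the cancellation; without it the argument is only an outline.
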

\begin{proof}
As in Sections \ref{sec:semisimple} and \ref{sec:proofss}, we consider the covering map
$\mb G \to \mb G_\ad$. We will show that it adjusts both sides of \eqref{eq:HII} by
the same factor.

From \eqref{eq:2.7} and \eqref{eq:2.8} we see that
\begin{equation}\label{eq:16.18}
\mr{fdeg}(\pi) = \frac{\dim (\sigma^N)}{\mr{vol}( N_{G^{F_\omega}}(\mh P^{F_\omega}))}
= \frac{\dim (\sigma)}{\mr{vol}(\mh P^{F_\omega}) \, |\Omega^{\theta,\mh P}|} .
\end{equation}
The volume of the Iwahori subgroup of $\mb G^\omega (K)$ with respect to our normalized
Haar measure was computed in \cite[(4.11)]{Gro}. In our notation, it says
\begin{equation}\label{eq:16.28}
\mr{vol}(\mh P^{F_\omega}) = |\overline{\mh P}^{F_\omega}| 
q_K^{- (\dim \overline{\mh P} + \dim \mb G) / 2} .
\end{equation}
With \cite[\S 5.1]{DeRe} one sees that this formula holds for all parahoric 
subgroups of $\mb G^\omega (K)$.

By \cite[Proposition 1.4.12.c]{MaGe} the cardinality of the group of $\mf F$-points of a 
connected reductive group does not change when we replace it by an isogenous $\mf F$-group. 
In particular $|\overline{\mh P_\ad}^{F_\omega}| = |\overline{\mh P}^{F_\omega}|$. 
From \cite[\S 3]{Lus-Chevalley} we know that $\sigma$ can also be regarded as a cuspidal
unipotent representation $\sigma_\ad$ of $\overline{\mh P_\ad}^{F_\omega}$, and then of
course $\dim (\sigma_\ad) = \dim (\sigma)$.

Choose a $\pi_\ad \in \Irr (\mb G_\ad^\omega (K))_{[\mh P_\ad,\sigma_\ad]}$ whose pullback
to $\mb G^\omega (K)$ contains $\pi$. The above allows us to simplify
\begin{equation}\label{eq:16.19}
\frac{\mr{fdeg}(\pi)}{\mr{fdeg}(\pi_\ad)} = 
\frac{\dim (\sigma) q_K^{- (\dim \overline{\mh P_\ad} + \dim \mb G_\ad) / 2} 
|\overline{\mh P_\ad}^{F_\omega}| \, |\Omega_\ad^{\theta,\mh P}|}{\dim (\sigma_\ad) 
q_K^{-( \dim \overline{\mh P} + \dim \mb G) / 2} |\overline{\mh P}^{F_\omega}| \, 
|\Omega^{\theta,\mh P}|} = \frac{|\Omega_\ad^{\theta,\mh P}|}{|\Omega^{\theta,\mh P}|} .
\end{equation}
Recall from Section \ref{sec:proofss} that the image of $\lambda_{\pi_\ad}$ under
${\mb G_\ad}^\vee \to \mb G^\vee$ is $\lambda_\pi$. Adjoint $\gamma$-factors are defined via
the action on the Lie algebra of $\mb G^\vee$, so
\begin{equation}\label{eq:16.20}
\gamma (s,\mr{Ad} \circ \lambda_\pi,\psi) = \gamma (s,\mr{Ad} \circ \lambda_{\pi_\ad},\psi) .
\end{equation}
From \eqref{eq:11.16} and Lemmas \ref{lem:11.4} and \ref{lem:11.6} we see that 
\begin{equation}\label{eq:16.21}
\dim (\rho_\pi) / \dim (\rho_{\pi_\ad}) = 
[\mc A_{\lambda_\pi} : (\mc A_{\lambda_\pi} )_{\rho_{\pi_\ad}}] . 
\end{equation}
With \eqref{eq:11.13} we can express \eqref{eq:16.21} as
\begin{equation}\label{eq:16.22}
[\Omega_\ad^{\theta,\mh P} : \Omega^{\theta,\mh P} N_{\lambda_{\pi_\ad}}] 
= |\Omega_\ad^{\theta,\mh P}| \, |\Omega^{\theta,\mh P} \cap N_{\lambda_{\pi_\ad}}| \, 
|\Omega^{\theta,\mh P}|^{-1} |N_{\lambda_{\pi_\ad}}|^{-1} .
\end{equation}
Write $S_{\lambda_\pi} = \pi_0 \big( Z_{{\mb G^\vee}_\SC}(\lambda) \big)$, as in the proof of 
Lemma \ref{lem:11.4}. Then $S_{\lambda_{\pi_\ad}} = S_{\lambda_{\pi_\ad}}^\sharp$ and
\begin{equation}\label{eq:16.23}
| S_{\lambda_\pi} |\, |S_{\lambda_\pi}^\sharp|^{-1} =
|Z ({\mb G^\vee}_\SC )^\theta | \, |Z(\mb G^\vee)^\theta |^{-1} .
\end{equation}
Like for any finite abelian group with a $\Z$-action, there are as many invariants as 
co-invariants. Also taking \eqref{eq:2.1} account, \eqref{eq:16.23} equals
\begin{equation}\label{eq:16.24}
|Z ({\mb G^\vee}_\SC )_\theta | \, |Z(\mb G^\vee)_\theta |^{-1} =
| (\Omega_\ad^\theta)^* |\, |(\Omega^\theta)^*|^{-1} = [\Omega_\ad^\theta : \Omega^\theta] .
\end{equation}
With \eqref{eq:11.5} we obtain
\begin{equation}\label{eq:16.25}
\begin{aligned}
| S_{\lambda_\pi}^\sharp|\, |S_{\lambda_{\pi_\ad}}^\sharp|^{-1} & =
| S_{\lambda_\pi}^\sharp | \,| S_{\lambda_\pi} |^{-1} [ S_{\lambda_\pi} : S_{\lambda_{\pi_\ad}}] \\
& = [\Omega_\ad^\theta : \Omega^\theta]^{-1} [\Omega_\ad^\theta : \Omega^\theta N_{\lambda_{\pi_\ad}}] \\
& = |\Omega^\theta| \, |\Omega^\theta N_{\lambda_{\pi_\ad}} |^{-1} = 
|\Omega^\theta \cap N_{\lambda_{\pi_\ad}} | \, |N_{\lambda_{\pi_\ad}} |^{-1} .
\end{aligned}
\end{equation}
Recall from \eqref{eq:11.3} that $N_{\lambda_{\pi_\ad}} \subset \Omega_\ad^{\theta,\mh P}$,
which implies $\Omega^\theta \cap N_{\lambda_{\pi_\ad}} = 
\Omega^{\theta,\mh P} \cap N_{\lambda_{\pi_\ad}}$. From \eqref{eq:16.20}, \eqref{eq:16.22}, 
\eqref{eq:16.23} and \eqref{eq:16.25} we deduce
\begin{equation}\label{eq:16.26}
\frac{ \dim (\rho_\pi) \, |S_{\lambda_{\pi_\ad}}^\sharp | \, 
|\gamma (0,\mr{Ad} \circ \lambda_\pi, \psi)| }{\dim (\rho_{\pi_\ad}) \, |S_{\lambda_\pi}^\sharp | \, 
|\gamma (0,\mr{Ad} \circ \lambda_{\pi_\ad}, \psi)| } =
\frac{ |\Omega_\ad^{\theta,\mh P_\ad}| \, |\Omega^{\theta,\mh P} \cap N_{\lambda_{\pi_\ad}}| \,
|N_{\lambda_{\pi_\ad}} |}{|\Omega^{\theta,\mh P}| \, |N_{\lambda_{\pi_\ad}}| \, 
|\Omega^\theta \cap N_{\lambda_{\pi_\ad}}| } =
\frac{  |\Omega_\ad^{\theta,\mh P_\ad}| }{|\Omega^{\theta,\mh P}|} .
\end{equation}
Now \eqref{eq:16.19}, \eqref{eq:16.26} and Lemma \ref{lem:16.3} imply \eqref{eq:HII}
for all $\pi \in \Irr (\mb G^\omega (K))_{\cusp,\unip}$.
\end{proof} 

\begin{lemma}\label{lem:16.5}
The equality \eqref{eq:HII} holds for $\Irr (\mb G^\omega (K))_{\cusp,\unip}$ 
when $\mb G$ is reductive and $Z(\mb G)^\circ$ is $K$-anisotropic.   
\end{lemma}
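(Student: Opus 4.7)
The plan is to reduce the HII identity for $\pi$ to that for $\pi_\der\in\Irr(\mb G_\der^\omega(K))_{\cusp,\unip}$ (already known by Lemma \ref{lem:16.4}), plus a residual identity that amounts to HII for the anisotropic central torus $Z := Z(\mb G)^\circ$. By Lemmas \ref{lem:15.3}--\ref{lem:15.5}, $\pi$ and $\pi_\der$ share their parahoric datum (up to enlarging $\mh P_\der^{F_\omega}$ by $Z(\mf o_K)^{F_\omega}$), their component group $\mc A_{\lambda_\pi}=\mc A_{\lambda_{\pi_\der}}$, and their enhancement $\rho$, so $\dim\rho_\pi=\dim\rho_{\pi_\der}$.

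Next I would compute the three remaining ratios between $\pi$ and $\pi_\der$. First, from \eqref{eq:2.21}, \eqref{eq:16.28} and Lemma \ref{lem:15.2}, the ratio $\mr{fdeg}(\pi)/\mr{fdeg}(\pi_\der)$ collapses to a ratio of parahoric volumes, which encodes the contribution of the compact torus $Z(K)$. Second, the argument of Lemma \ref{lem:15.5} together with \eqref{eq:15.2} yields a short exact sequence
\[
1\to (Z(\mb G^\vee)^\circ)^\theta \longrightarrow Z_{\mb G^\vee}(\lambda_\pi) \longrightarrow Z_{(\mb G_\der)^\vee}(\lambda_{\pi_\der}) \longrightarrow 1,
\]
from which $|S_{\lambda_\pi}^\sharp|/|S_{\lambda_{\pi_\der}}^\sharp|$ is computed as the order of the image of $(Z(\mb G^\vee)^\circ)^\theta$ in $\pi_0(Z_{\mb G^\vee}(\lambda_\pi))$. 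Third, the $\mb W_K$-stable decomposition $\mr{Lie}(\mb G^\vee) = \mr{Lie}((\mb G_\der)^\vee)\oplus\mr{Lie}(Z(\mb G^\vee)^\circ)$ factors the $\gamma$-factor as
\[
\gamma(0,\mr{Ad}\circ\lambda_\pi,\psi) = \gamma(0,\mr{Ad}\circ\lambda_{\pi_\der},\psi)\cdot \gamma(0,\mr{ad}_Z,\psi),
\]
where $\mr{ad}_Z$ denotes the $\mb W_K$-representation on $\mr{Lie}(Z(\mb G^\vee)^\circ)$.

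Combining these three ratios with Lemma \ref{lem:16.4}, the claim for $\pi$ reduces to the torus identity
\[
\mr{vol}(Z(K))\cdot |\gamma(0,\mr{ad}_Z,\psi)| = |(Z(\mb G^\vee)^\circ)^\theta|,
\]
which is precisely HII for the trivial weakly unramified character of the anisotropic unramified torus $Z$. This is a direct consequence of the local Tate $\gamma$-factor for the unramified $\mb W_K$-module $\mr{Lie}(Z^\vee)$ together with the unramified volume formula \eqref{eq:16.28} applied to $Z(\mf o_K)$, coupled with the duality $X_\Wr(Z(K))\cong (Z^\vee)_\theta$ recorded in Proposition \ref{prop:15.1}. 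The main obstacle is the careful bookkeeping of finite-torsion contributions: the intersection $Z(\mb G_\der)\cap Z$ mediating the almost direct product $\mb G=\mb G_\der\cdot Z$, the image of $(Z(\mb G^\vee)^\circ)^\theta$ in $\pi_0(Z_{\mb G^\vee}(\lambda_\pi))$, and the $\psi$-normalization of Haar measure from \cite{HII} must all be shown to balance simultaneously, so that once the torus identity above is in hand the equality for $\pi$ follows mechanically.
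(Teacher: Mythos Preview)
Your proposal is correct and follows essentially the same route as the paper: reduce to $\mb G_\der$ via Lemmas \ref{lem:15.2}--\ref{lem:15.5}, factor the $\gamma$-factor through the $\mb W_K$-stable splitting $\mf g^\vee=\mf g^\vee_\der\oplus Z(\mf g^\vee)$, identify $S^\sharp_{\lambda_{\pi_\der}}=S^\sharp_{\lambda_\pi}/Z(\mb G^\vee)^{\circ,\theta}$, and match the remaining ratio with a torus identity. The paper handles the volume ratio via the isogeny $\overline{\mh P_a}\times\overline{\mh P_\der}\to\overline{\mh P}$ and \cite[Proposition 1.4.12.c]{MaGe}, then disposes of the torus identity by directly quoting \cite[Lemma 3.5]{HII}; your framing of that same identity as ``HII for the trivial character of the anisotropic torus $Z$'' is equivalent (and your observation that $Z(K)=Z(K)_1$ because $X_*(Z)^\theta=0$ makes the volume bookkeeping clean).
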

\begin{proof}
Recall that in Section \ref{sec:mainred} we established Theorem \ref{thm:C} for 
$\mb G$ via restriction to $\mb G_\der$. By \cite[\S 3]{Lus-Chevalley}, the cuspidal
unipotent representations of $\mh P^{F_\omega}$ can be identified with those of
$\mh P_\der^{F_\omega}$. (Recall that by definitions all these representations are
inflated from finite reductive groups.) We denote $\sigma$ as 
$\mh P_\der^{F_\omega}$-representation by $\sigma_\der$. In Lemma \ref{lem:15.2} 
we checked that $\Omega^{\theta,\mh P} = \Omega_\der^{\theta,\mh P}$.  

Let $\overline{\mh P_a}$ be the image of $Z(\mb G)_a (K_\nr) = Z(\mb G)^\circ (K_\nr)$
in $\overline{\mh P}$, an $\mf F$-anisotropic torus of the same dimension as $Z(\mb G)_a$. 
Then $\overline{\mh P_a} \times \overline{\mh P_\der}$ is isogenous to $\overline{\mh P}$, 
and \cite[Proposition 1.4.12.c]{MaGe} tells us that
\[
|\overline{\mh P_a}^{F_\omega}| \, |\overline{\mh P_\der}^{F_\omega}| =
|\overline{\mh P}^{F_\omega}| .
\]
Since \eqref{eq:16.18} and \eqref{eq:16.28} are also valid for $\mb G^\omega (K)$, we can 
compare the formal degrees of $\pi$ and its restriction $\pi_\der$ to $\mb G_\der^\omega (K)$:
\begin{equation}\label{eq:16.27}
\!\! \frac{\mr{fdeg}(\pi)}{\mr{fdeg}(\pi_\der)} = \frac{\dim (\sigma) 
q_K^{- (\dim \overline{\mh P_\der} + \dim \mb G_\der) / 2} |\overline{\mh P_\der}^{F_\omega}| \, 
|\Omega_\der^{\theta,\mh P}|}{\dim (\sigma_\der) q_K^{- (\dim \overline{\mh P} + \dim \mb G) / 2} 
|\overline{\mh P}^{F_\omega}| \, |\Omega^{\theta,\mh P}|} = 
\frac{q_K^{(\dim \overline{\mh P_a} + \dim Z(\mb G)_a) / 2}}{|\overline{\mh P_a}^{F_\omega}|} . 
\hspace{-4mm}
\end{equation}
Recall from Lemma \ref{lem:15.4} and the proof of Proposition \ref{prop:15.8} that
$\lambda_{\pi_\der}$ is the canonical image of $\lambda_\pi$ under $\mb G^\vee \to 
{\mb G_\der}^\vee$. Using the decomposition
\[
\mf g^\vee = \mf g_\der^\vee \oplus Z(\mf g^\vee) = 
\mr{Lie}({\mb G_\der}^\vee) \oplus \mr{Lie}(Z(\mb G)_a^\vee) ,
\]
we can write 
\[
\mr{Ad}_{\mb G^\vee} \circ \lambda_\pi = \mr{Ad}_{{\mb G_\der}^\vee} \circ \lambda_{\pi_\der} 
\oplus \text{(action of } \mb W_K / \mb I_K \text{ on } Z(\mf g^\vee) ) .
\]
The action of $\mb W_K$ on $Z(\mf g^\vee)$ can be considered as the composition of 
the adjoint representation of ${}^L Z(\mb G)_a$ and $\mr{id}_{\mb W_K}$ (as 
L-parameter for $Z (\mb G)_a (K)$). From the definition \eqref{eq:16.1} we see that 
\begin{equation}\label{eq:16.29}
\gamma (s, \mr{Ad}_{\mb G^\vee} \circ \lambda_\pi, \psi) = 
\gamma (s, \mr{Ad}_{{\mb G_\der}^\vee} \circ \lambda_{\pi_\der}, \psi) 
\gamma (s, \mr{Ad}_{Z(\mb G)_a^\vee} \circ \mr{id}_{\mb W_K}, \psi) .
\end{equation}
Recall from Lemma \ref{lem:15.5} and the proof of Proposition \ref{prop:15.8} that $\rho_\pi$
can be identified with $\rho_{\pi_\der}$. Since $Z(\mb G)_a (K)$ is a torus, L-parameters
for that group do not need enhancements. Formally, we can say that the enhancement of
$\mr{id}_{\mb W_K}$ is the trivial one-dimensional representation of 
$\mc A_{\mr{id}_{\mb W_K}} = \pi_0 (Z(\mb G)_a^\vee) = 1$.

As in the proof of Lemma \ref{lem:15.5} we see that 
\[
S^\sharp_{\lambda_{\pi_\der}} = \big\{ g Z(\mb G^\vee)^\circ \in {\mb G_\der}^\vee :
g \lambda_\pi g^{-1} = z \lambda_\pi \text{ for some } z \in Z(\mb G^\vee)^\circ \big\} .
\]
By \eqref{eq:15.2} this equals
\begin{multline*}
\big\{ g Z(\mb G^\vee)^\circ \in {\mb G_\der}^\vee : g' \lambda_\pi g'^{-1} = \lambda_\pi 
\text{ for some } g' \in g Z(\mb G^\vee)^\circ \big\} \\
= Z_{\mb G^\vee}(\lambda_\pi) \big/ Z_{Z (\mb G^\vee)^\circ}(\lambda_\pi) =
S^\sharp_{\lambda_\pi} \big/ Z (\mb G^\vee)^{\circ,\theta} .
\end{multline*}
Now we compare the right hand sides of \eqref{eq:HII} for $\pi$ and $\pi_\der$:
\begin{equation}\label{eq:16.30}
\frac{ \dim (\rho_\pi) \, |S_{\lambda_{\pi_\der}}^\sharp | \, 
|\gamma (0,\mr{Ad}_{\mb G^\vee} \circ \lambda_\pi, \psi)| }{\dim (\rho_{\pi_\der}) \, 
|S_{\lambda_\pi}^\sharp | \, |\gamma (0,\mr{Ad}_{{\mb G_\der}^\vee} \circ \lambda_{\pi_\der}, \psi)| } = 
\frac{ |\gamma (0, \mr{Ad}_{Z(\mb G)_a^\vee} \circ \mr{id}_{\mb W_K}, \psi)| 
}{| Z(\mb G^\vee )^{\circ,\theta} |} .
\end{equation}
It was shown in \cite[Lemma 3.5]{HII} that 
\[
|\gamma (0, \mr{Ad}_{Z(\mb G)_a^\vee} \circ \mr{id}_{\mb W_K}, \psi)| = q_K^{\dim (Z(\mb G)_a)} 
| (Z(\mb G^\vee )^{\circ,\theta} | \, |\overline{\mh P_a}^{F_\omega}|^{-1} . 
\]
Then \eqref{eq:16.30} becomes 
\[
q_K^{\dim Z(\mb G)_a} |\overline{\mh P_a}^{F_\omega}|^{-1} =
q_K^{(\dim \overline{\mh P_a} + \dim Z(\mb G)_a) / 2}  |\overline{\mh P_a}^{F_\omega}|^{-1} ,
\]
which equals \eqref{eq:16.27}. In combination with Lemma \ref{lem:16.4} for $\mb G_\der$ 
that gives \eqref{eq:HII} for $\pi \in \Irr (\mb G^\omega (K))_{\cusp,\unip}$.
\end{proof}

We are ready to extend \eqref{eq:HII} to $K_\nr$-split reductive $K$-groups.\\

\emph{Proof of Theorem \ref{thm:HII}}\\
Let $\pi \in \Irr (\mb G^\omega (K))_{\cusp,\unip}$ be unitary. As observed in
\eqref{eq:15.13}, there exists an unramified character $\chi \in X_\nr (\mb G^\omega (K))$
such that $\pi' := \pi \otimes \chi^{-1}$ is trivial on $Z(\mb G^\omega)_s (K)$.
By definition \cite[p. 285]{HII}
\begin{equation}\label{eq:16.31}
\mr{fdeg} (\pi) = \mr{fdeg} (\pi') ,
\end{equation}
where $\pi'$ is regarded as a representation of $(\mb G^\omega / Z(\mb G^\omega)_s)(K)$.
By construction \eqref{eq:15.7} 
\[
(\lambda_\pi, \rho_\pi) = (\lambda_{\pi'} \chi^\vee, \rho_{\pi'}) ,
\]
where $\chi^\vee \in Z(\mb G^\vee )^\circ_\theta$ is the image of $\chi$ under \eqref{eq:2.2}.
Recall that our adjoint representation of ${}^L \mb G$ does not act on 
Lie$(\mb G^\vee)$ but on Lie$((\mb G / Z(\mb G)_s)^\vee )$.
From \eqref{eq:16.1} and the definitions of L-functions and $\epsilon$-factors in 
\cite{Tat} we see that 
\[
\gamma (s,\mr{Ad} \circ \lambda_\pi,\psi) = 
\gamma (s,\mr{Ad} \circ \lambda_{\pi'},\psi) .
\]
The group $S^\sharp_{\lambda_\pi}$ is already defined via $(\mb G / Z(\mb G)_s)^\vee$, 
so it equals $S^\sharp_{\lambda_{\pi'}}$. A part of the construction of \eqref{eq:15.7}
is that $\rho_\pi$ can be identified with $\rho_{\pi'}$. Thus the entire expression
\[
\dim (\rho_\pi) \, |S_{\lambda_\pi}^\sharp |^{-1} \, 
\gamma (s,\mr{Ad} \circ \lambda_\pi, \psi) 
\]
remains unchanged when we replace $\pi$ by $\pi'$ and $\mb G^\omega$ by $\mb G^\omega / 
Z(\mb G^\omega)_s$. In view of \eqref{eq:16.31} this means that \eqref{eq:HII} for
$\mb G^\omega (K)$ is equivalent to \eqref{eq:HII} for $(\mb G^\omega / Z(\mb G^\omega)_s)(K)$.
The group $\mb G^\omega / Z(\mb G^\omega)_s$ has $K$-anisotropic connected centre, so
for $\Irr \big( (\mb G^\omega / Z(\mb G^\omega)_s)(K) \big)_{\cusp,\unip}$ we have already
established \eqref{eq:HII} in Lemma \ref{lem:16.5}. $\qquad \Box$

\appendix

\section{Restriction of scalars and adjoint $\gamma$-factors}
\label{sec:restriction}

Let $L/K$ be a finite separable extension of non-archimedean local fields.
In this appendix we will first investigate to what extent local factors are inductive
for $\mb W_L \subset \mb W_K$. This question is well-known for Weil group representations,
but more subtle for representations of Weil--Deligne groups. We could not find in the 
literature, although it probably is known to some experts. After establishing the inductivity
result for general Weil--Deligne representations, we will check that it
applies to the Langlands parameters obtained from restriction of scalars of reductive 
groups. Then we will show that the HII conjectures are stable under Weil restriction.

We follow the conventions of \cite{Tat} for local factors. Let $\psi : K \to \CC^\times$ 
be a nontrivial additive character . We endow $K$ with the Haar measure that gives the ring 
of integers $\mf o_K$ volume 1, and similarly for $L$. For $s \in \CC$ let $\omega_s :
\mb W_K \to \CC^\times$ be the character $w \mapsto \| w \|^s$. For any (finite
dimensional) $\mb W_K$-representation $V$, by definition
\[
L(s, V) = L(\omega_s \otimes V) \quad \text{and} \quad 
\epsilon (s,V,\psi) = \epsilon (\omega_s \otimes V , \psi).
\]
We endow objects associated to $L$ with a subscript $L$, to distinguish 
them from objects for $K$ (without subscript). The restriction of $\omega_s$ from
$\mb W_K$ to $\mb W_L$ equals $\omega_s$ (as defined purely in terms of $L$), so for
any $\mb W_L$-representation $V_L$:
\begin{equation}\label{eq:17.5}
\mr{ind}_{\mb W_L}^{\mb W_K} \big( \omega_s \otimes V_L \big) =  
\omega_s \otimes \mr{ind}_{\mb W_L}^{\mb W_K} V_L .
\end{equation}
As concerns representations of the Weil--Deligne group $\mb W_K \times SL_2 (\CC)$, we 
only consider those which are admissible, that is, finite dimensional and the image of
$\mb W_K$ consists of semisimple automorphisms. In view of \cite[\S 4.1.6]{Tat} that
is hardly a restriction for local factors. It has the advantage that the category of
such representations is semisimple, so all the local factors are additive and make sense
for virtual admissible representations of $\mb W_K \times SL_2 (\CC)$. (The definitions 
of these local factors will be recalled in the course of the next proofs.)

\begin{lemma}\label{lem:A.1}
L-functions of Weil--Deligne representations are inductive. That is, for any
admissible virtual representation $(\tau_L,V_L)$ of $\mb W_L \times SL_2 (\CC)$:
\[
L(s,\mr{ind}_{\mb W_L \times SL_2 (\CC)}^{\mb W_K \times SL_2 (\CC)} \tau_L) =
L(s,\tau_L) \quad \text{for all } s \in \CC.
\] 
\end{lemma}
\begin{proof}
Since these local factors are additive, we may assume that $(\rho_L,V_L)$ is an actual 
representation.
We write $N_L = \mr{d}(\tau_L |_{SL_2 (\CC)}) \left( \begin{smallmatrix}0 & 1 \\ 
0 & 0 \end{smallmatrix} \right) \in \mr{End}_\CC (V_L)$ and
\[
V = \mr{ind}_{\mb W_L \times SL_2 (\CC)}^{\mb W_K \times SL_2 (\CC)} V_L ,\quad
\tau =\mr{ind}_{\mb W_L \times SL_2 (\CC)}^{\mb W_K \times SL_2 (\CC)} \tau_L ,\quad
N = \mr{d}(\tau|_{SL_2 (\CC)}) \left( \begin{smallmatrix}0 & 1 \\ 0 & 0 \end{smallmatrix} \right)
\in \mr{End}_\CC (V) .
\]
Then the kernel of $N$ in $V$ is stable under $\tau (\mb W_K)$ 
and the kernel of $N_L$ is a $\tau_L (\mb W_L)$-stable subspace of $V_L$. One checks directly 
that $\ker N = \mr{ind}_{\mb W_{L}}^{\mb W_K} (\ker N_L)$ and 
\begin{equation}\label{eq:16.7}
\tau |_{\ker N} = \mr{ind}_{\mb W_{L}}^{\mb W_K} \big( \tau_L |_{\ker N_L} \big) .
\end{equation}
By definition \cite[\S 4.1.6]{Tat}
\begin{equation}\label{eq:16.8}
L(s,\tau) = L(\omega_s \otimes \ker N^{\mb I_K}) = 
\det \big( 1 - q_K^{-s} \tau (\Frob) |_{\ker N^{\mb I_K}} \big)^{-1} .
\end{equation}
The function L, from Weil group representations 
to $\CC^\times$, is additive and inductive \cite[\S 3.3.2]{Tat}. 
The latter means that 
\begin{equation}\label{eq:16.9}
L \big( \omega_s \otimes \ker N_L^{\mb I_L} \big) = 
L \big( \mr{ind}_{\mb W_{L}}^{\mb W_K} (\omega_s \otimes \ker N_L^{\mb I_L}) \big) =
L \big( \omega_s \otimes \mr{ind}_{\mb W_{L}}^{\mb W_K} \ker N_L^{\mb I_L} \big) .
\end{equation}
Let $E$ be the maximal unramified subextension of $L/K$, and define $N_E, \tau_E$ etcetera
in the same way as for $K$. Since $\mb I_E = \mb I_K$, 
\begin{equation}\label{eq:17.1}
\mr{ind}_{\mb W_E}^{\mb W_K} \big( \ker N_E^{\mb I_E} \big) = 
\big( \mr{ind}_{\mb W_E}^{\mb W_K} \ker N_E \big)^{\mb I_K} = \ker N^{\mb I_K} .  
\end{equation}
From \eqref{eq:16.7}, \eqref{eq:16.8}, \eqref{eq:16.9} and \eqref{eq:17.1} we deduce that
\[
L(s,\tau) = L(\omega_s \otimes \ker N^{\mb I_K}) =
L \big( \omega_s \otimes \mr{ind}_{\mb W_E}^{\mb W_K} 
(\ker N_E^{\mb I_E}) \big) = L(s,\tau_E) .
\] 
The extension $L/E$ is totally ramified, so $q_L = q_E$. We can take $\Frob_E$ in $\mb W_L$, 
then it is also a Frobenius element of $\mb W_L$. From
\[
\omega_s \otimes \ker N_E^{\mb I_E} = \big( \omega_s \otimes 
\mr{ind}_{\mb W_{L}}^{\mb W_E} (\ker N_L^{\mb I_L}) \big)^{\mb I_E} = 
\big( \mr{ind}_{\mb W_{L}}^{\mb W_E} (\omega_s \otimes \ker N_L^{\mb I_L}) \big)^{\mb I_E} 
= \omega_s \otimes \ker N_L^{\mb I_L} 
\]
and \eqref{eq:16.8} we obtain $L(s,\tau_E) = L(s,\tau_L)$. 
\end{proof}

Let $e_{L/K}$ and $f_{L/K}$ denote the ramification index and the residue degree of $L/K$,
respectively. We endow $L$ with the discrete valuation whose image is $\ZZ \cup \{ \infty \}$. 
The restriction of this valuation to $K$ equals $e_{L/K}$ times the valuation of $K$.

Recall that the order of $\psi$ is the largest $n \in \Z$ such that $\psi (k) = 1$ for
all $k \in K$ of valuation $\geq -n$. Let $\psi_L : L \to \CC^\times$ be the composition
of $\psi$ with the trace map for $L/K$. We recall from \cite[Proposition III.3.7]{Ser}
that the order of $\psi_L$ is determined by the order of $\psi$ and the different $\mc D_{L/K}$
of $L/K$. For $l \in L^\times$ we define another additive character $\psi_{L,l}$ of $L$ by 
\[
\psi_{L,l}(y) = \psi_L (ly) = \psi (\mathrm{tr}_{L/K}(ly)) .
\]

\begin{thm}\label{thm:A.2}
Let $(\tau_L,V_L)$ be an admissible virtual representation of $\mb W_L \times SL_2 (\CC)$.
\begin{enumerate}
\item For every $l \in L^\times$ and every $s \in \CC$:
\[
\frac{\gamma (s,\mr{ind}_{\mb W_L \times SL_2 (\CC)}^{\mb W_K \times SL_2 (\CC)} \tau_L, \psi)}{
\gamma (s,\tau_L,\psi_{L,l})} = \frac{\epsilon (s,\mr{ind}_{\mb W_L \times 
SL_2 (\CC)}^{\mb W_K \times SL_2 (\CC)} \tau_L, \psi)}{\epsilon (s,\tau_L,\psi_{L,l})} .
\]
\item For all $s \in \CC$:
\[
\frac{\epsilon (s,\mr{ind}_{\mb W_L \times SL_2 (\CC)}^{\mb W_K \times SL_2 (\CC)} \tau_L, \psi)}{
\epsilon (s,\tau_L,\psi_L)} = \left( \frac{\epsilon (\CC [\mb W_K / \mb W_{L}], 
\psi)}{\epsilon (\mr{triv}_{\mb W_L}, \psi_L) } \right)^{\dim (V_L)} .
\]
In particular, $\epsilon$-factors of Weil--Deligne representations are inductive for 
virtual representations of dimension zero.
\item When $L/K$ is unramified:
\[
\epsilon (s,\mr{ind}_{\mb W_L \times SL_2 (\CC)}^{\mb W_K \times SL_2 (\CC)} \tau_L, \psi)
\epsilon (s,\tau_L,\psi_L)^{-1} = (-1)^{([L:K] - 1) \mr{ord}(\psi) \dim (V_L)} .
\]
\item Suppose that $(\omega_s \otimes \tau_L,V_L)$ is self-dual or unitary. For any $l \in L^\times$:
\[
\left| \frac{\epsilon (s,\mr{ind}_{\mb W_L \times SL_2 (\CC)}^{\mb W_K \times SL_2 (\CC)} \tau_L, \psi)}{
\epsilon (s,\tau_L,\psi_{L,l})} \right| = \left| \frac{\epsilon (\CC [\mb W_K / \mb W_{L}], 
\psi)}{\epsilon (\mr{triv}_{\mb W_L}, \psi_{L,l}) } \right|^{\dim (V_L)} .
\]
\item We denote the Artin conductor of a $\mb W_K$-representation $V$ by $\mb a (V)$. When 
$\mr{ord}(\psi) = \mr{ord}(\psi_{L,l}) = 0$ and $(\omega_s \otimes \tau_L,V_L)$ is self-dual or unitary:
\[
\left| \frac{\epsilon (s,\mr{ind}_{\mb W_L \times SL_2 (\CC)}^{\mb W_K \times SL_2 (\CC)} \tau_L, \psi)}{
\epsilon (s,\tau_L,\psi_{L,l})} \right| = q_K^{\mb a (\CC [\mb W_K / \mb W_L]) \dim (V_L) / 2} =
[\mf o_L : \mc D_{L/K}]^{\dim (V_L) / 2} .
\]
\end{enumerate}
\end{thm}
\begin{proof}
We use the conventions and notations from the proof of Lemma \ref{lem:A.1}. \\
(1) Recall from \cite[\S 3.2]{GrRe} that for any admissible representation $V$ of 
$\mb W_K \times SL_2 (\CC)$:
\begin{equation}\label{eq:16.1}
\gamma (s,V,\psi) = \epsilon (s,V, \psi) L(1-s,V^*) L(s,V)^{-1}.
\end{equation}
With this definition (1) is an obvious consequence of Lemma \ref{lem:A.1}.\\
(2) We write
\[
\mr{coker} \, N = V / \ker N \text{ and coker } N_L = V_L / \ker N_L. 
\]
These are representations of $\mb W_K$ and $\mb W_L$, respectively. 
From \eqref{eq:17.5} and \eqref{eq:16.7} we see that 
\begin{equation}\label{eq:16.12}
\mr{ind}_{\mb W_L}^{\mb W_K} (\omega_s \otimes \mr{coker}\, N_L) = 
\omega_s \otimes \mr{coker}\, N . 
\end{equation}
Since $\mb I_K$ is compact, 
\[
(\mr{coker}\, N)^{\mb I_K} = V^{\mb I_K} \big/ \ker N^{\mb I_K} . 
\]
For a $\mb W_K \times SL_2 (\CC)$-representation $(\tau,V_\tau)$ we define a 
$\mb W_K$-representation $(\tau_0,V_\tau)$ by 
\[
\tau_0 (w) = \tau \Big( w, \matje{\| w \|^{1/2}}{0}{0}{\| w \|^{-1/2}} \Big) .
\]
By definition \cite[\S 4.1.6]{Tat}
\begin{equation}\label{eq:16.10}
\epsilon (s,\tau,\psi) = 
\epsilon (\omega_s \otimes \tau_0,\psi)  
\det \big(- \omega_s \otimes \tau (\Frob) |_{\mr{coker}\, N^{\mb I_K}} \big) ,
\end{equation}
As $L/E$ is totally ramified, $q_L = q_E$ and $\Frob_L = \Frob_E$. 
From \eqref{eq:17.5} we see that 
\[
\omega_s \otimes \mr{coker} N^{\mb I_E} = 
\big( \omega_s \otimes \mr{ind}_{\mb W_{L}}^{\mb W_E} \mr{coker} N_L \big)^{\mb I_E} 
= \omega_s \otimes  \mr{coker} N_L^{\mb I_L}.
\]
Hence the rightmost term in \eqref{eq:16.10} is the same for $\tau_E$ and for $\tau_L$.

As in \eqref{eq:17.1}, we find that
\[
\omega_s \otimes \mr{coker} N^{\mb I_K} =
\big( \omega_s \otimes \mr{ind}_{\mb W_E}^{\mb W_K} \mr{coker} N_E \big)^{\mb I_K} = 
\mr{ind}_{\mb W_E}^{\mb W_K} \big( \omega_s \otimes \mr{coker} N_E^{\mb I_E} \big) .
\]
With elementary linear algebra one checks that 
\begin{equation}\label{eq:16.13}
\det \big(- \omega_s \otimes \tau (\Frob) |_{\mr{coker}\, N^{\mb I_K}} \big) =
\det \big(- \omega_s \otimes \tau_E (\Frob^{[E:K]}) |_{\mr{coker}\, N_E^{\mb I_E}} \big) .
\end{equation}
Since $\Frob^{[E:K]}$ is a Frobenius element of $\mb W_E$, we see that here the rightmost 
term in \eqref{eq:16.10} is the same for $\tau$ and for $\tau_E$, which we already
know is the same as for $\tau_L$.  

By \cite[\S 3.4]{Tat}, $\epsilon (V,\psi)$ is additive and inductive in degree 0 
(i.e. for virtual $\mb W_K$-representations $V$ of dimension 0). Consider the virtual 
$\mb W_L$-representation 
\begin{equation}\label{eq:17.14}
V_L^\circ := ( \omega_s \otimes \tau_{L,0}, V_L ) \,-\, 
\dim ( V_L ) (\mr{triv}, \CC) .
\end{equation}
The inductivity in degree 0 says
\begin{equation}\label{eq:16.14}
\begin{aligned}
& \epsilon (\omega_s \otimes \tau_{L,0},\psi_L) \epsilon (\mr{triv}_{\mb W_L}, 
\psi_L)^{-\dim (V_L)} = \epsilon (V_L^\circ,\psi_L) = \\ 
& \epsilon (\mr{ind}_{\mb W_L}^{\mb W_K} V_L^\circ,\psi) = 
\epsilon (\omega_s \otimes \tau_0 ,\psi) 
\epsilon (\CC [\mb W_K / \mb W_{L}], \psi)^{-\dim (V_L )} .
\end{aligned}
\end{equation}
We rewrite \eqref{eq:16.14} as
\begin{equation}\label{eq:17.3}
\frac{\epsilon (\omega_s \otimes \tau_0, \psi)}{\epsilon 
(\omega_s \otimes \tau_{L,0}, \psi_L)} = 
\left( \frac{\epsilon (\CC [\mb W_K / \mb W_{L}], 
\psi)}{\epsilon (\mr{triv}_{\mb W_L}, 
\psi_L)} \right)^{\dim (V_L )}. 
\end{equation}
In view of \eqref{eq:16.10} and the above analysis of the rightmost term in that formula,
\eqref{eq:17.3} equals $\epsilon (s,\tau ,\psi) \epsilon (s,\tau_L, \psi_L)^{-1}$, as 
asserted. In particular we see that
\[
\epsilon (s,\tau ,\psi) = \epsilon (s,\tau_L, \psi_L)  \text{ if } \dim (V_L) = 0 ,
\]
proving the inductivity in degree zero.\\
(3) When $L/K$ is unramified, we can simplify \eqref{eq:17.3}. In that case \cite[Proposition III.3.7
and Theorem III.5.1]{Ser} show that ord$(\psi_L) = \mr{ord}(\psi)$. Furthermore
the $\mb W_K$-representation $\CC [\mb W_K / \mb W_{L}]$ is a direct sum of unramified characters, 
which makes it easy to calculate its $\epsilon$-factor. Pick $a \in K^\times$ with valuation
$-\mr{ord}(\psi)$, so that the additive character $\psi_a : k \mapsto \psi (a k)$ has order zero.
From \cite[(3.4.4) and \S 3.2.6]{Tat} we obtain
\begin{equation}\label{eq:17.4}
1 = \epsilon (\CC [\mb W_K / \mb W_{L}], \psi_a) = 
|a|_K^{-[L:K]} \det (a, \CC[\mb W_K / \mb W_L]) \epsilon (\CC [\mb W_K / \mb W_{L}], \psi) .
\end{equation}
By the assumptions on $a$ and $L/K$,
\[
|a|_K^{[L:K]} = q_K^{\mr{ord}(\psi) [L:K]} = q_L^{\mr{ord}(\psi)} = q_L^{\mr{ord}(\psi_L)} .
\]
The group $\mb W_K / \mb W_L$ can be identified with 
$\langle \Frob \rangle / \langle \Frob^{[L:K]} \rangle$.
Elements of valuation one act on that through a cycle of length $[L:K]$, and $a$ acts by the
$\mr{ord}(\psi)$-th power of that cycle. Consequently
\[
\det (a, \CC[\mb W_K / \mb W_L]) = (-1)^{([L:K] - 1) \mr{ord}(\psi) } .
\]
On the other hand, by \cite[(3.2.6.1)]{Tat} $\epsilon (\mr{triv}_{\mb W_L}, \psi_L) = 
q_L^{\mr{ord}(\psi_L)}$. Now \eqref{eq:17.4} becomes
\[
\epsilon (\CC [\mb W_K / \mb W_{L}], \psi) = 
q_L^{\mr{ord}(\psi)} (-1)^{([L:K] - 1) \mr{ord}(\psi) } =
\epsilon (\mr{triv}_{\mb W_L}, \psi_L) (-1)^{([L:K] - 1) \mr{ord}(\psi) } .
\]
Combine this with part (2).\\
(4) By \cite[(3.4.4)]{Tat} applied to \eqref{eq:17.14}
\begin{equation}\label{eq:17.13}
\epsilon (V_L^\circ ,\psi_{L,l}) \epsilon (V_L^\circ,\psi_L)^{-1} = \det (l,V_L^\circ) = 
\det (\omega_s \otimes \tau_{L,0}(l), V_L) .
\end{equation}
By the assumed self-duality or unitarity of $\omega_s \otimes \tau$, 
$|\det (\omega_s \otimes \tau_{L,0}(l), V_L)| = 1$. Then \eqref{eq:17.13} says 
\[
\left| \frac{\epsilon (s,\tau_L,\psi_{L,l})}{\epsilon 
(\mr{triv}_{\mb W_L},\psi_{L,l})^{\dim (V_L)}} \right| = \left| \frac{\epsilon 
(s,\tau_L,\psi_L)}{\epsilon (\mr{triv}_{\mb W_L},\psi_L)^{\dim (V_L)}} \right| .
\]
Combine that with part (2).\\
(5) We work out the right hand side of part (4). Since ord$(\psi_{L,l}) = 0$ 
and $\mf o_L$ has volume 1, $\epsilon (\mr{triv}_{\mb W_L}, \psi_{L,l}) = 1$
\cite[\S 3.2.6]{Tat}. The Haar measure on $K$ which gives $\mf o_K$ volume 1 is
self-dual with respect to the additive character $\psi$ of order 0. Hence
\cite[(3.4.7)]{Tat} applies. It tells us that 
\begin{equation}\label{eq:17.16}
| \epsilon (\CC [\mb W_K / \mb W_L], \psi) |^2 = q_K^{\mb a (\CC [\mb W_K / \mb W_L])} . 
\end{equation}
Let $\mf d_{\mf o_L / \mf o_K}$ be the discriminant of $\mf o_L / \mf o_K$, an
ideal of $\mf o_K$.
By \cite[Corollary VI.2.4]{Ser} applied to the trivial representation of $\mb W_L$:
\begin{equation}\label{eq:17.11}
\mb a (\CC [\mb W_K / \mb W_L]) = f_{L/K} \mb a (\mr{triv}_{\mb W_L}) + 
\mr{val}_K (\mf d_{\mf o_L / \mf o_K}) = \mr{val}_K (\mf d_{\mf o_L / \mf o_K}) .
\end{equation}
By \cite[Proposition III.3.6]{Ser} the image of the different $\mc D_{L/K}$ (an ideal in
$\mf o_L$) under the norm map for $L/K$ is precisely $\mf d_{\mf o_L / \mf o_K}$, so
\begin{equation}\label{eq:17.15}
\mr{val}_K (\mf d_{\mf o_L / \mf o_K}) = \mr{val}_L (\mc D_{L/K}) [L:K] e_{L/K}^{-1} = 
\mr{val}_L (\mc D_{L/K}) f_{L/K} .
\end{equation}
It follows from \eqref{eq:17.16}--\eqref{eq:17.15} that 
\begin{equation}\label{eq:17.17}
\left| \frac{\epsilon (\CC [\mb W_K / \mb W_{L}], \psi)}{\epsilon (\mr{triv}_{\mb W_L}, 
\psi_{L,l}) } \right| = q_K^{\mb a (\CC [\mb W_K / \mb W_L]) / 2} = q_K^{\mr{val}_L 
(\mc D_{L/K}) f_{L/K} / 2} = q_L^{\mr{val}_L (\mc D_{L/K}) /2} . 
\end{equation}
Finally we use that $q_L^{\mr{val}_L (\mc D_{L/K})} = [\mf o_L : \mc D_{L/K}]$ and we
plug \eqref{eq:17.17} into part (4).
\end{proof}

Supppose that $L/K$ is a finite separable field extension and that $\mb H$ is any connected
reductive $L$-group. Let $\mb G = \mr{Res}_{L/K}(\mb H)$ be the restriction of scalars of 
$\mb H$, from $L$ to $K$. Then $\mb G (K) = \mb H (L)$ and, according to 
\cite[Proposition 8.4]{Bor}, Shapiro's lemma yields a natural bijection
\begin{equation}\label{eq:16.2}
\Phi (\mb G (K)) \longrightarrow \Phi (\mb H (L)) .
\end{equation}
It is desirable that \eqref{eq:16.2} preserves L-functions, $\epsilon$-factors and 
$\gamma$-factors -- basically that is an aspect of the well-definedness of these local factors. 
Recall from \cite[\S 12.4.5]{Spr} that
\[
\mb G = \mr{ind}_{\mb W_K}^{\mb W_L} \mb H  \cong \mb H^{[L:K]} 
\quad \text{as } L \text{-groups.}
\]
This yields isomorphisms of $\mb W_K$-groups
\begin{equation}\label{eq:16.3}
\mb G^\vee \cong \mr{ind}_{\mb W_L}^{\mb W_K} (\mb H^\vee) \cong (\mb H^\vee)^{[L:K]} .
\end{equation}
We regard $\mb H^\vee$ as a subgroup of $\mb G^\vee$, embedded as the factor associated to the 
identity element in $\mb W_K / \mb W_L$. From the proof of Shapiro's lemma one gets an explicit 
description of \eqref{eq:16.2}: it sends 
\[
\lambda \in \Phi ( \mb G (K) ) \quad \text{to} \quad \big( \mb H^\vee\text{-component of }
\lambda \big|_{\mb W_L \times SL_2 (\CC)} \big) \in \Phi \big( \mb H (L) \big) .
\]
Let Ad or $\mr{Ad}_{\mb G^\vee}$ denote the adjoint representation of ${}^L \mb G$ on 
\[
\mf g^\vee / Z(\mf g^\vee)^{\mb W_K} = 
\mr{Lie}(\mb G^\vee) \big/ \mr{Lie} \big( (Z(\mb G)_s)^\vee \big) . 
\]
For any $\lambda \in \Phi (\mb G (K))$, $\mr{Ad} \circ \lambda$ is an admissible 
representation of $\mb W_K \times SL_2 (\CC)$ on $\mf g^\vee / Z(\mf g^\vee)^{\mb W_K}$.
We refer to the local factors of $\mr{Ad} \circ \lambda$ as the adjoint local factors of 
$\lambda$ (all of them tacitly with respect to the Haar measure on $K$ that gives $\mf o_K$
volume 1).

Beware that the split component of the centre $Z(\mb H)_s$ is not compatible with
restriction of scalars: $\mr{Res}_{L/K} Z(\mb H)_s$ is not $K$-split and contains
$Z (\mr{Res}_{L/K} \mb H)_s$ as a proper subgroup.

\begin{lemma}\label{lem:17.4}
Let $L$ be a finite separable extension of $K$ and denote the bijection 
$\Phi (\mb G (K)) \to \Phi \big( \mb H (L) \big)$ from \eqref{eq:16.2} by 
$\lambda \mapsto \lambda_{\mb H}$. Suppose that $Z(\mb H)^\circ$ is $L$-anisotropic.

Then $\mr{Ad}_{\mb G^\vee} \circ \lambda$ can be regarded as 
$\mr{ind}_{\mb W_{L} \times SL_2 (\CC)}^{\mb W_K \times SL_2 (\CC)} (\mr{Ad}_{\mb H^\vee} \circ 
\lambda_{\mb H})$. In particular the adjoint local factors of $\lambda$ and $\lambda_{\mb H}$ are 
related as in Lemma \ref{lem:A.1} and Theorem \ref{thm:A.2} (with $V_L = \mf h^\vee$ and
$V = \mf g^\vee$).
\end{lemma}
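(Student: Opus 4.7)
The plan is to unwind the Shapiro isomorphism and the dual-group identification $\mb G^\vee \cong \mr{ind}_{\mb W_L}^{\mb W_K}(\mb H^\vee)$ from \eqref{eq:16.3} to see that $\mr{Ad} \circ \lambda$ is induced from $\mr{Ad} \circ \lambda_{\mb H}$, up to a small central correction, and then apply Proposition \ref{prop:17.3}.

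First I would fix coset representatives $1 = g_1, \ldots, g_d$ for $\mb W_K/\mb W_L$, where $d = [L:K]$. By \eqref{eq:16.3}, $\mf g^\vee$ decomposes as $\bigoplus_{i=1}^d g_i \otimes \mf h^\vee$, and this realization displays it as $\mr{ind}_{\mb W_L}^{\mb W_K}(\mf h^\vee)$ as a $\mb W_K$-module. The explicit form of Shapiro's bijection \eqref{eq:16.2} says $\lambda_{\mb H} = \lambda|_{\mb W_L \times SL_2(\CC)}$ takes values in the subgroup $\mb H^\vee \rtimes \mb W_L$, which acts on the distinguished summand $g_1 \otimes \mf h^\vee$. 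Hence $SL_2(\CC)$ preserves this summand under $\mr{Ad} \circ \lambda$ and acts there exactly as $\mr{Ad} \circ \lambda_{\mb H}$; combined with the permutation action of $\mb W_K$ on the summands, this promotes the isomorphism $\mf g^\vee \cong \mr{ind}_{\mb W_L}^{\mb W_K}(\mf h^\vee)$ to one of $\mb W_K \times SL_2(\CC)$-representations, where the right-hand side carries the $SL_2(\CC)$-action via $\mr{Ad} \circ \lambda_{\mb H}$.

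Second, I would pass to the adjoint quotient. In this identification $Z(\mf g^\vee) = \mr{ind}_{\mb W_L}^{\mb W_K}(Z(\mf h^\vee))$, and Frobenius reciprocity gives $Z(\mf g^\vee)^{\mb W_K} \cong Z(\mf h^\vee)^{\mb W_L}$ (embedded diagonally), of complex dimension $r := \dim_L Z(\mb H)_s = \dim_K Z(\mb G)_s$. This produces a short exact sequence
\begin{equation*}
0 \to \mr{ind}_{\mb W_L}^{\mb W_K}(Z(\mf h^\vee)^{\mb W_L}) / Z(\mf h^\vee)^{\mb W_L} \to \mf g^\vee / Z(\mf g^\vee)^{\mb W_K} \to \mr{ind}_{\mb W_L}^{\mb W_K}(V_L) \to 0,
\end{equation*}
in which the leftmost term is a trivial $SL_2(\CC)$-representation, isomorphic to $r$ copies of $\mr{ind}_{\mb W_L}^{\mb W_K}(\mathbf{1}) / \mathbf{1}$. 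Thus, in the Grothendieck group of admissible Weil--Deligne representations, $\mr{Ad} \circ \lambda$ equals $\mr{ind}_{\mb W_L \times SL_2(\CC)}^{\mb W_K \times SL_2(\CC)}(\mr{Ad} \circ \lambda_{\mb H})$ plus this explicit central correction.

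Finally I would apply Proposition \ref{prop:17.3} twice, by additivity of local factors: once to $\mr{Ad} \circ \lambda_{\mb H}$ on $V_L$, yielding the ratios predicted for the induction; and once to the trivial $\mb W_L$-representation $Z(\mf h^\vee)^{\mb W_L}$, which accounts for the central correction. The two contributions multiply, and the exponent in the $\epsilon$- and $\gamma$-factor formulas of Proposition \ref{prop:17.3}(2)--(3) collapses to $\dim V_L = \dim \mf h^\vee - r$, as claimed. The main obstacle is the bookkeeping in this last step: one must verify that the central correction's local factors (trivial for $L$-functions via inductivity, and explicit powers of $q_L^{\mr{ord}(\psi_L)}$ and $q_K^{\mr{ord}(\psi)}$ for $\epsilon$-factors) precisely shift the nominal dimension $\dim \mf h^\vee$ coming from the induced action on the full $\mf h^\vee$ down to the desired $\dim V_L$, using the inductivity-in-degree-zero argument of \cite{Tat} employed in the proof of Proposition \ref{prop:17.3}.
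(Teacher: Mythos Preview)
Your approach is the paper's approach: both use the Shapiro identification $\mb G^\vee \cong \mr{ind}_{\mb W_L}^{\mb W_K}\mb H^\vee$ from \eqref{eq:16.3} to recognise the action of $\lambda$ on $\mf g^\vee$ as induced from the action of $\lambda_{\mb H}$ on $\mf h^\vee$, and then invoke Proposition~\ref{prop:17.3}. The paper's proof is extremely terse --- it simply asserts the equality of Weil--Deligne representations and stops --- so your write-up is already more detailed. In particular you are more careful than the paper about the passage to the quotient by $Z(\mf g^\vee)^{\mb W_K}$: you correctly observe that $\mf g^\vee / Z(\mf g^\vee)^{\mb W_K}$ has dimension $d\dim\mf h^\vee - r$ while $\mr{ind}_{\mb W_L}^{\mb W_K}(V_L)$ has dimension $d(\dim\mf h^\vee - r)$, so when $r = \dim Z(\mb H)_s > 0$ the two are not literally isomorphic and a correction term $r\cdot\big(\CC[\mb W_K/\mb W_L] - \mathbf 1_K\big)$ appears. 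The paper's own applications of this lemma (reducing adjoint $K$-simple groups to absolutely simple ones, in Propositions~\ref{prop:11.1} and~\ref{prop:11.2} and Lemma~\ref{lem:16.3}) are all for semisimple $\mb G$, where $r = 0$ and the issue is moot; your treatment is genuinely more complete for the general reductive case.

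That said, your final step --- the claim that the local factors of the correction term ``collapse'' so that the exponent in Proposition~\ref{prop:17.3}(2) becomes exactly $\dim V_L$ --- is not justified, and in fact does not hold as stated. The correction $r\cdot(\CC[\mb W_K/\mb W_L] - \mathbf 1_K)$ has nontrivial $L$-function (for instance $L(s,\CC[\mb W_K/\mb W_L]) = (1-q_L^{-s})^{-1} \neq (1-q_K^{-s})^{-1} = L(s,\mathbf 1_K)$) and nontrivial $\epsilon$-factor, and these do not cancel to $1$. What your decomposition actually gives is the formula of Proposition~\ref{prop:17.3} with exponent $\dim\mf h^\vee$ (from inducing the full action on $\mf h^\vee$), multiplied by the explicit local factors of the correction term; you would need to carry out that computation and state the result, rather than assert that it reduces to exponent $\dim V_L$. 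So the bookkeeping you flag as ``the main obstacle'' is a real gap, not just tedium.
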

\begin{proof}
The proof of Shapiro's lemma and \eqref{eq:16.2} entail that every 
$\lambda \in \Phi (\mb G (K))$ is of the form 
\begin{equation}\label{eq:17.31}
\lambda = \mr{ind}_{\mb W_L \times SL_2 (\CC)}^{\mb W_K \times SL_2 (\CC)} \lambda_{\mb H} 
\quad \text{for a} \quad \lambda_{\mb H} \in \Phi \big( \mb H (L) \big) .
\end{equation}
To make sense of this induction, we regard $\lambda$ (resp. $\lambda_{\mb H}$) as a representation 
on $\mb G^\vee$ (resp. $\mb H^\vee$) and we apply \eqref{eq:16.3}. Then
$\mr{Ad}_{\mb G^\vee} \circ \lambda : \mb W_K \times SL_2 (\CC) \to \mr{Aut} ( \mf g^\vee)$ 
equals $\mr{ind}_{\mb W_{L} \times SL_2 (\CC)}^{\mb W_K \times SL_2 (\CC)} (\mr{Ad}_{\mb H^\vee} 
\circ \lambda_{\mb H})$. Knowing that, Lemma \ref{lem:A.1} and Theorem \ref{thm:A.2} apply.
\end{proof}

Lemma \ref{lem:17.4} shows that adjoint L-functions of groups with anisotropic centre are always 
preserved under restriction of scalars (most likely that was known already). Surprisingly, it 
also shows that adjoint $\epsilon$-factors and adjoint $\gamma$-factors are usually not 
preserved under Weil restriction, only if $L/K$ is unramified (and up to a sign).

We will deduce from Theorem \ref{thm:A.2} that the HII 
conjectures \cite{HII} are stable under Weil restriction: they hold for $\mb G (K)$ if and 
only if they hold for $\mb H (L)$. For that statement to make sense, we need a way
to transfer enhancements of L-parameters from $\mb G (K)$ to $\mb H (L)$:

\begin{lemma}\label{lem:17.1}
The map \eqref{eq:16.2} extends naturally to a bijection 
$\Phi_e (\mb G (K)) \to \Phi_e \big( \mb H (L) \big)$, which preserves cuspidality.
\end{lemma}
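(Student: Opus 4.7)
\textbf{Proof plan for Lemma \ref{lem:17.1}.}
The plan is to upgrade the Shapiro bijection \eqref{eq:16.2} to component groups and then to enhancements. Start from the identification
\[
\mb G^\vee \cong \mr{ind}_{\mb W_L}^{\mb W_K} \mb H^\vee \cong (\mb H^\vee)^{[L:K]}
\]
from \eqref{eq:16.3}. Fix coset representatives $w_1=1,w_2,\ldots,w_d$ of $\mb W_L$ in $\mb W_K$ with $d=[L:K]$, so that $\mb W_K$ acts on $(\mb H^\vee)^d$ by permuting factors through $\mb W_K/\mb W_L$, twisted by the intrinsic $\mb W_L$-action on $\mb H^\vee$. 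Under this identification, $\mb H^\vee$ embeds as the first factor, and a direct calculation with the cocycle description of $\lambda$ shows that $\lambda(w,x)\in \mb G^\vee\rtimes \mb W_K$ is determined by $\lambda_{\mb H}=\lambda|_{\mb W_L\times SL_2(\CC)}$ together with the permutation data.

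First I would check that projection onto the first factor yields an isomorphism
\[
Z_{\mb G^\vee}\bigl(\mr{im}\,\lambda\bigr) \isom Z_{\mb H^\vee}\bigl(\mr{im}\,\lambda_{\mb H}\bigr).
\]
If $g=(h_1,\ldots,h_d)\in \mb G^\vee$ centralizes $\mr{im}\,\lambda$, then commuting with $\lambda(w)$ for $w\in \mb W_L$ (which fixes every factor) forces each $h_i$ to centralize the corresponding restriction of $\lambda_{\mb H}$, while commuting with $\lambda(w_i)$ for $i\geq 2$ (which moves the first factor to the $i$-th) forces $h_i$ to be the image of $h_1$ under conjugation by $\lambda(w_i)$. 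Since $(\mb G^\vee)_\SC=((\mb H^\vee)_\SC)^d$ with the same permutation action, the identical argument works on the simply connected cover. Moreover $Z(\mb G^\vee)=Z(\mb H^\vee)^d$ with permutation action, and Shapiro's lemma for the 0-th cohomology of finite index subgroups (equivalently, a direct verification) shows that $B^1(\mb W_K,Z(\mb G^\vee))$ corresponds bijectively under projection to $B^1(\mb W_L,Z(\mb H^\vee))$. Combining these, the "cocycle-twisted" centralizer $Z^1_{(\mb G^\vee)_\SC}(\lambda(\mb W_K))$ of \eqref{eq:2.12} is isomorphic to $Z^1_{(\mb H^\vee)_\SC}(\lambda_{\mb H}(\mb W_L))$, and passing to $\pi_0$ yields a natural isomorphism $\mc A_\lambda\cong\mc A_{\lambda_{\mb H}}$. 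Transporting the enhancement across this isomorphism gives the map $(\lambda,\rho)\mapsto(\lambda_{\mb H},\rho)$, which is manifestly a bijection because \eqref{eq:16.2} is.

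Finally, I would verify preservation of cuspidality by tracking the unipotent element. Since $(1,x)\in\mb W_K\times SL_2(\CC)$ commutes with every $(w,1)$ and the permutation action on $\mb G^\vee$ is by algebraic automorphisms, the element $u_\lambda=\lambda\bigl(1,\bigl(\begin{smallmatrix}1&1\\0&1\end{smallmatrix}\bigr)\bigr)\in(\mb G^\vee)_\SC$ must be fixed by the $\mb W_K$-permutation modulo conjugation by $\lambda(w_i)$; the upshot is that $u_\lambda$ is "diagonal" in the sense that it corresponds, under the isomorphism of centralizers just established, to $u_{\lambda_{\mb H}}\in(\mb H^\vee)_\SC$. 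Hence an equivariant cuspidal local system on the $Z^1_{(\mb G^\vee)_\SC}(\lambda(\mb W_K))$-orbit of $u_\lambda$ is the same datum as an equivariant cuspidal local system on the corresponding orbit of $u_{\lambda_{\mb H}}$, so cuspidality of $(\lambda,\rho)$ matches cuspidality of $(\lambda_{\mb H},\rho)$. The main obstacle in carrying this out is purely bookkeeping: one must be careful with the permutation action and with the identification of coboundaries under Shapiro, but no substantive new idea is required beyond the explicit description of $\mb G^\vee$ provided by \eqref{eq:16.3}.
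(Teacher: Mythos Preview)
Your proposal is correct and follows essentially the same approach as the paper: both arguments exploit the induced structure $\mb G^\vee \cong (\mb H^\vee)^{[L:K]}$ from \eqref{eq:16.3} to identify the centralizer (and its simply connected variant) of $\lambda$ with that of $\lambda_{\mb H}$, then transport enhancements and the cuspidality criterion across this isomorphism. The paper's proof is considerably terser---it simply asserts that the conjugation action of the extra factors of $\mb G^\vee$ can be ignored and records the isomorphism $Z_{{\mb G^\vee}_\SC}(\lambda(\mb W_K)) \cong Z_{{\mb H^\vee}_\SC}(\lambda_{\mb H}(\mb W_L))$---whereas you spell out the coset-representative bookkeeping, the matching of coboundaries $B^1(\mb W_K,Z(\mb G^\vee)) \leftrightarrow B^1(\mb W_L,Z(\mb H^\vee))$ needed for the $Z^1$ version \eqref{eq:2.12}, and the tracking of $u_\lambda$; these are exactly the details the paper leaves implicit.
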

\begin{proof}
For $\Phi ( \mb H (L))$ the equivalence relation on L-parameters and the component 
groups come from the conjugation action of $\mb H^\vee$ and for $\Phi (\mb G (K))$ they come 
from the conjugation action of \eqref{eq:16.3}. But \eqref{eq:16.2} 
means that a L-parameter for $\mb G (K)$ depends (up to equivalence) only on its coordinates in
one factor $\mb H^\vee$ of $\mb G^\vee$, so the conjugation action of the remaining factors
of $\mb G^\vee$ can be ignored. Further, \eqref{eq:17.31} induces a group isomorphism
\begin{equation}\label{eq:17.32}
\begin{array}{ccc}
\mc A_{\lambda_{\mb H}} & \cong & \mc A_\lambda \\
a & \mapsto & [\gamma \mapsto a]
\end{array},
\end{equation}
where the right hand side is a subgroup of $\mr{ind}_{\mb W_L}^{\mb W_K} ({\mb H^\vee}_\SC)$.
Consequently \eqref{eq:16.2} extends to a bijection
\begin{equation}\label{eq:16.4}
\Phi_e (\mb G (K)) \to \Phi_e \big( \mb H (L) \big) .
\end{equation}
For $\mb G (K)$ cuspidality of enhancements of $\lambda$ is formulated via 
\begin{equation}\label{eq:16.5}
Z_{{\mb G^\vee}_\SC}(\lambda (\mb W_K)) = Z_{ \mr{ind}_{\mb W_L}^{\mb W_K} ({\mb H^\vee}_\SC ) }
(\lambda (\mb W_K)) \cong Z_{{\mb H^\vee}_\SC} (\lambda (\mb W_L)) ,
\end{equation}
where the isomorphism is a restriction of $\mc A_\lambda \cong \mc A_{\lambda_{\mb H}}$.
The right hand side of \eqref{eq:16.5} is just the group in which we detect cuspidality of 
enhancements of $\lambda \big|_{\mb W_L \times SL_2 (\CC)}$. 
Therefore \eqref{eq:16.4} respects cuspidality.
\end{proof}

In \cite{GrGa} a canonical Haar measure $|\omega_{\mb G}|$ on $\mb G (K)$ was constructed.
It involves the motive $M_{\mb G}$ of a reductive group \cite{Gro}. By \cite[(18) and \S 3.4]{GrRe} 
the Artin conductor of $M_{\mb G}$ equals the Artin conductor $\mb a (\mf g^\vee)$, where the 
Lie algebra $\mf g^\vee$ is endowed with the $\mb W_F$-action from conjugation in ${}^L \mb G$.

In \cite{HII} a measure $\mu_{\mb G,\psi}$ on $\mb G (K)$ 
is defined in terms of $|\omega_{\mb G}|$ and the Haar measure on $K$ that is self-dual with 
respect to $\psi$. When $\psi$ has order 0, that self-dual measure gives $\mf o_K$ volume 1 and 
$\mu_{\mb G,\psi}$ equals $|\omega_{\mb G}|$. That works well for $K_\nr$-split groups, but for 
ramified groups we need \cite[Correction]{HII}. Unfortunately the formula
\[
\mu'_{\mb G,\psi} = q^{-\mb a (\mf g^\vee )/2} \mu_{\mb G,\psi} 
\]
from \cite[Correction]{HII} is incompatible with the equality $\mu'_{\mb G,\psi} = \mu_{\mb G,\psi}$
for many groups \cite[Corollary 7.3]{GrGa}. On the other hand, such a $\mu'_{\mb G,\psi}$ is
definitely needed in \cite[Correction]{HII}. To make it work in all cases, we redefine
\begin{equation}\label{eq:17.18}
\mu_{\mb G,\psi} := q_K^{-(\mb a (\mf g^\vee) + \mr{ord}(\psi) \dim \mb G) /2} |\omega_{\mb G} | . 
\end{equation}
For $K_\nr$-split reductive groups this $\mu_{\mb G,\psi}$ agrees with \cite{HII}, because for 
those $\mb a (\mf g^\vee) = 0$ \cite[\S 4]{GrGa} and \eqref{eq:17.18} exhibits the same 
transformation behaviour with respect to $\psi$ as in \cite[(1.1)]{HII}.

\begin{lemma}\label{lem:A.3}
Let $\mb N$ be a normal connected $K$-subgroup of $\mb G$
such that the sequence of $K$-rational points
\[
1 \to \mb N (K) \to \mb G (K) \to (\mb G / \mb N)(K) \to 1
\]
is exact. Then $\mu_{\mb G,\psi} = \mu_{\mb N,\psi} \mu_{\mb G / \mb N,\psi}$, 
in the sense that for all $f \in C_c (\mb G(K))$:
\[
\int_{\mb G (K)} f(g) \, d \mu_{\mb G,\psi}(g) = \int_{(\mb G / \mb N)(K)}  \Big(
\int_{\mb N (K)} f(hn) \, d \mu_{\mb N,\psi}(n) \Big) d \mu_{\mb G / \mb N,\psi}(h) .
\]
\end{lemma}
\begin{proof}
We revisit the construction of $|\omega_{\mb G}|$ in \cite[\S 5]{GrGa}. Let $\mb G_0$ be the split
form of $\mb G$ and let $\varphi : \mb G \to \mb G_0$ be a isomorphism of $K_s$-groups.
Write $\mb N_0 = \varphi (\mb N)$ and $(\mb G / \mb N)_0 = \mb G_0 / \mb N_0$. We choose a Chevalley
model for $\mb G_0$ over $\mf o_K$. This also provides $\mb N_0$ and $(\mb G / \mb N)_0$ with 
Chevalley models over $\mf o_K$. Let $\omega_{\mb N_0}$ be an invariant differential form of
top degree which has good reduction modulo $\varpi_K$ (with respect to the Chevalley model). 
Choosing $\omega_{(\mb G / \mb N)_0}$ in the same way, the product $\omega_{\mb N_0} \omega_{(\mb G / 
\mb N)_0}$ defines an analogous invariant differential form $\omega_{\mb G_0}$ for $\mb G_0$.

The invariant differential forms $\omega_{\mb N}, \omega_{\mb G}$ and $\omega_{\mb G / \mb N}$ 
are obtained from their "split versions" by pullback along $\varphi$, so they also satisfy
$\omega_{\mb G} = \omega_{\mb N} \omega_{\mb G / \mb N}$. In view of the exactness of the 
sequence in the statement, the associated measures on the $K$-rational points of the involved 
groups are related as
\begin{equation}\label{eq:17.20}
|\omega_{\mb G}| = |\omega_{\mb N}| \, |\omega_{\mb G / \mb N}| . 
\end{equation}
As representation of Gal$(K_s/K)$, $\mf g^\vee$ is the direct sum of 
$\mf n^\vee := \mr{Lie}(\mb N^\vee)$ and Lie$(\mb G / \mb N)^\vee$. 
With the additivity of Artin conductors we deduce that
\begin{equation}\label{eq:17.21}
\hspace{-2mm} q_K^{-(\mb a (\mf g^\vee) + \mr{ord}(\psi) \dim \mb G) /2} =
q_K^{-(\mb a (\mf n^\vee) + \mr{ord}(\psi) \dim \mb N) /2} q_K^{-\big( \mb a (\mr{Lie}(\mb G / 
\mb N)^\vee) + \mr{ord}(\psi) \dim (\mb G / \mb N) \big) /2} . \hspace{-3mm}
\end{equation}
Comparing \eqref{eq:17.20} and \eqref{eq:17.21} with \eqref{eq:17.18}, we obtain
$\mu_{\mb G,\psi} = \mu_{\mb N,\psi} \mu_{\mb G / \mb N,\psi}$.
\end{proof}

Replacing $\mb H (L)$ by $\mb G (K)$ effects the formal degrees of square-integrable 
representations with respect to \eqref{eq:17.18}, but in a transparent way:

\begin{lemma}\label{lem:16.2}
Suppose that $Z(\mb H)^\circ$ is $L$-anisotropic and that $\pi \in \mr{Rep} (\mb G (K))$ is
square-integrable. Then
\[
\mr{fdeg}(\pi, \mu_{\mb G,\psi}) q_K^{-(\mb a (\mf g^\vee) + \mr{ord}(\psi) \dim \mb G)/2} =
\mr{fdeg}(\pi, \mu_{\mb H,\psi_L}) q_L^{-(\mb a (\mf h^\vee) + \mr{ord}(\psi_L) \dim \mb H)/2} .
\]
\end{lemma}
\begin{proof}
By \cite[Proposition 6.1.4]{GrGa} the measure $|\omega_{\mb G}|$ is respected by restriction 
of scalars, that is, $|\omega_{\mb H}| = |\omega_{\mb G}|$. Note that
\[
\mr{fdeg}(\pi, \mu_{\mb G,\psi}) q_K^{-(\mb a (\mf g^\vee) + \mr{ord}(\psi) \dim \mb G)/2} =
\mr{fdeg}(\pi, |\omega_{\mb G}|) 
\]
and similarly for $\mb H$.
\end{proof}

Our adjoint local factors coincide with those \cite{HII} if the additive characters
on $K$ and $L$ have order zero. But that is not always tenable. Namely, if 
$\psi : K \to \CC^\times$ has order zero, then $\psi_L$ (the composition of $\psi$ with the 
trace map for $L/K$) need not have order zero. More precisely, when ord$(\psi) = 0$, 
\cite[Theorem III.5.1]{Ser} says that $\psi_L$ has order zero if and only if $L/K$ is unramified. 

So far we used the Haar measure $dx$ on $K$ that gives $\mf o_K$ volume 1. When we compute 
$\epsilon$-factors or $\gamma$-factors with respect to an arbitrary additive character $\psi$,
the conventions in \cite{HII} impose that employ the Haar measure on $K$ which is self-dual
with respect to $\psi$. Thus we take $q^{-\mr{ord}(\psi) / 2} dx$ and we include it in the
notations of $\epsilon$-factors and $\gamma$-factors. 

For $a \in K^\times$ the additive character $\psi_a : x \mapsto \psi (xa)$ has order
$\mr{ord}(\psi) + \mr{val}_K (a)$. We recall from \cite[(3.4.3) and (3.4.4)]{Tat} that
\begin{align*}
\epsilon (s,V,\psi_a, q^{-(\mr{ord}(\psi) + \mr{val}_K (a))/2} dx) & =
\epsilon (s,V, \psi, q^{-(\mr{ord}(\psi) + \mr{val}_K (a))/ 2} dx) q_K^{\mr{val}_K (a) \dim (V)} 
\! \det (V,a) \\
& = \epsilon (s,V, \psi, q^{-\mr{ord}(\psi)/2} dx) q_K^{\mr{val}_K (a) \dim (V) / 2} \det (V,a) \\
& = \epsilon (s,V, \psi, q^{-\mr{ord}(\psi)/2} dx) | a |_K^{- \dim (V) / 2} \det (V,a) .
\end{align*}
With these notations, the HII conjecture can be formulated more precisely as
\begin{equation}\label{eq:17.19}
\mr{fdeg}(\pi, \mu_{\mb G,\psi}) = \dim (\rho_\pi) \, |S_{\lambda_\pi}^\sharp |^{-1} \, 
|\gamma (0,\mr{Ad}_{\mb G^\vee} \circ \lambda_\pi, \psi, q^{-\mr{ord}(\psi)/2} dx)| .
\end{equation}
Let us recall what \cite{HII} says about this for a split torus $GL_1^n$. Let $\chi$ be a unitary 
smooth character of $GL_1^n (L)$. By definition fdeg$(\chi, \mu_{GL_1^n,\psi_L})$ equals the formal 
degree of the trivial representation of the trivial group, which is just 1. Similarly all objects 
on the right hand of \eqref{eq:17.19} are trivial, for they come from the zero-dimensional 
representation of $\mb W_K \times SL_2 (\CC)$. Hence \eqref{eq:17.19} reduces to the equality $1 = 1$.

After restriction of scalars and dividing out the split component of the centre, we end
up with the anisotropic $K$-torus $\mb T := \mr{Res}_{L/K}(GL_1^n) / GL_1^n$. From the proof of
Theorem \ref{thm:HII} on page \pageref{eq:16.31}, we know that both sides of \eqref{eq:17.19}
for $\mr{Res}_{L/K}(GL_1)(K)$ reduce to the same expressions for $\mb T (K)$. Hence we may assume 
that $\chi$ is trivial on $GL_1^n (K)$. The conjecture of Hiraga, Ichino and Ikeda was never 
conjectural for tori, they proved it immediately. By \cite[Correction]{HII}, applied to the smooth 
character $\chi$ of $\mb T(K)$:
\begin{equation}\label{eq:17.2}
\mr{fdeg}(\chi, \mu_{\mb T,\psi}) = \frac{q^{\mb a (\mf t^\vee) / 2}}{|\mb T^{\vee,\mb W_K}|}
\frac{L(1,\mr{Ad}_{\mb T^\vee} \circ \lambda_\chi)}{L(0,\mr{Ad}_{\mb T^\vee} \circ \lambda_\chi)} 
= \frac{\dim (\rho_\chi)}{|S_{\lambda_\chi}^\sharp |} 
|\gamma (0,\mr{Ad}_{\mb T^\vee} \circ \lambda_\chi, \psi)| .
\end{equation}
Moreover, none of the terms in \eqref{eq:17.2} depends on $\chi$ and $\rho_\chi$ is just the 
trivial representation of the group $\mc A_{\lambda_\chi} = 1$.

\begin{prop}\label{prop:17.2}
Let $\mb H$ be any connected reductive $L$-group and let $\pi \in \Irr (\mb H (L))$ 
be square-integrable modulo centre. Let $(\lambda,\rho) \in
\Phi_e (\mb G (K))$ be an enhanced L-parameter associated to $\pi$, and let
$(\lambda_{\pi_\mb H}, \rho_{\pi_\mb H}) \in \Phi_e (\mb H (L))$ be its image under the map 
from Lemma \ref{lem:17.1}. The following are equivalent:
\begin{itemize}
\item The HII conjecture \eqref{eq:HII} holds for $\pi$ as $\mb G (K)$-representation,
with respect to any nontrivial additive character of $K$.
\item The HII conjecture \eqref{eq:HII} holds for $\pi$ as $\mb H (L)$-representation,
with respect to any nontrivial additive character of $L$.
\end{itemize} 
\end{prop}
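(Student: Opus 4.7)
The plan is to match each ingredient of \eqref{eq:HII} on the $\mb G(K)$-side with the corresponding one on the $\mb H(L)$-side, allowing for scalar corrections that arise from switching between $\psi$ and $\psi_L$. First, by the scaling computation at the end of the proof of Theorem \ref{thm:HII} (using \cite[Lemmas 1.1 and 1.3]{HII}), the validity of \eqref{eq:HII} for $\pi$ is independent of the chosen nontrivial additive character, on either side of the equivalence. Hence I may take $\psi_L := \psi \circ \mr{Tr}_{L/K}$ with $\psi$ chosen freely on $K$.

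Next I would match the factors on the right-hand side of \eqref{eq:HII}. Lemma \ref{lem:17.1} extends \eqref{eq:16.2} to a bijection of enhanced L-parameters, so $\dim(\rho_\pi) = \dim(\rho_{\pi_\mb H})$; applying the centralizer identification \eqref{eq:16.5} to $(\mb G/Z(\mb G)_s)^\vee \cong \mr{ind}_{\mb W_L}^{\mb W_K}\bigl((\mb H/Z(\mb H)_s)^\vee\bigr)$ gives $|S^\sharp_{\lambda_\pi}| = |S^\sharp_{\lambda_{\pi_\mb H}}|$. Thus the only Galois-side ratio that survives is $|\gamma(0,\mr{Ad}\circ\lambda_\pi,\psi)|/|\gamma(0,\mr{Ad}\circ\lambda_{\pi_\mb H},\psi_L)|$, which by Lemma \ref{lem:17.4} combined with Proposition \ref{prop:17.3}.(3) equals
\[
\Bigl| \epsilon(\CC[\mb W_K/\mb W_L],\psi) \, q_L^{-\mr{ord}(\psi_L)} \Bigr|^{\dim(\mb H/Z(\mb H)_s)} .
\]
On the analytic side, Lemma \ref{lem:16.2} and \cite[Proposition 6.1.4]{GrGa} show that the Gross--Gan Haar measures on $\mb G(K)/Z(\mb G)_s(K)$ and $\mb H(L)/Z(\mb H)_s(L)$ agree under the identification $\mb G(K) = \mb H(L)$, so the formal degrees computed with them coincide. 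The HII-normalized measures $\mu_{\mb G/Z(\mb G)_s,\psi}$ and $\mu_{\mb H/Z(\mb H)_s,\psi_L}$ differ from these Gross--Gan measures by the explicit scaling of \cite[(1.1)]{HII}, so the ratio of HII formal degrees is also a power of an absolute value determined by $\mr{ord}(\psi)$ and $\mr{ord}(\psi_L)$.

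The final step, where the main work lies, is to verify that the formal-degree correction from the previous paragraph cancels the $\gamma$-factor correction exactly, so that \eqref{eq:HII} on one side is equivalent to it on the other. This reduces to the classical identity $|\epsilon(\CC[\mb W_K/\mb W_L],\psi)| = q_K^{(\mr{ord}(\psi)[L:K] + d_{L/K})/2}$ for the $\epsilon$-factor of the regular representation of $\mb W_K/\mb W_L$ (computable by additivity from its decomposition into characters of $\mb W_K$ trivial on $\mb W_L$), together with Serre's formula $\mr{ord}(\psi_L) = [L:K]\,\mr{ord}(\psi) + d_{L/K}$ \cite[Proposition III.3.7]{Ser}. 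Both the $\gamma$-factor correction and the Haar-measure correction then simplify to the same explicit power of $q_K$ (with exponent a multiple of $\dim(\mb H/Z(\mb H)_s) = \dim(\mb G/Z(\mb G)_s)$, where the latter equality is immediate from restriction of scalars), and the equivalence follows. The main obstacle is purely bookkeeping: keeping track of the Gross--Gan versus HII normalizations on the two groups simultaneously with the change of additive character; once the reductions above are carried out, no further conceptual input is needed beyond Proposition \ref{prop:17.3}.
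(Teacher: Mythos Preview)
Your approach is essentially the paper's: reduce to comparing the formal-degree correction with the $\gamma$-factor correction, matching $\dim(\rho)$ and $|S^\sharp|$ via Lemma \ref{lem:17.1} and \eqref{eq:16.5}, and invoking Proposition \ref{prop:17.3} plus Lemma \ref{lem:17.4} for the $\gamma$-ratio. The paper streamlines the final bookkeeping by fixing $\mr{ord}(\psi)=0$ from the outset (so the $\mb G(K)$-side Haar measure is already the Gross--Gan one and no correction involving $\dim(\mb G/Z(\mb G)_s)$ is needed), and then computing $|\epsilon(\CC[\mb W_K/\mb W_L],\psi)| = q_L^{\mr{ord}(\psi_L)/2}$ via Tate's identity \eqref{eq:17.10} and the conductor--discriminant formula \cite[Corollary VI.2.4]{Ser}.

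Three of your auxiliary statements are incorrect as written and should be fixed. First, $\dim(\mb H/Z(\mb H)_s) \neq \dim(\mb G/Z(\mb G)_s)$: restriction of scalars multiplies the $K$-dimension by $[L:K]$. Fortunately this equality is never needed---once you set $\mr{ord}(\psi)=0$ (which your first paragraph allows), only $\dim(\mb H/Z(\mb H)_s)$ enters, on both sides. Second, the formula relating $\mr{ord}(\psi_L)$ to $\mr{ord}(\psi)$ involves the ramification index $e_{L/K}$, not $[L:K]$; the correct identity (and the one actually matching your $\epsilon$-formula) is $f_{L/K}\,\mr{ord}(\psi_L) = [L:K]\,\mr{ord}(\psi) + v_K(\mf d_{\mf o_L/\mf o_K})$. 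Third, computing $\epsilon(\CC[\mb W_K/\mb W_L],\psi)$ ``by decomposition into characters of $\mb W_K$ trivial on $\mb W_L$'' only works when $L/K$ is abelian; in general $\CC[\mb W_K/\mb W_L]$ has higher-dimensional constituents, and the paper instead reads off $|\epsilon|$ from the Artin conductor as in \eqref{eq:17.10}--\eqref{eq:17.12}. With these corrections your outline goes through and coincides with the paper's argument.
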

\begin{proof}
If the HII conjecture holds for $\pi \in \Irr (\mb G (K))$ with respect to one nontrivial
additive character of $K$, then by \cite[Lemma 1.1 and 1.3]{HII} it holds with respect to
all nontrivial additive characters of $K$. The same applies to $\pi$ as representation of
$\mb H (L)$. Therefore we may assume that ord$(\psi) = 0$, and it suffices to consider
one additive character of $L$.

First we assume that $Z(\mb H)^\circ$ is $L$-anisotropic.
Choose $l \in L^\times$ of valuation $-\mr{ord}(\psi_L)$. Then the character $\psi_{L,l} :
x \mapsto \psi_L (l x)$ of $L$ has order zero. By Lemma \ref{lem:16.2} 
\begin{equation}\label{eq:17.8}
\mr{fdeg}(\pi, \mu_{\mb H, \psi_{L,l}}) = \mr{fdeg}(\pi, 
\mu_{\mb G, \psi}) \, q_K^{(f_{L/K} \mb a (\mf h^\vee) - \mb a (\mf g^\vee)) / 2} .
\end{equation}
By the self-duality of the adjoint representation of ${}^L \mb G$, the right hand side of the 
HII conjecture \eqref{eq:17.19} is
\begin{equation}\label{eq:17.9}
\frac{\dim (\rho_\pi)}{ |S_{\lambda_\pi}^\sharp |} \, \left| \epsilon (0,\mr{Ad}_{\mb G^\vee} 
\circ \lambda_\pi, \psi) \frac{L(1, \mr{Ad}_{\mb G^\vee} \circ 
\lambda_\pi)}{L(0,\mr{Ad}_{\mb G^\vee} \circ \lambda_\pi)} \right| .
\end{equation}
It follows quickly from \eqref{eq:16.5} that \eqref{eq:16.2} induces an isomorphism
\begin{equation}\label{eq:17.24}
S_{\lambda_\pi}^\sharp = \pi_0 \big( Z_{\mb G^\vee}(\lambda_\pi) \big) \longrightarrow
\pi_0 \big( Z_{\mb H^\vee}(\lambda_{\pi_{\mb H}}) \big) = S_{\lambda_{\pi_{\mb H}}}^\sharp .
\end{equation}
In \eqref{eq:17.32} we checked that $\mc A_{\lambda_\pi} \cong \mc A_{\lambda_{\pi_{\mb H}}}$,
which implies $\dim (\rho_\pi) = \dim (\rho_{\pi_{\mb H}})$. By Lemmas \ref{lem:A.1} and 
\ref{lem:17.4} the L-functions in \eqref{eq:17.9} do not change if we replace 
$\lambda_\pi$ by $\lambda_{\pi_\mb H}$. So all terms in \eqref{eq:17.9}, 
except possibly the $\epsilon$-factor, are inert under 
\[
(\lambda_\pi, \rho_\pi) \mapsto (\lambda_{\pi_\mb H}, \rho_{\pi_\mb H}).
\]
By Lemma \ref{lem:17.4} and Theorem \ref{thm:A.2}.(5) 
\begin{equation}\label{eq:17.10}
| \epsilon (0,\mr{Ad}_{\mb H^\vee} \circ \lambda_{\pi_{\mb H}}, \psi_{L,l})| =
| \epsilon (0,\mr{Ad}_{\mb G^\vee} \circ \lambda_\pi, \psi )| 
q_K^{-\mb a (\CC [\mb W_K / \mb W_L]) \dim (\mb H^\vee) / 2}
\end{equation}
By \cite[Corollary VI.2.4]{Ser} 
\[
\mb a (\mf g^\vee) = f_{L/K} \mb a (\mf h^\vee) + \dim (\mb H^\vee) \mb a (\CC [\mb W_K / \mb W_L]) .
\]
Then \eqref{eq:17.10} becomes
\begin{equation}\label{eq:17.12}
| \epsilon (0,\mr{Ad}_{\mb H^\vee} \circ \lambda_{\pi_{\mb H}}, \psi_{L,l})| =
| \epsilon (0,\mr{Ad}_{\mb G^\vee} \circ \lambda_\pi, \psi )| \, 
q_K^{(f_{L/K} \mb a (\mf h^\vee) - \mb a (\mf g^\vee)) / 2} .
\end{equation}
From \eqref{eq:17.8} and \eqref{eq:17.12} we see that replacing  $\mb G (K)$ by $\mb H (L)$
adjusts both sides of \eqref{eq:17.19} by the same factor  
$q_K^{(f_{L/K} \mb a (\mf h^\vee) - \mb a (\mf g^\vee)) / 2}$. 

We come to the general case, now $\mb H$ can be any connected reductive $L$-group. We abbreviate 
$\mb G' = \mr{Res}_{L/K}(\mb H / Z(\mb H)_s)$ and  $\mb T = \mr{Res}_{L/K}(Z(\mb H)_s) / 
\mb Z (\mb G)_s$. Applying \eqref{eq:15.7} to $\mb H (L)$, we obtain a short exact sequence
\begin{equation}\label{eq:17.6}
1 \to Z(\mb H)_s (L) \to \mb G (K) \to \mb G' (K) \to 1 . 
\end{equation}
With Galois cohomology one checks that $\mb T (K) = Z(\mb H)_s (L) / Z(\mb G)_s (K)$,
just like \eqref{eq:15.7}. Plugging that into \eqref{eq:17.6}, we obtain a short exact sequence
\begin{equation}\label{eq:17.23}
1 \to \mb T (K) \to (\mb G / Z(\mb G)_s)(K) \to \mb G' (K) \to 1 . 
\end{equation}
Fix a unit vector $v$ in the Hilbert space on which $\pi$ is defined. 
The formal degree of $\pi$ is given in \cite{HII} as
\begin{equation}\label{eq:17.22}
\mr{fdeg}(\pi, d \mu_{\mb G / Z(\mb G)_s ,\psi})^{-1} = \int_{(\mb G / Z(\mb G)_s)(K)} 
| \langle \pi (g) v,v \rangle |^2 d \mu_{\mb G / Z(\mb G)_s ,\psi} (g) .
\end{equation}
As $\pi$ is square-integrable, its central character is unitary. Hence
$| \langle \pi (g) v,v \rangle |$ depends only on the image of $g$ in $\mb H(L) / Z(\mb H)_s (L) 
= \mb G' (K)$. With \eqref{eq:17.23} and Lemma \ref{lem:A.3} we see that \eqref{eq:17.22} equals
\[
\int_{\mb G'(K)} \int_{\mb T (K)} | \langle \pi (g) v,v \rangle |^2 d \mu_{\mb T,\psi} 
d \mu_{\mb G',\psi} (g) = \mu_{\mb T,\psi}(\mb T (K)) \int_{\mb G'(K)} 
| \langle \pi (g) v,v \rangle |^2 d \mu_{\mb G',\psi} (g) .
\]
By \eqref{eq:16.18} and \eqref{eq:17.2}, for any $\chi \in \Irr (\mb T (K))$:
\begin{equation}\label{eq:17.7}
\mu_{\mb T,\psi}(\mb T (K))^{-1} = \mr{fdeg}(\chi, d \mu_{\mb T,\psi}) =  
\dim (\rho_\chi) |S_{\lambda_\chi}^\sharp |^{-1} 
|\gamma (0,\mr{Ad}_{\mb T^\vee} \circ \lambda_\chi, \psi)| .
\end{equation}
We point out that $\mf h'^\vee := \mr{Lie} ((\mb H / Z(\mb H)_s)^\vee)$ is a representation of
Gal$(K_s / L)$. Its Artin conductor is understood as such. From \cite[Proposition 6.1.4]{GrGa} and 
\eqref{eq:17.18} with ord$(\psi) = \mr{ord}(\psi_{L,l}) = 0$  we see that
\begin{align*}
\int_{\mb G'(K)} | \langle \pi (g) v,v \rangle |^2 d \mu_{\mb G',\psi} (g) & = 
q_K^{-\mb a (\mf g'^\vee) /2} q_L^{\mb a (\mf h'^\vee) /2}  \int_{(\mb H / Z(\mb H)_s)(L)} 
\hspace{-7mm} | \langle \pi (g) v,v \rangle |^2 d \mu_{\mb H / Z(\mb H)_s,\psi_{L,l}} (g) \\
& = q_K^{(f_{L/K} \mb a (\mf h'^\vee) - \mb a (\mf g'^\vee)) /2}
\mr{fdeg}(\pi, d  \mu_{\mb H / Z(\mb H)_s,\psi_{L,l}})^{-1} . 
\end{align*}
When we put all this into \eqref{eq:17.22}, we find that
\begin{equation}\label{eq:17.25}
\begin{aligned}
& \mr{fdeg}(\pi, d \mu_{\mb G / Z(\mb G)_s ,\psi}) = \\
& q_K^{(\mb a (\mf g'^\vee) - f_{L/K} \mb a (\mf h'^\vee)) /2} \mr{fdeg}(\pi, 
d \mu_{\mb H / Z(\mb H)_s,\psi_{L,l}}) \mr{fdeg}(\chi, d \mu_{\mb T,\psi}) .
\end{aligned}
\end{equation}
By \eqref{eq:17.23} the representation space $\mr{Lie}(\mb G^\vee) / \mr{Lie}((Z(\mb G)_s)^\vee)$ 
for the adjoint $\gamma$-factor can be decomposed as $\mr{Lie}(\mb G'^\vee) \oplus \mr{Lie}(\mb T^\vee)$.
The action on the central Lie subalgebra $\mr{Lie}(\mb T^\vee)$ does not depend on the specific
L-parameter, it comes only from the canonical $\mb W_K$-action. The additivity of local factors
tells us that
\begin{equation}\label{eq:17.26}
\gamma (s,\mr{Ad}_{\mb G^\vee} \circ \lambda_\pi, \psi) =
\gamma (s,\mr{Ad}_{\mb G'^\vee} \circ \lambda_\pi, \psi) 
\gamma (s,\mr{Ad}_{\mb T^\vee} \circ \lambda_\chi, \psi) .
\end{equation}
From \eqref{eq:17.23} we also get a short exact sequence
\begin{equation}\label{eq:17.27}
1 \to Z_{\mb G'^\vee}(\lambda_\pi) \to Z_{(\mb G^\vee / Z(\mb G)_s )^\vee}(\lambda_\pi)
\to Z_{\mb T^\vee}(\lambda_\chi) \to 1 .
\end{equation}
The involved L-parameters are discrete, so all the groups in \eqref{eq:17.27} are finite. 
By definition, their component groups are the $S^\sharp_{\lambda_\pi}$ for the three cases.
Thus \eqref{eq:17.27} says that
\begin{equation}\label{eq:17.28}
S_{\lambda_\pi}^\sharp \big/ S_{\lambda'_\pi}^\sharp \cong S_{\lambda_\chi} \quad \text{and} 
\quad |S_{\lambda_\pi}^\sharp | = |S_{\lambda'_\pi}^\sharp| \, |S_{\lambda_\chi} | , 
\end{equation}
where the prime means that the group comes from $\mb G'$. Similar calculations show that 
$\mc A_{\lambda_\chi} = 1$ and $\mc A_{\lambda'_\pi} = \mc A_{\lambda_\pi}$. In particular 
$\rho_\pi$ is the same for $\mb G$ and for $\mb G'$. From that, \eqref{eq:17.26} and 
\eqref{eq:17.28} we deduce that
\begin{equation}\label{eq:17.29}
\frac{\dim (\rho_\pi)}{|S_{\lambda_\pi}^\sharp |} 
|\gamma (0,\mr{Ad}_{\mb G^\vee} \circ \lambda_\pi, \psi)| =
\frac{\dim (\rho_\pi)}{|S_{\lambda'_\pi}^\sharp |} 
|\gamma (0,\mr{Ad}_{\mb G'^\vee} \circ \lambda_\pi, \psi)|
\frac{|\gamma (0,\mr{Ad}_{\mb T^\vee} \circ \lambda_\chi, \psi)|}{|S_{\lambda_\chi}^\sharp |} 
\end{equation}
Using \eqref{eq:17.2} and our earlier findings for groups with anisotropic centre,
in particular \eqref{eq:17.12}, we can simplify \eqref{eq:17.29} to
\[
q_K^{(\mb a (\mf g'^\vee) - f_{L/K} \mb a (\mf h'^\vee)) /2}  
\frac{\dim (\rho_{\pi_{\mb H}})}{|S_{\lambda_{\pi_{\mb H}}}^\sharp |} 
\: |\gamma (0,\mr{Ad}_{\mb H^\vee} \circ \lambda_{\pi_{\mb H}}, \psi)| \:
\mr{fdeg}(\chi, d \mu_{\mb T,\psi}) .
\]
We compare that with \eqref{eq:17.25} and we note that replacing $\mb H (L)$ by $\mb G (K)$
adjusts both sides of the HII conjecture \eqref{eq:17.19} by a factor
\[
q_K^{(\mb a (\mf g'^\vee) - f_{L/K} \mb a (\mf h'^\vee)) /2}  
\mr{fdeg}(\chi, d \mu_{\mb T,\psi}) . \qedhere
\]
\end{proof}

In the body of the paper we only use Proposition \ref{prop:17.2} for unramified extensions
$L/K$. That case can be proven more elementarily, without Artin conductors.


\begin{thebibliography}{99}

\bibitem[ABPS]{ABPS1} A.-M. Aubert, P.F. Baum, R.J. Plymen, M. Solleveld
``Conjectures about $p$-adic groups and their noncommutative geometry'',
Contemp. Math. {\bf 691} (2017), 15--51

\bibitem[AMS]{AMS1} A.-M. Aubert, A. Moussaoui, M. Solleveld
``Generalizations of the Springer correspondence and cuspidal Langlands parameters'',
Manus. Math. {\bf 157} (2018), 121--192

\bibitem[Bor]{Bor} A. Borel,
``Automorphic L-functions'',
Proc. Symp. Pure Math {\bf 33.2} (1979), 27--61

\bibitem[DeRe]{DeRe} S. DeBacker, M. Reeder,
``Depth-zero supercuspidal $L$-packets and their stability'',
Ann. Math. \textbf{169} (2009), 795--901

\bibitem[Feng]{Fen} Y. Feng,
``A Note on the Spectral Transfer Morphisms for Affine Hecke Algebras'',
J. Lie Theory {\bf 29.4} (2019), 901--926

\bibitem[FeOp]{FeOp} Y. Feng, E. Opdam,
``On a uniqueness property of cuspidal unipotent representations'', to appear in
Advances in Mathematics (2020), \url{https://doi.org/10.1016/j.aim.2020.107406}

\bibitem[GaGr]{GrGa}  W.T. Gan, B.H. Gross,
``Haar measure and the Artin conductor'',
Trans. Amer. Math. Soc. {\bf 351.4} (1999), 1691--1704

\bibitem[GeMa]{MaGe} M. Geck, G. Malle,
``Reductive groups and the Steinberg map'',
arXiv:1608.01156, 2016

\bibitem[Gro]{Gro} B.H. Gross,
``On the motive of a reductive group",
Invent. Math {\bf 130} (1997), 287--313

\bibitem[GrRe]{GrRe} B.H. Gross, M. Reeder,
``Arithmetic invariants of discrete Langlands parameters'',
Duke Math. J. {\bf 154.3} (2010), 431--508

\bibitem[Hai]{Hai} T.J. Haines, 
``The stable Bernstein center and test functions for Shimura varieties'',
pp. 118--186 in: \emph{Automorphic forms and Galois representations},
London Math. Soc. Lecture Note Ser. {\bf 415}, Cambridge University Press, 2014

\bibitem[HII]{HII} K. Hiraga, A. Ichino, T. Ikeda,
``Formal degrees and adjoint $\gamma$-factors'',
J. Amer. Math. Soc. {\bf 21.1} (2008), 283--304
and correction J. Amer. Math. Soc. {\bf 21.4} (2008), 1211--1213

\bibitem[Kac]{Kacbook} V.G. Kac,
\textit{Infinite dimensional Lie algebras. Third Edition}, 
Cambridge University Press, 1994

\bibitem[Kot1]{Kot1} R.E. Kottwitz,
``Stable trace formula: cuspidal tempered terms'',
Duke Math. J. {\bf 51.3} (1984), 611--650

\bibitem[Kot2]{Ko} R.E. Kottwitz,
``Isocrystals with additional structure II'',
Compositio Mathematica \textbf{109} (1997), 255--339

\bibitem[Lus1]{Lus-Chevalley} G. Lusztig,
\emph{Representations of finite Chevalley groups},
Regional conference series in mathematics {\bf 39},
American Mathematical Society, 1978

\bibitem[Lus2]{Lus-Intersect} G. Lusztig, 
``Intersection cohomology complexes on a reductive group''
Invent. Math. {\bf 75.2} (1984), 205--272

\bibitem[Lus3]{Lusztig-unirep} G. Lusztig, 
``Classification of unipotent representations of simple $p$-adic groups",
Internat. Math. Res. Notices \textbf{11} (1995), 517--589

\bibitem[Lus4]{Lusztig-unirep2} G. Lusztig,  
``Classification of unipotent representations of simple $p$-adic groups II'', 
Represent. Theory, \textbf{6} (2002), 243--289

\bibitem[Mor]{Mor} L. Morris
``Tamely ramified supercuspidal representations",
Ann. scient. \'Ec. Norm. Sup. {\bf 29} (1996), 639--667

\bibitem[Opd]{Opd2} E. Opdam, 
``Spectral transfer morphisms for unipotent affine Hecke algebras",
Selecta Math. \textbf{22.4} (2016), 2143--2207

\bibitem[PaRa]{HR} G. Pappas, M. Rapoport,
``Twisted loop groups and their affine flag varieties'', 
with an appendix by T. Haines and M. Rapoport, 
Adv. Math. \textbf{219.1} (2008), 118--198

\bibitem[Pra]{Pra} G. Prasad,
``Elementary proof of a theorem of Bruhat-Tits-Rousseau and of a theorem of Tits'',
Bull. Soc. math. France {\bf 110} (1982), 197--202

\bibitem[Ree1]{Ree0} M. Reeder,
``On the Iwahori-spherical discrete series for $p$-adic Chevalley groups: formal degrees and L-packets'',
Ann. Sci. \'E. N. S. {\bf 27.4} (1994), 463--491

\bibitem[Ree2]{Ree1} M. Reeder,
``Formal degrees and L-packets of unipotent discrete series representations of 
exceptional $p$-adic groups. With an appendix by Frank L\"ubeck'',
J. reine angew. Math. {\bf 520} (2000), 37--93

\bibitem[Ree3]{Retorsion} M. Reeder,
``Torsion automorphisms of simple Lie algebras", 
L'Enseign. Math. (2), {\bf 56}, (2010), 3--47

\bibitem[Ser]{Ser} J.-P. Serre,
\emph{Local fields}, Springer Verlag, 1979

\bibitem[Sol]{Sol} M. Solleveld,
``A local Langlands correspondence for unipotent representations",
arXiv:1806.11357, 2018

\bibitem[Spr]{Spr} T.A. Springer,
\emph{Linear algebraic groups 2nd ed.},
Progress in Mathematics {\bf 9}, Birkh\"auser, 1998

\bibitem[Ste]{Ste} R. Steinberg,
\emph{Endomorphisms of linear algebraic groups},
Mem. Amer. Math. Soc. {\bf 80}, 1968

\bibitem[Tate]{Tat} J. Tate,
``Number theoretic background'',
Proc. Symp. Pure Math {\bf 33.2} (1979), 3--26

\bibitem[Th\v{a}]{Tha} Th\v{a}\'{n}g, Nguy\^{e}\~{n} Qu\^{o}\'{c},
``On Galois cohomology and weak approximation of connected 
reductive groups over fields of positive characteristic'',
Proc. Japan Acad. Ser. A Math. Sci. {\bf 87.10} (2011), 203--208

\end{thebibliography}
\end{document}